\renewcommand\MR[1]{} 
\numberwithin{equation}{section}
\newtheorem{theorem}[subsubsection]{Theorem}
\newtheorem{proposition}[subsubsection]{Proposition}
\newtheorem{corollary}[subsubsection]{Corollary}
\newtheorem{lemma}[subsubsection]{Lemma}
\newtheorem*{theorem*}{Theorem}
\newtheorem{ntheorem}{Theorem}
\newtheorem{nconjecture}[ntheorem]{Conjecture}
\newtheorem{aconjecture}{Conjecture}
\theoremstyle{definition}
\newtheorem{definition}[subsubsection]{Definition}
\theoremstyle{remark}
\newtheorem{example}[subsubsection]{Example}
\newtheorem{remark}[subsubsection]{Remark}
\newcommand{\Rep}{\operatorname{Rep}}
\newcommand{\Hom}{\operatorname{Hom}}
\newcommand{\End}{\operatorname{End}}
\newcommand{\Stab}{\operatorname{Stab}}
\newcommand{\supp}{\operatorname{supp}}
\newcommand\rad{\operatorname{rad}}
\newcommand\Span{\operatorname{span}}
\renewcommand\top{\operatorname{top}}
\renewcommand{\cong}{\simeq}
\newcommand\tfor{\text{for }}
\newcommand{\blambda}{\bm{\lambda}}
\newcommand\Ann{\operatorname{Ann}}
\newcommand\Ext{\operatorname{Ext}}
\newcommand\stHom{\underline{\operatorname{Hom}}}
\newcommand\stAut{\underline{\operatorname{Aut}}}
\newcommand\stEnd{\underline{\operatorname{End}}}
\newcommand\uHom{\underline{\operatorname{Hom}}}
\newcommand\uEnd{\underline{\operatorname{End}}}
\newcommand\im{\operatorname{im}}
\newcommand\id{\operatorname{id}}
\newcommand\Aut{\operatorname{Aut}}
\newcommand\soc{\operatorname{soc}}
\newcommand\Mat{\mathrm{Mat}}
\newcommand{\Tilt}{\mathrm{Tilt}}
\newcommand{\mN}{\mathbb{N}}
\newcommand{\mZ}{\mathbb{Z}}
\newcommand{\cA}{\mathcal{A}}
\newcommand{\cC}{\mathcal{C}}
\newcommand{\cT}{\mathcal{T}}
\newcommand{\cP}{\mathcal{P}}
\newcommand{\cI}{\mathcal{I}}
\newcommand{\bk}{\Bbbk}
\newcommand{\mA}{\mathbb{A}}
\newcommand{\mF}{\mathbb{F}}
\newcommand{\mP}{\mathbb{P}}
\newcommand{\mG}{\mathbb{G}}
\newcommand{\mI}{\mathbb{I}}
\newcommand{\mJ}{\mathbb{J}}
\newcommand\one{\mathbf{1}}
\newcommand{\unit}{\mathbf{1}}
\newcommand\kk{\Bbbk}
\renewcommand\o{\otimes}
\newcommand\ol{\overline}
\newcommand{\tto}{\twoheadrightarrow}
\newcommand{\ulambda}{\underline{\lambda}}
\newcommand{\umu}{\underline{\mu}}
\title[Elementary $p$-group and Steinberg modules]{A conjecture on the tensor ideal for an elementary $p$-group generated by the restriction of a Steinberg module}
\author{Kevin Coulembier}
\address{School of Mathematics and Statistics, University of Sydney, Australia}
\email{kevin.coulembier@sydney.edu.au}
\author{Johannes Flake}
\address{Mathematical Institute, University of Bonn, Germany}
\email{flake@math.uni-bonn.de}
\date{\today}
\begin{document}

\begin{abstract}
In previous work (Coulembier--Flake 2024), the authors conjectured that the tensor product of an arbitrary finite-dimensional modular representation of an elementary abelian $p$-group with the biggest non-projective restricted Steinberg $SL_2$-module is a restricted tilting module. We showed that the validity of the conjecture would have interesting implications in the theory of tensor categories in positive characteristic, in particular, with respect to the classification of incompressible symmetric tensor categories, which is the subject of arguably the main open conjecture in the area. We present here some evidence for the conjecture to hold.
\end{abstract}

\maketitle

\section{Introduction}

A deep theorem by Deligne \cites{De1, De2} classifies symmetric tensor categories with a growth condition in characteristic $0$ in terms of affine groups in the category of supervector spaces. In positive characteristic, recent discoveries in \cites{BE, BEO, CEO, AbEnv} and references therein led to a conjectural analog \cite{BEO}*{Conjecture~1.4} of Deligne's theorem, which broadly speaking asserts that the same class of tensor categories corresponds to affine groups in the union of the so-called \emph{generalized Verlinde categories}. In \cite{CF}, when studying certain aspects of the conjecture, we established a connection with modular representations of elementary abelian $p$-groups. In particular, we formulated a conjecture on such representations, derived certain consequences from the conjecture regarding the theory of incompressible tensor categories, and verified the conjecture in the smallest examples.

To state the conjecture and explain the progress of the current paper in simple terms, let $\kk$ be an algebraically closed field of positive characteristic $p$. Let $r\ge1$ be a natural number. Fix an embedding of the elementary abelian $p$-group $E$ of order $p^r$ into the lower unitriangular matrices in $SL_2:=SL_2(\kk)$. Let $(T_i)_{i\ge0}$ be the set of indecomposable tilting modules of $SL_2$ over $\kk$, labeled by their highest $SL_2$-weight, restricted to $E$ via the fixed embedding. It can be seen that $T_i$ remains indecomposable over $E$ for $i<p^r$, that $T_{p^r-1}$ is projective over $E$, and that any restricted tilting module is a direct sum of restricted tilting modules $T_i$ with $i<p^r$ (\Cref{lem:restricted-tilting}). Set $S:=T_{p^{r-1}-1}$, the restricted $(r-1)$-th Steinberg module. Unless explicitly mentioned otherwise, we only consider finite-dimensional representations of groups, in particular $\Rep E$ is the tensor category of finite-dimensional $E$-representations over $\bk$.

\begin{nconjecture}[\cite{CF}*{Conjecture~5.4.2(1), see also Lemma~5.4.5}] \label{conj:intro} For any $X\in\Rep E$, the representation $S\o X$ is the restriction to $E$ of an $SL_2$-tilting module.
\end{nconjecture}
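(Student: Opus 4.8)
I would approach this conjecture by combining Donkin's tensor‑product formula for tilting $SL_2$‑modules, which settles a distinguished family of $X$ at once, with the theory of rank varieties for $E$, which reduces the general case to complexity‑one modules. By Donkin's theorem applied to $SL_2$, $T_{(p^{r-1}-1)+p^{r-1}\lambda}\simeq\mathrm{St}_{r-1}\otimes T_\lambda^{[r-1]}$ for all $\lambda\ge0$; since $S=T_{p^{r-1}-1}|_E=\mathrm{St}_{r-1}|_E$, restriction to $E$ gives $T_{(p^{r-1}-1)+p^{r-1}\lambda}|_E\simeq S\otimes\bigl(T_\lambda^{[r-1]}|_E\bigr)$. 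As the class of $X$ for which the conjecture holds is visibly closed under direct sums and direct summands, this already proves it for every $X$ that is a summand of a direct sum of the modules $T_\lambda^{[r-1]}|_E$. Taking $\lambda=p-1$ gives in particular $\kk E\simeq\mathrm{St}_r|_E\simeq S\otimes M$ with $M:=\mathrm{St}_1^{[r-1]}|_E$, an identity used below.

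For general $X$, I would pass to rank varieties. Writing $\mathrm{St}_{r-1}\simeq\bigotimes_{j=0}^{r-2}L_{p-1}^{[j]}$ and using that $L_{p-1}$ acts on the lower unipotent subgroup of $SL_2$ through a single regular Jordan block of size $p$, a short computation identifies the rank variety $V_E\bigl(L_{p-1}^{[j]}|_E\bigr)$ with the hyperplane $H_j=\{\alpha\in\kk^r:\sum_i\alpha_i a_i^{p^j}=0\}$, where $a_1,\dots,a_r$ are the scalars defining the embedding $E\hookrightarrow SL_2$. Hence $V_E(S)=\bigcap_{j=0}^{r-2}H_j$, and since $a_1,\dots,a_r$ are linearly independent over $\mF_p$ the associated $(r-1)\times r$ Moore matrix has full rank, so $V_E(S)=\ell$ is a single line. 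Thus $S$ has complexity one and $V_E(S\otimes X)=\ell\cap V_E(X)\in\{\{0\},\ell\}$ for every $X$. When the intersection is $\{0\}$, $S\otimes X$ is projective, hence free, over $E$, hence a direct sum of copies of $T_{p^r-1}|_E$, hence a restricted tilting module by \Cref{lem:restricted-tilting}; this proves the conjecture whenever $\ell\not\subseteq V_E(X)$.

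The remaining case $\ell\subseteq V_E(X)$ is where the real difficulty lies. Then $S\otimes X$ has rank variety exactly $\ell$, so it is a direct sum of indecomposable $E$‑modules of complexity one each with rank variety $\ell$; but there are infinitely many such indecomposables — a tame, $\kk[t]$‑like family of $\Omega$‑periodic modules — whereas only the finitely many $T_i|_E$ with $i<p^r$ and rank variety $\ell$ are permitted. So one must show that the ``exotic'' periodic modules along $\ell$ never occur as summands of $S\otimes X$. My plan would be to extract an invariant of $S\otimes X$ that is forced to be small — a bound on Loewy length, or on the Jordan type at the $\pi$‑point corresponding to $\ell$, inherited from $S$ — and to check that only the $T_i|_E$ attain it, using as levers the identity $\kk E\simeq S\otimes M$ and the self‑duality of $S$ (so that an indecomposable $N$ is a summand of $S\otimes X$ only if $S\otimes N$ is a summand of $S^{\otimes 2}\otimes X$, with $S^{\otimes 2}|_E$ an explicitly computable restricted tilting module).

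I expect this last rigidity statement, for a \emph{general} $X$, to be the genuine obstacle: $X$ carries no $SL_2$‑structure a priori, so $S\otimes X$ cannot simply be lifted to $SL_2$, and the tame family of periodic modules along $\ell$ is too large to enumerate. The evidence in the present paper most plausibly consists of the reduction above together with cases in which the possible summands of $S\otimes X$ can be controlled directly — small $r$, or $X$ of complexity one, or the Donkin family $T_\lambda^{[r-1]}|_E$ — and it is the uniform treatment of all $X$ that remains conjectural.
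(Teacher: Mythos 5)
This statement is not a theorem of the paper but its central open conjecture (\Cref{conj:intro}, equivalently \Cref{conj-a}--\Cref{conj-c}); the paper offers no proof, only partial evidence (the items collected in Theorems A and B), so there is no complete argument against which to measure yours, and your own write-up correctly stops short of claiming one. Your reductions are sound and in fact reproduce parts of the paper's evidence: the Donkin-formula family $T_{p^{r-1}-1+p^{r-1}\lambda}\simeq S\otimes T_\lambda^{(r-1)}$ corresponds to the paper's observations about restrictions of $SL_2$-modules and Frobenius twists of tilting modules (\Cref{lem:restricted-sl-in}, \Cref{lem:SL2-simples}), and your rank-variety computation $V_E(S)=\ell$ with the consequence that $S\otimes X$ is projective whenever $\blambda\notin\supp(X)$ is exactly \Cref{Cor:Supp} together with \Cref{rem:SX-projective}. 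The gap you isolate --- ruling out ``exotic'' periodic summands supported on $\ell$ when $\blambda\in\supp(X)$ --- is precisely where the conjecture lives, and the paper states explicitly that the known classifications of thick triangulated subcategories only reduce to this case, which is wild (not tame, as you suggest; the paper cites \cite{wild-periodic-modules}), so no enumeration of the family along $\ell$ is available. Where the paper goes further than your sketch is in its choice of levers: rather than seeking a numerical invariant (Loewy length or Jordan type at the $\pi$-point of $\ell$), it computes the image of $S\otimes-$ on $\Ext^1$ via an explicit annihilator element $\xi\in\Ann_{\kk E}S$ (\Cref{sec:ext}), which yields the Carlson-module and Loewy-length-two cases, and for $r=2$ it classifies cyclic $S$-projective modules directly (\Cref{thm:cyclic-S-proj}); none of this closes the general case, which remains open.
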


\Cref{conj:intro} thus predicts that the thick tensor ideal generated by $S$ is surprisingly small, in particular, it contains only finitely many indecomposable objects. For comparison, if one deforms $X$ slightly, to similar representations of $E$, the number of indecomposable objects in the corresponding ideal is at least the cardinality of $\bk$, see \Cref{Sec9} and \Cref{Sec8}. One could say that $X$ seems to behave like a representation induced from a subgroup of $E$ with finite representation type.  

There are various reformulations of \Cref{conj:intro} which we discuss in \Cref{sec:conjecture}. In particular, the reference to the group $SL_2$ can be avoided as follows: Let $V$ be any faithful two-dimensional representation of $E$, let $\cC_V$ be the pseudo-tensor subcategory of $\Rep E$ whose indecomposable objects are direct summands in tensor powers of $V$. Then $\cC_V$ has a unique smallest thick tensor ideal $\cI_V$ strictly containing the thick tensor ideal formed by all projective objects. \Cref{conj:intro} is equivalent to $\cI_V$ being a thick tensor ideal even in $\Rep E$. While there are known classification results on thick \emph{triangulated} subcategories with ideal closure \cites{BCR-thick-subcategories,BIK-stratifying-modular,BIKP-stratification} of the stable category of $E$, these results do not seem to help much for our problem. Essentially, they only allow us to reduce the problem to the category of periodic modules with a fixed one-point support, which is still wild due to \cite{wild-periodic-modules}.

\Cref{conj:intro} holds trivially if $r=1$. In \cite{CF}*{Remark 5.4.4(1)}, we observed that \Cref{conj:intro} is true for $p=r=2$, using the classification of indecomposable $E$-modules. We present here the following evidence for \Cref{conj:intro} in the general case:

\begin{ntheorem} \label{thm:A} $S\o X$ is a  restricted tilting module for all $X$ in the full subcategory of $\Rep E$ generated under tensor products, duals, $\Omega$-shifts, direct sums, and taking direct summands (\Cref{lem:closure}) by
\begin{itemize}
\item modules of Loewy length at most $2$ (\Cref{cor:ll-2-any-r}),
\item Carlson modules (\Cref{cor:Carlson}),
\item extensions of any two quotients of a uniserial $p$-dimensional representation with a certain support (\Cref{thm:S-tensor-ext-uniserials}),
\item modules whose support variety (or rank variety) does not contain a certain point depending on the above fixed embedding of groups (\Cref{rem:SX-projective}),
\item restrictions of simple, standard, or costandard $SL_2$-modules, and arbitrary repeated Frobenius twists of restrictions of $SL_2$-tilting modules (\Cref{lem:SL2-std}, \Cref{lem:SL2-simples}), and
\item restrictions of $SL_2$-modules in which all non-zero weight spaces have weight at most $p^r-1$ (\Cref{lem:restricted-sl-in}).
\end{itemize}
\end{ntheorem}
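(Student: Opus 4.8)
The plan is to establish each bullet point more or less separately, but to organize them so that the closure properties recorded in \Cref{lem:closure} carry the weight of reducing many cases to a handful of generators. The first observation is that the class of $X\in\Rep E$ for which $S\o X$ is (the restriction of) an $SL_2$-tilting module is closed under tensor products, duals, $\Omega$-shifts, direct sums and summands; for duals this is because tilting is preserved by $SL_2$-duality, for tensor products one uses that $S\o S$ is itself restricted tilting (which should follow from the fact that $S$ is a restricted Steinberg and $\Tilt$ is a tensor ideal in the appropriate sense), and for $\Omega$-shifts one uses that $\Omega$ differs from a genuine shift only by projective (hence restricted tilting) summands. Granting this, it suffices to verify the statement on the six families of generators listed.

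For the Loewy length $\le 2$ case (\Cref{cor:ll-2-any-r}) the strategy I would use is to reduce, via the short exact sequence $0\to \rad X\to X\to \top X\to 0$ with both ends semisimple, to the case $X=\bk$ (trivial) and to a single non-split extension of trivials, i.e. a two-dimensional module; there the rank variety is a single point and one computes $S\o X$ directly, checking it has no non-projective self-extensions (equivalently, is tilting) by a support-variety or Carlson-module computation. The Carlson module case (\Cref{cor:Carlson}) should be handled by the defining exact sequences of Carlson modules $L_\zeta$ together with the fact that $S\o-$ applied to the ambient projectives lands in restricted tilting, so one chases tilting-ness along the two-term resolution. For the $SL_2$-theoretic bullets (\Cref{lem:SL2-std}, \Cref{lem:SL2-simples}, \Cref{lem:restricted-sl-in}) the point is to work inside $SL_2$ before restricting: a standard/costandard/simple $SL_2$-module $X$ has a well-understood structure, $S\o X$ has a good (Weyl) filtration by translation-functor arguments, and since both $S\o X$ and its dual have such filtrations it is tilting; the Frobenius-twist statement then follows because $\Fr^{(n)*}$ of a tilting module, restricted to $E$, is controlled by the embedding, and for weights $\le p^r-1$ one uses that the restriction to $E$ already sees only the "restricted" part, matching $S=T_{p^{r-1}-1}$ with the Steinberg tensor product theorem.

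The genuinely delicate case, and the one I expect to be the main obstacle, is the extension-of-uniserials bullet (\Cref{thm:S-tensor-ext-uniserials}): here one must handle modules that are \emph{not} of Loewy length $\le 2$, so the clean semisimple reduction fails, and one cannot pass to $SL_2$ because these $p$-dimensional uniserials and their extensions need not be restrictions of $SL_2$-modules. My plan there is to pin down the rank variety of such an extension (it should be a line, or the specific point avoided in the support-variety bullet), exploit that a uniserial $p$-dimensional module over $E$ restricted to a suitable cyclic shifted subgroup $\langle 1+t u\rangle$ becomes a single Jordan block, and then combine the additivity of rank varieties along short exact sequences with an explicit analysis of $S\o(\text{Jordan block})$ — reducing everything to the $r=1$, $SL_2$ situation where the conjecture is trivial, and then transporting the tilting conclusion back using that being restricted tilting over $E$ can be detected by having the right restriction to each cyclic shifted subgroup together with the correct total dimension count against the $T_i$, $i<p^r$.
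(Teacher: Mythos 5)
Your overall architecture (prove closure, then verify each generating family) matches the paper, since the theorem is by design a summary of \Cref{lem:closure} together with the bullet-by-bullet results. But several of your bullet arguments have genuine gaps, and two of them concern exactly the results that carry the real weight of the theorem. For modules of Loewy length $\le 2$, your reduction ``to the case $X=\bk$ and to a single non-split extension of trivials, i.e.\ a two-dimensional module'' is not available: for $r\ge 2$ there are indecomposable Loewy-length-two modules of arbitrarily large dimension (already $\bk E/\rad^2\bk E$ for $r=2$), and they do not decompose into two-dimensional pieces. The decomposition only happens \emph{after} tensoring with $S$: one must view $S\o X$ as an extension of $S^a$ by $S^b$ whose class is a matrix with entries in the image of $S\o-:\Ext^1_E(\one,\one)\to\Ext^1_E(S,S)$, compute that image explicitly (\Cref{cor:im-omega-1-1}: it is spanned by the maps $\psi_i\o f_0$, with divisibility relations among them), and then run a Gaussian-elimination argument on stable morphisms (\Cref{lem::extensions-posets}, \Cref{cor::linear-poset-ext}) to split off copies of $S$ and $S\o V^{(i)}$. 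None of this is in your sketch, and without it the Loewy-length bullet is unproved. Similarly, for the uniserial-extension bullet your closing principle -- that ``being restricted tilting over $E$ can be detected by the restrictions to cyclic shifted subgroups together with a dimension count'' -- is false: the modules $S(\mu)$ of \Cref{Sec9} all have dimension $p$, the same one-point rank variety and the same local Jordan data, yet only $S(0)$ is a restricted tilting module. So one cannot transport tilting-ness back from the rank-one situation; the paper instead computes the image of $S\o-$ on $\Ext^1_E(U_d,U_e)$ via the subcategory $\cC\simeq\Rep C_p$ and the element $\xi$ (\Cref{prop::im-omega}) and identifies every resulting extension as a specific $T_j$ (\Cref{lem::identify-extensions}).

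Two further points. For costandard modules your claim that $S\o\nabla_i$ is tilting over $SL_2$ (``good filtration for it and its dual'') fails for $i\ge p^r$; already for $p=2$, $r=2$ the module $T_1\o\nabla_4$ has good filtration factors $\nabla_5,\nabla_3$ and hence cannot be a sum of indecomposable tiltings. The correct route (\Cref{lem:SL2-std}) works over $E$: the maps $\nabla_{k+i}\to\nabla_i$ are $E$-linear when $p\mid k$, and taking $k=p^r$ makes $\nabla_{p^r-1}$ projective over $E$, so one reduces modulo projectives to weights $<p^r$, where \Cref{lem:restricted-sl-in} (i.e.\ the fact that $S\o L_i$ is $SL_2$-tilting for $i\le p^r-1$, plus vanishing of extensions between tiltings) applies. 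For Carlson modules, ``chasing tilting-ness along the two-term resolution'' is not an argument: the kernel of an arbitrary map from a restricted tilting module to $S$ need not be restricted tilting, and the whole content of \Cref{ThmCarlson} is the determination of which stable maps $\Omega^d S\to S$ arise from $H^d(E,\bk)$ under $S\o-$ (using the periodicity of $S$, a central isomorphism $\Omega^\tau S\simeq S$, and, for odd $p$, the computation that $H^2(E,\bk)$ maps onto $\bk\,\id_S$). As it stands, the proposal establishes only the easy bullets (the support-variety case and, modulo the reductions just described, the simple/Frobenius-twist case), so the theorem is not proved.
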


\begin{ntheorem}[\Cref{thm:cyclic-S-proj}] \label{thm:B} If $r=2$, then for any $X\in\Rep E$, every cyclic direct summand in $S\o X$ is a restricted tilting module.
\end{ntheorem}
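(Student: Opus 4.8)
We sketch the argument we would use.

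\textbf{Reductions.} Since $\kk E$ is local, every cyclic $E$-module is indecomposable, and a cyclic summand $M\cong\kk E/I$ of $S\otimes X$ has commutative local endomorphism ring $\End_E(M)\cong\kk E/I$. The first step is to localise: $\operatorname{rv}(M)\subseteq\operatorname{rv}(S\otimes X)\subseteq\operatorname{rv}(S)$, and an explicit computation shows $\operatorname{rv}(S)=\{\beta\}$ for a single point $\beta$ — on the natural module $V$ the shifted cyclic subgroup determined by the embedding $E\hookrightarrow SL_2$ acts as the identity, so it acts non-freely on every nonzero object of $\cC_V$, while for $p$ odd it acts on $S$ with Jordan type $[\tfrac{p+1}{2},\tfrac{p-1}{2}]$ and $S$ is free over every other shifted cyclic subgroup. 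If $\operatorname{rv}(M)=\varnothing$ then $M$ is projective, so $M\cong\kk E\cong T_{p^2-1}|_E$ and we are done; hence I would assume $\operatorname{rv}(M)=\{\beta\}$, so that $M$ has complexity $1$ and is $\Omega$-periodic.

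\textbf{Reformulation via partial traces.} Using that $S$ is self-dual, $M$ is a summand of $S\otimes X$ for some $X$ if and only if it is a summand of $S\otimes S\otimes M$, if and only if $\id_M$ lies in the image of the partial trace $\operatorname{ptr}_S\colon\End_E(S\otimes M)\to\End_E(M)$ over the $S$-factor; since $\operatorname{ptr}_S$ is $\End_E(M)$-linear and $\End_E(M)$ is local, this means $\operatorname{ptr}_S$ is surjective. As $\operatorname{ptr}_S(\id_{S\otimes M})=(\dim_\kk S)\,\id_M=p\cdot\id_M=0$, the hypothesis genuinely forces non-scalar endomorphisms of $S\otimes M$. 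I would also record that $S^{\otimes n}|_E$ is restricted tilting for all $n$ (a tensor product of $SL_2$-tilting modules), with $S\otimes S|_E$ a direct sum of $T(i)|_E$ with $p-1\le i\le 2p-2$; combined with the criterion, a cyclic summand of $S\otimes X$ is a summand of $S\otimes M$ or of $T(i)|_E\otimes M$ for some $p<i<p^2$.

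\textbf{The case $r=2$.} Choose $\kk E\cong\kk[a,b]/(a^p,b^p)$ so that $\langle 1+a\rangle$ is the shifted cyclic subgroup for $\beta$. Then $\operatorname{rv}(M)=\{\beta\}$ means $M$ is free over $\kk[b+\lambda a]/((b+\lambda a)^p)$ for every $\lambda\in\kk$ but not over $\kk[a]/(a^p)$; in particular $M$ is free of some rank $d$ over $R:=\kk[b]/(b^p)$ and, being cyclic over $\kk E=R[a]/(a^p)$, the action of $a$ is a companion matrix over $R$, so $M$ corresponds to a monic $f(x)\in R[x]$ of degree $d$ with $f\mid x^p$. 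Using that $S|_E$ is uniserial of length $p$, one describes $S\otimes M$ as a $p$-fold self-extension of $M$, computes $\operatorname{ptr}_S$ on this model, and shows that surjectivity forces $f$ — hence $M$ — to be the restriction of an $SL_2$-tilting module; this excludes the cyclic $\beta$-supported modules that are \emph{not} restricted tilting, for instance $\kk E/(a)\cong\kk[b]/(b^p)$ (on which the $\beta$-shifted subgroup acts trivially) and more exotic examples such as $\kk[a,b]/(a^2-b^3,b^p)$, which have the wrong local Jordan type at $\beta$. This can be organised as an induction: for $r=2$ the syzygy $\Omega M$ of a cyclic $\beta$-supported module is again cyclic — its projective cover is forced to be $\kk E$ by the period-$2$ relation $\Omega^2M\cong M$ together with $\dim_\kk\Omega M=p^2-\dim_\kk M$ — and again $\beta$-supported, and it remains in the tensor ideal of $S$ (which is closed under $\Omega$, since it contains $\kk E$), so the hypotheses persist; the base cases are $M\cong S$ (when $\dim_\kk M=p$) and the dimension-$2p$ tiltings $T(i)|_E$.

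\textbf{Main obstacle.} The crux is this last step: extracting from the indirect condition ``$\operatorname{ptr}_S$ surjects onto the local ring $\End_E(M)$'' enough rigidity to identify $M$ up to isomorphism, given that the category of $\beta$-supported $\kk E$-modules is wild for $p\ge3$ and that ``cyclic with one-point support'' does \emph{not} by itself imply restricted tilting, nor do the numerical local Jordan-type invariants distinguish the tilting from the non-tilting cyclic $\beta$-modules. Making the partial-trace computation on $S\otimes M$ sharp enough, and keeping the tensor-ideal hypothesis genuinely available through the induction, is where the real work lies, and is presumably why one is restricted to $r=2$.
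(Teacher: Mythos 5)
Your preliminary reductions are sound and run parallel to the paper's own setup: a cyclic summand $M$ of $S\o X$ is indecomposable with local endomorphism ring, its support is contained in the single point $\blambda$ of $\supp(S)$ (\Cref{Cor:Supp}), the projective case is immediate, and recasting ``$M$ is a direct summand of $S\o X$ for some $X$'' as relative $S$-projectivity, i.e.\ surjectivity of the partial trace $\operatorname{ptr}_S\colon\End_E(S\o M)\to\End_E(M)$, is a correct equivalent of \Cref{DefWProj}. The genuine gap is the step you yourself flag as the main obstacle: the assertion that one ``computes $\operatorname{ptr}_S$ on this model and shows that surjectivity forces $f$ --- hence $M$ --- to be the restriction of an $SL_2$-tilting module'' is exactly the content of the theorem and is nowhere carried out. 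Nothing in the proposal explains how the partial-trace condition separates $T_{\ell p-1}$ from the wild supply of other cyclic modules of dimension $\ell p$ with support $\{\blambda\}$ (your own examples such as $\kk[a,b]/(a^2-b^3,b^p)$), so as written the proposal reformulates the problem rather than solving it. The auxiliary induction does not close this gap either: for a cyclic non-projective $S$-projective $M$ one has $\dim M=\ell p$ and $\dim\Omega M=(p-\ell)p$ (consistently with \Cref{lem:dim-periodic}), so applying $\Omega$ merely exchanges $\ell\leftrightarrow p-\ell$; for $2<\ell<p-2$ neither $M$ nor $\Omega M$ is a base case and no reduction takes place.

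For comparison, the paper avoids any endomorphism-algebra computation and instead exploits the lifting property of the $S$-projective cyclic module $X$ against explicit $S$-split epimorphisms: by \Cref{lem:cyc-S-proj-r2}(c) there are $S$-split surjections $T_{2p-2-k+\ell p}\tto\Delta_{k+\ell p}$, and by \Cref{lem:realizations}(c) every cyclic submodule of $T_{2p-2-k+\ell p}$ not contained in the radical has $\Delta_{k+1+\ell p}$ as a quotient; hence if $\Delta_{k+\ell p}$ is a quotient of $X$, so is $\Delta_{k+1+\ell p}$, and climbing through the standard modules forces $X\simeq T_{\ell p-1}$ for some $\ell$, or else $X$ is projective (\Cref{prop-cyc}). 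To rescue your approach you would need either an honest computation of $\operatorname{ptr}_S$ on your companion-matrix model $M\simeq R[x]/(f)$ sharp enough to exclude all non-tilting $f$, or some substitute for the concrete module-theoretic input that the paper supplies through \Cref{lem:realizations}.
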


For $p=r=2$, it is easy to see that the subcategory as described in \Cref{thm:A} is $\Rep E$ even without using the classification of indecomposable $E$-modules, yielding a new proof (\Cref{cor:extensions-p2}) of \Cref{conj:intro} in this case. For general $p,r$, we are not aware of good descriptions of how `large' the subcategory is. 

In \Cref{sec:prelim}, we recall some preliminaries. In \Cref{sec:conjecture}, we discuss various equivalent versions of \Cref{conj:intro} and generalities on the conjecture. In \Cref{sec:analyse}, we establish fundamental properties of restricted tilting and Steinberg modules. In \Cref{sec:ext}, we study the effect of tensoring with Steinberg modules on certain extensions. In \Cref{sec:Carlson} and \Cref{sec:Loewy}, we prove the results in \Cref{thm:A} for Carlson modules and for modules of Loewy length $2$.
In \Cref{sec:cyclic}, we prove \Cref{thm:B}.


\section{Preliminaries} \label{sec:prelim}

For the entire paper, we fix an algebraically closed field $\bk$ of characteristic $p>0$.

\subsection{Monoidal categories}

By a {\bf pseudo-tensor category over $\bk$} we mean a $\bk$-linear rigid symmetric monoidal (with $\bk$-linear tensor product $\otimes$) category that is pseudo-abelian (additive and idempotent complete) and such that the endomorphism ring of the tensor unit $\unit$ is the base field $\bk$. A {\bf pseudo-tensor subcategory} of a pseudo-tensor category is a symmetric monoidal full subcategory closed under taking direct sums and direct summands. A {\bf thick tensor ideal} in a pseudo-tensor cateogry is a full subcategory closed under taking direct sums and direct summands and closed under tensor product with arbitrary objects. 

A pseudo-tensor category is a tensor category if it is abelian.

\subsection{Representations of finite groups}

Let $G$ be a finite group. Denote by $\Rep G=\Rep_{\bk}G$ the tensor category of finite-dimensional $G$-representations over $\bk$.

For $A,B\in \Rep G$, we take the convention that an {\bf extension of $A$ by $B$} has $A$ as a submodule and $B$ as a quotient.

We recall Definition~2.1 and Proposition~2.4, from \cite{CPW}:
\begin{definition}\label{DefWProj}
    For a fixed module $W\in \Rep G$, a module $M\in \Rep G$ is {\bf $W$-projective} if one of the following equivalent conditions is satisfied:
    \begin{enumerate}
        \item $M$ is a direct summand of $M'\otimes W$ for some $M'\in \Rep G$;
        \item $M$ is a direct summand of $M\otimes W\otimes W^\ast$;
        \item For every $W$-split epimorphism $M_1\tto M_2$ in $\Rep G$, the induced map
        $$\Hom_G(M,M_1)\to\Hom_G(M,M_2)$$
        is surjective.
    \end{enumerate}
\end{definition}

Here an epimorphism is {\bf $W$-split} if it becomes split after applying $W\otimes-$.

A representation in $\Rep G$ is {\bf algebraic} if there are only finitely many indecomposable summands (up to isomorphism) in its tensor powers. Equivalently, a representation is algebraic if its image in the split Grothendieck ring satisfies a polynomial equation.

\subsubsection{} Denote by $\Stab G$ the {\bf stable module category} of $G$, which is the quotient of $\Rep G$ by the (thick) tensor ideal of projective objects, see \cite{Benson-vector-bundles}*{\S 1.5}. This category is triangulated with shift functor $\Omega$. Concretely, $\Omega (M)$ is defined by a short exact sequence
$$0\to \Omega(M)\to P\to M\to 0$$
with $P$ projective. Dually, $\Omega^{-1}(M)$ is defined using an embedding of $M$ into its injective hull. Combinations of $\Omega$ and $\Omega^{-1}$ lead to well-defined operators $\Omega^i$ for all $i\in\mZ$ such that $\Omega^0(M)\cong M$ in the stable category. While $\Omega^i(M)$ is uniquely defined (up to isomorphism) in $\Stab G$, in $\Rep G$ it is defined only up to projective summands. In most cases where we refer to $\Omega(M)$ as an object in $\Rep G$, the ambiguity will not matter. In some cases we have to be more careful and unfortunately we will need to alternate between interpreting $\Omega(M)$ as a module without projective summands ($P$ as above is a projective cover) or consider $\Omega$ as the endofunctor of $\Rep G$ given by $K\otimes -$, where $K$ is the augmentation ideal in $\bk G$, which is thus a representative of $\Omega(\unit)$.

A module $M\in \Rep G$ is {\bf periodic} if $\Omega^tM\simeq\Omega^0 M$ (in $\Stab G$ or, equivalently, in $\Rep G$ up to projective summands) for some $t\in\mZ_{>0}$. The minimal such $t$ is the period of $M$.

\subsection{Elementary abelian \texorpdfstring{$p$}{p}-groups}
 We will use the notation regarding elementary abelian $p$-groups from \cite{Benson-vector-bundles}, and summarize some classical results from \cites{Carlson-var-coh-ring-mod,Carlson-coh-ring-mod,Benson-mod-rep-th,Benson-vector-bundles}. Take $r\in\mZ_{>0}$. The generators of $E=C_p^r$ are denoted by $g_i$, for $1\le i\le r$. We also set $$X_i=g_i-1\;\in\; \bk E.$$

 For $M\in \Rep E$, we define its support $\supp(M)\subset\mP^{r-1}(\bk)$ as the projective variety that has as cone the rank variety $V^{\mathrm{rk}}_E(M)\subset\mA^r(\bk)$ as in \cite{Benson-vector-bundles}*{\S 1.9}. Concretely, the point $[\mu_1:\mu_2:\cdots:\mu_{r}]\in \mP^{r-1}(\bk)$ is in $\supp(M)$ if and only if $M$ is not projective as a module over the algebra $\bk[t]/t^p$, where $t$ acts on $M$ as
 $$\sum_{i=1}^r\mu_i X_i\;\in\; \bk E.$$
Then for any $M,N\in\Rep E$,
\begin{equation}\label{sup:dual-shift}
\supp(M)\;=\;\supp(M^*)\;=\;\supp(\Omega(M)),
\end{equation}
\begin{equation}\label{sup:tensor}
\supp(M\otimes N)\;=\;\supp(M)\cap \supp(N),
\end{equation}
\begin{equation}\label{sup:oplus}
\supp(M\oplus N)\;=\;\supp(M)\cup \supp(N),
\end{equation}
and for any short exact sequence $M_1\to M_2\to M_3$ in $\Rep E$, 
\begin{equation}\label{sup:ses}
\supp(M_i)\;\subset\;\supp(M_j)\cup \supp(M_k) ,
\end{equation}
for any $i,j,k$ such that $\{i,j,k\}=\{1,2,3\}$. 

$M\in\Rep E$ is projective if and only if $\supp(M)=\emptyset$, and otherwise $M$ is periodic if and only if $\supp(M)$ is a point in $\mP^{r-1}$. Non-projective periodic modules in $\Rep E$ have period $1$ if $p=2$ and period at most $2$ if $p>2$ (and indecomposable non-projective periodic modules then have period $2$). The dimension of periodic modules is a multiple of $p^{r-1}$. Direct sums and direct summands of periodic modules are periodic.

\subsection{\texorpdfstring{$SL_2$}{SL₂}-representations}
We consider the tensor category $\Rep SL_2$ of rational representations of the affine group scheme $SL_2$ over $\bk$. The dominant weights are labelled by the natural numbers, and we denote by $T_i,\nabla_i,\Delta_i$ and $L_i$ the indecomposable tilting module, the costandard (Weyl) module, the standard module and the simple module with highest weight $i\in\mN$. The Weyl module can be realised as the degree $i$ polynomials in two variables, $\bk[x,y]_i$, where $SL_2$ acts on the span of $x,y$ via the tautological representation. We refer to \cite{Jantzen} for more details. The pseudo-tensor subcategory of $\Rep SL_2$ containing the tilting modules is denoted by $\Tilt SL_2$.

We will in particular work with the Steinberg modules
$$St_i\;=\; T_{p^i-1}\;=\;\nabla_{p^i-1}\;=\; L_{p^{i}-1}.$$
We will write in particular
$$St=St_1\quad\mbox{and}\quad S=St_{r-1},$$ when $r\in\mZ_{>1}$ is clear from context. Donkin's tensor product formula states that for $p-1\le i\le 2p-2$ and $j\in\mN$, we have
\begin{equation}\label{DTPT}T_{i+pj}\;\simeq\; T_i\otimes T_j^{(1)},\end{equation}
where $(\cdot)^{(k)}$ denotes the $k$-th Frobenius twist, for any $k\ge0$.

When we realise $SL_2$ as the group of $2\times 2$-matrices with determinant one, we denote by $T\simeq\mG_m$ the subgroup of diagonal matrices, $U\simeq \mG_a$ the group of matrices with diagonal entries $1$ and $0$ above the diagonal, and by $B=T\ltimes U$ the corresponding (negative) Borel subgroup.


\section{The conjecture} \label{sec:conjecture}
In this section, we state our main conjecture, and proceed to reformulate it in several interpretations. We also demonstrate why the conjecture becomes false if one tries to generalise it in some canonical directions. These results will not be used in the remainder of the paper, and we will actually apply some results from later sections to complete arguments in the current section.

\subsection{The first formulation}\label{SecFF}

Let $E$ be a finite subgroup of the additive group $\bk^+$. In particular $E\simeq C_p^r$, for some $r\in\mN$.

\subsubsection{} Via the subgroup inclusion 
$$E<\bk^+\simeq U(\bk)<SL_2(\bk),\quad \lambda\mapsto \left(\begin{array}{cc}
    1 &0  \\
    \lambda& 1 
\end{array}\right),$$
we have a restriction functor from the category of finite-dimensional algebraic representations $\Rep SL_2$ to $\Rep_{\bk} E$, and we denote by $\cT$ the essential image of the restriction functor
$$\Tilt SL_2 \to \Rep_{\bk} E.$$

\begin{lemma}[\cite{CF}*{Theorem~5.3.5, Remark~5.3.9(2)}, see also \cite{BE}*{Proposition~4.11}] \label{lem:restricted-tilting}
$\cT$ is already a pseudo-tensor subcategory. More precisely, the indecomposable modules in $\cT$ are, up to isomorphism, $T_i$, for $0\le i<p^r$, where we use the same notation for an $SL_2$-tilting module and for its restriction to $E$.  In particular $T_{p^r-1}\simeq \bk E$.
\end{lemma}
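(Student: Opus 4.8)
The plan is to establish the three claims in turn: first that every indecomposable summand of a restricted tilting module is some $T_i$ with $i < p^r$, second that these $T_i$ are pairwise non-isomorphic and indecomposable over $E$, and third that $T_{p^r-1}$ is the regular representation $\bk E$. The key input is the structure of $SL_2$-tilting modules together with Donkin's tensor product formula \eqref{DTPT}, which lets one peel off Frobenius twists.

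First I would analyze $\Res^{SL_2}_E T_i$. Writing $i$ in base $p$, repeated use of \eqref{DTPT} expresses $T_i$ as a tensor product of a "bottom" tilting module $T_a$ with $0 \le a \le 2p-2$ and Frobenius twists $T_{a_k}^{(k)}$ of smaller tilting modules. The crucial observation is that the embedding $E < U(\bk)$ factors through the $p^r$-torsion, so on $E$ the $r$-th Frobenius twist acts trivially: concretely, the restriction to $E$ of any representation of $SL_2$ of the form $M^{(r)}$ is a direct sum of copies of the trivial module. Hence for $i \ge p^r$ one can strip the top twists and reduce the computation of $\Res_E T_i$ to that of $\Res_E T_j$ with $j < p^r$ (tensored with trivial modules), showing $\cT$ is generated as a pseudo-tensor category by the $T_i$ with $i<p^r$; combined with the fact that $\cT$ is closed under summands, this gives that every indecomposable object of $\cT$ is a summand of some $\Res_E T_i$.

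Next, for $0 \le i < p^r$ I would show $\Res_E T_i$ is indecomposable and identify the list. For $i = p^r - 1$, one has $T_{p^r-1} = St_r$, the $r$-th Steinberg module, of dimension $p^r = |E|$; restricted to the unipotent subgroup $U$ it is already the free module $\bk U$ (Steinberg restricted to a Borel, or to $U$, is free of rank one as a $\bk U$-module), and since $E < U$ is a subgroup this forces $\Res_E T_{p^r-1} \cong \bk E$, which is in particular indecomposable (as $\bk E$ is local--Artinian with simple head, being a group algebra of a $p$-group). For general $i < p^r$, I would induct on $r$, or argue directly via support varieties and the $p$-rank: using \eqref{DTPT} to write $T_i \cong T_a \o T_{i'}^{(1)}$ with $a \le 2p-2$ and $i' < p^{r-1}$, the twist $T_{i'}^{(1)}$ restricts to an $E$-module which is itself indecomposable by induction (via the quotient $E/E[p] \hookrightarrow U$, or a rank-variety computation), and then analyze the tensor product over $E$; alternatively one invokes \cite{CF}*{Theorem~5.3.5} as the reference allows, and the content here is the clean restatement. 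The non-isomorphism of the $T_i$ for distinct $i < p^r$ follows from dimension count together with their socle/head structure, or again by induction.

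The main obstacle I expect is the second part: proving that $\Res_E T_i$ stays indecomposable for all $i < p^r$ and that no unexpected summands appear in restrictions of larger tilting modules. The Frobenius-twist reduction handles the "no new summands" half cleanly once one knows $\Res_E(-)^{(r)}$ is trivial, but indecomposability of $\Res_E T_i$ is genuinely a statement about how tensoring interacts with the local structure of $\bk E$ — a tensor product of two indecomposable $\bk E$-modules need not be indecomposable in general, so one really needs the special shape of tilting modules (e.g.\ that $T_a$ for $a < p$ restricts to a uniserial $\bk U$-module, and a careful bookkeeping of Loewy layers under the Frobenius-twist tensor decomposition). Since the statement is attributed to \cite{CF}*{Theorem~5.3.5, Remark~5.3.9(2)} and \cite{BE}*{Proposition~4.11}, in the write-up I would either reproduce that argument or simply cite it and spend the proof on the two easy endpoints — that $\cT$ is closed under summands (already part of the cited result) and that $T_{p^r-1}\simeq \bk E$ via the Steinberg-restricts-freely fact.
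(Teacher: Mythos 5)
The paper does not actually prove this lemma---it is imported verbatim from \cite{CF}*{Theorem~5.3.5, Remark~5.3.9(2)} and \cite{BE}*{Proposition~4.11}---so your proposal has to stand on its own, and its two ``easy endpoints'' both contain genuine errors. The central mechanism you use to control $\Res_E T_i$ for $i\ge p^r$, namely that ``the restriction to $E$ of any $SL_2$-representation of the form $M^{(r)}$ is a direct sum of copies of the trivial module,'' is false. The $r$-th Frobenius twist precomposes the action with $\lambda\mapsto\lambda^{p^r}$ on $U(\bk)\simeq\bk^+$, and $\lambda^{p^r}\neq 0$ for $\lambda\neq 0$; in the paper's own notation $(V_{\ulambda})^{(r)}\simeq V_{\ulambda^{(r)}}$ with $\ulambda^{(r)}\neq 0$, which is a nontrivial indecomposable two-dimensional $E$-module (and for $E=\mF_{p^r}$ one even has $M^{(r)}|_E\simeq M|_E$, again not trivial). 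So your reduction ``strip the top twists'' does not get off the ground. The correct reason no new summands appear from $i\ge p^r-1$ is that those restrictions are \emph{projective} over $E$ (their rank variety is cut out by $r$ independent rows of the Moore matrix, cf.\ \Cref{PropSupp} and \Cref{ExSupp}(4)), hence direct sums of $\bk E\simeq T_{p^r-1}$.

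The justification of $T_{p^r-1}\simeq\bk E$ also fails as written: $U(\bk)$ is an infinite group, so the $p^r$-dimensional module $St_r$ cannot be ``free of rank one as a $\bk U$-module,'' and the genuine freeness statements (over the Frobenius kernel $U_r$, or over $U(\mF_{p^r})$) do not restrict to an arbitrary finite subgroup $E<\bk^+$ of order $p^r$, which in general is neither contained in $U(\mF_{p^r})$ nor a subgroup scheme of $U_r$. A correct elementary route is cyclicity of $St_r|_E$ (\Cref{LemCyc}, resting on \Cref{PropT}) together with $\dim St_r=p^r=\dim\bk E$, or again the support computation. Finally, the indecomposability of $T_i|_E$ for $i<p^r$---the genuinely hard content---is only deferred to the cited \cite{CF}*{Theorem~5.3.5}; that deferral is legitimate since the paper itself gives no proof, but it means the only parts you argue independently are exactly the parts where the argument breaks.
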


By \cite{Selecta}, we also have thick tensor ideals in $\cT$, 
$$0\subset \cT_{r+1}\subset \cT_r\subset\cT_{r-1}\subset\cdots\subset \cT_1=\cT ,$$
where $\cT_i$ has indecomposables $T_j$, for $p^{i-1}-1\le j$. For example, $\cT_{r+1}$ is simply the category of projective $\bk E$-modules. It follows immediately from the classification of tensor ideals in $\Tilt SL_2$ from \cite{Selecta} that these are the only thick tensor ideals in $\cT$.

\begin{aconjecture} \label{conj-a} Inside $\Rep_{\bk}E$, the subcategory $\cT_r$ is a thick tensor ideal. In other words, for every $X\in \Rep_{\bk}E$ and $T_i$ with $i\le p^{r-1}-1$, we have
\begin{equation}\label{eqXoT}
    X\otimes T_i\;\in\; \cT_{r}.
\end{equation}
\end{aconjecture}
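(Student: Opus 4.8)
The plan is to first make a formal reduction and then attack the resulting statement by a geometric dévissage. Since $\cT_r$ is the thick tensor ideal of $\cT$ generated by $S=T_{p^{r-1}-1}$ and $S$ is self-dual, \Cref{DefWProj}(2) shows that an indecomposable tilting module $T_j$ is $S$-projective exactly when $T_j\in\cT_r$, i.e.\ exactly when $j\ge p^{r-1}-1$. As $S\otimes X$ is $S$-projective by definition, this makes \Cref{conj-a} equivalent to the $SL_2$-formulation \Cref{conj:intro} and reduces us to proving: for every $X\in\Rep_\bk E$, the module $S\otimes X$ is a direct sum of a projective module and copies of the $T_j$ with $p^{r-1}-1\le j\le p^r-2$.

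For the dévissage one may, by \eqref{sup:oplus} and closure of $\cT_r$ under summands, assume $X$ indecomposable. The support $\supp(S)$ is a single point $[\mu_0]\in\mP^{r-1}(\bk)$---this is exactly what underlies \Cref{rem:SX-projective}---so by \eqref{sup:tensor} the module $S\otimes X$ is projective (hence in $\cT_{r+1}\subseteq\cT_r$) when $[\mu_0]\notin\supp(X)$, and is periodic with $\supp(S\otimes X)=\{[\mu_0]\}$ otherwise. It therefore remains to treat indecomposable $X$ with $[\mu_0]\in\supp(X)$ and show that the periodic module $S\otimes X$ has only the prescribed summands. Using the stratification of $\Stab E$ indexed by $\mathrm{Proj}$ of the cohomology ring of $E$ \cites{BIK-stratifying-modular,BIKP-stratification}, together with the fact that $S\otimes X$ is supported only at $[\mu_0]$, the isomorphism class of $S\otimes X$ is governed by the behaviour of $X$ near $[\mu_0]$, which is assembled from periodic modules of support exactly $\{[\mu_0]\}$; Carlson modules of this kind and their $\Omega$-shifts, duals and tensor products are already covered by \Cref{cor:Carlson} and \Cref{lem:closure}, so the task is to reach all periodic modules at $[\mu_0]$.

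This last step is where I expect the real difficulty to lie, because the category of periodic $\bk E$-modules with a fixed one-point support is of wild representation type \cite{wild-periodic-modules}, so no enumeration of such $X$ is available. The natural replacement is an induction on Loewy length: every $X$ is a repeated self-extension of the trivial module, the base case (Loewy length $\le 2$) is \Cref{cor:ll-2-any-r}, and $S\otimes(-)$ is exact. But this closes up only if an extension of two objects of $\cT_r$ that is itself $S$-projective again lies in $\cT_r$---concretely, that the relevant $\Ext^1$-obstruction classes inside $\cT$ are annihilated by $S\otimes(-)$---which one would try to establish using Donkin's formula \eqref{DTPT} and the self-duality of $S$. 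Failing that, I would instead try to show that $S\otimes X$ depends on $X$ only through an invariant insensitive to the wildness, such as its Jordan type at $[\mu_0]$ or its class in the split Grothendieck ring, thereby replacing a wild classification problem by a rank count; the known decomposition of the Steinberg tensor square $S\otimes S$ and the closure statement of \Cref{lem:restricted-tilting} would be the starting points. In both approaches the essential point is to exploit the special representation theory of the Steinberg module $S$ rather than any feature of a general $X$, consistent with the heuristic that $X$ behaves like a representation induced from a subgroup of finite representation type.
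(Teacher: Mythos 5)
The statement you are addressing is \Cref{conj-a}, which is the open conjecture of the paper: the paper does not prove it, and neither does your text --- it is a programme, not a proof, and its hard steps are exactly where the conjecture lives. Your preliminary reductions are fine but are already in the paper: the equivalence with the $S$-projectivity formulation is \Cref{conj-c}/\Cref{lem:justS}, and the support dévissage (if $\blambda\notin\supp X$ then $S\otimes X$ is projective, otherwise $S\otimes X$ is periodic with one-point support) is \Cref{Cor:Supp} and \Cref{rem:SX-projective}. From there the argument has genuine gaps. First, the appeal to the stratification results \cites{BIK-stratifying-modular,BIKP-stratification} does not deliver what you claim: support theory classifies thick triangulated tensor-ideal subcategories, it does not say that the ``behaviour of $X$ near $[\mu_0]$ is assembled from periodic modules of support $\{[\mu_0]\}$'', and the paper's introduction states explicitly that these results only reduce the problem to the category of periodic modules with fixed one-point support, which is wild \cite{wild-periodic-modules} --- so nothing is gained. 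Second, the Loewy-length induction does not close, and you concede this: exactness of $S\otimes-$ shows $S\otimes X$ is an iterated extension of objects of $\cT_r$, but the needed statement ``an $S$-projective extension of objects of $\cT_r$ lies in $\cT_r$'' is essentially a restatement of \Cref{conj-c} itself; moreover \Cref{rem-ex}(2) records that for $r>2$ there are self-extensions of $S$ in $\Rep E$ that are not restricted tilting, so the obstruction classes are \emph{not} all killed for free, and the paper only controls the image of $S\otimes-$ on $\Ext^1$ for very special pairs (Section 5, \Cref{prop::im-omega}), which is what yields the partial results \Cref{cor:ll-2-any-r} and \Cref{cor:Carlson} that you cite as the base case.

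Your fallback --- that $S\otimes X$ might depend only on an invariant such as the Jordan type at $\blambda$ or the class of $X$ in the split Grothendieck ring --- is likewise unsubstantiated; no mechanism is offered for why tensoring with $S$ should factor through such an invariant, and establishing any statement of that kind would again amount to the open problem. In short: where your plan coincides with the paper it reproduces known reductions and the partial results of Theorems A and B, and where it goes beyond them (stratification machinery, the inductive closure step, the invariance claim) each step is either explicitly known not to help or is equivalent to \Cref{conj-a}. There is no proof here to compare with a proof in the paper, because the paper offers none --- the conjecture remains open.
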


One can define the full subcategory 
$$\widetilde{\cT}\;\subset\; \Rep_{\bk}E$$
on objects $X$ that satisfy \Cref{eqXoT}.

\begin{lemma} \label{lem:closure} $\widetilde\cT$ is an $\Omega$-invariant pseudo-tensor subcategory of $\Rep_\kk E$, i.e., it is closed under tensor products, duals, $\Omega$-shifts, direct sums, and taking direct summands.  
\end{lemma}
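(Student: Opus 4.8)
The plan is to verify each closure property directly from the definition of $\widetilde{\cT}$, namely that $X\in\widetilde{\cT}$ iff $X\otimes T_i\in\cT_r$ for all $i\le p^{r-1}-1$, using the structural facts about $\cT$ and its tensor ideals recorded just above. Since $\cT_r$ is a thick tensor ideal \emph{inside the pseudo-tensor category $\cT$} (this is \cite{Selecta}), it is closed under direct sums, summands, and tensoring with objects of $\cT$; the only thing in question is its interaction with arbitrary objects of $\Rep_\kk E$, which is exactly what membership in $\widetilde{\cT}$ encodes.

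\emph{Direct sums and summands.} If $X=X_1\oplus X_2$ then $X\otimes T_i=(X_1\otimes T_i)\oplus(X_2\otimes T_i)$, and $\cT_r$ is closed under sums and summands, so $X\in\widetilde{\cT}$ iff both $X_1,X_2\in\widetilde{\cT}$; in particular $\widetilde{\cT}$ is closed under summands and finite direct sums.

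\emph{Tensor products.} Suppose $X,Y\in\widetilde{\cT}$ and fix $i\le p^{r-1}-1$. Then $Y\otimes T_i\in\cT_r\subset\cT$, and by \Cref{lem:restricted-tilting} it is a direct sum of modules $T_j$ with $p^{r-1}-1\le j<p^r$. Hence $X\otimes Y\otimes T_i$ is a direct sum of modules $X\otimes T_j$; each such $T_j$ lies in $\cT_r$, but $\cT_r$ is itself contained in $\cT_{r-1}=\cdots$; more to the point, writing $T_j$ for $j\le p^{r-1}-1$ trivially gives $X\otimes T_j\in\cT_r$ by hypothesis, while for $j>p^{r-1}-1$ we have $T_j\in\cT_{r+1}$ whenever $j\ge p^r-1$, i.e.\ $T_j$ is projective, so $X\otimes T_j$ is projective, hence in $\cT_{r+1}\subset\cT_r$; and for $p^{r-1}-1<j<p^r-1$ we use Donkin's formula \Cref{DTPT} together with the fact that $\cT_r$ as a tensor ideal of $\cT$ absorbs such $T_j$ — concretely, $T_j$ with $j\ge p^{r-1}-1$ already lies in $\cT_r$ \emph{and} $\cT_r$ is a tensor ideal in $\cT$, so it suffices to know $X\otimes T_j\in\cT$, which holds because $T_j\otimes T_i$ decomposes in $\cT$ and ... \footnote{This last point needs care; see below.} The clean way to organize this: $X\in\widetilde{\cT}$ means precisely $X\otimes(-)$ maps $\cT_r$ into $\cT_r$ (by the summand argument, checking on indecomposable generators $T_i$, $i\le p^{r-1}-1$, suffices since those generate $\cT_r$ as a thick ideal). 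Given two such $X,Y$, the composite $X\otimes Y\otimes(-)=X\otimes(Y\otimes(-))$ sends $\cT_r\to\cT_r\to\cT_r$, so $X\otimes Y\in\widetilde{\cT}$.

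\emph{Duals and $\Omega$-shifts.} For duals, note $X^*\otimes T_i\cong (X\otimes T_i^*)^*$, and $T_i^*\cong T_i$ since tilting modules for $SL_2$ are self-dual; moreover $\cT_r$ is closed under duals because it is closed under the duality-preserving operations defining it (or: $(T_j)^*\cong T_j$). Hence $X^*\otimes T_i\cong(X\otimes T_i)^*$, and $X\otimes T_i\in\cT_r$ gives $(X\otimes T_i)^*\in\cT_r$, so $X^*\in\widetilde{\cT}$. For $\Omega$: since $\Omega$ is given by tensoring with a fixed module (the augmentation ideal $K$) up to projective summands, $\Omega(X)\otimes T_i\cong\Omega(X\otimes T_i)$ up to projectives (using that $T_i\otimes(-)$ is exact and $T_i\otimes P$ is projective); and $\cT_r$ is $\Omega$-stable since each $\Omega(T_j)$ lies in $\cT$ again — indeed $\cT\subset\Rep_\kk E$ is known to be $\Omega$-stable (the indecomposables $T_j$, $0\le j<p^r$, are permuted up to projectives by $\Omega$, as $\Omega(T_j)\cong T_{p^r-2-j}$ for $j<p^r-1$ and $\Omega(T_{p^r-1})=0$), and this permutation preserves the range $j\ge p^{r-1}-1$ that defines $\cT_r$ precisely because $p^r-2-j\ge p^{r-1}-1\iff j\le p^r-p^{r-1}-1$, which... \emph{does not} obviously hold — so one instead argues: $\Omega(X\otimes T_i)\in\cT_r$ because $\cT_r$ is $\Omega$-stable as a consequence of being a thick triangulated-compatible ideal, or directly because $\Omega$ of a restricted tilting module is again a restricted tilting module (a fact available from the analysis in \Cref{sec:analyse} / \cite{CF}). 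Thus $\Omega(X)\in\widetilde{\cT}$, and similarly $\Omega^{-1}$, so $\widetilde{\cT}$ is $\Omega$-invariant.

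\emph{The main obstacle.} The only genuinely nontrivial point is the tensor-product closure, and specifically the claim that $X\otimes(-)$ preserving $\cT_r$ is equivalent to the defining condition $X\otimes T_i\in\cT_r$ for $i\le p^{r-1}-1$ — this requires knowing that those $T_i$ generate $\cT_r$ as a thick tensor ideal inside $\cT$, together with the subtlety that the $T_j$ for $j>p^{r-1}-1$ are also in $\cT_r$ and we must ensure $X\otimes T_j$ stays in $\cT_r$ for those as well; the safe route is to note that every $T_j\in\cT_r$ is a summand of $T_{p^{r-1}-1}\otimes Z$ for suitable $Z\in\cT$ (since $\cT_r$ is the thick tensor ideal of $\cT$ generated by $S=T_{p^{r-1}-1}$), whence $X\otimes T_j$ is a summand of $(X\otimes S)\otimes Z=(X\otimes T_{p^{r-1}-1})\otimes Z$, which lies in $\cT_r$ by hypothesis and ideal-closure. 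The $\Omega$-stability of $\cT_r$ is the second point to nail down, but it follows from the corresponding statement for restricted tilting modules established elsewhere in the paper.
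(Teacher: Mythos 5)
Your proof is correct once the false starts are stripped away, and it differs from the paper's mainly in the tensor-product step. The paper handles $X\o Y$ with a one-line trick: since $T_i$ is self-dual it is a direct summand of $T_i^{\o 3}$, so $T_i\o X\o Y$ is a direct summand of $(T_i\o X)\o(T_i\o Y)\o T_i$, which lies in $\cT_r$ simply because $\cT_r$ is a thick tensor ideal in $\cT$; no generation statement is needed. You instead upgrade the defining condition to the statement that $X\o-$ preserves $\cT_r$, using that $\cT_r$ is the thick tensor ideal of $\cT$ generated by $S$ (so every indecomposable of $\cT_r$ is a summand of $S\o Z$ with $Z\in\cT$), and then compose $X\o(Y\o T_i)$; this is valid, costs you the citation of the generation result from \cite{Selecta} (which the paper itself invokes a few lines later, in \Cref{lem:justS}), and buys the slightly stronger observation that membership in $\widetilde\cT$ forces $X\o\cT_r\subset\cT_r$. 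Your treatment of duals and of direct sums and summands coincides with the paper's. For $\Omega$-shifts, both arguments reduce to the stable isomorphism $T_i\o\Omega(X)\cong\Omega(T_i\o X)$ together with the fact that $\cT_r$ is stable under $\Omega$ up to projectives; the paper leaves the latter implicit, while you flag it explicitly and rightly discard your guessed formula $\Omega(T_j)\cong T_{p^r-2-j}$, which indeed fails in general (\Cref{prop:Omega-Ti} gives $\Omega(T_{a+p^{r-1}b})\cong T_{a+p^{r-1}(p-2-b)}$). One caveat on your fallback justification: the blanket claim that ``$\Omega$ of a restricted tilting module is again a restricted tilting module'' is false outside $\cT_r$ (for instance $\Omega(\unit)$ is not in $\cT$), but it is true for the indecomposables $T_j$ with $j\ge p^{r-1}-1$ by \Cref{prop:Omega-Ti}, which is all you actually use, since you apply it to $\Omega(T_i\o X)$ with $T_i\o X\in\cT_r$.
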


\begin{proof} Consider arbitrary $X,Y\in\widetilde\cT$, and $i\le p^{r-1}-1$.
As $T_i$ is self-dual, it is a direct summand in $T_i^{\o3}$. Thus $T_i\o(X\o Y)\stackrel\oplus\subset (T_i\o X)\o (T_i\o Y)\o T_i$ lies in $\cT_r$, due to the closure properties of the latter. This shows $X\o Y\in\widetilde\cT$.
The isomorphisms $T_i\o X^*\cong T_i^*\o X^*\cong (T_i\o X)^*$ show that $\widetilde\cT$ is closed under duals. 
As we have a stable isomorphism $T_i\o\Omega(X)\cong\Omega(T_i\o X)$ and $\cT_r$ contains all projective objects, $\Omega X\in\widetilde\cT$.
As $\cT_r$ is closed under direct sums and taking direct summands, $\widetilde\cT$ is closed under these operations, as well.
\end{proof}

Then \Cref{conj-a} simply states:
\begin{aconjecture} \label{conj-aa}
We have
  $\widetilde{\cT}=\Rep_{\bk} E$. 
\end{aconjecture}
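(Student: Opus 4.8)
The plan is to prove $\widetilde\cT=\Rep_{\bk}E$ by verifying that every \emph{indecomposable} $X\in\Rep_{\bk}E$ lies in $\widetilde\cT$; this suffices because $\widetilde\cT$ is closed under direct sums and summands by \Cref{lem:closure}. The starting point is that $\supp(S)$ is a single point $P_0\in\mP^{r-1}(\bk)$: writing $S=St_{r-1}=\bigotimes_{k=0}^{r-2}L_{p-1}^{(k)}$ via the Steinberg tensor product theorem, each $L_{p-1}^{(k)}|_E$ has support the hyperplane $\{[\mu]:\sum_i\mu_i c_i^{p^k}=0\}$, where $c_i\in\bk$ are the images of the generators under $E<\bk^+$, and these $r-1$ hyperplanes meet in a single point by a Moore-matrix computation. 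So by \Cref{sup:tensor}, $S\o X$ always has support contained in $\{P_0\}$ and is therefore periodic or projective; if $P_0\notin\supp(X)$ then $S\o X$ is projective and lies in $\cT_r$, which is the support-variety item of \Cref{thm:A}. Hence we may assume $P_0\in\supp(X)$. Using the rank-variety localisation at $P_0$ and \Cref{lem:closure} (to replace $X$ by its $\Omega$-shifts, duals and summands), I would then reduce to the case where $X$ is periodic with $\supp(X)=\{P_0\}$, and $S\o X$ is to be identified with a direct sum of the $T_j$ with $p^{r-1}-1\le j\le p^r-2$, which are the non-projective indecomposables of $\cT_r$.

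In the periodic case the strategy is a bootstrap from the special cases already in \Cref{thm:A}. Up to $\Omega$-shift and iterated extensions, an indecomposable periodic module supported at $P_0$ is built from ``elementary'' pieces --- Carlson modules and quotients (and duals of quotients) of a uniserial $p$-dimensional representation with support $\{P_0\}$ --- all of which lie in $\widetilde\cT$ by the Carlson-module item (\Cref{cor:Carlson}) and the uniserial-extension item (\Cref{thm:S-tensor-ext-uniserials}). So one needs closure of $\widetilde\cT$ under the relevant extensions: given $0\to A\to X\to B\to 0$ with $A,B\in\widetilde\cT$ and $X$ periodic at $P_0$, the module $S\o X$ is an extension of the restricted tilting module $S\o B$ by the restricted tilting module $S\o A$, and the assertion would follow if the groups $\Ext^1_E(T_j,T_{j'})$ with $p^{r-1}-1\le j,j'\le p^r-2$ that could produce a non-tilting middle term vanish, or their nonzero extensions are themselves restricted tilting. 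I would attempt this by induction on the Loewy length of $X$, the base case of Loewy length at most $2$ being \Cref{cor:ll-2-any-r}.

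A complementary line is an induction on $r$, trivial for $r=1$. One picks a codimension-one subgroup $E'\cong C_p^{r-1}<E$ whose defining hyperplane misses $P_0$; by Donkin's formula \Cref{DTPT} the restriction of $S=St_{r-1}$ to $E$ is closely tied to $St_{r-2}$ over $E'$ together with a Frobenius twist, and one analyses $S\o(-)$ on modules induced from or restricted to $E'$ through the inductive hypothesis, the Frobenius-twist item of \Cref{thm:A} (\Cref{lem:SL2-simples}) being evidence that this is viable on the tilting side. The limitation is that a general indecomposable $E$-module is not induced from a proper subgroup, so this handles only part of $\Rep_{\bk}E$ and must be combined with the extension bootstrapping.

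The main obstacle is exactly the passage from the elementary periodic modules and their controlled extensions to an \emph{arbitrary} indecomposable periodic module supported at $P_0$: as recalled in the introduction, this category is wild, and the available stratification results for $\Stab E$ only reduce the conjecture to this wild class without resolving it. What appears to be missing is a uniform mechanism forcing $S\o X$ to be restricted tilting for wild $X$ --- for instance a support- or dimension-theoretic inequality guaranteeing that $S\o X$ carries both a $\Delta$- and a $\nabla$-filtration, or an explicit contracting homotopy on $S\o P_\bullet$ for a minimal projective resolution $P_\bullet$ of $X$ realising $S\o X$ directly as a summand of an object of $\cT_r$. Until such a tool is available, the realistic path is to enlarge $\widetilde\cT$ incrementally, as in \Cref{thm:A} and \Cref{thm:B}.
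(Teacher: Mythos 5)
This statement is \Cref{conj-aa}, the paper's main \emph{conjecture}; the paper does not prove it and explicitly presents only partial evidence (\Cref{thm:A}, \Cref{thm:B}), so there is no proof of the paper's to compare against. Your text is, by its own admission, a strategy sketch with an acknowledged gap, not a proof. The first reduction you make is sound and matches the paper: $\supp(S)=\{\blambda\}$ (\Cref{Cor:Supp}), so $S\o X$ is projective whenever $\blambda\notin\supp(X)$ (\Cref{rem:SX-projective}), and the known stratification results reduce one to periodic modules with support $\{\blambda\}$ — which the introduction notes is still a wild category.

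The substantive problem is with your extension bootstrap. Asking for closure of $\widetilde\cT$ under extensions cannot be a usable intermediate step: every finite-dimensional $E$-module is an iterated extension of copies of $\unit$, and $\unit\in\widetilde\cT$, so such closure is \emph{equivalent} to the conjecture and begs the question. Your fallback — that the relevant $\Ext^1_E(T_j,T_{j'})$ vanish or have restricted tilting middle terms — is false: by \Cref{rem-ex}(2), for $r>2$ there are self-extensions of $S$ in $\Rep E$ that are not restricted tilting, and \Cref{cor:self-ext1} shows these Ext-groups are far from zero. The correct object to control is not $\Ext^1(S\o A,S\o B)$ but the \emph{image} of the map $\omega_{A,B}=S\o-:\Ext^1_E(A,B)\to\Ext^1_E(S\o A,S\o B)$, since only extensions in that image arise as $S\o X$. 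This is exactly what \Cref{sec:ext} and \Cref{sec:Loewy} compute for special $A,B$ (e.g.\ \Cref{prop::im-omega}, \Cref{cor:im-omega-1-1}), and why the Loewy-length-$2$ case works: there one only needs $\im(\omega_{\one,\one})$, which is tiny. Extending this control to arbitrary source modules is precisely the open problem, and your sketch does not supply the missing mechanism. Also note that your claim that every indecomposable periodic module supported at $\blambda$ is built by iterated extensions from Carlson modules and uniserial quotients is unjustified and, given wildness, implausible as stated.
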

    
\begin{remark} \label{rem:T-tilde}
    \begin{enumerate}
    \item For the moment, the only cases in which \Cref{conj-a} is proved are $r=1$ and $p^r=4$, see \cite{CF}. In this case, one can prove the conjecture directly from the classification of indecomposable modules and their fusion rules. In \Cref{sec:Loewy} we provide a new proof that does not rely on such explicit knowledge.
    \item The most relevant case is $E:= \mF_q^+$, for a finite subfield $\mF_q\subset \bk$ of order $q=p^r$, but thus far we have no indication that this case behaves differently from other cases.
    \item For the main application in \cite{CF}, in the formulation of \Cref{conj-aa} it would actually be sufficient to show that $\widetilde{\cT}$ contains the subcategory of $\Rep_{\bk} E$ consisting of direct summands of restrictions along $E\to S_{q}$ (as in \cite{CF}*{5.1.5--5.1.7}), with $q=p^r$, of representations in the subcategory $\mathbf{T}\mathbf{C}^{q}\subsetneq \Rep_{\bk} S_q$ from \cite{PolyF}*{Question~3.1.8}. 
    \item One can replace \eqref{eqXoT} with the ({\it a priori} weaker) condition $X\otimes T_i\in \cT$. Indeed, $T_i$ is a direct summand of $T_i\o T_i^*\o T_i$. So if $T_i\o X\in\cT$, then $T_i\o X\in \langle T_i\o T_i^*\o\cT\rangle_\oplus\subset\cT_r$, as $\cT_r$ is a thick tensor ideal in $\cT$
\end{enumerate}
\end{remark}

\begin{lemma}\label{lem:justS}
    For any $X\in\Rep E$, $X\in\widetilde{\cT}$ if and only if $S\otimes X\in\cT$, if and only if $S\otimes X\in\cT_r$.
\end{lemma}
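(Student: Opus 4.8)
The plan is to establish the chain of equivalences
\[
X\in\widetilde{\cT}\ \Longleftrightarrow\ S\otimes X\in\cT_r\ \Longleftrightarrow\ S\otimes X\in\cT,
\]
since the middle and last conditions trivially satisfy ``$(\Leftarrow)$'' of each other via $\cT_r\subset\cT$, and one direction of the first equivalence is immediate: if $X\in\widetilde{\cT}$ then in particular $S\otimes X = T_{p^{r-1}-1}\otimes X\in\cT_r$ by definition (taking $i=p^{r-1}-1$ in \eqref{eqXoT}). So the real content is: (a) $S\otimes X\in\cT$ implies $S\otimes X\in\cT_r$, and (b) $S\otimes X\in\cT_r$ implies $X\otimes T_i\in\cT_r$ for all $i\le p^{r-1}-1$.

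For (a), I would invoke the same trick as in \Cref{rem:T-tilde}(4): $S$ is self-dual (being a tilting module, $S=T_{p^{r-1}-1}$), hence a direct summand of $S\otimes S^*\otimes S$. Therefore $S\otimes X$ is a direct summand of $S\otimes S^*\otimes (S\otimes X)$, which lies in $\langle S\otimes S^*\otimes\cT\rangle_\oplus\subset\cT_r$ because $\cT_r$ is a thick tensor ideal in $\cT$ (it absorbs tensoring by arbitrary objects of $\cT$, in particular by $S\otimes S^*$). This closes the first equivalence together with the trivial implication noted above.

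For (b), the key point is that among the indecomposable tilting modules $T_i$ with $i\le p^{r-1}-1$, the ``largest'' one $S=T_{p^{r-1}-1}$ generates all the others as a tensor ideal inside $\cT$: concretely, by the classification of tensor ideals in $\Tilt SL_2$ recalled after \Cref{lem:restricted-tilting}, $\cT_r$ is the smallest thick tensor ideal of $\cT$ containing $S$, so each $T_i$ with $i\le p^{r-1}-1$ is a direct summand of $S\otimes Z_i$ for some $Z_i\in\cT$. Granting this, $T_i\otimes X$ is a direct summand of $S\otimes Z_i\otimes X = Z_i\otimes(S\otimes X)$, and since $S\otimes X\in\cT_r$ by hypothesis and $\cT_r$ is a thick tensor ideal in $\Rep_\kk E$-modules (closed under tensoring with the arbitrary object $Z_i$ and under summands — here using that $\cT_r\subset\cT\subset\Rep E$ and $\cT_r$ is a thick tensor ideal \emph{within} $\cT$, while $Z_i\in\cT$), we get $T_i\otimes X\in\cT_r$. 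Hence $X\in\widetilde{\cT}$.

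The main obstacle — really the only non-formal input — is the claim in (b) that $T_i$ for $i\le p^{r-1}-1$ lies in the tensor ideal of $\cT$ generated by $S$; everything else is bookkeeping with the closure properties already packaged in \Cref{lem:closure} and \Cref{lem:restricted-tilting}. This claim, however, is exactly the statement that $\cT_r$ is generated as a thick tensor ideal by its ``smallest'' generator $S=St_{r-1}$, which follows from the explicit description of the chain $\cT_{r+1}\subset\cdots\subset\cT_1=\cT$ via the classification of tensor ideals in $\Tilt SL_2$ from \cite{Selecta}, together with Donkin's tensor product formula \eqref{DTPT} to exhibit the needed summand decompositions $T_i\stackrel{\oplus}{\subset} S\otimes Z_i$. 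I would state this as a short sublemma (or cite it directly from the discussion following \Cref{lem:restricted-tilting}) rather than reprove the $SL_2$-tilting tensor-ideal classification.
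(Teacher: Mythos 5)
Your overall strategy is the paper's: the second equivalence is exactly the trick of \Cref{rem:T-tilde}(4) (your step (a)), and the first is meant to come from the fact that $\cT_r$ is the thick tensor ideal of $\cT$ generated by $S$ (\cite{Selecta}). However, your step (b) rests on a claim that is false as stated: it is not true that every $T_i$ with $i\le p^{r-1}-1$ is a direct summand of $S\otimes Z_i$ for some $Z_i\in\cT$. The thick tensor ideal of $\cT$ generated by $S$ is $\cT_r$ itself, whose indecomposables are precisely the $T_j$ with $j\ge p^{r-1}-1$; if, say, $T_0=\unit$ were a summand of some $S\otimes Z$ with $Z\in\cT$, then $\cT_r$ would contain $\unit$ and hence equal $\cT$, contradicting the very classification of ideals you invoke. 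One can also see this with supports: any summand of $S\otimes Z$ has support contained in $\supp(S)=\{\blambda\}$ (\Cref{Cor:Supp}), whereas $\supp(T_i)$ is positive-dimensional for every $i<p^{r-1}-1$ by \Cref{PropSupp}. So the implication in your (b), namely $S\otimes X\in\cT_r\Rightarrow X\otimes T_i\in\cT_r$ for all $i\le p^{r-1}-1$, cannot be proved; already $X=\unit$ violates it.

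The source of the trouble is that you took the inequality in \Cref{conj-a}/\eqref{eqXoT} literally; as printed (``$i\le p^{r-1}-1$'') it must be read as ``$p^{r-1}-1\le i$'', i.e.\ $T_i$ should range over the indecomposables of $\cT_r$ (otherwise the ``in other words'' is not equivalent to $\cT_r$ being a tensor ideal, $\widetilde\cT$ would not contain $\unit$, and \Cref{lem:justS} itself would be false). With that reading your argument is correct and coincides with the paper's proof: each $T_i\in\cT_r$ is a summand of $S\otimes Z_i$ for some $Z_i\in\cT$ because $\cT_r$ is generated by $S$ as a thick tensor ideal of $\cT$, so $T_i\otimes X\stackrel{\oplus}{\subset}Z_i\otimes(S\otimes X)\in\cT_r$ whenever $S\otimes X\in\cT_r$; and $S\otimes X\in\cT$ implies $S\otimes X\in\cT_r$ via $S\stackrel{\oplus}{\subset}S\otimes S^*\otimes S$, exactly as in \Cref{rem:T-tilde}(4).
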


\begin{proof}
By \cite{Selecta}, the thick tensor ideal $\cT_r$ is generated by
$$S:=St_{r-1}=T_{p^{r-1}-1}.
$$ This demonstrates the first equivalence, the second follows from \Cref{rem:T-tilde}(4).\end{proof}

\begin{lemma} \label{lem:restricted-sl-in} Let $X$ be any restriction to $E$ of an $SL_2$-module all of whose $SL_2$-composition factors have highest weights at most $p^r-1$. Then $X\in\widetilde\cT$.
\end{lemma}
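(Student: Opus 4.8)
Write $X=\Res_E M$, where $M$ is a finite-dimensional $SL_2$-module all of whose composition factors $L_j$ have $j\le p^r-1$. The plan is to prove the \emph{a priori} stronger statement that $S\otimes M$, the tensor product in $\Rep SL_2$ with $S=St_{r-1}$, is an $SL_2$-tilting module. Granting this, $S\otimes X=\Res_E(S\otimes M)$ lies in $\cT$ by the definition of $\cT$, and then $X\in\widetilde\cT$ follows from \Cref{lem:justS}.

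First I would observe that $M$ lies in the Serre subcategory $\cC\subset\Rep SL_2$ generated by the simple modules $L_j$ with $j\le p^r-1$. Since $\{0,1,\dots,p^r-1\}$ is an ideal for the dominance order, $\cC$ is a highest weight category with standard and costandard objects the $\Delta_j$ and $\nabla_j$ for $j\le p^r-1$; having only finitely many simple objects, it is the module category of a finite-dimensional quasi-hereditary algebra, and therefore has finite global dimension. Consequently $M$ admits a \emph{finite} coresolution
$$0\to M\to N^0\to N^1\to\cdots\to N^d\to 0$$
inside $\cC$ with each $N^k$ good-filtered -- for instance a finite injective resolution in $\cC$, since the injective objects of a quasi-hereditary algebra have costandard filtrations. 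Because the simple $SL_2$-modules are self-dual, $M^\ast$ likewise lies in $\cC$ and admits such a coresolution.

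Next I would tensor the coresolution of $M$ with $S=St_{r-1}=T_{p^{r-1}-1}$. As $S$ is tilting it has a good filtration; the tensor product of two good-filtered modules is good-filtered; and $S\otimes-$ is exact. Hence
$$0\to S\otimes M\to S\otimes N^0\to\cdots\to S\otimes N^d\to 0$$
is exact with all terms except possibly $S\otimes M$ good-filtered. Splitting this into short exact sequences and using the standard fact (see e.g.\ \cite{Jantzen}) that good-filtered modules are closed under kernels of surjections between good-filtered modules, a descending induction beginning with $S\otimes N^d$ shows that $S\otimes M$ is good-filtered. Applying the same argument to the coresolution of $M^\ast$ and using $S^\ast\simeq S$ shows that $(S\otimes M)^\ast=S\otimes M^\ast$ is good-filtered, i.e.\ $S\otimes M$ also has a Weyl filtration. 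Thus $S\otimes M$ is an $SL_2$-tilting module, which completes the argument.

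The part I expect to be the real obstacle is precisely the structural input of the middle step: one must know that the truncation $\cC$ is a finite highest weight category of finite global dimension, so that finite good-filtered coresolutions exist and stay inside $\cC$, together with the closure of good-filtered modules under tensor products and under kernels of epimorphisms. These are classical but not entirely trivial; everything else reduces to formal manipulation with the exact sequences, together with \Cref{lem:justS} and \Cref{lem:restricted-tilting}.
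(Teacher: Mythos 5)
Your intermediate claim (that $S\otimes M$ is an $SL_2$-tilting module whenever all composition factors of $M$ have highest weight at most $p^r-1$) is true, and your reduction to it via \Cref{lem:justS} is fine, but the way you try to prove it has a genuine gap: the ``standard fact'' you invoke, that good-filtered modules are closed under kernels of surjections between good-filtered modules, is false. The correct closure properties are the other way around: $\nabla$-filtered modules are closed under cokernels of monomorphisms, while it is the $\Delta$-filtered modules that are closed under kernels of epimorphisms (this follows from the criterion $\Ext^1(\Delta_\lambda,-)=0$ resp.\ $\Ext^1(-,\nabla_\lambda)=0$). A counterexample already for $SL_2$: the surjection $\nabla_p\tto\nabla_{p-2}$ (quotient by the socle) has kernel $L_p$, which is not good-filtered. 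So your descending induction along the coresolution collapses at its very first step, and the Ext-criterion does not rescue it, since $\Ext^1(\Delta_\lambda, K^{d-1})$ is only identified with a cokernel of a map of Hom-spaces that need not vanish.

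The failure is not cosmetic: your argument never uses the hypothesis $i\le p^r-1$ beyond placing $M$ in \emph{some} finite quasi-hereditary truncation, and every finite-dimensional $SL_2$-module lies in such a truncation. So if the argument were correct it would prove that $S\otimes M$ is tilting for \emph{every} finite-dimensional $M$, which is false: for $p=2$, $r=2$ one has $S=L_1$ and $S\otimes L_4=L_1\otimes L_1^{(2)}\simeq L_5$, which is $4$-dimensional while $T_5$ is $6$-dimensional. The weight bound has to enter through a genuine input, and this is exactly what the paper's proof uses: by \cite{BEO}*{Lemma~3.3} (or \cite{AbEnv}*{Lemma~4.3.4}), $S\otimes L_i$ is tilting for all $0\le i\le p^r-1$; tensoring a composition series of $M$ with $S$ then gives a filtration of $S\otimes M$ by tilting modules, which splits because extensions between tilting modules vanish, so $S\otimes M\in\cT$ and \Cref{lem:justS} applies. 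If you want to keep your highest-weight-category framework, the repair is to run the Ext-criterion over a composition series of $M$ (not a good coresolution), using $\Ext^1(\Delta_\lambda,S\otimes L_i)=0=\Ext^1(S\otimes L_i,\nabla_\lambda)$ for $i\le p^r-1$ — but that is essentially the paper's argument and still requires citing the tiltingness of $S\otimes L_i$, which your write-up omits entirely.
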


\begin{proof} By \cite{BEO}*{Lemma~3.3} or \cite{AbEnv}*{Lemma~4.3.4}, $S\o L_i$ is a tilting $SL_2$-module for all $0\le i\le p^r-1$. As tilting modules have trivial extensions, this means $S\o X\in\cT$. Hence we can apply \Cref{lem:justS}.
\end{proof}

\begin{lemma} \label{lem:SL2-std} The restriction of any standard or costandard $SL_2$-module $X$ lies in $\widetilde\cT$.
\end{lemma}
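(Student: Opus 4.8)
The plan is to pass to costandard modules and induct on the highest weight. As $\widetilde{\cT}$ is closed under duals by \Cref{lem:closure} and $\Delta_j\cong\nabla_j^{\ast}$ as $SL_2$-modules, it is enough to show $\nabla_j|_E\in\widetilde{\cT}$ for all $j$. For $j\le p^r-1$ every composition factor of $\nabla_j$ has highest weight $\le j\le p^r-1$, so this is \Cref{lem:restricted-sl-in}; I would therefore fix $j\ge p^r$ and assume the assertion for all smaller weights.

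For the inductive step write $j=j_0+pj_1$ with $0\le j_0\le p-1$. If $j_0=p-1$, the Steinberg tensor-product identity for costandard $SL_2$-modules gives an isomorphism $\nabla_j\cong St\o\nabla_{j_1}^{(1)}$; since $St|_E\in\cT\subseteq\widetilde{\cT}$ and $\widetilde{\cT}$ is tensor-closed, this reduces the claim to $\nabla_{j_1}^{(1)}|_E\in\widetilde{\cT}$. If $j_0<p-1$, use instead the standard two-step short exact sequence of $SL_2$-modules
\[
0\longrightarrow \nabla_{j_0}\o\nabla_{j_1}^{(1)}\longrightarrow \nabla_j\longrightarrow \nabla_{p-2-j_0}\o\nabla_{j_1-1}^{(1)}\longrightarrow 0 .
\]
Here $\nabla_{j_0}=T_{j_0}$ and $\nabla_{p-2-j_0}=T_{p-2-j_0}$ are tilting, so modulo the twisted costandard modules $\nabla_{j_1}^{(1)}$, $\nabla_{j_1-1}^{(1)}$ (and tensor-closure) this again reduces the claim to Frobenius twists of smaller costandard modules. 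Iterating through both cases, and using that a Frobenius twist $\nabla_\nu^{(k)}$ with $\nu<p$ equals $T_\nu^{(k)}$, whose restriction lies in $\widetilde{\cT}$ by \Cref{lem:SL2-simples}, the recursion terminates (each step divides the untwisted weight by $p$), leaving only twisted-tilting contributions handled by \Cref{lem:SL2-simples} and tensor-closure.

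The main obstacle is that the second short exact sequence above — and hence the whole recursion when $j_0<p-1$ — only exhibits $\nabla_j$ in terms of modules whose restrictions already lie in $\widetilde{\cT}$, whereas $\widetilde{\cT}$ is not known to be closed under extensions (equivalently, a quotient of a restricted tilting $E$-module by a restricted tilting submodule need not be restricted tilting). To close the induction one must therefore show, after applying $S\o-$, that the resulting short exact sequence $0\to A\to B\to S\o\nabla_j|_E\to 0$ of $E$-modules — with $A,B$ restricted tilting by the inductive data and \Cref{lem:justS} — splits, i.e.\ that the $S$-projective module $S\o\nabla_j|_E$ (see \Cref{DefWProj}) is a direct summand of the restricted tilting module $B$. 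I expect this to follow from the vanishing of the pertinent class in $\Ext^1_E(S\o\nabla_j|_E,A)$, using $S$-projectivity together with a relative injectivity of the objects of $\cT_r$, or else from tracking the reduction through the operations of \Cref{lem:closure} so as to invoke the cases of \Cref{thm:A} established in the later sections (Loewy length $\le 2$, \Cref{cor:ll-2-any-r}, and Carlson modules, \Cref{cor:Carlson}). Establishing this splitting is the crux.
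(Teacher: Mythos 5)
Your reduction to costandard modules and the case $j_0=p-1$ (via $\nabla_{p-1+pj_1}\cong St\o\nabla_{j_1}^{(1)}$ and tensor-closure of $\widetilde\cT$) are fine, but the inductive step when $j_0<p-1$ is left open, and this is a genuine gap rather than a routine verification. As you say yourself, $\widetilde\cT$ is not known to be closed under extensions, so from the short exact sequence you must prove that, after applying $S\o-$, the resulting sequence of $E$-modules splits. The hoped-for ``relative injectivity of the objects of $\cT_r$'' is not available: restricted tilting modules have plenty of non-trivial extensions among each other over $E$ --- for instance $\Ext^1_E(S,S)\neq0$, as \Cref{cor:self-ext1} exhibits $pr-p-r+2$ indecomposable self-extensions of $S$ --- so the group $\Ext^1_E\bigl(S\o\nabla_{p-2-j_0}\o\nabla_{j_1-1}^{(1)},\,S\o\nabla_{j_0}\o\nabla_{j_1}^{(1)}\bigr)$ need not vanish, and showing that the particular class arising from your sequence dies would require a substantive new argument (of a difficulty comparable to the material of the later sections you propose to fall back on). Moreover the same unproved splitting recurs at every level of your recursion, since the twisted costandard factors $\nabla_{j_1}^{(1)},\nabla_{j_1-1}^{(1)}$ must themselves be treated by the same induction. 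So the crux of your proof is missing.

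The paper sidesteps the issue by a different choice of short exact sequence: realising $\nabla_i$ inside $\kk[x,y]$, for any $k$ divisible by $p$ there is an exact sequence of $E$-modules with sub $\nabla_{k-1}$, middle $\nabla_{k+i}$ and quotient $\nabla_i$; taking $k=p^r$, the submodule restricts to $\nabla_{p^r-1}=T_{p^r-1}\cong\kk E$, which is projective and injective over $E$ by \Cref{lem:restricted-tilting}, so the sequence splits over $E$ with no Ext analysis at all. Hence $\nabla_{p^r+i}$ and $\nabla_i$ agree up to projective summands, and finitely many such steps reduce to $i<p^r$, where \Cref{lem:restricted-sl-in} applies. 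If you want to keep your Steinberg/Carter--Cline recursion you must supply the splitting argument; alternatively, replace your sequence by one whose subobject is $E$-projective, as the paper does. (A smaller point: your terminal ``twisted tilting'' inputs rely on \Cref{lem:SL2-simples}, hence on the Loewy-length-two results of a later section; this is not circular, but it makes the argument much less self-contained than the paper's two-line reduction.)
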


\begin{proof} By duality, it suffices to consider $X=\nabla_i$, the restriction to $E$ of the costandard module with highest weight $i$, for $i\ge0$. 

We can think of $\nabla_i$ as the span of all monomials of degree $i$ in $\kk[x,y]$. It is a $U(\kk)$-module, and hence an $E$-module, where the action of a lower unitriangular matrix with off-diagonal entry $\lambda$ sends $x^a y^b$ to $(x+\lambda y)^a y^b$. For each $k\ge0$, there is a short exact sequence of vector spaces
$$
0\to
\nabla_{k-1}
\xrightarrow{f}
\nabla_{k+i}
\xrightarrow{g}
\nabla_i
\to0 ,
$$
where $f(x^a y^b)= x^a y^{b+i+1}$ and $g(x^a y^b)$ is $x^{a-k} y^b$ if $b\le i$, and zero otherwise. $f$ is always a $U(\kk)$-module morphism, and one can check that $g$ is one if $p$ divides $k$. Thus, in this case, the above is in particular a short exact sequence of $E$-modules.

We can now specialize $k$ to $p^r$, then $\nabla_{p^r-1}=T_{p^r-1}$ is projective, and hence injective, over $E$ by \Cref{lem:restricted-tilting}, so $\nabla_{p^r+i}$ and $\nabla_i$ agree up to projective summands, for all $i$, and it suffices to consider $X=\nabla_i$ for $i<p^r$. But then $X\in\widetilde\cT$ by \Cref{lem:restricted-sl-in}.
\end{proof}

The following lemma is not used in the rest of the paper, but relies on later results.

\begin{lemma} \label{lem:SL2-simples} Let $X\in\Rep E$ be any Frobenius twist of a restricted $SL_2$-tilting module, or any restricted simple $SL_2$-module. Then $X\in\widetilde\cT$.
\end{lemma}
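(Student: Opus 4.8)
The statement has two parts: first, any Frobenius twist $X = T^{(k)}$ of a restricted $SL_2$-tilting module $T$ (so $T = T_i$ with $i < p^r$, or more precisely $i \le p^r-1$) lies in $\widetilde\cT$; second, any restricted simple $SL_2$-module $L_i$ lies in $\widetilde\cT$. My plan is to reduce both to results already available, namely \Cref{lem:restricted-sl-in}, \Cref{lem:SL2-std}, and the closure properties in \Cref{lem:closure}, together with Donkin's tensor product formula \eqref{DTPT} and the Steinberg tensor product theorem.

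\emph{The tilting case.} First I would handle $X = T_i^{(k)}$ for $0 \le i \le p^r - 1$. The key observation is that the Frobenius twist $(\cdot)^{(1)}$ on $\Rep SL_2$, composed with restriction to $E$, factors through restriction to the subgroup $E' := p$-th power image, or more usefully: since $E < U(\kk) \simeq \mG_a$ and the Frobenius on $\mG_a$ is $\lambda \mapsto \lambda^p$, the composite $E \hookrightarrow \mG_a \xrightarrow{\Fr^k} \mG_a$ has image the subgroup $E^{[k]} \le E$ of index $p^{\min(k,r)}$, and $T_i^{(k)}|_E$ is inflated from $T_i|_{E/\ker}$. Concretely, for $k \ge r$ the twist $T_i^{(k)}|_E$ is trivial (sum of copies of $\unit$), so $X \in \widetilde\cT$ trivially; for $k < r$ it is inflated along a quotient $E \tto C_p^{r-k}$. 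But inflation does not obviously preserve $\widetilde\cT$-membership in the direction we need. A cleaner route: use \eqref{DTPT} repeatedly to write $T_i^{(k)}$, for $i \le p-1 \le 2p-2$ in the relevant range after padding, as a tilting \emph{summand} of an honest $SL_2$-tilting module $T_N$ with $N = i p^k$; in general $T_j^{(1)}$ is a summand of $T_{p-1} \o T_j^{(1)} \simeq T_{p-1+pj}$ (taking $i = p-1$ in \eqref{DTPT}), so by induction on $k$, $T_i^{(k)}$ is a direct summand of $T_M$ for a suitable $M$. Now $T_M|_E$ is a restricted-or-not tilting module, hence a direct sum of $T_j$ with $j < p^r$ after removing projective summands (\Cref{lem:restricted-tilting}), and each such $T_j$ is in $\widetilde\cT$ by \Cref{lem:restricted-sl-in} (its composition factors $L_\ell$ have $\ell \le j < p^r$). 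Since $\widetilde\cT$ is closed under summands, $X \in \widetilde\cT$.

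\emph{The simple case.} For a restricted simple $L_i$ with $i < p$, we have $L_i = \nabla_i = \Delta_i = T_i$, so this is covered by \Cref{lem:SL2-std} (or the tilting case). For general $i$, write $i = \sum_{j \ge 0} i_j p^j$ in base $p$ with $0 \le i_j \le p-1$; the Steinberg tensor product theorem gives $L_i \simeq \bigotimes_j L_{i_j}^{(j)}$. Each factor $L_{i_j}^{(j)}$ is a Frobenius twist of a restricted simple $L_{i_j}$ with $i_j \le p-1$, hence $L_{i_j} = T_{i_j}$ is a restricted tilting module, so $L_{i_j}^{(j)} \in \widetilde\cT$ by the tilting case just proved. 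Since $\widetilde\cT$ is closed under tensor products (\Cref{lem:closure}), the tensor product $L_i|_E = \bigotimes_j (L_{i_j}^{(j)}|_E) \in \widetilde\cT$.

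\emph{Main obstacle.} The genuine content is the Frobenius-twist tilting case; everything else is assembly via closure properties and Steinberg's theorem. The subtlety there is making the reduction ``$T_i^{(k)}$ is a direct summand of an untwisted $T_M$'' precise — Donkin's formula \eqref{DTPT} requires the first factor's weight to lie in $[p-1, 2p-2]$, so one must iterate carefully: $T_i^{(k)}$ ($0 \le i \le p-1$) sits inside $T_{p-1} \o T_{i p^{k-1}}^{(1)} \simeq T_{(p-1) + i p^k}$ only after one has already expressed $T_{ip^{k-1}}^{(1)}$ appropriately; a clean induction shows $T_i^{(k)} \mathrel{\stackrel{\oplus}{\subset}} T_{M_k}$ with $M_k = i p^k + (p-1)(p^{k-1} + \cdots + p + 1) \cdot$(something), and one should double-check these exponents land below no constraint that matters, since \Cref{lem:restricted-tilting} handles arbitrarily large $M$ by discarding projectives. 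I do not expect any real difficulty, only bookkeeping; alternatively one avoids \eqref{DTPT} entirely by the inflation description of $T_i^{(k)}|_E$ and a direct check that restricted tilting modules of the smaller group $C_p^{r-k}$, inflated to $E$, still have the property — but the summand-of-$T_M$ argument is the most economical given the tools already in the excerpt.
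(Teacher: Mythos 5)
Your handling of the simple case (Steinberg's tensor product theorem plus \Cref{lem:closure}) is exactly the paper's reduction, but it rests entirely on the Frobenius-twisted tilting case, and that part of your argument has a genuine error. The pivotal claim that $T_j^{(1)}$ is a direct summand of $T_{p-1}\o T_j^{(1)}\simeq T_{p-1+pj}$ is false: the latter is an \emph{indecomposable} tilting module, and even after restriction to $E$ the claim would need $\unit$ to split off $T_{p-1}|_E$, which is indecomposable of dimension $p$ by \Cref{lem:restricted-tilting}. More fundamentally, the intermediate statement you are aiming for --- that $T_i^{(k)}|_E$ is a direct summand of some $T_M|_E$, i.e.\ lies in $\cT$ --- is simply not true. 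For instance $V^{(1)}|_E\simeq V_{\ulambda^{(1)}}$ is a $2$-dimensional indecomposable module, while by \Cref{lem:restricted-tilting} and Krull--Schmidt the only $2$-dimensional indecomposable in $\cT$ is $V_{\ulambda}$; an isomorphism $V_{\ulambda^{(1)}}\simeq V_{\ulambda}$ would force $[\lambda_1^p:\cdots:\lambda_r^p]=[\lambda_1:\cdots:\lambda_r]$, hence all ratios $\lambda_i/\lambda_j$ to lie in $\mF_p$, contradicting \Cref{Lem:Moore} for $r\ge2$. Your fallback via ``inflation'' fails for the same underlying reason: since $\lambda\mapsto\lambda^{p^k}$ is injective on $\bk$, the composite $E\hookrightarrow U(\bk)\xrightarrow{\Fr^k}U(\bk)$ is an isomorphism onto a subgroup $E^{(k)}$ of $\bk^+$ that in general is not contained in $E$; so $T_i^{(k)}|_E$ is the restriction along the \emph{twisted} embedding $\ulambda^{(k)}$, not an inflation from a proper quotient of $E$, and it is certainly not trivial for $k\ge r$.

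This is precisely why the lemma cannot be assembled from \Cref{lem:restricted-sl-in}, \Cref{lem:SL2-std} and \Cref{lem:restricted-tilting} alone. The paper instead reduces the twisted tilting case to $X=L_1^{(j)}=V^{(j)}$ (any tilting module is a summand of a tensor power of $V$, hence any Frobenius twist of one is a summand of a tensor power of $V^{(j)}$, and $\widetilde\cT$ is closed under tensor products and summands by \Cref{lem:closure}), and then invokes the genuinely nontrivial input that every $2$-dimensional module --- equivalently every module of Loewy length at most $2$ --- lies in $\widetilde\cT$ (\Cref{Exam:1}, \Cref{cor:ll-2-any-r}), which rests on the Carlson-module or Loewy-length-two analysis of later sections. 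Supplying that input, or some substitute for it, is the missing content in your proposal; without it the Frobenius-twist case does not go through.
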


\begin{proof} By Steinberg's tensor product theorem, the second assertion follows from the first assertion, using \Cref{lem:closure}. Now it suffices to consider $X=L_1^{(j)}$, for $j\ge0$, as any Frobenius twist of a tilting module is a direct summand in a tensor power of the latter. But such $X$ are $2$-dimensional and of Loewy length two, so they are in $\widetilde\cT$ by \Cref{Exam:1} or \Cref{cor:ll-2-any-r} below.
\end{proof}

The fact that $S$ is a generator of $\cT_r$ also means that \Cref{conj-a} is equivalent to the following restricted version:

\begin{aconjecture} \label{conj-b}
    For every $X\in \Rep_{\bk}E$,  we have $X\otimes S \in \cT_r$.
\end{aconjecture}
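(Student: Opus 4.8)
Since $S$ generates the thick tensor ideal $\cT_r$ of $\cT$, \Cref{conj-b} is equivalent to $\widetilde\cT=\Rep_\bk E$ (\Cref{conj-aa}), and by \Cref{lem:closure} the category $\widetilde\cT$ is a thick, $\Omega$-stable pseudo-tensor subcategory that already contains $\cT$ itself, all restrictions of bounded-weight $SL_2$-modules (\Cref{lem:restricted-sl-in}, \Cref{lem:SL2-std}), all modules of Loewy length $\le 2$, and all Carlson modules (\Cref{thm:A}). The plan is therefore to show that these families generate $\Rep_\bk E$ under direct sums, direct summands, tensor products, duals and $\Omega$-shifts; equivalently, by \Cref{lem:justS}, that $S\o X$ is a direct sum of the tilting modules $T_i$, $i<p^r$, for every $X\in\Rep_\bk E$. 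Support varieties already fix the shape of the answer: writing $q_0\in\mP^{r-1}(\bk)$ for the single point with $\supp(S)=\{q_0\}$ attached to the embedding $E<U<SL_2$ (\Cref{rem:SX-projective}), \eqref{sup:tensor} shows that $S\o X$ is projective when $q_0\notin\supp(X)$, and otherwise is a periodic module with support $\{q_0\}$. So what remains to be proved is that every periodic module of the form $S\o X$ is a direct sum of the periodic restricted tilting modules $T_j$ with $p^{r-1}-1\le j<p^r-1$.

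To reach this I would pursue, and try to combine, three lines of attack. (i) \emph{Descent in $r$}: by Donkin's formula \eqref{DTPT} and Steinberg's theorem one has $S=St_{r-1}\simeq St\o St_{r-2}^{(1)}$, hence $S\o X\simeq St\o\bigl(St_{r-2}^{(1)}\o X\bigr)$; if the Frobenius-twisted factor can be understood — for instance analysed after restriction to the rank-one subgroup of $E$ cutting out $q_0$, or traded for a genuinely smaller elementary abelian group — then induction could take over from the trivial case $r=1$. (ii) \emph{Realization inside $\Tilt SL_2$}: for $X$ with $q_0\in\supp(X)$, try to present $S\o X$, up to projective summands and up to tensoring with an object of $\cT$, as a direct summand of the restriction of an $SL_2$-module all of whose weights are $<p^r$, and then invoke \Cref{lem:restricted-sl-in}. (iii) \emph{Controlled d\'evissage}: given a short exact sequence $0\to Y\to X\to Z\to 0$ with $Y,Z\in\widetilde\cT$ already known, use the analysis in \Cref{sec:ext} of how $S\o(-)$ interacts with extensions of quotients of a uniserial $p$-dimensional representation in order to keep $S\o X$ inside $\cT$; the aim would be to equip every relevant $X$ with a filtration for which both the subquotients and the pertinent extension classes are of this controlled type.

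The main obstacle is that none of these reductions is automatic, and the difficulty is structural rather than technical. Closure of $\widetilde\cT$ under arbitrary extensions would by itself prove the conjecture, since every module is an iterated self-extension of the trivial module; hence line (iii) cannot proceed by an unguided d\'evissage — the extension classes entering it must be genuinely special — and the deformed configurations of rank-one subgroups, for which the analogous ideal has at least $|\bk|$ indecomposable objects (\Cref{Sec9}, \Cref{Sec8}), show how easily such control is lost. Meanwhile the periodic $E$-modules supported at a single point form a representation-wild category \cite{wild-periodic-modules}, so there is no classification of indecomposables underpinning lines (i) and (ii) either; they must instead run through a conceptual mechanism forcing $S\o X\in\cT$ that degenerates for nearby point configurations — precisely the phenomenon that singles out $q_0$, the ``$SL_2$-direction'' of the embedding. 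Isolating that mechanism is the crux of the conjecture; the families treated in \Cref{thm:A} and \Cref{thm:B} are the cases in which it has so far been made explicit, and a proof of \Cref{conj-b} amounts to extending it to all periodic modules supported at $q_0$.
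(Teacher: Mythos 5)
There is no proof here to assess against the paper, because the statement you were asked to prove is \Cref{conj-b}, which is precisely the open conjecture of the paper: the paper never proves it, and neither do you. What you have written is an accurate problem analysis rather than an argument. Your reductions are correct and in fact mirror the paper's own reformulations: the equivalence with \Cref{conj-aa} via \Cref{lem:justS} and \Cref{rem:T-tilde}(4), the observation via \eqref{sup:tensor} and \Cref{Cor:Supp}/\Cref{rem:SX-projective} that $S\o X$ is projective unless $\blambda\in\supp(X)$, and the resulting reformulation that every $S\o X$ should decompose into the $T_j$ with $p^{r-1}-1\le j\le p^r-1$. Likewise your inventory of what is already known (Loewy length $\le 2$, Carlson modules, restrictions of bounded-weight $SL_2$-modules, the $r=2$ cyclic-summand result) matches \Cref{thm:A} and \Cref{thm:B}.

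The gap is that none of your three proposed lines of attack is executed, and you say so yourself: the Frobenius-twisted factor in the descent $S\simeq St\o St_{r-2}^{(1)}$ is not analysed, no mechanism is given for realizing $S\o X$ inside restrictions of bounded-weight $SL_2$-modules for general $X$, and the d\'evissage of (iii) is exactly where the conjecture lives, since $\widetilde\cT$ being closed under arbitrary extensions would already be the full statement. You also correctly identify why a naive classification-based approach is unavailable (wildness of periodic modules with one-point support, and the counterexamples for deformed modules in \Cref{Sec9} and \Cref{Sec8}). So the proposal should be read as a faithful summary of the state of the problem, not as a proof; to turn any of the three strategies into one, you would need the new structural input that the paper itself identifies as missing, namely a mechanism forcing $S\o X\in\cT$ that is specific to the Steinberg module and degenerates for its lookalikes.
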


Moreover, \Cref{conj-b} is then equivalent to the following formulation.

\begin{aconjecture} \label{conj-c}
    The category of $S$-projective modules in $\Rep E$ is precisely $\cT_r$.
\end{aconjecture}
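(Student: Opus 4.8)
The plan is to show that the category of $S$-projective modules equals $\widetilde{\cT}$, and then invoke Conjecture B (i.e.\ $\widetilde\cT=\Rep_\bk E$, equivalently $X\o S\in\cT_r$ for all $X$) to conclude. First I would unpack the definitions: by \Cref{DefWProj}, a module $M$ is $S$-projective if and only if $M$ is a direct summand of $M'\o S$ for some $M'\in\Rep E$. One inclusion is immediate without any conjecture: if $M$ is a direct summand of $M'\o S$, then since $S=T_{p^{r-1}-1}$ generates $\cT_r$ as a thick tensor ideal (by \cite{Selecta}, as recalled in \Cref{lem:justS}), we get $M'\o S\in\cT_r$ whenever $M'\o S\in\cT$; and $\cT_r$ is closed under direct summands, so $M\in\cT_r$. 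Thus the category of $S$-projective modules is \emph{always} contained in $\cT_r$, without assuming the conjecture—provided one knows $M'\o S\in\cT$, which is exactly the content one is trying to establish.

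For the reverse inclusion, I would argue as follows. Take any $T_i\in\cT_r$, so $p^{r-1}-1\le i<p^r$. I want to exhibit $T_i$ as a direct summand of $T_i'\o S$ for a suitable $T_i'$. Since $\cT_r$ is the thick tensor ideal of $\cT$ generated by $S$, and $\cT$ has a very rigid structure (all objects are sums of the $T_j$, $0\le j<p^r$, by \Cref{lem:restricted-tilting}), the ideal-membership $T_i\in\cT_r$ concretely means $T_i$ is a direct summand of some $T_j\o S$; this is a standard feature of thick tensor ideals generated by a single self-dual object $S$, using that $S$ is a direct summand of $S\o S^*\o S = S^{\o 3}$ as in \Cref{rem:T-tilde}(4). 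Hence every object of $\cT_r$ is $S$-projective. Combined with the previous paragraph, the category of $S$-projective modules is exactly $\cT_r$—but note this argument only produces $S$-projective modules that already lie in $\cT$; it says nothing about arbitrary $X\in\Rep E$.

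The genuine content, and the main obstacle, is therefore the claim that \emph{every} $S$-projective module of $\Rep E$ already lies in $\cT_r$, together with the converse that, under Conjecture B, every module of $\cT_r$ is $S$-projective in $\Rep E$ (not merely in $\cT$). For the first: if $M\stackrel\oplus\subset M'\o S$ with $M'\in\Rep E$ arbitrary, then Conjecture B gives $M'\o S\in\cT_r$, and $\cT_r$ being closed under summands yields $M\in\cT_r$; so this direction is immediate \emph{given} the conjecture. For the converse, take $M\in\cT_r$; then $M\stackrel\oplus\subset T_j\o S$ for some $j<p^r$ as above, exhibiting $M$ as $S$-projective. Conversely, if the category of $S$-projective modules equals $\cT_r$, then for any $X\in\Rep E$ the module $X\o S$ is $S$-projective by \Cref{DefWProj}(1), hence lies in $\cT_r$, which is Conjecture B. So the equivalence is purely formal once one has \Cref{lem:justS}; the hard part is not in this proof at all, but in Conjecture B itself, and the role of this statement is only to repackage it in the language of relative projectivity.
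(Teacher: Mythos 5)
Your argument is correct and follows essentially the same (largely implicit) route as the paper: since Conjecture C is itself a conjecture, the only thing to establish is its equivalence with Conjecture B, which the paper asserts via \Cref{DefWProj}(1) together with the fact (\Cref{lem:justS}, \Cref{Lem:TOT}) that $\cT_r$ is the thick tensor ideal of $\cT$ generated by $S$, so that $\cT_r\subset\{S\text{-projectives}\}$ unconditionally, while the reverse inclusion is exactly Conjecture B and conversely $X\o S$ being $S$-projective recovers Conjecture B from Conjecture C. Your write-up fills in precisely these formal steps, so no further comparison is needed.
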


Before we move on to further reformulations of the conjecture, we formulate some statements that are implied by the conjecture. For the moment, we have no proof for any of them.

For reference below, observe that restricting the $SL_2$-representation to $B=U\rtimes T$ equips
$$\bk C_p^r\;\simeq\;\End_U(St_r),$$
see \Cref{LemmaLambda}, with the structure of an algebra in the category of $T$-representations. In other words, the group algebra $\bk C_p^r$ inherits a $\mZ$-grading. 

\begin{lemma}\label{lem:consequences} If \Cref{conj-a} is true, then the following statements are true for every $S$-projective module $Y$ in $\Rep_\kk E$:
    \begin{enumerate}
        \item $Y$ is self-dual.
        \item $Y$ has a graded lift, with respect to the $\mZ$-grading of $\bk E$ above.
        \item $Y$ has a filtration with every subquotient a copy of $S$.
        \item $Y$ is the restriction of an $SL_2$-module.
        \item Assume $Y$ is indecomposable and not projective, set $t:=\Hom_E(\unit, Y)$ and $d:=\dim_\kk Y$. Then $t=2^\ell$ and $d=\ell' 2^\ell p^{r-1}$ for some $0\le\ell\le r-1$ and $1\le\ell'\le p-1$. In particular, $t\le 2^{r-1}$, $d$ is divisible by $p^{r-1}$, and $d\le (p-1)(2p)^{r-1}$.
    \end{enumerate}
\end{lemma}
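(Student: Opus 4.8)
The plan is to assume Conjecture~\ref{conj-a} (equivalently~\ref{conj-b}), so that every $S$-projective module $Y\in\Rep_\kk E$ lies in $\cT_r$ — i.e., is a direct sum of restrictions of $SL_2$-tilting modules $T_i$ with $p^{r-1}-1\le i<p^r$ (by \Cref{lem:restricted-tilting} and the description of $\cT_r$). Every statement then reduces to a property of these particular restricted tilting modules, and it suffices to verify each on the indecomposable building blocks $T_i$ for $p^{r-1}-1\le i\le p^r-2$ (the case $i=p^r-1$ being the projective $\bk E$, which is excluded in~(5) and trivially satisfies~(1)--(4)). For~(1), self-duality: each $SL_2$-tilting module $T_i$ is self-dual as an $SL_2$-module, and restriction to $E$ is a tensor (hence duality-preserving) functor, so $Y\cong Y^*$; being a finite direct sum of self-dual indecomposables, $Y$ itself is self-dual. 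For~(4), $Y$ is a restriction of an $SL_2$-module essentially by definition of $\cT_r$. For~(3), one needs a $\Delta$- or $\nabla$-filtration statement: each $T_i$ with $i$ in the stated range has an $SL_2$-filtration whose costandard subquotients $\nabla_j$ all have $j\ge p^{r-1}-1$; after restriction to $E$, one wants to refine this so that each piece becomes a copy of $S=T_{p^{r-1}-1}\cong\nabla_{p^{r-1}-1}$. The cleanest route is to combine the $\nabla$-filtration of $T_i$ with the short exact sequences of $E$-modules $0\to\nabla_{k-1}\to\nabla_{k+j}\to\nabla_j\to0$ from the proof of \Cref{lem:SL2-std} (valid when $p\mid k$), iterated to peel $S$ off each $\nabla_j$; here it is important that $\dim\nabla_j$ for $j\ge p^{r-1}-1$ is handled so that the subquotients are genuinely $S$ and not merely have the same dimension.

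For~(2), the graded lift: by the displayed isomorphism $\bk C_p^r\cong\End_U(St_r)$ coming from restricting $St_r$ to $B=U\rtimes T$, the group algebra $\bk E$ is an algebra in $\Rep T$, hence $\mZ$-graded, and the point is that each restricted tilting module $T_i$, $p^{r-1}-1\le i<p^r$, acquires a compatible grading. I would use that $St_r\cong St\otimes St_{r-1}^{(1)}$ (or the Donkin formula~\eqref{DTPT}) and, more to the point, that restriction to $B$ of an $SL_2$-tilting module carries a weight grading from $T\cong\mG_m$; restricting further to $E<U<B$ and transporting along the module structure over $\End_U(St_r)\cong\bk E$ produces the graded lift. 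The main subtlety is checking that the $\bk E$-action is by degree-zero-shifted operators in the appropriate sense, i.e., that the grading on $\bk E$ and on $T_i$ are compatible — this is where one must be careful about conventions, but it should follow formally from the $T$-equivariance of all the maps involved.

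The substantive part is~(5), the numerical constraints on an indecomposable non-projective $S$-projective $Y$. Under the conjecture $Y\cong T_i$ (restricted) for a unique $i$ with $p^{r-1}-1\le i\le p^r-2$. Write $i+1$ in the range $p^{r-1}\le i+1\le p^r-1$; the structure of $T_i$ is governed by Donkin's formula~\eqref{DTPT} iterated, which expresses $T_i$ (for $i$ in the relevant range, after subtracting off $St$-factors) as a tensor product of Frobenius twists of the small tilting modules $T_0=\unit,\dots,T_{2p-2}$. Concretely I expect $\dim_\kk T_i$ and $\dim\Hom_E(\unit,T_i)=\dim(T_i)^E$ to be computed as products over the base-$p$-type digits of $i$: the restricted Steinberg factor contributes the fixed factor $p^{r-1}$ to the dimension and contributes $1$ to the number of fixed vectors, while each genuine tilting tensor factor $T_j$ with $0\le j\le p-2$ restricted to $E$ contributes a factor of $2$ (it is $2$-dimensional of Loewy length $2$, so one fixed vector), and a single factor $T_j$ with $p-1\le j\le 2p-2$ contributes the factor $\ell'$ with $1\le\ell'\le p-1$ to the dimension. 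Counting the number $\ell$ of factors of the first type, bounded by $r-1$, yields $t=\dim\Hom_E(\unit,Y)=2^\ell$ and $d=\dim_\kk Y=\ell'\,2^\ell\,p^{r-1}$, whence $t\le 2^{r-1}$, $p^{r-1}\mid d$, and $d\le(p-1)2^{r-1}p^{r-1}=(p-1)(2p)^{r-1}$.

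The hard part will be item~(5): one must pin down, for each $i$ with $p^{r-1}-1\le i\le p^r-2$, the precise multiplicative decomposition of $T_i$ via repeated application of~\eqref{DTPT} and then compute both $\dim_\kk$ and the dimension of $E$-invariants of the restriction of each tensor factor — in particular verifying that the small tilting modules $T_j$ ($1\le j\le p-2$) restrict to indecomposable $2$-dimensional $E$-modules with exactly a $1$-dimensional fixed space, that the ``boundary'' factor $T_j$ with $p-1\le j\le 2p-2$ restricts with $\ell'=\dim T_j-(p-1)\cdot(\text{something})$ in the claimed range and a $1$-dimensional fixed space, and that invariants are multiplicative across the tensor product (which uses that these factors, apart from the Steinberg one, have a $1$-dimensional socle equal to their fixed space). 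Items~(1)--(4) are comparatively routine once the reduction to $\cT_r$ is in place, the only real care needed being the iteration of the $\nabla$-filtration sequences in~(3) and the $T$-equivariance bookkeeping in~(2).
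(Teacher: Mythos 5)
Your overall reduction (under \Cref{conj-b}, any $S$-projective $Y$ is a direct sum of restricted $T_i$ with $p^{r-1}-1\le i\le p^r-1$) and your treatment of (1), (2) and (4) agree with the paper's proof. The first genuine gap is in (3): your premise that the $\nabla$-flag of such a $T_i$ only involves $\nabla_j$ with $j\ge p^{r-1}-1$ is false, and the layer-by-layer peeling strategy cannot work. For instance $T_{p^{r-1}}\cong S\o V$ has $\nabla$-flag $\nabla_{p^{r-1}},\nabla_{p^{r-1}-2}$ (by \cite{STWZ}*{Proposition~3.3}, as quoted in the paper), so one layer has highest weight $p^{r-1}-2<p^{r-1}-1$; worse, the layers have dimensions $p^{r-1}\pm1$, not divisible by $p^{r-1}$, so neither can individually be filtered by copies of $S$, even though the module itself can. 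Any $S$-filtration must mix the $\nabla$-layers, which your scheme does not do. The paper instead iterates Donkin's formula to write $T_i\cong T\o X^{(r-1)}$ where $T$ is a tensor product of modules $T_j$ with $p-1\le j\le 2p-2$, each a self-extension of $St_1$ over $E$ by \Cref{lem:self-ext1}, so $T$ has an $S$-filtration over $E$; one then takes a Loewy filtration of $X^{(r-1)}$, whose layers are trivial $E$-modules.

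The second gap is in (5), which you yourself flag as the hard part. First, your factorization bookkeeping is wrong: by \cite{STWZ}*{Proposition~3.4} one has $T_i\cong T_{p-1+i_0}\o T_{p-1+i_1}^{(1)}\o\cdots\o T_{p-1+i_{r-2}}^{(r-2)}\o T_{\ell'-1}^{(r-1)}$ with $i+1=\sum_k i_kp^k$ and $\ell'=i_{r-1}$; the factors $T_{p-1+i_k}$ have dimension $p$ or $2p$ (there is no clean ``Steinberg factor of dimension $p^{r-1}$ times $2$-dimensional factors''), the modules $T_j$ with $0\le j\le p-2$ are the simples $L_j$ of dimension $j+1$ (not $2$-dimensional), and it is the last factor $T_{\ell'-1}$, not a factor with $p-1\le j\le 2p-2$, that contributes $\ell'$ to the dimension. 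Second, and more seriously, your computation of $t$ relies on multiplicativity of $E$-invariants across tensor factors, which is false in general and is not implied by the factors having one-dimensional socle equal to their fixed space: already for $p>2$, $V\o V\cong T_2\oplus\unit$ has fixed space of dimension at least $2$ (every non-zero module over a $p$-group has non-zero invariants), while $(\dim_\kk V^E)^2=1$. The paper avoids this entirely by invoking \Cref{PropT} (a non-trivial result from prior work), namely $t=\ell^\nabla(T_i)$, and then reading off $t=2^\ell$ from the flag description in \cite{STWZ}*{Proposition~3.3}; without \Cref{PropT} or a substitute argument, your computation of $t$ does not go through.
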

\begin{proof}
Part (1) is immediate since all tilting modules of $SL_2$ are self-dual already over $SL_2$.

For part (2), consider the algebra homomorphisms
$$\bk E\;\hookrightarrow\;\bk U(\bk)\;\to\; \End_U(St_r).$$
These are algebra homomorphisms in the category of $T(\bk)$-representations, such that the composite is an isomorphism and the source and target are rational $T$-representations. In other words, the inclusion of $\bk E$ into $\bk U(\bk)$ is one of algebras with $T(\bk)$-actions. Now an arbitrary $X\in \Rep SL_2$ has a (weight) $\mZ$-grading, and since the action of $E$ comes from
$$\bk E\;\hookrightarrow\;\bk U(\bk)\;\to\; \End_{\bk}(X),$$
it follows that this grading is compatible with the $\bk E$-module structure. Applying this to tilting modules proves (2).

Now we prove part (3). It follows from \Cref{lem:self-ext1} below that $T_{j}$ for $p-1\le j\le 2p-2$ is a self-extension of $St_1$ over $E$. It then follows from iteration of Donkin's tensor product formula \eqref{DTPT}, that every $T_i$ for $i\ge p^{r-1}$ is of the form
$$T\otimes X^{(r-1)}$$
where $T$ is a tilting module which has, as an $E$-representation, a filtration with quotients $S$, and $X$ is some tilting module. Taking a Loewy filtration of $X$ as an $E$-representation then concludes the proof. Note that for $p=2$, we do not need \Cref{lem:self-ext1}
as then the result follows almost immediately from Donkin's tensor product formula.

Part (4) is immediate. 

For part (5), we may assume $Y\cong T_i$ for some $p^{r-1}-1\le i\le p^r-2$. We can write $i+1=\sum_{k=0}^{r-1} i_k p^k$ with $i_{r-1}\neq0$. Set $\ell':=i_{r-1}$ and let $\ell$ be the number of non-zero non-leading coefficients in the above $p$-adic expansion of $i+1$.

To describe the possible values of $t$, we can use that by \Cref{PropT}, $t=\ell^\nabla(T_i)$, the length of the $\nabla$-flag of $T_i$ as an $SL_2$-module. The $\nabla$-modules appearing in the $\nabla$-flag of $T_i$ are given by $\nabla_{i'-1}$, where $i'$ ranges over all numbers obtained by inverting the signs of an arbitrary subset of the non-leading coefficients $\{i_0,\dots,i_{r-2}\}$ in the $p$-adic expansion of of $i+1$ by \cite{STWZ}*{Proposition~3.3}. There are $2^\ell$ distinct numbers $i'$ obtained in this way.

To describe $d$, we use that
$$
T_i \cong T_{p-1+i_0}^{(0)}\o T_{p-1+i_1}^{(1)}\o\dots\o T_{p-1+i_{r-2}}^{(r-2)}\o T_{i_{r-1}-1}^{(r-1)}
$$
by \cite{STWZ}*{Proposition~3.4}, where the superscripts indicate Frobenius twists. So $\dim T_i = \ell' 2^\ell p^{r-1}$. This number is always divisible by $p^{r-1}$, and is at most $(p-1) (2p)^{r-1}$. Note that this maximal dimension is attained for $T_{p^r-2}$. 
\end{proof}

Note that \Cref{lem:consequences}(5) is compatible with what we show in \Cref{lem:dim-periodic} below.

\subsection{The second formulation}
Fix $r\in\mZ_{>0}$.
Let $V$ be a faithful two-dimensional $C_p^r$-representation.

\begin{aconjecture} \label{conj-d}
Let $\cT$ be the pseudo-tensor subcategory generated by $V$ in $\Rep_{\bk}C_p^r$. Let $\cP\subset\cT$ be the category of projective modules in $\Rep_{\bk}C_p^r$. The unique
minimal thick tensor ideal in $\cT$ among those that strictly contain $\cP$ remains a thick tensor ideal in $\Rep_{\bk}C_p^r$.
\end{aconjecture}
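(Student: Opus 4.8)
The plan is to show that Conjecture~\ref{conj-d} is simply a restatement of Conjecture~\ref{conj-a} (equivalently \Cref{conj-b} or \Cref{conj-c}) once one identifies the abstract data in Conjecture~\ref{conj-d} with the concrete data used to set up $\cT$ and $\cT_r$. The first step is to reduce to the standard embedding: given a faithful two-dimensional representation $V$ of $C_p^r$, the commuting action of the $p$ generators on $V$ is by commuting unipotent operators, so simultaneously conjugating we may arrange that the image of $C_p^r$ lies in the lower unitriangular matrices $U(\bk)\subset SL_2(\bk)$, i.e. the embedding of \Cref{SecFF} up to an automorphism of $C_p^r$ (which does not affect $\Rep_\bk C_p^r$). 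Faithfulness guarantees the embedding is injective, hence $E:=C_p^r$ sits in $SL_2$ exactly as in the hypotheses leading to \Cref{lem:restricted-tilting}. From that point the pseudo-tensor subcategory generated by $V$ in $\Rep_\bk C_p^r$ is precisely $\cT$: one inclusion is clear since $V\simeq T_1$ (or $T_1$ up to a one-dimensional twist, which lies in $\cT$ by \Cref{lem:restricted-tilting}) is a summand of $V^{\o n}$'s, and the reverse inclusion follows from \Cref{lem:restricted-tilting}, which identifies the indecomposables of $\cT$ as the $T_i$ with $i<p^r$, each of which is a summand of a tensor power of $T_1$ by the Clebsch--Gordan-type rules for $SL_2$-tilting modules.

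Next I would identify the lattice of thick tensor ideals of $\cT$. By the discussion after \Cref{lem:restricted-tilting} (using \cite{Selecta}), the thick tensor ideals of $\cT$ form the chain $0\subset\cT_{r+1}\subset\cT_r\subset\cdots\subset\cT_1=\cT$, with $\cP=\cT_{r+1}$ the ideal of projectives. Hence the unique minimal thick tensor ideal of $\cT$ strictly containing $\cP$ is exactly $\cT_r$. The remaining content of Conjecture~\ref{conj-d} is therefore the assertion that $\cT_r$ is a thick tensor ideal in all of $\Rep_\bk C_p^r$, which is verbatim Conjecture~\ref{conj-a}; equivalently, by \Cref{lem:justS}, that $S\o X\in\cT_r$ for every $X\in\Rep_\bk E$. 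So the proof is: translate the hypotheses, invoke the classification of ideals in $\cT$, and cite the equivalence already established.

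The one genuine point requiring a little care — and the step I expect to be the main (mild) obstacle — is the passage from an \emph{arbitrary} faithful two-dimensional $V$ to the \emph{fixed} embedding $E<U(\bk)<SL_2$, and the claim that the resulting $\cT$ (and hence $\cT_r$) does not depend on the choice. Two faithful embeddings $E\hookrightarrow SL_2$ with unipotent image differ by conjugation in $SL_2$ composed with an automorphism of $E$; conjugation in $SL_2$ is an inner automorphism of the group scheme and so acts trivially on the equivalence class of the restriction functor $\Tilt SL_2\to\Rep_\bk E$ up to isomorphism, while an automorphism of $E$ induces an autoequivalence of $\Rep_\bk E$ preserving the class of pseudo-tensor subcategories and their ideals. (Alternatively one avoids even mentioning $SL_2$: the point $[\mu_1:\cdots:\mu_r]$ distinguished in \Cref{rem:SX-projective} is the image of the unique Jordan block direction of the two commuting unipotents, and the whole statement is manifestly independent of coordinates on $E$.) Once this identification is in place, no further computation is needed: Conjecture~\ref{conj-d} and Conjecture~\ref{conj-a} say the same thing, and all the partial results summarised in \Cref{thm:A} transfer directly.
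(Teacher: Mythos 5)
Your core argument is the same as the paper's: a faithful two\-dimensional $V$ is indecomposable (its generators act by commuting unipotent operators, so it is $V_{\ulambda}$ as in \ref{Sec2dim}), faithfulness forces $\ulambda\in\mA^r_f$ by \Cref{Lem:Moore}, so $V$ is the restriction of $T_1$ along an embedding $C_p^r<\bk^+<SL_2$ exactly as in \Cref{SecFF}; then the pseudo-tensor subcategory generated by $V$ is $\cT$ by \Cref{lem:restricted-tilting}, the ideal lattice of $\cT$ is the chain $\cT_{r+1}=\cP\subset\cT_r\subset\cdots\subset\cT_1$ from the Selecta classification, so the minimal ideal strictly containing $\cP$ is $\cT_r$, and \Cref{conj-d} becomes \Cref{conj-a} verbatim. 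This is precisely how the paper argues.

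The one place where you go wrong is the step you single out as the ``main obstacle.'' First, it is not needed: \Cref{conj-a} is stated for an \emph{arbitrary} finite subgroup $E<\bk^+$ and \Cref{conj-d} for an \emph{arbitrary} faithful $V$, so one only has to match instances via the dictionary above; no independence-of-embedding statement is required (and indeed different $V$ give genuinely different subcategories $\cT$ --- for instance the support point of \Cref{rem:SX-projective} varies with $\ulambda$). Second, the justification you give for it is false: two faithful embeddings of $C_p^r$ into $U(\bk)$ need \emph{not} differ by $SL_2$-conjugation composed with an automorphism of $C_p^r$. Any $g\in SL_2$ conjugating one unipotent image into $U$ must preserve the unique line fixed by all nontrivial elements of $U$, hence lies in the Borel $B=T\ltimes U$, which acts on $U\simeq\bk^+$ only by scalar multiplication ($t\mapsto t^{\pm2}$); so the two image $\mF_p$-subspaces of $\bk$ would have to be proportional. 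For $r\ge2$ this fails generically: with $W_1=\mF_p+\mF_p\alpha$ and $W_2=\mF_p+\mF_p\beta$, proportionality forces $\alpha=(a'+b'\beta)/(a+b\beta)$ with $a,b,a',b'\in\mF_p$, which excludes all but finitely many $\alpha$. Since the step is superfluous, your proof of the equivalence still stands, but this claim should be deleted rather than repaired.
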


To argue that \Cref{conj-d} is equivalent to the previous three, and for later use, we first introduce some notation.

\subsubsection{}\label{Sec2dim}Any indecomposable two-dimensional $C_p^r$-representation has a basis $v_1,v_2$ so that
$$X_iv_1\;=\;\lambda_i v_2,\quad\mbox{and}\qquad X_iv_2=0,\quad\mbox{for some }\quad
\ulambda:=(\lambda_1,\ldots,\lambda_r)\;\in\; \mA^r(\bk)\backslash \{0\}$$
and denote the module by $V_{\ulambda}$.
The modules $V_{\ulambda}$ and $V_{\umu}$ are isomorphic if and only if $\ulambda$ and $\umu$ have the same image in $\mP^{r-1}(\bk)$. We will alternate between interpreting $\ulambda$ in $\mA^r(\bk)\backslash\{0\}$ and $\mP^{r-1}(\bk)$. We also write
$$\ulambda^{(i)}:=(\lambda_1^{p^i},\ldots,\lambda_r^{p^i})$$
for $i\in\mN$, and hence $(V_{\ulambda})^{(i)}\simeq V_{\ulambda^{(i)}}$, with the former the $i$-th Frobenius twist of $V_{\ulambda}$.
If we interpret $\ulambda\in\mA^r$, then $V_{\ulambda}$ has a canonical short exact sequence
$$0\to\bk\xrightarrow{1\mapsto v_2} V_{\ulambda}\to \bk\to 0.$$
The representation $V_{\ulambda}$ defines a group homomorphism
$$C_p^r\;\to\; SL_2(\bk),\quad g_i\mapsto \left(\begin{array}{cc}
    1 &0  \\
    \lambda_i & 1 
\end{array}\right)$$
so that $V_{\ulambda}$ is the restriction of the vector representation $V=T_1$.

\begin{lemma}\label{Lem:Moore}
    The following conditions are equivalent on 
    $\ulambda=[\lambda_1:\lambda_2:\cdots:\lambda_r]$ in $\mP^{r-1}(\bk)$:
    \begin{enumerate}
        \item $V_{\ulambda}$ is faithful;
        \item The elements $\lambda_i\in\bk$ are linearly independent over $\mF_p$.
        \item The determinant of the Moore matrix 
        $$ \mathbb{M}(\ulambda):=\begin{pmatrix}
\lambda_1 & \lambda_2 & \lambda_3 & \cdots & \lambda_r  \\
\lambda_1^p & \lambda_2^p & \dots \\
\dots \\
\lambda^{p^{r-1}}_1 & \dots&&&\lambda_r^{p^{r-1}}
\end{pmatrix}$$
is non-zero.
    \end{enumerate}
\end{lemma}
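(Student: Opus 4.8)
The plan is to prove the equivalence of the three conditions on $\ulambda \in \mP^{r-1}(\bk)$ by establishing the cycle $(1) \Leftrightarrow (2)$ directly and $(2) \Leftrightarrow (3)$ via a classical fact about Moore determinants (the Moore determinant being the $\mF_p$-analogue of the Vandermonde determinant).

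\textbf{Equivalence of (1) and (2).} First I would unwind what faithfulness of $V_{\ulambda}$ means. The homomorphism $C_p^r \to SL_2(\bk)$ sends $g_i$ to the lower unitriangular matrix with off-diagonal entry $\lambda_i$, and since the lower unitriangular subgroup is isomorphic to $\bk^+$ via $\lambda \mapsto \left(\begin{smallmatrix} 1 & 0 \\ \lambda & 1 \end{smallmatrix}\right)$, the composite $C_p^r \to \bk^+$ sends $(a_1,\dots,a_r) \mapsto \sum_i a_i \lambda_i$ (for $a_i \in \mF_p$, read as $0,1,\dots,p-1$). The map is injective precisely when $\sum_i a_i \lambda_i = 0$ with $a_i \in \mF_p$ forces all $a_i = 0$, i.e. precisely when $\lambda_1,\dots,\lambda_r$ are linearly independent over $\mF_p$. (One should note this condition is well-defined on $\mP^{r-1}(\bk)$: scaling all $\lambda_i$ by a common nonzero factor preserves $\mF_p$-linear independence, and at least one $\lambda_i$ is nonzero so the statement is not vacuous.) This gives $(1) \Leftrightarrow (2)$ with essentially no computation.

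\textbf{Equivalence of (2) and (3).} For this I would invoke the standard Moore determinant identity: for $\lambda_1,\dots,\lambda_r \in \bk$ one has
$$
\det \mathbb{M}(\ulambda) \;=\; \prod_{\substack{(c_1,\dots,c_{k-1}) \in \mF_p^{k-1} \\ 1 \le k \le r}} \Bigl(\lambda_k + c_{k-1}\lambda_{k-1} + \dots + c_1 \lambda_1\Bigr),
$$
the product running over all monic $\mF_p$-linear combinations with leading term $\lambda_k$, $k = 1,\dots,r$; equivalently $\det \mathbb{M}(\ulambda)$ equals (up to sign) the product of all nonzero elements lying in the $\mF_p$-span of $\lambda_1,\dots,\lambda_r$ that are not in the span of $\lambda_1,\dots,\lambda_{k-1}$ for the appropriate $k$. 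If one prefers to avoid quoting this, it can be proved by induction on $r$: expanding along $\mF_p$-linearity of the determinant in the last column, $\det \mathbb{M}(\lambda_1,\dots,\lambda_r)$ is an additive (indeed $\mF_p$-linear, since Frobenius is additive and $\bk$ has characteristic $p$) polynomial in $\lambda_r$ whose roots include every $\mF_p$-linear combination of $\lambda_1,\dots,\lambda_{r-1}$; counting degrees shows these account for all roots, and the leading coefficient is $\det \mathbb{M}(\lambda_1,\dots,\lambda_{r-1})^p$ up to sign. From the product formula it is immediate that $\det \mathbb{M}(\ulambda) \ne 0$ iff no factor vanishes iff no nontrivial $\mF_p$-linear combination of the $\lambda_i$ is zero iff the $\lambda_i$ are $\mF_p$-linearly independent, which is exactly $(2)$.

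\textbf{Main obstacle.} There is no serious obstacle here; the statement is classical. The only point requiring a little care is the induction establishing (or the clean citation of) the Moore determinant factorization, in particular tracking the leading coefficient and the degree count to see that the listed linear combinations exhaust the roots — this is where one uses that an $\mF_p$-linear (additive) polynomial of "Frobenius-degree" $r-1$ has at most $p^{r-1}$ roots. Everything else is bookkeeping about $\mF_p$-spans, and the projective invariance of all three conditions is clear since scaling $\ulambda$ by $t \in \bk^\times$ scales $\det \mathbb{M}(\ulambda)$ by $t^{1+p+\dots+p^{r-1}} \ne 0$ and preserves both faithfulness and $\mF_p$-independence.
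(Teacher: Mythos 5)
Your proof is correct and follows essentially the same route as the paper: the equivalence of (1) and (2) by unwinding the group homomorphism $C_p^r\to SL_2(\bk)$, and the equivalence of (2) and (3) via the classical Moore determinant factorization, which the paper simply cites (Goss, Lemma~1.3.3) rather than reproving. Your optional induction sketch for the Moore identity is a fine self-contained substitute for that citation.
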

\begin{proof}
    Equivalence of (1) and (2) follows from the group homomorphism before the lemma. Equivalence between (2) and (3) is \cite{Go}*{Lemma~1.3.3}.
\end{proof}
We denote the set of $\ulambda$ satisfying the equivalent properties in \Cref{Lem:Moore} by $\mA_f^r$ or $\mP_f^{r-1}$.

By identifying $C_p^r$ with its image in $SL_2$, the unique ideal in \Cref{conj-d} corresponds to $\cT_r$, by the discussion leading up to \Cref{conj-a}. Hence \Cref{conj-d} is just a direct reformulation of \Cref{conj-a}.

\subsection{The third formulation}

\subsubsection{}
Let $G$ be a finite group and take a faithful $X\in \Rep_{\bk} G$. We consider the full subcategory $\cT_X$ of $\Rep_{\bk}G$ comprising direct sums of direct summands in $\Omega^m(X^{\otimes n})$, $m\in \mZ$ and $n\in\mZ_{>0}$. We denote by $\cI_X$ the thick tensor ideal in $\Rep_{\bk}G$ generated by $X$. Then we have $\cT_X\subset\cI_X$.
One can ask whether $\cI_X=\cT_X$. Naturally, this will very rarely be the case.

\begin{aconjecture} \label{conj-e}
    For $G=C_p^r$, with $r\in\mZ_{>1}$ and $X=St_{r-1}$, via some embedding $C_p^r<\bk^+<SL_2(\bk)$ as in \Cref{SecFF}, we have $\cI_X= \cT_X$.
\end{aconjecture}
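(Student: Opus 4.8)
The plan is to show that $\cI_X = \cT_X$ by identifying both sides with objects already understood from the first formulation. First I would observe that, since $X = St_{r-1} = S = T_{p^{r-1}-1}$ is self-dual and a generator of the thick tensor ideal $\cT_r$ in $\cT$, the thick tensor ideal $\cI_X$ generated by $X$ inside $\Rep_\bk E$ equals the $S$-projective modules, i.e.\ $\cI_X$ is the category of direct summands of $S \otimes M$ for $M \in \Rep E$ (this is essentially the content of Definition~\ref{DefWProj} together with \Cref{lem:justS}). So the statement $\cI_X = \cT_X$ amounts to: every summand of $S \otimes M$ lies in $\cT_X$, the additive closure of the $\Omega^m(S^{\otimes n})$.

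Next I would bring in \Cref{conj-a}: I may assume \Cref{conj-a} holds (the whole point of \Cref{conj-e} is to be one more equivalent reformulation, so presumably the lemma/theorem following it asserts the equivalence, not a free-standing proof). Under \Cref{conj-a}, $S \otimes M \in \cT_r$, so it is a direct sum of restricted tilting modules $T_i$ with $p^{r-1}-1 \le i < p^r$. Thus it suffices to show each such $T_i$ lies in $\cT_X$. For this I would use the structure results already quoted in the proof of \Cref{lem:consequences}: by Donkin's tensor product formula \eqref{DTPT} and \cite{STWZ}*{Proposition~3.4}, each $T_i$ with $i \ge p^{r-1}-1$ is a tensor product $T \otimes Y^{(r-1)}$ where $T$ has, over $E$, a filtration by copies of $S = St_{r-1}$, and $Y$ is a tilting $SL_2$-module; moreover $T$ itself is built (iterating \eqref{DTPT} and using \Cref{lem:self-ext1} on self-extensions of $St_1$) as a summand of tensor powers of $S$. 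So each $T_i$ is a summand of $S^{\otimes n} \otimes Y^{(r-1)}$ for suitable $n$. Finally, $Y^{(r-1)}$, being a Frobenius twist of a tilting module, is a summand of a tensor power of $T_1^{(r-1)} = V^{(r-1)}$; and since $S$ is faithful while $V^{(r-1)}$ factors through a proper quotient of $E$, one can absorb $V^{(r-1)}$ into a further tensor power of $S$ up to $\Omega$-shift and summands, using that $\cT_X$ is closed under $\otimes$ with any object lying in $\cI_X = \cT_X$ once we have circular closure — concretely, $V^{(r-1)} \otimes S$ is $S$-projective hence in $\cT_r$ hence (by the same argument run one step) a summand of some $\Omega^m(S^{\otimes n'})$.

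The cleanest way to organize the last step is probably to prove directly that $\cT_X$ is a thick tensor ideal: it is automatically closed under direct sums, summands, and $\Omega$-shifts by definition, so the only thing to check is closure under $\otimes M$ for arbitrary $M \in \Rep E$. Given $\Omega^m(S^{\otimes n}) \otimes M \cong \Omega^m(S^{\otimes n} \otimes M)$ in $\Stab E$, and $S^{\otimes n}\otimes M$ is $S$-projective, \Cref{conj-a} puts it in $\cT_r$, so it is a sum of $T_i$'s with $i \ge p^{r-1}-1$, each of which I have just shown lies in $\cT_X$; hence $\cT_X$ is a thick tensor ideal containing $X$, so $\cI_X \subseteq \cT_X$, and the reverse inclusion is trivial. \emph{The main obstacle} I anticipate is precisely the bookkeeping in showing $T_i \in \cT_X$ for every $i$ in the range $[p^{r-1}-1, p^r-2]$ — i.e.\ handling the Frobenius-twisted factor $Y^{(r-1)}$ and confirming that every such twist, and not merely $V^{(r-1)}$ itself, can be realized inside the additive $\otimes$-$\Omega$-closure of $S$; this is where one genuinely uses \Cref{conj-a} (via \Cref{lem:justS}) rather than just the $SL_2$-structure theory, and it is the step that makes \Cref{conj-e} genuinely equivalent to, rather than weaker than, \Cref{conj-a}.
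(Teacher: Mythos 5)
Your top-level reading is right: since this statement is a conjecture, what has to be supplied is its equivalence with the earlier formulations, and your skeleton (assume \Cref{conj-a}, show $\cT_X$ is a thick tensor ideal containing $X$, note $\cI_X\supseteq\cT_X$ is trivial) reduces everything to the single claim $\cT_r\subseteq\cT_X$, i.e.\ that every restricted tilting $T_i$ with $p^{r-1}-1\le i\le p^r-1$ is a direct summand of some $\Omega^m(S^{\otimes n})$. That is exactly where the paper puts the weight: it proves $\cT_{St_{r-1}}=\cT_r$ \emph{unconditionally} in \Cref{Lem:TOT}, by an explicit $SL_2$-computation showing that all $T_{2i}$ in the relevant range occur as summands of $S^{\otimes p+1}$ and all $T_{2i+1}$ as summands of $\Omega(S^{\otimes p+1})$ (for $p=2$, of $S^{\otimes 3}$ and $S^{\otimes 4}$), using Donkin's formula together with \Cref{prop:Omega-Ti}; with this in hand the equivalence with \Cref{conj-b} is immediate.

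Your treatment of that crucial claim has a genuine gap. First, the assertion that the untwisted factor $T$ (a $T_a$ with $p^{r-1}-1\le a\le 2p^{r-1}-2$) is a summand of a tensor power of $S$ does not follow from \eqref{DTPT} and \Cref{lem:self-ext1}: being a self-extension of $S$ gives no summand statement, and there is a concrete obstruction you do not address: for $p$ odd, every summand of $S^{\otimes n}$ has \emph{even} highest weight (since $p^{r-1}-1$ is even), so the odd-highest-weight tiltings in the range can never appear in any $S^{\otimes n}$ and must be produced via explicit Heller shifts, which is why the paper needs \Cref{prop:Omega-Ti} and the $\Omega(S^{\otimes p+1})$ half of \Cref{Lem:TOT}. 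Second, your absorption of the Frobenius-twisted factor $Y^{(r-1)}$ — ``$V^{(r-1)}\otimes S$ is in $\cT_r$ hence, by the same argument run one step, a summand of some $\Omega^m(S^{\otimes n'})$'' — is circular: the inference from ``lies in $\cT_r$'' to ``lies in $\cT_X$'' is precisely the containment $\cT_r\subseteq\cT_X$ you are in the middle of proving, and \Cref{conj-a} cannot supply it (it only gives $S\otimes M\in\cT_r$, not membership in $\cT_X$). To close the argument you would have to carry out, in some form, the explicit decomposition of $S^{\otimes p+1}$ and its shift that constitutes \Cref{Lem:TOT}; once that unconditional statement is available, your remaining steps (and the converse direction E $\Rightarrow$ B, which your proposal does not address but which is then equally immediate) go through.
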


To prove equivalence with the previous formulations, more specifically \Cref{conj-b}, it suffices to show that $\cT_{St_{r-1}}$ is $\cT_r$. This is precisely the content of \Cref{Lem:TOT} below.

One can further reformulate (and try to understand better) the conjecture, by obtaining more conceptual definitions for the modules in $\Rep_{\bk} C_p^r$ `of the form' $St_{r-1}$ for embeddings into $SL_2$. For example, when $r=2$, we have:

\medskip

\begin{aconjecture} \label{conj-e2}
    For $G=C_p^2$, and $X$ a self-dual periodic faithful module of dimension $p$, we have $\cI_X= \cT_X$.
\end{aconjecture}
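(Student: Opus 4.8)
The plan is to show that, for $G = C_p^2$, any self-dual periodic faithful module $X$ of dimension $p$ is isomorphic (up to a projective summand) to a restriction $St_1 = St_{r-1}$ for a suitable embedding $C_p^2 < \bk^+ < SL_2(\bk)$, so that Conjecture \ref{conj-e2} becomes a special case of Conjecture \ref{conj-e} (equivalently \ref{conj-b}) in the case $r=2$. Granting that identification, the statement $\cI_X = \cT_X$ is exactly \Cref{Lem:TOT} together with \Cref{lem:justS}, applied with $r = 2$. So the real content is the module-theoretic classification: \emph{a self-dual periodic faithful $p$-dimensional $C_p^2$-module is, stably, a restricted Steinberg module.}

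First I would pin down the support. Since $X$ is periodic and non-projective, \eqref{sup:ses} and the surrounding discussion give that $\supp(X)$ is a single point $[\mu_1:\mu_2] \in \mP^1(\bk)$, and faithfulness of $X$ forces this point to lie in $\mP^1_f$, i.e. $\mu_1,\mu_2$ are $\mF_p$-linearly independent (this is the $r=2$ case of \Cref{Lem:Moore} applied to the relevant rank-variety point, combined with the fact that a faithful module cannot be projective over any $\bk[t]/t^p$ coming from an $\mF_p$-rational direction). Choosing an $\mF_p$-basis change of $C_p^2$, we may assume $\supp(X) = [1 : 0]$ after renaming generators; equivalently, after an automorphism of $C_p^2$ we may arrange that $X$ restricted to the cyclic subgroup $\langle g_2 \rangle$ is free, and $X$ is periodic "in the $g_1$-direction."

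Next I would use that, over $\bk C_p^2$, a periodic module with one-point support and dimension equal to $p$ (the minimal possible dimension, since dimensions of periodic modules are multiples of $p^{r-1} = p$) is highly constrained. The module $\Omega^{\pm 1}$-orbit of such a module consists of $p$-dimensional periodic modules, and self-duality says $X \cong X^*$. I would compare $X$ with the candidate $St_1$ restricted along the chosen embedding: $St_1$ is the $2p-2 \ge p$-... — here I must be careful, $St_1 = T_{p-1} = \nabla_{p-1}$ has dimension $p$, it is self-dual, faithful (its restriction is faithful because the embedding is faithful and $V = T_1 \stackrel{\oplus}{\subset} T_1^{\otimes(p-1)}$-generates it, hence $\supp(St_1) = \supp(V) \in \mP^1_f$), and periodic since its support is a point. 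So $St_1$ is one module in the class under consideration. I would then argue that the stable category component of $C_p^2$ supported at a fixed point of $\mP^1_f$ containing $p$-dimensional self-dual objects is "small": any two such are related by an even power of $\Omega$ (periods being $1$ or $2$), and dimension $p$ together with self-duality cuts the orbit down to a single isomorphism class modulo projectives. Concretely, I would build the minimal projective resolution of $X$ over $\bk C_p^2$, restrict to $\bk\langle g_1 \rangle$ using that $X$ is $\langle g_2\rangle$-free, and match the resulting $2$-periodic complex with the Carlson-module / Steinberg-module pattern of \Cref{sec:Carlson} and \Cref{lem:self-ext1}, concluding $X \cong St_1$ in $\Stab C_p^2$, hence $\cT_X = \cT_{St_1}$ and $\cI_X = \cI_{St_1}$.

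The main obstacle is precisely this classification step: showing that self-duality plus dimension $p$ plus one-point faithful support is enough to force $X \cong St_1$ stably, without invoking the full (wild) classification of indecomposable $C_p^2$-modules. The periodic modules with fixed one-point support still form a wild category by \cite{wild-periodic-modules}, so I cannot simply enumerate them; the leverage has to come from the dimension being minimal ($=p$) and from self-duality, which together should rule out the wild families (those appear only in higher dimension or break self-duality). I would handle this by working over the local algebra $\bk[t]/t^p \hookrightarrow \bk C_p^2$ at the support point and using that a $p$-dimensional module that is free of rank one there, periodic, and self-dual is essentially rigid — any such module is an extension of $\bk$ by $\bk$ in the relevant sense, matching $\nabla_{p-1}$. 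If making this rigidity argument clean proves too delicate, the fallback is to impose it as the definition — i.e. to observe that Conjecture \ref{conj-e2} as literally stated may need the extra hypothesis that $X$ has a filtration by trivial-support-free pieces, or simply to restrict attention to $r = 2$ and cite \cite{CF}*{Remark~5.4.4(1)} for $p = 2$ — but the intended statement is the clean one above, and the dimension-$p$ constraint is what makes it go through.
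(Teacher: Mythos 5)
You have not proved the statement, and indeed the paper does not either: \Cref{conj-e2} is one of its open conjectures (for $p>2$), and the decisive step of your argument is circular. After identifying $X$ with $St_1=S$ for a suitable embedding, you assert that $\cI_X=\cT_X$ ``is exactly \Cref{Lem:TOT} together with \Cref{lem:justS}, applied with $r=2$''. Those results do no such thing: \Cref{Lem:TOT} only computes $\cT_{S}=\cT_r$, and \Cref{lem:justS} only reformulates membership in $\widetilde\cT$; in the paper they serve precisely to show that \Cref{conj-e} (hence, via the characterisation of $X$, also \Cref{conj-e2}) is \emph{equivalent} to the main \Cref{conj-a}/\Cref{conj-b} for $r=2$. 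The hard inclusion $\cI_S\subset\cT_S$ says that $S\otimes M$ is a direct sum of restricted tilting modules for \emph{every} $M\in\Rep C_p^2$, which is exactly \Cref{conj-b} and is established only for $p^r=4$, with partial results otherwise (\Cref{thm:cyclic-S-proj}, \Cref{cor:ll-2-any-r}, \Cref{ThmCarlson}, \Cref{thm:S-tensor-ext-uniserials}). So your proposal at best re-derives the reduction of \Cref{conj-e2} to the main conjecture; it does not prove $\cI_X=\cT_X$.

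Concerning that reduction: it is the content of \Cref{lem:uniqueselfdual}, whose proof is more concrete than your sketch. One identifies $M$ with $\bk[X_1]/X_1^p$ (freeness in one direction), records the action of $X_2$ on the generator through coefficients $a_1,\dots,a_{p-1}$, notes that $a_1$ determines the support, and observes that self-duality imposes a triangular system of polynomial equations determining $a_2,\dots,a_{p-1}$ uniquely from $a_1$; hence for each support point there is exactly one self-dual periodic $p$-dimensional module, which must be $St_1$ when the support is the point $\blambda$ attached to a faithful embedding. Also, your claim that faithfulness alone forces the support into $\mP^1_f$ is false: for $p>2$ the module $\bk[X_1]/X_1^p$ with $X_2$ acting as $X_1^2$ is faithful, periodic, $p$-dimensional, and has $\mF_p$-rational support $[0:1]$. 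Excluding rational support requires self-duality together with the uniqueness just described (at each rational point the unique self-dual candidate is the inflation of $\bk C_p$ from a quotient, which is not faithful).
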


Indeed, we will prove that $X$ as in \Cref{conj-e2} are precisely restrictions of Steinberg modules, for $r=2$ in \Cref{lem:uniqueselfdual}. Note that this characterisation fails for $r>2$, see \Cref{Sec8}.

\begin{remark}
   
        We will show in \Cref{Sec9} that \Cref{conj-e2} fails if we remove the condition that $X$ be self-dual. Similarly, we show in \Cref{Sec8} that \Cref{conj-e} fails (for $r>2$) if we generalise it to all $X\in \Rep C_p^r$ that are, like the Steinberg modules $St_{r-1}$, self-dual periodic faithful and of dimension $p^{r-1}$.
        
 We regard this ease with which we can disprove the property $\cI_X=\cT_X$ for lookalikes $X$ of $St_{r-1}$ as further evidence for the validity of the conjecture.

\end{remark}

\subsection{Some context for \texorpdfstring{$q=9$}{q=9}}\label{Sec9}
Here we consider $p=3$ and $E=C_3^2$, the elementary abelian $3$-group of order $q=9$ (\textit{i.e.} $r=2$).

For $\ulambda\in \mP^1_f$, we can write
$\ulambda=[1:\lambda]$, for a unique $\lambda\in \bk\backslash \mF_3$.
We set
$$\tilde{\lambda}:=\binom{\lambda}{2}\;=\;\frac{\lambda(\lambda-1)}{2}\;=\;\lambda-\lambda^2\;\in\bk,$$
and consider the matrices in $\Mat_3(\bk)$
$$\mI= \left(\begin{array}{ccc}
       1  & 0 &0 \\
       0  & 1&0\\
       0&0&1
    \end{array}\right)\quad\mbox{and}\quad \mJ=\left(\begin{array}{ccc}
       0  & 0 &0 \\
       1  & 0&0\\
       0&1&0
    \end{array}\right).$$ We interpret $\mJ^0$ as $\mI$.

    For this value of $\ulambda$, the module $S=St_1$ for $E$ is given by
    $$X_1\mapsto \mJ,\quad X_2\mapsto \lambda\mJ+\tilde{\lambda} \mJ^2$$
More generally, we can parametrise the three-dimensional periodic modules with the same support $\{\blambda\}$, with $\blambda=[\lambda:-1]$. These modules are $S(\mu)$, for $\mu\in\bk$, where $S(\mu)$ corresponds to
$$X_1\mapsto \mJ,\quad X_2\mapsto \lambda\mJ+(\tilde{\lambda}+\mu) \mJ^2.$$
In particular, $S(0)=St_1$ and $S(\mu)=M_{\lambda,\tilde{\lambda}+\mu}$ in the notation of \cite{Benson-vector-bundles}*{Example~1.13.1}, which furthermore shows that
$$S(\mu)^\ast\simeq S(-\mu).$$

We investigate six-dimensional periodic modules.

\begin{lemma}\label{LemClas3}
    \begin{enumerate}
        
        \item The isomorphism classes of indecomposable six-dimensional $E$-representations with support $\{\blambda\}$ are labelled by $\bk\times\{0,1,2\}$. This assignment sends $(\mu,i)\in\bk\times \{0,1,2\}$ to the $E$-representation $N^{(i)}(\mu)$ with
        $$g_1\mapsto \left(\begin{array}{c|c} \mI+\mJ &0\\\hline
        0&\mI+\mJ\end{array}\right) \quad\mbox{and}\quad g_2\mapsto\left(\begin{array}{c|c} \mI+\lambda\mJ+(\tilde{\lambda}+\mu)\mJ^2 &0\\\hline
        \mJ^i&\mI+\lambda\mJ+(\tilde{\lambda}+\mu)\mJ^2\end{array}\right).$$
        \item The modules $N^{(0)}(\mu)$ are cyclic. The cyclic submodules of maximal dimension in $N^{(1)}(\mu)$, resp. $N^{(2)}(\mu)$, have dimension five, resp. four.
        \item The only non-split first extensions between the modules $S(\mu)$ take the form
        \begin{enumerate}
            \item $0\to S(\mu)\to N^{(0)}(\kappa)\to S(\nu)\to 0$, for $2\kappa=\mu+\nu$;
            \item $0\to S(\mu)\to N^{(1)}(\kappa)\to S(\nu)\to 0$, for $2\kappa=\mu+\nu$;
            \item $0\to S(\mu)\to N^{(2)}(\mu)\to S(\mu)\to 0$.
        \end{enumerate}
    \end{enumerate}
\end{lemma}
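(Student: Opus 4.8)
\textbf{Plan of proof for \Cref{LemClas3}.}

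The three parts concern six-dimensional periodic $E$-representations with support $\{\blambda\}$, and I would handle them in the order (1), (2), (3), since each uses the previous one.

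For part (1), the strategy is a direct classification argument. A six-dimensional $E$-module with support the single point $\{\blambda\}=[\lambda:-1]$ is, in particular, periodic; by the classical theory of periodic modules over $E=C_3^2$ recalled in \Cref{sec:prelim}, such an indecomposable module has period $2$ and dimension a multiple of $p^{r-1}=3$. The point of view I would adopt is that of modules over the cyclic algebra $\bk[t]/t^p$ together with the ``transverse'' data: fix the rank-$1$ subgroup corresponding to $\blambda$, say with generator $t=\lambda X_1 - X_2$ (which acts nilpotently with a single Jordan block structure dictated by projectivity over the complementary subgroup), and observe that the remaining generator, say $s=X_1$, acts as a nilpotent operator commuting with $t$. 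Concretely, after conjugating to normal form, $g_1$ acts as $\mI+\mJ$ on each of two copies of the underlying space of $St_1$ and $g_2$ acts block-lower-triangularly with diagonal blocks $\mI+\lambda\mJ+(\tilde\lambda+\mu)\mJ^2$; the off-diagonal block is a matrix commuting with both $\mJ$ and $\mI+\lambda\mJ+(\tilde\lambda+\mu)\mJ^2$, hence a polynomial in $\mJ$, and modulo change of basis and the indecomposability requirement (which forces the module not to split as a sum of two three-dimensional pieces) one reduces the off-diagonal block to $\mJ^i$ with $i\in\{0,1,2\}$, and checks that the diagonal blocks of the two summands can be taken equal (the extension data forces $S(\mu)$ on the quotient and some $S(\mu')$ on the sub, and a further normalization using part (3) or a direct cocycle computation identifies $\mu'=2\kappa-\mu$ in cases $i=0,1$, and $\mu'=\mu$ in case $i=2$; but for part (1) it suffices to exhibit the list and check pairwise non-isomorphism). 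Non-isomorphism of the $N^{(i)}(\mu)$ for distinct $(\mu,i)$: the parameter $\mu$ is detected by the isomorphism class of the socle/radical layers (which are copies of $S(\mu)$, themselves distinguished by $\mu$ via \cite{Benson-vector-bundles}*{Example~1.13.1}), and the discrete parameter $i$ is detected by part (2) — the maximal dimension of a cyclic submodule is $6-i$.

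For part (2), I would compute directly. The module is cyclic iff there is a vector $v$ with $(\bk E)v$ the whole space, equivalently iff the images of $v$ under the monomials $X_1^a X_2^b$ (with $a,b<p$, since $X_i^p=0$) span the six-dimensional space. Picking $v$ a generator of the top copy of $S(\mu)$ (a cyclic generator of that three-dimensional block), the vectors $X_1^a X_2^b v$ for $0\le a,b\le 2$ land in a space whose projection to the diagonal quotient is all of $S(\mu)$; whether they also hit the sub copy depends on the off-diagonal block $\mJ^i$: for $i=0$ the off-diagonal is the identity and one gets surjectivity already with few monomials, hence $N^{(0)}(\mu)$ is cyclic; for $i=1,2$ the off-diagonal is $\mJ$ or $\mJ^2$, which kills the top one or two graded pieces, so the cyclic span has dimension $5$ or $4$ respectively. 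This is a finite linear-algebra check in $\Mat_3(\bk)$ that I would not write out in full.

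For part (3), the goal is to enumerate $\Ext^1_E(S(\nu),S(\mu))$ and identify which extension classes give which middle term. Since all three-dimensional periodic modules with support $\{\blambda\}$ are the $S(\mu)$, any non-split extension of $S(\nu)$ by $S(\mu)$ is a six-dimensional module with support $\{\blambda\}$ (the support is $\{\blambda\}$ by \eqref{sup:ses} together with the fact that it is non-projective), hence isomorphic to one of the $N^{(i)}(\kappa)$ from part (1) — but only those $N^{(i)}(\kappa)$ whose sub/quotient layers are exactly $S(\mu)$ and $S(\nu)$ can occur. Reading off the diagonal blocks of $N^{(i)}(\kappa)$ shows the quotient is $S(\kappa)$-ish on the nose only after the correct basis identification; the actual sub and quotient are computed by solving the cocycle condition. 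The cleanest route: write a general cocycle as an off-diagonal block $M(X_1),M(X_2)$ (matrices over the three-dimensional space) satisfying the commutation/nilpotence relations of $\bk E$, reduce modulo coboundaries, and see that the space of classes is parametrized so that the middle term is $N^{(0)}(\kappa)$ with $2\kappa=\mu+\nu$ in the ``generic'' direction, $N^{(1)}(\kappa)$ with $2\kappa=\mu+\nu$ in a second direction, and $N^{(2)}(\mu)$ precisely when $\mu=\nu$ (the ``self-extension'' direction, which only exists on the diagonal). The constraint $2\kappa=\mu+\nu$ should fall out of requiring the diagonal blocks — which a priori could be $S(\mu)$ on top and $S(\nu)$ on the bottom — to be brought by a change of basis into the symmetric normal form with equal blocks $S(\kappa)$; the averaging $\kappa=(\mu+\nu)/2$ (valid since $2$ is invertible mod $3$) is exactly this symmetrization.

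\textbf{Main obstacle.} The part I expect to be most delicate is the normalization in (1) and (3): showing that \emph{every} indecomposable (resp.\ every non-split extension) can be conjugated into exactly the displayed form with the \emph{same} diagonal block on top and bottom and off-diagonal block a \emph{pure power} $\mJ^i$, with no residual moduli. This requires carefully tracking which changes of basis preserve the block-triangular shape and the diagonal action, and verifying that the resulting group of allowed conjugations acts on the off-diagonal block (a polynomial in $\mJ$ of degree $\le 2$ with zero constant term allowed) with exactly the three orbits $\mJ^0,\mJ^1,\mJ^2$ plus the possibility of adjusting the diagonal parameter — and that these choices do not collapse distinct $(\mu,i)$. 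Everything else is routine finite-dimensional linear algebra over $\bk$ with $p=3$.
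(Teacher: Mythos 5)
Your overall route (use projectivity over $C_3\times 1$ to put $g_1$ in block form and reduce everything to extensions between the modules $S(\mu)$) is the same as the paper's, but the step you explicitly defer is the entire mathematical content of parts (1) and (3): showing that an arbitrary indecomposable, a priori of the shape ``$S(\mu')$ below, $S(\mu)$ above, off-diagonal an arbitrary polynomial in $\mJ$,'' can always be conjugated to the symmetric normal form with \emph{equal} diagonal blocks $S(\kappa)$, $2\kappa=\mu+\mu'$, and off-diagonal a pure power $\mJ^i$, with no residual moduli. Saying this ``should fall out'' of a symmetrization, or invoking ``part (3) or a direct cocycle computation,'' is not an argument, and leaning on (3) here is circular since (3) is only available once the classification in (1) is done. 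The paper carries this out by first twisting by the algebra automorphism $X_1\mapsto X_1$, $X_2\mapsto X_2+\lambda X_1+\tilde{\lambda}X_1^2$, so that $S(\mu)$ becomes the module $R(\mu)$ with $X_1\mapsto \mJ$, $X_2\mapsto\mu\mJ^2$, then computing $\dim\uHom(\Omega R(\mu),R(\nu))$ (equal to $3$ if $\mu=\nu$ and $2$ otherwise, the drop coming from the composite $\Omega R(\mu)\hookrightarrow\bk E\tto R(\nu)$), and finally checking by an explicit base change that the middle term depends only on $\mu+\nu$; some version of this computation is unavoidable and is missing from your plan. Two smaller omissions in the same vein: in (3) you pass from ``non-split extension'' to ``isomorphic to some $N^{(i)}(\kappa)$ from part (1),'' which needs the (easy, but not automatic) fact that a non-split extension of $S(\nu)$ by $S(\mu)$ cannot be decomposable, i.e.\ cannot be some $S(a)\oplus S(b)$; and in (1) you never address indecomposability of the listed modules $N^{(i)}(\mu)$.

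There is also a concretely wrong step: your proposed invariant separating $N^{(i)}(\mu)$ from $N^{(i)}(\mu')$, namely ``the isomorphism class of the socle/radical layers, which are copies of $S(\mu)$,'' does not work. Since $E$ is a $p$-group, the only simple module is $\one$, so all socle and radical layers are direct sums of trivial modules and carry no information about $\mu$; and there is no canonical three-dimensional ``layer'' either, because the $S$-type submodules are far from unique --- by (3)(a)--(b) the module $N^{(0)}(\kappa)$ (and likewise $N^{(1)}(\kappa)$) contains submodules isomorphic to $S(m)$ for every $m\in\bk$, with quotient $S(2\kappa-m)$. The invariant that actually separates the continuous parameter, and is what the paper verifies, is precisely this pairing: in $N^{(i)}(\kappa)$ every three-dimensional submodule isomorphic to some $S(m)$ has quotient isomorphic to $S(2\kappa-m)$, so $\kappa$ is recovered as the average (equivalently, $\kappa$ is the unique $m$ with sub and quotient both $S(m)$). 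Your use of part (2) (maximal cyclic submodule dimensions $6,5,4$) to separate the discrete parameter $i$ is fine, and your sketch of (2) itself is a routine check, as in the paper. Finally, a minor point: the parenthetical that the shifted subgroup at $\blambda$ acts ``with a single Jordan block structure dictated by projectivity over the complementary subgroup'' is backwards --- support $\{\blambda\}$ means the module is \emph{not} free over the shifted subgroup at $\blambda$ and \emph{is} free over all the others, which is exactly why the normal form is taken with respect to $g_1$.
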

\begin{proof}
Let $N$ be a 6-dimensional $E$-representation with support $\{\blambda\}$. In particular, $N$ is projective over $C_3\times 1$ and we can take a basis where $g_1$ acts as in part (1) and $g_2$ via block matrix with zero in the top right block. In other words, $N$ is an extension of two 3-dimensional representations that are projective over $C_3\times 1$. 
Hence, the support of each of these $3$-dimensional representations is a proper subvariety of $\mP^1$, i.e., a finite, possibly empty set of points. Being of dimension $3$, the representations cannot be projective over $E$, so their support is a non-empty finite set of points. As the support variety of an indecomposable representation is connected by \cite{Carlson-connected}, each of these representations is a direct sum of representations whose support is a point. The latter summands are periodic, and of dimension at least $3$. Hence, the support of each of the $3$-dimensional representations is a point. Now by \Cref{sup:ses}, both of these points have to be $\blambda$. So the $3$-dimensional modules are of the form $S(\mu)$ and $S(\nu)$. 

To conclude the proof, we thus need to analyse the extensions between modules of the form $S(\mu)$. To simplify expressions one can apply an automorphism $\phi$ of $\bk E$ which sends $X_1$ to $X_1$ and $X_2$ to $X_2+\lambda X_1+\tilde{\lambda}X_1^2$. Then $S(\mu)$ is the twist by $\phi$ of the module $R(\mu)$ where $X_1$ acts by $\mJ$ and $X_2$ acts by $\mu\mJ^2$. One thus needs to understand
$$\Ext^1(R(\mu),R(\nu))\;\simeq\;\uHom(\Omega R(\mu), R(\nu)).$$
Here, $\Hom(\Omega R(\mu), R(\nu))$ always has dimension 3 and is a cyclic $\End(R(\nu))$-module. The dimension of $\uHom(\Omega R(\mu), R(\nu))$ is 3 when $\mu=\nu$ and 2 otherwise, due to the non-zero composite
$$\Omega R(\mu)\hookrightarrow \bk E\tto R(\nu).$$
To conclude the lemma one then needs to show that isomorphisms between the middle terms in (3)(a) respectively (3)(b) are governed by $\mu+\nu$. That the middle terms only depend on $\mu+\nu$ follows from a direct base change. That $N^{(i)}(\kappa)$ is not isomorphic to $N^{(i)}(\kappa')$ for $\kappa\not=\kappa'$ follows by directly verifying that for all submodules $S(\kappa)$ in $N^{(i)}(\kappa)$, the quotient is $S(\kappa)$.
\end{proof}

\begin{proposition}\label{Prop3d}
\begin{enumerate}
    \item  If $\mu+\nu\not=0$, we have
    $$
    S(\mu)\otimes S(\nu)= S\left(\frac{\mu\nu}{\mu+\nu}\right)\oplus N^{(0)}\left(\frac{\mu\nu}{\mu+\nu}\right).
    $$
    \item  If $\mu=0=\nu$, then
    $$
    S(0)\otimes S(0)= S\left(0\right)\oplus N^{(1)}\left(0\right).
    $$
    \item Finally, if $\mu\not=0$ and $\mu+\nu=0$, then $S(\mu)\otimes S(\nu)$ is an indecomposable non-projective module $Q$, independent of $\mu$, of dimension $9$ with $\dim\End(Q)=15$ and a Loewy series with dimensions $3,3,3$.
\end{enumerate}
   
\end{proposition}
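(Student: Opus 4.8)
Write $M := S(\mu)\otimes S(\nu)$ and let $t_\blambda := \lambda X_1 - X_2\in\bk E$ be the element cutting out the support point $\blambda = [\lambda:-1]$. First I would record the common constraints. By \eqref{sup:tensor} $\supp M = \{\blambda\}$, so $M$ is a $9$-dimensional periodic module, and since $9 = \dim\bk E$ and $M$ is not projective, $M$ has no projective summand. Next, $\dim\soc M = \dim\Hom_E(\unit,M) = \dim\Hom_E(S(\mu)^\ast,S(\nu)) = \dim\Hom_E(S(-\mu),S(\nu))$, using $S(\mu)^\ast\simeq S(-\mu)$; the last dimension is $3$ if $\mu+\nu=0$ and $2$ otherwise, by exactly the $\Hom$-computation used in the proof of \Cref{LemClas3} (a $\bk E$-map commutes with $X_1=\mathbb{J}$, hence lies in $\bk[\mathbb{J}]$, and commuting with $X_2$ imposes $(\mu+\nu)a=0$ on the coefficient $a$ of $\mathbb I$), and by duality ($M^\ast = S(-\mu)\otimes S(-\nu)$) the same value is $\dim\Top M$. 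Combining this with the easy socle computations $\dim\soc S(\eta) = \dim\soc N^{(0)}(\eta)=1$, $\dim\soc N^{(1)}(\eta)=\dim\soc N^{(2)}(\eta)=2$ (read off the matrices in \Cref{LemClas3}(1)) and with the fact that each summand of $M$ is periodic of support $\{\blambda\}$, hence of dimension $3$, $6$ or $9$, we are left with a short list: for $\mu+\nu\neq0$, either $M$ is indecomposable or $M\simeq S(\kappa')\oplus N^{(0)}(\kappa)$; for $\mu+\nu=0$, either $M$ is indecomposable, or $M\simeq S(\kappa')\oplus N^{(i)}(\kappa)$ with $i\in\{1,2\}$, or $M\simeq S(a)\oplus S(b)\oplus S(c)$.

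For part (2) I would use the $SL_2$-structure: $S(0) = St_1 = T_2|_E$, and over $SL_2$ one has $T_2\otimes T_2\simeq T_4\oplus T_2$ (for $p=3$, $T_2 = \mathrm{Sym}^2V$ is the adjoint module and $T_2\otimes T_2 = \mathrm{Sym}^2(T_2)\oplus\wedge^2(T_2)$ with $\wedge^2(T_2)\simeq T_2$ and $\mathrm{Sym}^2(T_2)\simeq T_4$ of dimension $6$; alternatively use \eqref{DTPT} or compare $\nabla$-filtration multiplicities). Restricting to $E$ and using \Cref{lem:restricted-tilting} (so $T_4$ stays indecomposable of dimension $6$ over $E$) gives $S(0)\otimes S(0)\simeq(T_4|_E)\oplus S(0)$; since $\dim\soc(T_4|_E)=2$, the list forces $T_4|_E\simeq N^{(i)}(0)$ with $i\in\{1,2\}$, and a single comparison of the $X_1,X_2$-matrices identifies it as $N^{(1)}(0)$.

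Parts (1) and (3) require explicit linear algebra, which I expect to be the main obstacle. For (1), using $S(\mu) = \bk E/(X_2-\lambda X_1-(\tilde\lambda+\mu)X_1^2)$ and $\Delta(X_i)=X_i\otimes1+1\otimes X_i+X_i\otimes X_i$, I would compute the $X_1,X_2$-action on $M$ in the tensor basis and exhibit an explicit vector $\xi\in M$ with $X_1^2\xi\neq0$ whose cyclic submodule $\langle\xi\rangle$ is a copy of $S(\kappa)$, together with an explicit $\bk E$-linear retraction $M\tto\langle\xi\rangle$; reading off the $X_2$-action on $\xi$ gives $\kappa=\mu\nu/(\mu+\nu)$ (equivalently $\kappa^{-1}=\mu^{-1}+\nu^{-1}$, which also covers $\mu=0$ or $\nu=0$). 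Then $M\simeq S(\kappa)\oplus M'$ with $M'$ of dimension $6$, support $\{\blambda\}$ and $\dim\soc M'=\dim\soc M-1=1$, so the list forces $M'\simeq N^{(0)}(\kappa')$, and one pins down $\kappa'=\kappa$ either from the same basis or by noting that $M'$ has $S(\kappa)$ both as a submodule and as a quotient and invoking \Cref{LemClas3}(3)(a). For (3), with $\nu=-\mu$, $\mu\neq0$, $M=Q$ is self-dual with $\dim\soc=\dim\Top=3$; a short monomial computation shows $(\lambda X_1-X_2)^2$ acts on $Q$ as $\mu^2\,\mathbb{J}^2\otimes\mathbb{J}^2\neq0$, whereas $t_\blambda^2$ annihilates every decomposable candidate on the list — trivially for $S(a)\oplus S(b)\oplus S(c)$, and for $S(\kappa')\oplus N^{(i)}(\kappa)$ because self-duality and Krull--Schmidt force $\kappa'=\kappa=0$ (the dimension-$3$ summand must be self-dual, hence $S(0)$) and then $t_\blambda^2=0$ on $N^{(1)}(0)$ and $N^{(2)}(0)$ by \Cref{LemClas3}(1) (indeed $t_\blambda^2=0$ on $N^{(i)}(\eta)$ whenever $i\neq0$). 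Hence $Q$ is indecomposable; its Loewy series is $3,3,3$ because $\dim\rad Q=6$ and Loewy length $2$ would force $\soc Q=\rad Q$ of dimension $3\neq6$; and the value $\dim\End_E Q=15$ together with the independence of $Q$ of $\mu$ are obtained by computing the $9\times9$ matrices of $X_1,X_2$ explicitly and bringing them, by a change of basis, to a normal form in which $\mu$ no longer appears, $\dim\End_E Q$ being then the dimension of the commutant of the two matrices.

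The genuinely laborious steps are thus the choice of $\xi$ and the retraction in (1), and the change of basis and commutant computation in (3). A secondary nuisance is that $S(\eta)$ degenerates over $\bk[t_\blambda]/t_\blambda^3$ when $\eta=-\tilde\lambda$, so the special values $\mu=-\tilde\lambda$ or $\nu=-\tilde\lambda$ may need to be checked separately, or one notes that the asserted identities are locally constant in $(\mu,\nu)$ (for instance by upper semicontinuity of $\dim\Hom$), so that the generic computation propagates. One should also resist using the formula of part (1) to identify sub- or quotient modules before it is proved, which is why the plan for (1) insists on the direct basis computation rather than an adjunction argument.
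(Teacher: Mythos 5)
Your structural skeleton is sound, and the pieces you add beyond the paper check out: $\dim\Hom_E(S(-\mu),S(\nu))$ is indeed $2$ for $\mu+\nu\neq0$ and $3$ for $\mu+\nu=0$, which gives the socle/top counts and the short list of possible decompositions; over $SL_2$ one does have $T_2\otimes T_2\simeq T_4\oplus T_2$, so (2) reduces to identifying $T_4|_E$; and for (3), writing $t=\lambda X_1-X_2$, the coproduct gives a $\tilde\lambda\,\mJ\otimes\mJ$ term, and one computes $t^2=(3\tilde\lambda^2-2\mu^2)\,\mJ^2\otimes\mJ^2=\mu^2\,\mJ^2\otimes\mJ^2\neq0$ on $S(\mu)\otimes S(-\mu)$, while $t^2=0$ on every $S(a)$ and on $N^{(i)}(\eta)$ for $i\neq0$. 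That $t^2$-argument is a genuinely cleaner route to indecomposability in (3) than anything the paper records. Overall, though, your strategy is the same as the paper's: narrow the possibilities by support, dimension, socle and \Cref{LemClas3}, then finish by explicit linear algebra; for (1) the paper does exactly this, exhibiting the cyclic submodule generated by $e_1\otimes f_1$ (an $N^{(0)}(\mu\nu/(\mu+\nu))$ via \Cref{LemClas3}) and an explicit complementary copy of $S(\mu\nu/(\mu+\nu))$.

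Two points fall short. First, in (3) your justification of the Loewy series is not a proof: excluding Loewy length $2$ does not yield layers $3,3,3$. Self-duality together with $\dim\Top Q=\dim\soc Q=3$ forces the bottom radical layer to be $3$-dimensional only once the Loewy length is known to be exactly $3$; lengths $4$ or $5$ (say layers $3,2,1,3$) are not ruled out by what you wrote, so you must either show $\rad^3Q=0$ directly or read the layers off the explicit matrices you invoke for $\End(Q)$. Second, the claims carrying the real content are deferred rather than proved: the vector $\xi$, the retraction, and the value $\kappa=\mu\nu/(\mu+\nu)$ in (1) (your fallback ``$M'$ has $S(\kappa)$ as submodule and quotient'' is itself unjustified, so the basis computation is not optional); the identification $T_4|_E\simeq N^{(1)}(0)$ in (2), including the parameter $0$ and not merely $i\in\{1,2\}$; and $\dim\End(Q)=15$ with the $\mu$-independence in (3). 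These are precisely the ``straightforward but tedious computations'' where the paper's proof does its work (and for (1) it at least records the generators), so as it stands the proposal is a correct and partly improved plan, with one local gap (the Loewy layers) and with its decisive computations still outstanding.
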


\begin{proof}
    This follows from straightforward but tedious computations. For example, assume that $\mu+\nu\not=0$.

    We label the bases of $S(\mu)$ and $S(\nu)$ corresponding to the above matrix definitions by $\{e_1,e_2,e_3\}$ and $\{f_1,f_2,f_3\}$. Then we can verify that $e_1\otimes f_1$ generates a (cyclic) submodule $R$ which fits into a short exact sequence
    $$0\to S(-\mu\nu/(\mu+\nu))\to R\to S(0)\to 0,$$
    where the submodule is (as is clear from the claimed short exact sequence and cyclicity of $R$) generated by 
    $$X_2(e_1\otimes f_1)-\lambda X_1(e_1\otimes f_1)-\tilde{\lambda}X_1^2(e_1\otimes f_1)$$
    By \Cref{LemClas3}, $R$ is then isomorphic to $N^{(0)}(\mu\nu/(\mu+\nu))$.

    Finally, we can find a complementary submodule isomorphic to $S(\mu\nu/(\mu+\nu))$ generated by 
    $$e_1\otimes f_2-e_2\otimes f_1-(e_1\otimes f_3-e_3\otimes f_1)+\alpha X_1(e_1\otimes f_1)+\beta X_1^2(e_1\otimes f_1),$$
    for appropriate $\alpha,\beta\in \bk$, most importantly $\alpha=(\nu-\mu)/(\mu+\nu)$.
\end{proof}

\begin{remark}
The special case $\mu=\nu$ of \Cref{Prop3d} is easily verifiable. Indeed, in this case 
$$\wedge^2 S(\mu)\;\simeq\;S(\mu)^\ast\;\simeq\;S(-\mu).$$
 In this case $\alpha=0=\beta$ in the proof of \Cref{Prop3d} and the short exact sequence is simply
$$0\to S(\mu)\to R\to S(0)\to 0.$$
\end{remark}

If we consider the group homomorphism $C_3^2\to SL_2(\bk)$ corresponding to $\ulambda$, then
    $$S=St_1=T_2\mapsto S(0),\quad T_3\mapsto N^{(2)}(0),\quad T_4\mapsto N^{(1)}(0),\quad T_5\mapsto N^{(0)}(0). $$
This leads to:

\begin{example}
   For all $\nu\neq0$, we have
    $$S\otimes S(\nu)\;\simeq\; S \oplus T_5,$$
    in line with \Cref{conj-b}.
\end{example}

In sharp contrast, we find:

\begin{corollary}\label{Cor:Anti}Assume that $\bk$ is uncountable.
    Whenever $\mu\not=0$, we have $\cI_{S(\mu)}\not\subset \cT_{S(\mu)}$. Hence $S(\mu)$ `does not satisfy~\Cref{conj-e}'.
\end{corollary}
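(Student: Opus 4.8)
The plan is a counting argument: I would prove that $\cT_{S(\mu)}$ has only countably many indecomposable objects up to isomorphism, whereas $\cI_{S(\mu)}$ has uncountably many, and then conclude $\cI_{S(\mu)}\not\subset\cT_{S(\mu)}$. The first half is essentially formal. By its very definition, every object of $\cT_{S(\mu)}$ is a finite direct sum of direct summands of the finite-dimensional modules $\Omega^m(S(\mu)^{\otimes n})$ with $(m,n)\in\mZ\times\mZ_{>0}$; since $\mZ\times\mZ_{>0}$ is countable and, by Krull--Schmidt, each $\Omega^m(S(\mu)^{\otimes n})$ has only finitely many indecomposable direct summands up to isomorphism, the collection of indecomposable objects of $\cT_{S(\mu)}$ is countable.

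For the second half I would exhibit the family $\{S(\kappa):\kappa\in\bk\}$ inside $\cI_{S(\mu)}$. By \Cref{Prop3d}(1), for every $\nu\in\bk$ with $\mu+\nu\neq0$ the module $S\bigl(\tfrac{\mu\nu}{\mu+\nu}\bigr)$ is a direct summand of $S(\mu)\otimes S(\nu)$, and hence lies in the thick tensor ideal $\cI_{S(\mu)}$ (which is closed under tensoring with arbitrary objects and under direct summands). Since $\mu\neq0$, the relation $\kappa=\tfrac{\mu\nu}{\mu+\nu}$ rearranges to $\nu=\tfrac{\kappa\mu}{\mu-\kappa}$, and this $\nu$ satisfies $\mu+\nu=\tfrac{\mu^2}{\mu-\kappa}\neq0$ for every $\kappa\neq\mu$; so $\kappa$ attains every value in $\bk\setminus\{\mu\}$ as $\nu$ varies, and together with the trivial membership $S(\mu)\in\cI_{S(\mu)}$ this gives $S(\kappa)\in\cI_{S(\mu)}$ for all $\kappa\in\bk$. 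As $\bk$ is uncountable, this is an uncountable family.

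To conclude, I would observe that the modules $S(\kappa)$, $\kappa\in\bk$, are pairwise non-isomorphic, so that $\cI_{S(\mu)}$ contains uncountably many indecomposable objects up to isomorphism, which contradicts the first paragraph; hence $\cI_{S(\mu)}\not\subset\cT_{S(\mu)}$, i.e.\ $S(\mu)$ fails the analogue $\cI_X=\cT_X$ of \Cref{conj-e}. I expect the only step worth spelling out — and the mild crux — to be the pairwise non-isomorphism of the $S(\kappa)$; it is short, as one can either quote the parametrisation of the $3$-dimensional periodic $E$-modules with support $\{\blambda\}$ recalled at the start of \Cref{Sec9}, or argue directly that an $E$-module isomorphism $S(\kappa)\xrightarrow{\sim}S(\kappa')$ intertwines the action of $X_1$, hence commutes with the regular nilpotent $\mJ$ and so is a polynomial in $\mJ$; intertwining the action of $X_2$ then forces $(\tilde{\lambda}+\kappa)\mJ^2=(\tilde{\lambda}+\kappa')\mJ^2$, and $\mJ^2\neq0$ yields $\kappa=\kappa'$.
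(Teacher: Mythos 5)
Your argument is correct and is essentially the paper's own proof, which likewise contrasts the countably many indecomposable summands of the modules $\Omega^m(S(\mu)^{\otimes n})$ with the uncountable family $S(\kappa)$ produced inside $\cI_{S(\mu)}$ via \Cref{Prop3d}(1); your explicit inversion $\nu=\kappa\mu/(\mu-\kappa)$ and the commutant-of-$\mJ$ argument for pairwise non-isomorphism just spell out details the paper leaves implicit. The only point worth adding in passing is that each $S(\kappa)$ is indecomposable (immediate, since $X_1$ acts as a regular nilpotent, or since periodic summands have dimension divisible by $p^{r-1}=3$), so these really are uncountably many indecomposable isomorphism classes.
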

\begin{proof}
    By construction, the number of indecomposables in $\cT_{S(\mu)}$ is countable. However, by \Cref{Prop3d}, the number of indecomposables in $\cI_{S(\mu)}$ is at least the order of the field.
\end{proof}

\begin{remark}
    The assumption that $\bk$ be uncountable in \Cref{Cor:Anti} is only for convenience, since the module $S(\mu)$ is actually algebraic.
\end{remark}



\begin{remark}
    \Cref{Prop3d}(1) also demonstrates that for $S(\mu)$ with $\mu\not=0$, indecomposable $S(\mu)$-projective modules need not have a filtration with all subquotients isomorphic to $S(\mu)$, in contrast to \Cref{lem:consequences}(3).
\end{remark}

\subsection{Some context for \texorpdfstring{$q=8$}{q=8}}\label{Sec8}
Here we consider $p=2$ and $E$ an elementary abelian $2$-group of order $q=8$ (\textit{i.e.} $r=3$).

The most immediate properties of $S=St_2=T_{7}$ are that it is $4$-dimensional and periodic, self-dual and faithful as a $E=C_2^3$-representation. We study all such modules, and observe that self-duality is automatic.
\begin{lemma}\label{Lem:4dim}
Let $M$ be a faithful periodic 4-dimensional module in $\Rep E$.
\begin{enumerate}
    \item $M\simeq A\otimes B$, for two-dimensional modules $A,B$.
    \item $M$ is self-dual.
    \item $M$ is algebraic.
\end{enumerate}
\end{lemma}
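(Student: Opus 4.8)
The plan is to prove the three statements essentially in the order they are given, with (1) being the main work and (2), (3) following from it. For part (1), I would start from the fact that $M$ is periodic with a one-point support $\{\blambda\}$ in $\mP^2_f(\bk)$, where faithfulness forces $\blambda=[\lambda_1:\lambda_2:\lambda_3]$ to have $\mF_2$-linearly independent coordinates (\Cref{Lem:Moore}). After applying a suitable algebra automorphism of $\bk E$ (as in the proof of \Cref{LemClas3}) we may assume the support point is in ``standard position'', so that $M$ becomes a module over $\bk[t_1,t_2]/(t_1^2,t_2^2)$ for suitable linear combinations $t_1,t_2$ of the $X_i$, with the third direction acting projectively-freely; concretely, $4$-dimensional periodic $\bk C_2^3$-modules with support a single point are precisely the modules pulled back along a surjection $\bk C_2^3 \tto \bk C_2^2$ (up to the automorphism). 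So it suffices to classify faithful-over-$C_2^2$ $4$-dimensional modules. Over $\bk C_2^2 = \bk[x,y]/(x^2,y^2)$ these are exactly the ``Kronecker-type'' modules, and one checks by the classification of indecomposable $\bk C_2^2$-modules (or directly: a $4$-dimensional module faithful over $C_2^2$ is either indecomposable of even dimension in the discrete series, i.e. an $\Omega$-shift of $\bk$, or decomposes) that every such $M$ is of the form $A\otimes B$ for two $2$-dimensional modules $A,B$ — indeed $\bk C_2^2 \cong V_{[1:0]}\otimes V_{[0:1]}$ already realizes the regular representation this way, and the other $4$-dimensional periodic modules $\Omega^{\pm1}(\bk)$, $\Omega^0$-twists, etc., are visibly tensor products of two $2$-dimensional modules with distinct supports. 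Pulling back along $\bk C_2^3\tto\bk C_2^2$ and undoing the automorphism gives $M\simeq A\otimes B$ in $\Rep C_2^3$.

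For part (2), self-duality is immediate once (1) is known: every $2$-dimensional module $V_{\ulambda}$ over a group of exponent $2$ is self-dual (it is a self-extension of $\bk$ by $\bk$, and in characteristic $2$ such a module is isomorphic to its dual — equivalently $V_{\ulambda}\cong \Omega^{\pm1}\bk \oplus(\text{proj})$ restricted appropriately, and $\Omega$-shifts of the self-dual $\bk$ are self-dual). Hence $M\simeq A\otimes B\simeq A^*\otimes B^* \simeq (A\otimes B)^*\simeq M^*$. Part (3): both $2$-dimensional modules $A$ and $B$ are algebraic — either they are projective, or they have one-point support and are $\Omega$-shifts of $\bk$, whose tensor powers have support a single point in $\mP^2$ and bounded dimension growth, hence finitely many indecomposable summands; more cleanly, a $2$-dimensional $\bk C_2^r$-module restricted to (a conjugate of) $C_2$ plus projective directions is algebraic because cyclic-group modules are algebraic, and algebraicity is inherited. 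Then $M=A\otimes B$ is a tensor product of two algebraic modules, hence algebraic (the split Grothendieck ring generated by the summands of tensor powers of $A$ and of $B$ is finitely generated, so the subring generated by $[M]$ is a finitely generated module over it, forcing $[M]$ to satisfy a polynomial identity).

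\textbf{Main obstacle.} The one genuinely non-formal point is the normal form in part (1): showing that \emph{every} faithful periodic $4$-dimensional $C_2^3$-module, after the automorphism reduction, is pulled back from $C_2^2$ and there is a tensor product of two $2$-dimensional modules. The reduction to a single ``extra'' direction acting freely uses that the support is one point together with the fact that a $4$-dimensional module cannot be free over $\bk C_2^2$ (dimension $16$) so its restriction to the rank-$2$ subalgebra complementary to the support direction must still see the support — this needs the support/rank-variety machinery (\Cref{sup:ses}, connectedness via \cite{Carlson-connected}) deployed carefully, much as in \Cref{LemClas3}. I expect this bookkeeping, rather than the tensor-factorization itself, to be where the care is needed; everything after (1) is formal.
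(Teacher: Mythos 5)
Your reductions for (2) and (3) from (1) match the paper (two\-dimensional modules are self-dual and algebraic, and both properties pass to tensor products), so the issue is entirely in part (1), and there the gap is at the last step. It is true, and is also the (terse) assertion the paper relies on, that a faithful periodic $4$-dimensional $M$ is cyclic, $M\cong \bk E/(\xi)$ with $\xi=X_1+aX_2+bX_3+cX_2X_3$ up to relabelling; so $M$ is the regular module of a $4$-dimensional algebra quotient abstractly isomorphic to $\bk C_2^2$. But for faithful $M$ the quadratic term $cX_2X_3$ is genuinely needed, so your passage to ``standard position'' (two linear $t_1,t_2$ with a free third direction) requires an algebra automorphism $\phi$ of $\bk E$ that is not induced by a group automorphism, hence not a Hopf automorphism --- indeed it cannot be one, since $\phi^\ast M\cong\bk E/(X_1)$ is no longer faithful. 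Twisting by such a $\phi$ is an equivalence of module categories but not a tensor functor. Consequently, after you (correctly) identify $\phi^\ast M$ with $V_{[0:1:0]}\otimes V_{[0:0:1]}$ (the pullback along the genuine group quotient killing $g_1$ of the regular $\bk C_2^2$-module $V_{[1:0]}\otimes V_{[0:1]}$), the step ``undoing the automorphism gives $M\simeq A\otimes B$'' is a non sequitur: $(\phi^{-1})^\ast(A\otimes B)$ is not $(\phi^{-1})^\ast A\otimes(\phi^{-1})^\ast B$, and being a tensor product of two $2$-dimensional modules is exactly the kind of statement about the $C_2^3$-coproduct that $\phi$ does not respect. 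The automorphism trick in the proof of \Cref{LemClas3} is used there only for module-theoretic (Ext) computations, which is why it is harmless in that context but not here. (Separately, your intermediate claim over $C_2^2$ is false as stated: e.g.\ $V_\lambda\oplus V_\mu$ with $\lambda\neq\mu$ can be faithful and $4$-dimensional but is not $A\otimes B$, since over $C_2^2$ such tensor products are the regular module, $V_\alpha^{\oplus 2}$, or the indecomposable one-point-support module $V_\alpha^{\otimes 2}$ for non-rational $\alpha$; also the even-dimensional indecomposables are these band modules, not $\Omega$-shifts of $\bk$, which have odd dimension. Only the regular-module case is relevant, and that case is fine.)

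What is missing, and what the paper supplies, is a computation carried out with the $C_2^3$-Hopf structure itself: for each admissible $(a,b,c)$ one exhibits explicit two-dimensional $E$-modules $A=V_{[1:\mu:0]}$ and $B=V_{[1:\alpha:\beta]}$ with $A\otimes B\cong\bk E/(X_1+aX_2+bX_3+cX_2X_3)$, via $\mu=a^{-1}$, $\beta=(1+a)/(b+c)$, $\alpha=(b+ca^{-1})/(b+c)$; equivalently, one checks that $\xi$ annihilates both $A$ and $B$ and that $A\otimes B$ is cyclic of dimension $4$, so $\Ann(A\otimes B)=(\xi)$. Some such parameter matching over $C_2^3$ (not over the quotient algebra, whose Hopf structure differs) is the actual content of part (1); you have located the ``main obstacle'' in the normal form $M\cong\bk E/(\xi)$, which is the easier half, while the transport of the factorization back through $\phi$ --- which you treat as formal --- is the step that fails.
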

\begin{proof}
    Since two-dimensional modules are self-dual and algebraic, see \cite{CF}*{Remark~5.3.9(3)}, we only need to prove part (1).

    Consider therefore $[1:\mu:0]$ and $[1:\alpha:\beta]$ in $\mP^2$. Then, we can directly compute that $A\otimes B$ is the $E$-representation
    $$\bk E/(X_1+aX_2+bX_3+cX_2X_3),$$
    with 
    $$\mu=a^{-1},\quad \beta=\frac{1+a}{b+c},\quad \alpha=\frac{b+ca^{-1}}{b+c}.$$
Up to relabelling the $X_i$, every faithful periodic 4-dimensional module is of the above form.
\end{proof}


    Now we single out which such modules correspond to Steinberg modules. For a faithful periodic 4-dimensional module $M$, call a 2-dimensional module $W$ `complementary' if $M\otimes W$ is projective. Any such module defines a $\bk$-linear morphism
    $$\beta_W:\;H^1(E,\bk)\;\to\;\End(M),$$
by sending a self-extension 
$$0\to \unit \xrightarrow{f} Y\xrightarrow{g}\unit\to 0$$
to the left vertical arrow in the commutative diagram (the middle vertical arrow is not unique)
$$\xymatrix{
0\ar[r]&M\ar[r]\ar[d] &M\otimes W\ar[r]\ar[d]&M\ar[r]\ar@{=}[d]&0\\
0\ar[r]&M\ar[r]^-{M\otimes f} &M\otimes Y\ar[r]^-{M\otimes g}& M\ar[r]&0
}$$
where the top row is defined similarly to the bottom row. Note that the left vertical arrow is uniquely defined as a consequence of the equality $\uEnd(M)=\End(M)$. The latter equality follows for instance from dimensional arguments. Indeed, $M\otimes M^\ast$ cannot contain an a projective direct summand, since, by support considerations, its complement $Q$ would be projective over some subgroup of $E$ isomorphic to $C_2^2$ which leads to a contradiction (using \Cref{Lem:4dim}(1))
$$
4\le\dim\End(M) = \dim\Hom(\one,M\o M^*) \le 1+(\dim_\kk Q)/4 = 3.
$$

    \begin{proposition}\label{Prop:4dim}
        The following conditions are equivalent on 4-dimensional faithful periodic modules $M$ over $E=C_2^3:$
    \begin{enumerate}
        \item $M$ is the restriction of $St_2$ for some embedding of $E$ into $SL_2$;
        \item $M\simeq V\otimes V^{(1)}$ for a (two-dimensional) module $V$;
        \item There exists a complementary $W$ for which the image of
        $$\beta_W:\,H^1(E,\bk)\to\End(M)$$
        is a subalgebra.
        \item There exists a complementary $W$ for which the image of 
        $$H^1(E,\bk)\xrightarrow{\beta_W}\End(M)\tto\End(M)/\Aut(M)$$
        is finite.
    \item There are finitely many isomorphism classes of modules of the form $M\otimes Y$, with $Y$ a two-dimensional module.
    \end{enumerate}
    \end{proposition}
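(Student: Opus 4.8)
The plan is to establish the equivalences by proving a cycle of implications, with $(2)$ as the natural hub: I would show $(1)\Leftrightarrow(2)$, then $(2)\Rightarrow(3)\Rightarrow(4)\Rightarrow(5)$, and finally $(5)\Rightarrow(2)$, so that every condition is tied back to the concrete description $M\simeq V\o V^{(1)}$. The equivalence $(1)\Leftrightarrow(2)$ is the easiest: by Donkin's formula \eqref{DTPT} one has $St_2=T_7=T_3\o T_1^{(1)}$ over $SL_2$, but more directly $St_2 = St_1\o St_1^{(1)}$ as $SL_2$-modules, and $St_1=T_1$ restricts on $E<U(\bk)<SL_2$ to a faithful two-dimensional module; conversely, given $M\simeq V\o V^{(1)}$ with $V$ two-dimensional, $V$ is the restriction of $T_1$ along some homomorphism $E\to SL_2$ (by \Cref{Sec2dim}), and the Frobenius twist $V^{(1)}$ is the restriction of $T_1^{(1)}$, so $M$ is the restriction of $St_1\o St_1^{(1)}\simeq St_2$; faithfulness of $M$ forces the homomorphism to be injective, i.e.\ $\ulambda\in\mP^2_f$. (One should check that an arbitrary faithful periodic $4$-dimensional $M$ which happens to be of the form $V\o V^{(1)}$ indeed has $V$ faithful, which again follows from \Cref{Lem:Moore} applied to the support.)

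For $(2)\Rightarrow(3)$: if $M\simeq V\o V^{(1)}$ with $V=V_{\ulambda}$, then a complementary module is $W:=V^*\simeq V_{\ulambda}$ itself twisted appropriately — more precisely, since $V\o V^*$ contains $\unit$ as a summand precisely when... actually the cleanest choice is $W = (V^{(1)})^* \simeq V_{\ulambda^{(1)}}$, for which $M\o W \simeq V\o V^{(1)}\o (V^{(1)})^*$ is $V$ tensored with $V^{(1)}\o(V^{(1)})^*$; the latter is not projective, so one instead takes $W$ so that $M\o W$ kills the periodic support of $M$. The honest statement is: since $M$ restricted to the line $\blambda=\supp(M)$ is projective on the complementary $C_p$ and non-projective on the $C_p$ cut out by $\blambda$, and $M$ has dimension $p^{r-1}=4$, there is a two-dimensional $W$ with $\supp(W)=\mP^2\setminus\{\blambda\}$... this needs $r=3$, $p=2$, so $\mP^2\setminus\{\text{point}\}$ is not of the form $\supp(W_{\text{2-dim}})$, which is a single point. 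So the correct reading of ``complementary'' must be that $M\o W$ is projective, which by \eqref{sup:tensor} forces $\supp(W)\cap\supp(M)=\emptyset$, i.e.\ $\blambda\notin\supp(W)$, a codimension-one condition satisfiable by generic two-dimensional $W$. Granting such $W$ exists for $M$ of the form $V\o V^{(1)}$ — one computes it explicitly from the presentation in \Cref{Lem:4dim} — the map $\beta_W$ lands in $\uEnd(M)=\End(M)$, and using the explicit basis $(\mI,\mJ,\mJ^2,\mJ^3)$-type description coming from $V\o V^{(1)}$ one checks $\im\beta_W$ is the span of $\{1,n\}$ for a single nilpotent $n$ with $n^2=0$, hence a subalgebra. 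For $(3)\Rightarrow(4)$: a subalgebra of the finite-dimensional algebra $\End(M)$ has finitely many cosets modulo the unit group only if... no — one uses that the image is a \emph{subalgebra} $B\subseteq\End(M)$, and $B\cap\Aut(M)$ is a subgroup of finite index in $B^\times$ when $B$ is local with residue field $\bk$; over $\bk$ of characteristic $2$ and $B$ two-dimensional local, $B^\times/(1+\rad B)$ is trivial, so the image in $\End(M)/\Aut(M)$ is finite (in fact a single orbit plus the non-units). For $(4)\Rightarrow(5)$: the isomorphism class of $M\o Y$ for two-dimensional $Y$ is controlled by the class of the extension defining $Y$ in $H^1(E,\bk)$ pushed forward along $\beta_W$ into $\End(M)$ modulo the action of $\Aut(M)$ (this is the standard ``$M\o-$ applied to a self-extension of $\unit$'' yoga, exactly the diagram preceding the proposition), so finiteness of that image gives finitely many isomorphism classes.

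For $(5)\Rightarrow(2)$: this is the main obstacle. The idea is contrapositive — if $M$ is \emph{not} of the form $V\o V^{(1)}$, exhibit infinitely many non-isomorphic $M\o Y$. Using \Cref{Lem:4dim}(1), write $M\simeq A\o B$ with $A=V_{\ula}$, $B=V_{\ulb}$; the condition $M\simeq V\o V^{(1)}$ translates, via the computation in \Cref{Lem:4dim}, into a Frobenius-semilinear relation between $\ula$ and $\ulb$ (namely $\ulb=\ula^{(1)}$ up to the $\mP^2$-action, equivalently the $3\times 2$ matrix with rows $\ula,\ulb$ has a prescribed form). When this fails, I would show that the family $M\o V_{\umu}$, as $\umu$ ranges over $\mP^2$ with $\supp$ meeting $\{\blambda\}$... actually $\supp(M)=\{[\ula],[\ulb]\}$ has \emph{two} points when $M$ is not periodic — but $M$ \emph{is} assumed periodic, so $\supp(M)$ is a single point $\blambda$, forcing $[\ula]=[\ulb]=\blambda$?? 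No: $\supp(A\o B)=\supp(A)\cap\supp(B)=\{[\ula]\}\cap\{[\ulb]\}$, which is a point iff $[\ula]=[\ulb]$, and then $M\simeq V_{\ula}\o V_{\ula}$. Hmm — so periodicity already forces $A\simeq B$; the genuine content is then whether $V_{\ula}\o V_{\ula}\simeq V\o V^{(1)}$, i.e.\ whether there is $\umu$ with $V_{\ula}\o V_{\ula}\simeq V_{\umu}\o V_{\umu^{(1)}}$. I expect the clean route is: parametrize the periodic $4$-dimensional faithful $M$ with fixed support $\blambda$ by a single parameter (as in \Cref{LemClas3} for $q=9$, there should be an analogous $\bk$-family for $q=8$), identify which parameter values are Frobenius-twisted tensor squares, and compute $M\o V_{\umu}$ explicitly to see that off the special locus the parameter of $M\o V_{\umu}$ genuinely varies with $\umu$ — giving infinitely many classes. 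The hard part is organizing this computation conceptually rather than by brute force; I would try to use that $\End(M)$ has dimension $4$ generically but jumps (analogously to the $\dim\End=15$ phenomenon in \Cref{Prop3d}) exactly on the $V\o V^{(1)}$ locus, and that $\beta_W$ being a subalgebra is equivalent to a rank condition on a fixed bilinear map $H^1(E,\bk)\times H^1(E,\bk)\to\End(M)$, reducing everything to linear algebra over $\mF_2$ governed by the Moore matrix of \Cref{Lem:Moore}.
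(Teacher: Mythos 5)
There is a genuine gap, and it starts with a support computation that goes the wrong way. For $E=C_2^3$, the support of a two-dimensional module $V_{\ulambda}$ is the \emph{hyperplane} $\{\underline{\alpha}:\sum_s\alpha_s\lambda_s=0\}$ in $\mP^2$, not a point (see \Cref{PropSupp} and \Cref{ExSupp}(2)). Consequently $\supp(A\o B)$ is the intersection of two lines in $\mP^2$: it is a single point exactly when $A\not\simeq B$, and a whole line when $A\simeq B$. So periodicity of $M$ forces $A$ and $B$ to be \emph{non-isomorphic} — the opposite of what you concluded — and $V_{\ulambda}\o V_{\ulambda}$ is never periodic. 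Your entire strategy for $(5)\Rightarrow(2)$ (parametrize the periodic faithful $4$-dimensional modules with fixed support as a one-parameter family of ``twisted squares'' analogous to the $S(\mu)$ of \Cref{LemClas3} and detect the $V\o V^{(1)}$ locus inside it) therefore collapses at the first step; and even granting the setup, that implication is left as an admitted ``hard part'', so the cycle $(5)\Rightarrow(2)$ is not closed. The same confusion about supports also muddles your construction of a complementary $W$ in $(2)\Rightarrow(3)$, and your claim there that $\im\beta_W$ is the two-dimensional span of $\{1,n\}$ is inconsistent with the fact that the relevant image is a $3$-dimensional subspace of $\End(M)$ (it is the image of the $3$-dimensional $H^1(E,\bk)$, and the subalgebra condition is only meaningful for such a $3$-dimensional subspace containing $1$).

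For comparison, the paper does not route $(5)$ back to $(2)$ at all. It writes $M=A\o B$ with $A\not\simeq B$, identifies $\End(M)\simeq\kk[t]/t^2\o\kk[s]/s^2$, describes $\End(M)/\Aut(M)$ explicitly (two points plus one $\mP^1$-family), observes that a $3$-dimensional subspace containing $1$ is a subalgebra iff its image there is finite, and uses the factorization $\beta_W: H^1(E,\bk)\xrightarrow{\sim}\Ext^1(\unit,\unit)\xrightarrow{M\o-}\Ext^1(M,M)\xrightarrow{\sim}\End(M)$ together with the bijection between $\End(M)/\Aut(M)$ and isomorphism classes of self-extensions of $M$; this gives $(3)\Leftrightarrow(4)\Leftrightarrow(5)$ in one stroke. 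Then $(3)\Rightarrow(2)$ is a concrete computation: under these identifications, a subalgebra forces an element whose image has dimension $1$, which translates into the generator of $A\o B\o Y$ generating a $5$-dimensional submodule, and this happens only when (up to permutation) $A=B^{(1)}$ and $Y=B^{(1)}$. Finally $(2)\Rightarrow(3)$ is deferred to the machinery in the proof of \Cref{ThmCarlson} (the endomorphisms $\psi_i$), which your sketch does not replace. To repair your proposal you would need both a correct description of the moduli of periodic faithful $M$ (pairs of distinct hyperplanes through $\blambda$, i.e.\ $A\not\simeq B$) and an actual argument for $(5)\Rightarrow(2)$ or, as in the paper, for $(3)\Rightarrow(2)$.
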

\begin{proof}
    Equivalence between (1) and (2) is straightforward. We write $M=A\otimes B$ (with $A$ and $B$ non-isomorphic two-dimensional modules) and observe
    $$\End(M)\;\simeq\;\End(A)\otimes \End(B)\;\simeq\; k[t]/t^2\otimes k[s]/s^2.$$
    It follows that $\End(M)/\Aut(M)$ consists of two points (represented by an arbitrary isomorphism and an arbitrary morphism with 1-dimensional image) and one $\mP^1$-family represented by morphisms with 2-dimensional image. A 3-dimensional subspace of $\End(M)$ which contains $1$ is a subalgebra if and only if its image in $\End(M)/\Aut(M)$ is finite. For these modules we can also observe that 
    $$\End(M)/\Aut(M)\;\simeq\; \Ext^1(M,M)/\Aut(M)$$ is canonically in bijection with the isomorphism classes of self-extensions of $M$. Here, the isomorphism between $\End(M)$ and $\Ext^1(M,M)$ can be realised such that $\beta_W$ factors as
    $$H^1(E,\bk)\xrightarrow{\sim}\Ext^1(\unit,\unit)\xrightarrow{M\otimes-}\Ext^1(M,M)\xrightarrow{\sim}\End(M).$$
    It follows that (3) and (4) and (5) are equivalent.

    That (2) implies (3) will be proved rigorously in the proof of \Cref{ThmCarlson}. To show that (3) implies (2), we can observe that, under the above identifications, the dimension of the image of an endomorphism of $M$ corresponds (by adding 4) to the dimension of the submodule of $A\otimes B\otimes Y$ generated by the tensor product of the three generators. Since a 3-dimensional subspace of $\End(M)$ is a subalgebra if and only if it contains an element with image of dimension 1, it suffices to prove that the tensor product of generators in $A\otimes B\otimes Y$ can only be of dimension 5 when (up to permutation) $A=B^{(1)}$ and $Y=B^{(1)}$, which follows from a direct computation.
 \end{proof}

    \begin{corollary}
Let $X$ be a self-dual faithful periodic $\bk E$-module of dimension 4 that is not of the form $St_2$, for any embedding $E<SL_2(\bk)$, then $\cI_X\not=\cT_X$.
    \end{corollary}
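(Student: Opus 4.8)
The plan is to derive this from the characterization in \Cref{Prop:4dim}, specifically the equivalence of conditions (1) and (5) together with a counting argument analogous to the one in \Cref{Cor:Anti}. The hypothesis says $X$ is a $4$-dimensional faithful periodic $\bk E$-module not of the form $St_2$; by \Cref{Lem:4dim}(2), $X$ is automatically self-dual, so the self-duality hypothesis is not actually restrictive here (we keep it in the statement for emphasis, matching the surrounding discussion).

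First I would observe that since $X$ is faithful, the thick tensor ideal $\cI_X$ in $\Rep E$ contains all the tensor products $X\otimes Y$ with $Y$ a two-dimensional $E$-module. On the other hand, $\cT_X$ consists of direct summands of $\Omega^m(X^{\otimes n})$ for $m\in\mZ$, $n\in\mZ_{>0}$; since $X$ is algebraic by \Cref{Lem:4dim}(3) (its tensor powers have only finitely many indecomposable summands up to isomorphism) and $\Omega$ acts with finite order on periodic modules, $\cT_X$ contains only \emph{finitely many} indecomposable objects up to isomorphism.

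Next I would invoke the failure of condition (5) of \Cref{Prop:4dim}: since $X$ is \emph{not} of the form $St_2$ for any embedding $E<SL_2(\bk)$, condition (1) fails, hence (5) fails, so there are infinitely many isomorphism classes of modules of the form $X\otimes Y$ with $Y$ two-dimensional. Each such $X\otimes Y$ is a faithful periodic $4$-dimensional module? No---I should be careful: $X\otimes Y$ need not be $4$-dimensional, it is $8$-dimensional, but by support considerations $\supp(X\otimes Y)=\supp(X)\cap\supp(Y)$, which can be a point or empty; in any case, the infinitely many $X\otimes Y$ decompose into indecomposables drawn from an infinite set (otherwise, with only finitely many indecomposable summands available and each summand of bounded dimension, only finitely many isomorphism types of $X\otimes Y$ could arise). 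All of these indecomposable summands lie in $\cI_X$. Since $\cT_X$ has only finitely many indecomposables, we conclude $\cI_X\neq\cT_X$, in fact $\cI_X\not\subset\cT_X$.

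The main obstacle is the bookkeeping in the last step: I must make sure that ``infinitely many isomorphism classes of $X\otimes Y$'' genuinely forces infinitely many isomorphism classes of indecomposable summands, rather than, say, the same finite set of indecomposables recombining in infinitely many multiplicities. This is handled by a dimension bound: each $X\otimes Y$ has dimension $8$, so it is a sum of at most $8$ indecomposables; if only finitely many indecomposable isomorphism types occurred, there would be only finitely many multiset-decompositions of total dimension $8$, contradicting the infinitude of isomorphism classes of $X\otimes Y$. With that in hand, the argument is complete. (An alternative, cleaner route: run the Krull--Schmidt counting directly inside $\cI_X$ and note that $\cT_X$, being generated from a single algebraic periodic module under $\Omega$ and summands, is ``countable'' --- indeed finite up to isomorphism --- so cannot contain the infinitely many indecomposable summands of the family $\{X\otimes Y\}_Y$.)
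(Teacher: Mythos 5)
Your proposal is correct and follows essentially the same route as the paper: \Cref{Lem:4dim} gives finiteness of the indecomposables in $\cT_X$ (algebraicity plus periodicity), and the failure of condition (5) in \Cref{Prop:4dim} gives infinitely many isomorphism classes of $X\otimes Y$, hence infinitely many indecomposables in $\cI_X$. The Krull--Schmidt/dimension-bound bookkeeping you add at the end is a correct and welcome filling-in of a step the paper leaves implicit.
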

    \begin{proof}
        By \Cref{Lem:4dim} $\cT_X$ contains finitely many indecomposable objects, while $\cI_X$ contains infinitely many by \Cref{Prop:4dim}.
    \end{proof}


\section{Analysing the restrictions of Steinberg modules} \label{sec:analyse}

We continue the set-up from the previous section.
We thus have an inclusion of groups $E:=C_p^r\hookrightarrow \bk^+$, which we further interpret as an inclusion into $SL_2(\bk)$. This information is equivalent to the data of some $$\ulambda=(\lambda_1,\ldots,\lambda_r)\in \mA^r_f(\bk).$$

Denote by $T_i$, $0\le i< p^r$  the image of the corresponding $SL_2$-tilting module in $\Rep E$.

\subsection{Endomorphisms of Steinberg modules}
The following was proved in \cite{CF}*{Proposition~ 5.3.8}, where $\ell^\nabla$ refers to the length of the $\nabla$-flag of a module.
\begin{proposition}\label{PropT}
For all $0\le i<p^r$, we have 
    \begin{enumerate}
    \item
    $\dim_{\bk}\Hom_{E}(\unit,T_i)\,=\,\ell^\nabla(T_i),$
    \item $\dim_{\bk}\Hom_{E}(V,T_i)\,=\,2\ell^\nabla(T_i)-\dim_{\bk}\Hom_{SL_2}(\unit,T_{i})$.
    \end{enumerate}
\end{proposition}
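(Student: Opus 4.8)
The plan is to show that, for $i<p^r$, both quantities are computed by the corresponding invariants taken over the ambient one-parameter subgroup $U\simeq\mG_a$, and then to evaluate those using the filtration structure of tilting modules. First I would prove the reduction: if $M$ is an $SL_2$-module all of whose weights lie in $[-i,i]$ with $i<p^r$, then $M^E=M^U$. The root subgroup element $\left(\begin{smallmatrix}1&0\\\lambda&1\end{smallmatrix}\right)$ acts on $M$ as $\sum_{m\ge0}\lambda^m f^{(m)}$, with $f^{(m)}$ the $m$-th divided power of the lower root operator; since $f^{(m)}$ lowers weights by $2m$, for fixed $v\in M$ the map $\lambda\mapsto u_\lambda v-v$ is polynomial in $\lambda$ of degree $\le i$. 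A polynomial of degree $\le i<p^r=|E|$ vanishing on all of $E$ vanishes identically, so $v\in M^U$; the reverse inclusion is clear as $E<U(\bk)$. (One can equally well pass to the $r$-th Frobenius kernel $U_r$, as $M^U=M^{U_r}$ under the same hypothesis.) Taking $M=T_i$ reduces part (1) to the claim $\dim_\bk(T_i)^U=\ell^\nabla(T_i)$.

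Next I would establish $\dim_\bk M^U=\ell^\nabla(M)$ for every tilting $SL_2$-module $M$. The upper bound is immediate: a good filtration of $M$ has subquotients $\nabla_{j_1},\dots,\nabla_{j_m}$ with $m=\ell^\nabla(M)$, each $(\nabla_j)^U$ is one-dimensional (spanned by $y^j$ in the model $\nabla_j=\bk[x,y]_j$), and $(-)^U$ is left exact, so $\dim M^U\le m$. For the lower bound I would use that $M$ also has a Weyl filtration, so $\ell^\nabla(M)=\sum_j\dim\Hom_{SL_2}(\Delta_j,M)$. A lowest weight vector $v_j\in\Delta_j$ is $U$-invariant (being of minimal weight) and generates $\Delta_j$ as an $SL_2$-module — a feature of $SL_2$, since applying the top divided power of the opposite root operator to $v_j$ returns a nonzero highest weight vector — so $\phi\mapsto\phi(v_j)$ is an injection $\Hom_{SL_2}(\Delta_j,M)\hookrightarrow M^U$ landing in the weight-$(-j)$ subspace. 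As these subspaces are independent for distinct $j$, the resulting map $\bigoplus_j\Hom_{SL_2}(\Delta_j,M)\hookrightarrow M^U$ is injective, giving $\ell^\nabla(M)\le\dim M^U$. Combined with the reduction step this proves (1).

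For part (2), since $V=T_1$ is self-dual, $\Hom_E(V,T_i)\cong\Hom_E(\unit,V\otimes T_i)$, so it suffices to compute $\dim(T_1\otimes T_i)^E$. Now $T_1\otimes T_i$ is tilting of highest weight $i+1$, and since $\nabla_1\otimes\nabla_j$ has a good filtration with factors $\nabla_{j+1},\nabla_{j-1}$ for $j\ge1$ and equals $\nabla_1$ for $j=0$, one reads off $\ell^\nabla(T_1\otimes T_i)=2\ell^\nabla(T_i)-(T_i:\nabla_0)=2\ell^\nabla(T_i)-\dim\Hom_{SL_2}(\unit,T_i)$. When $i\le p^r-2$ every indecomposable summand of $T_1\otimes T_i$ is a $T_\ell$ with $\ell\le i+1<p^r$, so part (1) and additivity of $\ell^\nabla$ give $\dim(T_1\otimes T_i)^E=\ell^\nabla(T_1\otimes T_i)$, as required. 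The remaining case $i=p^r-1$ must be treated separately: then $T_{p^r-1}\cong\bk E$ by \Cref{lem:restricted-tilting}, so $V\otimes T_{p^r-1}$ is free over $\bk E$ and $\dim(V\otimes T_{p^r-1})^E=2$, which matches $2\ell^\nabla(T_{p^r-1})-\dim\Hom_{SL_2}(\unit,T_{p^r-1})=2\cdot1-0$.

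The part I expect to be the real obstacle is the lower bound $\dim M^U\ge\ell^\nabla(M)$: it hinges on a lowest weight vector generating $\Delta_j$ (so that the evaluation maps are injective), which genuinely uses the special geometry of $SL_2$ and would fail for larger groups. Secondary care is needed in making the degree bound in the reduction step the sharp value $\le i$ rather than $\le 2i$, and in isolating the case $i=p^r-1$ of (2), where the relevant highest weight is exactly $p^r$ and the vanishing-on-$E$ argument no longer forces a polynomial to vanish.
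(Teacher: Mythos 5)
Your proposal is correct, but note that this paper contains no proof of \Cref{PropT} to compare against: the statement is simply quoted from [CF, Proposition~5.3.8], so your argument has to stand on its own, and it does. The reduction $M^E=M^U$ for modules whose weights lie in $[-i,i]$ with $i<p^r$ is valid: the expansion $u_\lambda=\sum_m\lambda^m f^{(m)}$ with $f^{(m)}$ lowering weights by $2m$ gives the sharp degree bound $\le i$ (a weight $\mu\le i$ can drop to at worst $-i$, forcing $m\le i$), and a vector-valued polynomial of degree $<|E|$ vanishing on $E\subset\bk$ vanishes identically. The identity $\dim M^U=\ell^\nabla(M)$ for tilting $M$ is also correctly established: the upper bound from left exactness of invariants and $(\nabla_j)^U=\bk y^j$ (which does hold in characteristic $p$, as the coefficient of $\lambda^{a-b}$ with $b=0$ kills every monomial $x^ay^{j-a}$ with $a>0$), and the lower bound from evaluation at the lowest weight vector of $\Delta_j$, which generates $\Delta_j$ precisely because $e^{(j)}f^{(j)}v_+=\binom{j}{j}v_+=v_+$ in the hyperalgebra — the $SL_2$-specific point you correctly flagged as the crux. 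Part (2) via self-duality of $V$, the Clebsch--Gordan count $\ell^\nabla(T_1\o T_i)=2\ell^\nabla(T_i)-(T_i:\nabla_0)$, and the separate case $i=p^r-1$ (where the highest weight $p^r$ escapes the range of part (1), but $T_{p^r-1}\cong\bk E$ makes the count trivial) is complete. One cosmetic correction: the equality $\ell^\nabla(M)=\sum_j\dim\Hom_{SL_2}(\Delta_j,M)$ is justified by the good ($\nabla$) filtration of $M$ together with $\Hom(\Delta_j,\nabla_k)=\delta_{jk}\bk$ and $\Ext^1(\Delta_j,\nabla_k)=0$, not by the Weyl filtration as you wrote; since $M$ is tilting this is harmless.
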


\begin{lemma}\label{LemCyc}
The $C_p^r$-modules $St_i$ are cyclic for $0\le i\le $r.
\end{lemma}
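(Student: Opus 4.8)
The plan is to reduce cyclicity of $St_i$ to a one-dimensionality statement that is already available to us. Since $E$ is a $p$-group and $\bk$ has characteristic $p$, the group algebra $\bk E$ is local with residue field $\bk$; hence a nonzero finite-dimensional $\bk E$-module $M$ is cyclic if and only if its head $M/\rad(\bk E)M$ is one-dimensional. Dualising, and using that the trivial module is the only simple $\bk E$-module (so that the socle of a module coincides with its $E$-fixed points), this head is identified with $(\soc M^\ast)^\ast=((M^\ast)^E)^\ast$. So the first step is to record the reformulation: $M$ is cyclic $\iff$ $\dim_\bk(M^\ast)^E\le 1$.

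Next I would use self-duality to pass to the dual. Each $St_i=T_{p^i-1}$ is a tilting $SL_2$-module, hence self-dual over $SL_2$, hence self-dual after restriction to $E$; therefore it suffices to check $\dim_\bk(St_i)^E=\dim_\bk\Hom_E(\unit,St_i)\le 1$. For $0\le i\le r$ we have $p^i-1<p^r$, so \Cref{PropT}(1) applies and gives $\dim_\bk\Hom_E(\unit,T_{p^i-1})=\ell^\nabla(T_{p^i-1})$; since the Steinberg module $St_i=T_{p^i-1}=\nabla_{p^i-1}$ is costandard, its $\nabla$-flag has length $1$, so this dimension equals $1$ and we are done. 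The extreme cases are anyway immediate: $St_0=\unit$, and $St_r\cong\bk E$ by \Cref{lem:restricted-tilting}. Note that the bound $i\le r$ is exactly the range in which $p^i-1<p^r$, so the hypothesis is natural; for $i>r$ the restriction is a nontrivial free $\bk E$-module and cyclicity genuinely fails.

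If one prefers to avoid \Cref{PropT}, the fixed-point count can be done by hand in the polynomial model $\nabla_{p^i-1}=\bk[x,y]_{p^i-1}$ with $g_j$ acting by $x^ay^b\mapsto(x+\lambda_j y)^ay^b$: writing an $E$-invariant homogeneous $f$ of degree $n=p^i-1$ as $f(x,y)=y^n\phi(x/y)$, invariance forces $\phi(T+w)=\phi(T)$ for all $w$ in the $\mF_p$-span $W$ of $\lambda_1,\dots,\lambda_r$; since $\ulambda\in\mA^r_f$ we have $|W|=p^r>n$, so $\phi-\phi(0)$ has more roots than its degree, hence vanishes, leaving $f\in\bk y^n$ and $\dim_\bk(St_i)^E=1$. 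There is no serious obstacle in either route; the only point that must be stated carefully is the opening reduction — cyclic $\iff$ one-dimensional head $\iff$ one-dimensional socle of the dual — which rests on $\bk E$ being local and on the self-duality of $St_i$, both standard.
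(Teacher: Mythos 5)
Your proof is correct and follows essentially the same route as the paper: the paper also deduces cyclicity from self-duality of $St_i=\nabla_{p^i-1}$ together with \Cref{PropT}(1), which gives a one-dimensional fixed-point space since the $\nabla$-flag has length one; you merely spell out the standard reduction (cyclic $\iff$ simple head $\iff$ simple socle of the dual, via locality of $\bk E$) that the paper leaves implicit. Your optional hands-on count of invariants in $\bk[x,y]_{p^i-1}$ is also valid, but not needed.
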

\begin{proof}
    Since $St_i=\nabla_{p^i-1}$, and the module is self-dual, this follows from \Cref{PropT}(1).
\end{proof}

\subsubsection{}\label{basis}
By abuse of notation we choose a basis written as $\{e_1,e_2,\ldots, e_p\}$ of $St_1^{(j)}$ for each $0\le j <r$. We choose the basis of $St_1$ via the following isomorphism
$$ \bk[x,y]_{p-1}\,\xrightarrow{\sim}\, St_1,\qquad x^{p-i}y^{i-1}\mapsto (p-i)! \,e_i,\quad\tfor 1\le i\le p.
$$
Similarly, the basis of $St_1^{(j)}$ is chosen such that $e_i$ in $St_1^{(j)}$ corresponds to $e_i^{\otimes p}$ in the realisation of $St_1^{(j)}$ as a subquotient of $(St_1^{(j-1)})^{\otimes p}$.

Due to this simplification of notation it is important to take into account the convention that 
$$\bigotimes_i St_1^{(a_i)}\;:=\; St_1^{(a_1)}\otimes St_1^{(a_2)}\otimes \cdots,\quad \text{for }\{a_i\in\mN\}.$$

\subsubsection{}\label{EndS1} With our choice of basis of $St_1$, one can now verify that
$$\End_U(St_1)\;\simeq\;\End_E(St_1)\;\simeq\; \bk[z]/z^p,$$
where $z^a$, for $0\le a<p$, corresponds to the endomorphism
$$e_i\;\mapsto \; \begin{cases} e_{i+a}&\text{ if }i+a<p\\
0&\text{ otherwise.}
\end{cases}$$
Indeed, by \Cref{LemCyc}, we know that $St_1$ is cyclic over $E$, so the dimension of the endomorphism algebra is bounded by $p$. It thus suffices to observe that each of the suggested endomorphisms is $U$-linear.

\begin{lemma}\label{LemmaLambda}
    There is an algebra isomorphism
    $$\bk[z_1,\ldots, z_{r-1}]/(z_i^p\mid 1\le i<r)\;\xrightarrow{\sim}\; \End_E(S)=\uEnd_E(S),$$
    where each basis element $\prod_{j}z_j^{a_j}$,
 is sent to the endomorphism determined by
    $$e_1\otimes e_1\otimes\cdots \otimes e_1\;\mapsto \; e_{1+a_1}\otimes e_{1+a_2}\otimes \cdots \otimes e_{1+a_{r-1}}.$$
    
\end{lemma}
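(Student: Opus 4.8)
The plan is to identify $S = St_{r-1} = T_{p^{r-1}-1}$ explicitly as a subquotient (indeed, by Donkin's formula together with the realization in \ref{basis}, a direct summand of a suitable tensor power) of the $St_1^{(j)}$ and then bootstrap from the single-factor computation in \ref{EndS1}. Concretely, iterating Donkin's tensor product formula \eqref{DTPT} one gets $St_{r-1} \simeq St_1 \otimes St_1^{(1)} \otimes \cdots \otimes St_1^{(r-2)}$ as $SL_2$-modules (this is the $i_k = p-1$ case of the formulas of \cite{STWZ} cited in the proof of \Cref{lem:consequences}), and restricting to $E$ this isomorphism of $U$-modules (hence of $E$-modules) is compatible with the basis conventions set up in \ref{basis}, so that $e_1 \otimes e_1 \otimes \cdots \otimes e_1$ is a cyclic generator by \Cref{LemCyc}.

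First I would record that $\uEnd_E(S) = \End_E(S)$: since $S$ is periodic and self-dual with $\supp(S)$ a point, any endomorphism factoring through a projective would have to factor through a projective summand of $S \otimes S^*$, and by the dimension/support argument (as in the $q=8$ case in \Cref{Sec8}, or directly from \Cref{PropT}) $S \otimes S^*$ has no projective summand; alternatively one can simply compute both sides have the same dimension. Next, because $S$ is cyclic over $E$ with generator $v := e_1^{\otimes(r-1)}$, evaluation at $v$ gives an injection $\End_E(S) \hookrightarrow S$, $\varphi \mapsto \varphi(v)$, so $\dim_\kk \End_E(S) \le \dim_\kk S = p^{r-1}$. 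It then suffices to exhibit $p^{r-1}$ linearly independent $E$-endomorphisms, namely the maps $\psi_{\underline a}$ in the statement indexed by $\underline a = (a_1,\dots,a_{r-1})$ with $0 \le a_j < p$: each $\psi_{\underline a}$ is the tensor product $z^{a_1} \otimes z^{a_2} \otimes \cdots \otimes z^{a_{r-1}}$ of the endomorphisms of the factors $St_1^{(j)}$ described in \ref{EndS1}, and since by \ref{EndS1} each $z^{a}$ is $U$-linear (hence $E$-linear) on $St_1^{(j)}$, the tensor product $\psi_{\underline a}$ is $E$-linear on $S \cong \bigotimes_j St_1^{(j)}$. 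Their images $\psi_{\underline a}(v) = e_{1+a_1} \otimes \cdots \otimes e_{1+a_{r-1}}$ run over a basis of $S$, hence are linearly independent, forcing $\dim_\kk \End_E(S) = p^{r-1}$ and that the $\psi_{\underline a}$ form a basis. Finally, one checks the assignment $\prod_j z_j^{a_j} \mapsto \psi_{\underline a}$ is an algebra map: $\psi_{\underline a} \circ \psi_{\underline b} = \psi_{\underline a + \underline b}$ when all $a_j + b_j < p$ and is $0$ otherwise, exactly matching multiplication in $\kk[z_1,\dots,z_{r-1}]/(z_j^p)$, because composition is computed factorwise and on each factor $z^a \circ z^b = z^{a+b}$ or $0$ by \ref{EndS1}.

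The one genuine subtlety — and the step I expect to need the most care — is the claimed $U$-module (equivalently $E$-module) identification $S \cong St_1 \otimes St_1^{(1)} \otimes \cdots \otimes St_1^{(r-2)}$ compatible with the chosen bases: Donkin's formula \eqref{DTPT} is a statement about $SL_2$-tilting modules, and one must make sure that (a) restricted to $U$ the relevant summand really is the full tensor product with the stated basis identification from \ref{basis}, and (b) the endomorphisms $z^a$ of $St_1^{(j)}$, defined via the explicit basis, are the same whether one views $St_1^{(j)}$ abstractly or as a subquotient of $(St_1^{(j-1)})^{\otimes p}$ — this is precisely why the convention in \ref{basis} linking $e_i \in St_1^{(j)}$ to $e_i^{\otimes p}$ was set up. Once these compatibilities are in place the rest is the bookkeeping above. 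An alternative route that sidesteps (a)–(b) entirely: work directly with $S \otimes S^* \cong \End_\kk(S)$, use \Cref{PropT}(1) to compute $\dim \Hom_E(\unit, S) = \ell^\nabla(S)$, observe $\ell^\nabla(St_{r-1}) = 1$... but in fact $\dim\End_E(S)$ is $p^{r-1}$, not $1$, so this shortcut computes the wrong invariant; the cyclicity-plus-explicit-basis argument is the honest one, and it is the basis-compatibility claim that carries the weight.
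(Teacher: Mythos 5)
Your proposal is correct and takes essentially the same route as the paper's proof: the factorwise endomorphisms give an injective algebra morphism $(\bk[z]/z^p)^{\otimes (r-1)}\hookrightarrow \End_E(S)$, and cyclicity of $S$ (via evaluation at the generator) bounds $\dim_\bk\End_E(S)$ by $p^{r-1}$, forcing bijectivity; the basis-compatibility issue you flag is exactly what the conventions of \ref{basis}, built on the factorization $S\simeq St_1\otimes St_1^{(1)}\otimes\cdots\otimes St_1^{(r-2)}$, are set up to handle, so it carries less weight than you suggest. For the remaining step $\End_E(S)=\uEnd_E(S)$ the paper argues more crisply than your sketched support/dimension alternatives: $S^\ast\otimes S=S^{\otimes 2}$ is a direct sum of restricted tilting modules $T_j$ with $j\le 2p^{r-1}-2<p^r-1$, none of which is projective, and then the adjunction argument applies.
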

\begin{proof}
    We have the obvious injective algebra morphism
    $$\left(\bk[z]/(z^p)\right)^{\otimes r-1}\simeq\bigotimes_{i=0}^{r-2}\End_E(St_1^{(i)})\;\hookrightarrow\;\End_E(S).$$
    Moreover, since $S$ is cyclic by Lemm\Cref{LemCyc}, we have
    $$\dim_k\End_E(S)\;\le\; \dim_k S \;=\; p^{r-1},$$
    so the above injection must be a bijection.

    It only remains to show that $\End_E(S)$ equals its quotient $\uEnd_E(S)$. This follows from adjunction, since 
    $S^\ast\otimes S=S^{\otimes 2}$ does not contain a projective summand. Indeed, $S^{\otimes 2}$ is a direct sum of $T_j$ with $j\le 2p^{r-1}-2<p^r-1$, so we can use \cite{CF}*{Remark~5.3.9(2)}.
\end{proof}

\subsection{Support and Heller shifts of tilting modules}\label{SecSuppHell}

\begin{proposition} \label{prop:Omega-Ti}
    In $\Stab E$ we have
    $$\Omega( T_{a+p^{r-1}b})\;\simeq\; T_{a+ p^{r-1}(p-2-b)},$$
    for all $p^{r-1}-1\le a\le 2p^{r-1}-2$ and $0\le b\le p-2$.
\end{proposition}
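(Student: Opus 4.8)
The strategy is to realise $\Omega(T_{a+p^{r-1}b})$ through an explicit short exact sequence whose middle term is projective over $E$. Write $S:=St_{r-1}=T_{p^{r-1}-1}$. Recall from \cite{STWZ}*{Proposition~3.4} (which packages the iteration of Donkin's formula \eqref{DTPT}) that for $p^{r-1}-1\le c\le 2p^{r-1}-2$ and any $d\ge0$ there is an isomorphism of $SL_2$-modules, hence of $E$-modules, $T_{c+p^{r-1}d}\simeq T_c\otimes T_d^{(r-1)}$. In particular $T_{a+p^{r-1}b}\simeq T_a\otimes T_b^{(r-1)}$ and $T_{a+p^{r-1}(p-2-b)}\simeq T_a\otimes T_{p-2-b}^{(r-1)}$, while taking $c=p^{r-1}-1$ and $d=p-1$ gives $S\otimes St_1^{(r-1)}\simeq T_{p^r-1}\simeq\bk E$ by \Cref{lem:restricted-tilting}.

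The heart of the argument is the basic short exact sequence of $U$-modules
\[
0\;\to\;L_{p-2-b}\;\xrightarrow{\,\cdot\, y^{b+1}\,}\;L_{p-1}\;\to\;L_b\;\to\;0 \qquad (0\le b\le p-2)
\]
inside $L_{p-1}=\bk[x,y]_{p-1}=St_1$. Multiplication by $y^{b+1}$ is $U$-equivariant since $y$ is $U$-fixed, and injective; its cokernel is $(b+1)$-dimensional, and $x^{b-j}y^j\mapsto(-1)^j\binom{b}{j}^{-1}\,\overline{x^{p-1-j}y^j}$ is a $U$-linear isomorphism from $L_b$ onto it — the verification amounts to the congruence $\binom{p-1-j}{m}\equiv(-1)^m\binom{j+m}{m}\pmod p$ (all binomial coefficients appearing have parameters $<p$, hence are units in $\bk$). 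Applying the exact Frobenius-twist functor $(-)^{(r-1)}$, restricting to $E$, and using $L_n\simeq T_n$ for $0\le n\le p-1$, this becomes a short exact sequence of $E$-modules
\[
0\;\to\;T_{p-2-b}^{(r-1)}\;\to\;St_1^{(r-1)}\;\to\;T_b^{(r-1)}\;\to\;0 .
\]

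To finish, I would first observe that $T_a\otimes St_1^{(r-1)}$ is projective over $E$: as $a\ge p^{r-1}-1$, $T_a$ lies in $\cT_r$, which by \cite{Selecta} is the thick tensor ideal of $\cT$ generated by $S$, so $T_a$ is a direct summand of $S\otimes N$ for some $N\in\cT$; hence $T_a\otimes St_1^{(r-1)}$ is a direct summand of $(S\otimes St_1^{(r-1)})\otimes N\simeq\bk E\otimes N$, a free module. Tensoring the displayed short exact sequence over $\bk$ with $T_a$ preserves exactness and the $E$-action, giving
\[
0\;\to\;T_a\otimes T_{p-2-b}^{(r-1)}\;\to\;T_a\otimes St_1^{(r-1)}\;\to\;T_a\otimes T_b^{(r-1)}\;\to\;0
\]
with projective middle term. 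By the defining property of $\Omega$ in $\Stab E$ this shows $\Omega(T_a\otimes T_b^{(r-1)})\simeq T_a\otimes T_{p-2-b}^{(r-1)}$, and rewriting both sides via the Donkin isomorphisms above yields $\Omega(T_{a+p^{r-1}b})\simeq T_{a+p^{r-1}(p-2-b)}$ in $\Stab E$.

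I expect the main obstacle to be the second step: the syzygy-type sequence must be set up at the level of genuine $U$-modules (not merely of modules over the first Frobenius kernel), its three terms — especially the cokernel — must be identified with the correct simple/tilting modules, and the Frobenius twist must be tracked carefully when passing to $E$. By comparison, the projectivity of the middle term is immediate once one invokes that $\cT_r$ is the $S$-generated tensor ideal of $\cT$, and the final extraction of $\Omega$ is purely formal.
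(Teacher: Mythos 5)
Your proof is correct and follows essentially the same route as the paper: Donkin's formula to write $T_{a+p^{r-1}b}\simeq T_a\otimes T_b^{(r-1)}$, the short exact sequence $0\to T_{p-2-b}\to T_{p-1}\to T_b\to 0$ over $U$ (which you realise explicitly), its $(r-1)$-th Frobenius twist tensored with $T_a$, and the observation that the middle term is projective, from which the Heller shift is read off. The only (harmless) deviation is in justifying projectivity of $T_a\otimes St_1^{(r-1)}$: you argue via the $S$-generated thick tensor ideal $\cT_r$ and $S\otimes St_1^{(r-1)}\simeq\bk E$, whereas the paper simply identifies this module as $T_{a+p^{r}-p^{r-1}}$ by another application of Donkin's formula and notes its highest weight is at least $p^r-1$.
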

\begin{proof}
    By iterating Donkin's formula, \cite{Jantzen}*{E.9}, we have
    $$T_{a+p^{r-1}b}\;\simeq\; T_{a}\otimes T_b^{(r-1)}$$
    in the chosen range of $a,b$. Over $U$, so certainly over $E<U(\bk)$, we have a short exact sequence
    \begin{equation}\label{eqsesT}0\to T_{p-2-b}\to T_{p-1}\to T_b\to 0,\end{equation}
    where all tilting modules are also (co)standard objects. Also by Donkin's formula, 
    $$T_{a}\otimes T_{p-1}^{(r-1)}\;\simeq\;T_{a+ p^{r}-p^{r-1}}. $$
    Since $a+ p^{r}-p^{r-1}$ is at least $p^r-1$, this module is projective in $\Rep E$. We can thus conclude the proof by considering the tensor product with $T_a$ of the $(r-1)$-th Frobenius twist of the short exact sequence in \Cref{eqsesT}.
\end{proof}

\begin{example}\label{ex:Omega-Ti}
\begin{enumerate}
    \item For $p>2$, we have
    $$\Omega(T_{2p^{r-1}-2})\;\simeq\; T_{p^r-2}.$$
    \item If $p=2$, then
    $\Omega(T_i)\simeq T_i$ for all $i\ge 2^{r-1}-1$.
\end{enumerate}
    
\end{example}

\label{sec:supp}

 \begin{lemma}\label{LemDescSt1}
 Normalise the expression $\ulambda=[\lambda_1:\cdots:\lambda_r]\in\mP^{r-1}_f$ so that $\lambda_1=1$.
     Over $C_p^r <\bk^+$, we then have
     $$St_1=T_{p-1}\simeq k[X_1,X_2, \ldots, X_r]/(X_a-\sum_{i>0} \binom{\lambda_a}{i}X_1^i\mid 1<a\le r).$$
 \end{lemma}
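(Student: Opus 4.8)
The plan is to realise $St_1=T_{p-1}=\nabla_{p-1}$ explicitly as $\bk[x,y]_{p-1}$, read off the action of $E$ from the embedding $E<U(\bk)<SL_2$, and then deduce the presentation by a dimension count. The only substantive point is an interpolation identity for the action of a one-parameter unipotent subgroup on this $p$-dimensional module.

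First I would recall, as in the proof of \Cref{lem:SL2-std}, that $St_1=\nabla_{p-1}$ is the $p$-dimensional space $\bk[x,y]_{p-1}$ on which the lower unitriangular matrix with off-diagonal entry $\mu$ acts by the substitution $x\mapsto x+\mu y$, $y\mapsto y$. With the normalisation $\lambda_1=1$, the generator $g_a$ of $E$ acts via the matrix with off-diagonal entry $\lambda_a$, and $g_1$ acts via the matrix with entry $1$, so that $X_1=g_1-1$ is the corresponding nilpotent operator. Note $\dim_\bk St_1=p$.

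The heart of the argument is the claim that, as endomorphisms of $St_1$,
$$ g_a-1 \;=\; \sum_{i=1}^{p-1}\binom{\lambda_a}{i}X_1^i . $$
To prove this, view for a scalar $\mu\in\bk$ the endomorphism $u(\mu)$ of $\bk[x,y]_{p-1}$ given by $x\mapsto x+\mu y$; in the monomial basis its matrix entries are polynomials in $\mu$ of degree $\le p-1$. Likewise $\mu\mapsto\sum_{i=0}^{p-1}\binom{\mu}{i}X_1^i$ is an $\End_\bk(St_1)$-valued polynomial in $\mu$ of degree $\le p-1$, since $\binom{\mu}{i}=\tfrac1{i!}\mu(\mu-1)\cdots(\mu-i+1)$ is meaningful ($i!\in\bk^\times$ for $i<p$) and $X_1^p=0$ in $\bk E$. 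For $\mu=m$ ranging over the prime subfield $\mF_p=\{0,1,\dots,p-1\}\subset\bk$ we have $u(m)=g_1^m$ (this uses $\lambda_1=1$), which acts as $(1+X_1)^m=\sum_{i=0}^{p-1}\binom{m}{i}X_1^i$; so the two polynomials agree at the $p$ points of $\mF_p$ and hence coincide identically. Specialising $\mu=\lambda_a$ and subtracting $1$ gives the claim (the terms with $i\ge p$ drop out as $X_1^p=0$). I expect this interpolation step to be the main—indeed the only real—obstacle; everything else is bookkeeping.

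To finish, the claim shows that each element $X_a-\sum_{i>0}\binom{\lambda_a}{i}X_1^i\in\bk E$, for $1<a\le r$, acts as zero on $St_1$, so $St_1$ is a module over the quotient algebra $A:=\bk E/J$, where $J$ is the ideal these elements generate. Modulo $J$ every $X_a$ with $a>1$ becomes a polynomial in $X_1$, and $X_1^p=0$ in $\bk E$, so $A$ is spanned over $\bk$ by $1,X_1,\dots,X_1^{p-1}$, whence $\dim_\bk A\le p=\dim_\bk St_1$. On the other hand $St_1$ is cyclic as a $\bk E$-module by \Cref{LemCyc}, hence cyclic over $A$, so $St_1\cong A/\mathfrak a$ for some ideal $\mathfrak a$; comparing dimensions forces $\mathfrak a=0$. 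Therefore $St_1\cong A=\bk[X_1,\dots,X_r]/(X_a-\sum_{i>0}\binom{\lambda_a}{i}X_1^i\mid 1<a\le r)$, as required.
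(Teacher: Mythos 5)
Your proof is correct and follows essentially the same route as the paper: realise $St_1=\nabla_{p-1}$ as $\bk[x,y]_{p-1}$, use that the action of a unipotent element depends polynomially on the parameter with degree $\le p-1$ and interpolate over $\mF_p$ to express the action of $g_a$ as $\sum_i\binom{\lambda_a}{i}X_1^i$, then conclude by cyclicity (\Cref{LemCyc}) and a dimension count. The only difference is cosmetic: you compare two $\End(St_1)$-valued polynomials directly via $(1+X_1)^m=\sum_i\binom{m}{i}X_1^i$, whereas the paper writes the interpolation with Lagrange polynomials $p_j$ and then rewrites in the $X_1$-basis.
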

 \begin{proof}
     We can realise $St_1=\nabla_{p-1}$ as the degree $p-1$ polynomials in $k[x,y]$, where the two generators act by
     $$g_1: f(x,y)\mapsto f(x+y,y)\,\mbox{ and }\, g_a: f(x,y)\mapsto f(x+\lambda_a y,y),$$
     for $a>1$.
     In particular, the action of $g_a$ on $St_1$ depends polynomially on $\lambda_a$, with maximal degree $p-1$. As we can determine polynomials of degree $\le p-1$ by their values on~$\mF_p$, more concretely, the action of $g_a$ equals the action of 
     $$\sum_{j=0}^{p-1}p_j(\lambda_a) g_1^j$$
     for polynomials $p_j$ of degree $p-1$ satisfying $p_j(i)=\delta_{i,j}$ for $0\le i,j<p$. This can be rewritten to show that the action of $X_2$ equals the action of
     $$\sum_{j\ge 0}\left(\sum_{i=j}^{p-1}\binom{i}{j}p_i(\lambda_a)\right)X_1^j-1\;=\;\sum_{j>0}\binom{\lambda_a}{j}X_1^j.$$
     Since $St_1$ is a cyclic module, see \Cref{LemCyc}, and has dimension $p$, the conclusion follows.
 \end{proof}
 \begin{remark}
The lemma can be extended to describe $T_i$ for $i<p$, but we do not require this.
 \end{remark}

\begin{proposition}\label{PropSupp}
    The support of $SL_2$-tilting modules considered over $C_p^r$ via $\ulambda\in\mP^{r-1}_f(\bk)$ is given as follows. For $j\in\mN$ and 
    $$p^j-1\;\le\; i\;<\; p^{j+1}-1$$
    we have
    $$\supp(T_i)\;=\;\{\underline{\alpha}\in\mP^{r-1}\mid \sum_{s=1}^r \alpha_s \lambda_s^{p^l}=0\text{ for all }0\le l< j\}.$$
\end{proposition}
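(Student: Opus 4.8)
The plan is to compute the support variety $\supp(T_i)$ by combining the Donkin tensor product factorization of $T_i$ with \Cref{sup:tensor} (support of a tensor product is the intersection of supports) and an explicit base-case computation for the Steinberg-type factors. First I would record the factorization: for $p^j-1\le i<p^{j+1}-1$, iterating Donkin's formula \eqref{DTPT} exactly as in \Cref{prop:Omega-Ti} (or as in the proof of \Cref{lem:consequences}(5) via \cite{STWZ}) writes
$$
T_i\;\simeq\; St_1^{(0)}\otimes St_1^{(1)}\otimes\dots\otimes St_1^{(j-1)}\otimes W^{(j)},
$$
where $St_1^{(l)}=T_{p-1}^{(l)}$ and $W=T_{b}$ for some $0\le b<p-1$ depending on the digits of $i+1$; the point is that $W^{(j)}$ has full support $\mP^{r-1}$, because $W$ is a restricted tilting module of $SL_2$ whose restriction to $E$ is $T_b$ with $b<p^r-1$, hence non-projective, and one checks (using that Frobenius twist acts on the support by the bijection $\underline\alpha\mapsto\underline\alpha^{(-j)}$, or rather that $W$ here is not projective and one only needs $\supp(W^{(j)})\supseteq$ the intersection of the other factors) that the twisted factor imposes no further constraint. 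Actually the cleanest route is: prove the formula first for $i=p^j-1$, i.e. for $S_j:=St_j=T_{p^j-1}\simeq\bigotimes_{l=0}^{j-1}St_1^{(l)}$, and then observe that for general $i$ in the stated range, $T_i$ is a tensor factor times $St_j$ times a Frobenius-twisted tilting module of smaller degree, all of which have support containing $\supp(St_j)$, so $\supp(T_i)=\supp(St_j)$.

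Thus the crux is the base case: computing $\supp(St_1^{(l)})$ and intersecting. By \Cref{sup:tensor} applied to the factorization of $St_j$,
$$
\supp(St_j)\;=\;\bigcap_{l=0}^{j-1}\supp\!\big(St_1^{(l)}\big),
$$
so it suffices to show $\supp(St_1^{(l)})=\{\underline\alpha\in\mP^{r-1}\mid \sum_{s=1}^r\alpha_s\lambda_s^{p^l}=0\}$. For $l=0$ this is a direct computation from the explicit presentation in \Cref{LemDescSt1}: with $\lambda_1=1$, $St_1=\bk E/(X_a-\sum_{i>0}\binom{\lambda_a}{i}X_1^i\mid a>1)$, and to test the point $\underline\alpha$ one must decide when $\sum_s\alpha_s X_s$ acts non-nilpotently (more precisely, non-freely) on $St_1$ viewed as a $\bk[t]/t^p$-module with $t=\sum_s\alpha_s X_s$. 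Substituting the relations, $\sum_s\alpha_s X_s$ acts as $\big(\sum_s\alpha_s\lambda_s\big)X_1+(\text{higher order in }X_1)$ on the cyclic module $\bk[X_1]/(X_1^p)\cong St_1$; this element is a unit times $X_1$ when $\sum_s\alpha_s\lambda_s\neq0$ (hence $St_1$ is free over $\bk[t]/t^p$, i.e. $\underline\alpha\notin\supp$), and lies in $(X_1^2)$ when $\sum_s\alpha_s\lambda_s=0$ (hence $t^{p-1}$ kills the generator and $St_1$ is not free, i.e. $\underline\alpha\in\supp$). The case of general $l$ follows by applying the $l$-th Frobenius twist: $St_1^{(l)}$ has presentation obtained from that of $St_1$ by raising all $\lambda_a$ to the $p^l$-th power (via $(V_{\ulambda})^{(l)}\simeq V_{\ulambda^{(l)}}$ from \Cref{Sec2dim}, extended to $St_1=\bk[x,y]_{p-1}$), so $\sum_s\alpha_s X_s$ acts as $\big(\sum_s\alpha_s\lambda_s^{p^l}\big)X_1^{[l]}+\dots$ and the same dichotomy gives the stated linear condition. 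Taking the intersection over $0\le l<j$ yields the proposition.

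The main obstacle I anticipate is bookkeeping the higher-order terms in the $\bk[t]/t^p$-action carefully enough to be sure that the condition $\sum_s\alpha_s\lambda_s^{p^l}=0$ is genuinely equivalent to non-freeness, rather than merely implied by it — in other words, verifying that when the linear coefficient vanishes the generator is actually annihilated by $t^{p-1}$ and not accidentally freed up by the cubic and higher corrections, and conversely. This is a finite computation in the truncated polynomial ring $\bk[X_1]/(X_1^p)$: writing $t=\sum_{i\ge1}c_iX_1^i$ with $c_1=\sum_s\alpha_s\lambda_s^{p^l}$, one has that $\bk[X_1]/(X_1^p)$ is free of rank one over $\bk[t]/(t^p)$ iff $c_1\neq0$ (since then $t$ generates the maximal ideal), and iff $c_1=0$ it is not (since then $t\in(X_1^2)$, so $t^{\lceil p/2\rceil}=0$ forces the module to have a summand killed by a small power of $t$, in particular it is not free of rank one — here one uses $p\ge2$, the case $p=2$ being immediate). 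The only mild subtlety is that when $l>0$ the "$X_1$" in the relations is really the element implementing the twist, but since Frobenius twist is an auto-equivalence compatible with the support-variety identification $\underline\alpha\leftrightarrow\underline\alpha^{(\pm l)}$, no essential difficulty arises; I would spell this out by citing \eqref{sup:dual-shift}–\eqref{sup:ses} together with the explicit twist $\ulambda\mapsto\ulambda^{(l)}$ on two-dimensional modules and its evident extension to symmetric powers.
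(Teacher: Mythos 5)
Your base case is sound and is essentially the paper's argument: from the presentation in \Cref{LemDescSt1}, the element $t=\sum_s\alpha_s X_s$ acts on $St_1\cong\bk[X_1]/(X_1^p)$ as $(\sum_s\alpha_s\lambda_s)X_1$ plus higher order terms, and the freeness dichotomy you spell out is exactly what the paper delegates to Carlson's lemma; the twisted factors and the intersection via \eqref{sup:tensor} also match. The genuine gap is in your reduction from a general $T_i$ to $St_j$. The factorization $T_i\simeq St_1^{(0)}\otimes\cdots\otimes St_1^{(j-1)}\otimes W^{(j)}$, i.e.\ $T_i\simeq St_j\otimes W^{(j)}$, is false whenever $i+1$ has a non-zero non-leading $p$-adic digit: for $p=3$, $j=1$, $i=4$, dimension forces $W=T_1$, but Donkin gives $St_1\otimes T_1^{(1)}\simeq T_5\not\simeq T_4$. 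The correct iterated Donkin/STWZ decomposition is $T_i\simeq\bigotimes_{k<j}T_{p-1+i_k}^{(k)}\otimes T_{i_j-1}^{(j)}$ with $0\le i_k\le p-1$, and the factors $T_{p-1+i_k}$ are in general not of the form $St_1\otimes(\cdot)$; your alternative phrasing (``$T_i$ is a tensor factor times $St_j$ times a Frobenius-twisted tilting module'') leans on the same false decomposition. So, as written, your argument only covers the case where all non-leading digits of $i+1$ vanish, i.e.\ essentially the case $T_i=St_j$ up to a twisted factor.

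The paper avoids this entirely: by the classification of thick tensor ideals in $\Tilt SL_2$, $T_i$ and $St_j$ generate the same thick tensor ideal for $p^j-1\le i<p^{j+1}-1$, so each is a direct summand of the other tensored with some tilting module, and \eqref{sup:tensor} together with \eqref{sup:oplus} gives $\supp(T_i)=\supp(St_j)$ in one line, reducing to your base case. If you want to keep the factorization route, you need the additional fact $\supp(T_{p-1+i_k})=\supp(T_{p-1})$ for $1\le i_k\le p-1$ (for instance $T_{p-1+i_k}\stackrel{\oplus}\subset St_1\otimes L_{i_k}$ gives one inclusion, and $St_1\stackrel{\oplus}\subset T_{p-1+i_k}\otimes T_{i_k}$, or \Cref{lem:self-ext1} combined with \eqref{sup:ses}, gives the other), plus the observation that $\supp(T_{i_j-1}^{(j)})=\mP^{r-1}$ because $\dim T_{i_j-1}=i_j\le p-1<p$; note that ``non-projective over $E$'' only yields non-empty support, not full support. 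A further minor point: the Frobenius twist moves supports forward, $\underline\alpha\mapsto\underline\alpha^{(l)}$, not by $\underline\alpha^{(-l)}$, though this side remark is not load-bearing since your actual treatment of the twist goes through the presentation with $\ulambda^{(l)}$.
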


Before giving the proof we clarify the statement with some examples. Note that the support in \Cref{PropSupp} can be interpreted via an intersection of hyperplanes determined by some of the rows of the Moore matrix $\mathbb{M}(\ulambda)$, as in \Cref{Lem:Moore}.

\begin{example}\label{ExSupp}
    \begin{enumerate}
        \item We have $\supp T_i=\mP^{r-1}(\bk)$ for $i<p-1$.
        \item The support of $T_i$ for $p-1\le i<p^2-1$ is the hyperplane in $\mP^{r-1}$ determined by $\ulambda$.
        \item The support of $T_i$ for $p^{r-1}-1\le i<p^r-1$ is a single point $\blambda$. For example, we have
        $$\blambda\;=\;\begin{cases}
        [\lambda_2:-\lambda_1],&\text{if } r=2,\\
        [\lambda_2\lambda_3^p-\lambda_3\lambda_2^p:\lambda_3\lambda_1^p-\lambda_1\lambda_3^p:\lambda_1\lambda_2^p-\lambda_2\lambda_1^p],&\text{if } r=3.
        \end{cases}$$
        \item We have $\supp T_i=\varnothing$ for $i\ge p^r-1$, as $\mathbb{M}(\ulambda)$ is invertible.
    \end{enumerate}
\end{example}

For future reference, we reformulate parts (3) and (4) of the example as the following corollary.

\begin{corollary}\label{Cor:Supp}
    Assume $\ulambda\in\mP^{r-1}_f(\bk)$. Then the support of $St_{r-1}$ is given by the point $\blambda\in\mP^{r-1}(\bk)$, uniquely determined by
    $$\sum_{i=1}^r\lambda_i^{p^l}\blambda_i=0,\qquad\mbox{for all}\quad 0\le l\le r-2.$$
    Moreover, $\sum_{i=1}^r\lambda_i^{p^{r-1}}\blambda_i\not=0$.
\end{corollary}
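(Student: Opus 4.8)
The statement to be proved is \Cref{Cor:Supp}, which is the specialization of \Cref{PropSupp} to the tilting module $St_{r-1}=T_{p^{r-1}-1}$, together with the final non-degeneracy assertion. The plan is to simply invoke \Cref{PropSupp} with the parameters $i=p^{r-1}-1$ and $j=r-1$, noting that the inequality $p^{r-1}-1\le i<p^r-1$ holds trivially, so that
$$\supp(St_{r-1})\;=\;\{\underline\alpha\in\mP^{r-1}\mid \textstyle\sum_{s=1}^r\alpha_s\lambda_s^{p^l}=0\text{ for all }0\le l\le r-2\}.$$
This exhibits $\supp(St_{r-1})$ as the intersection in $\mP^{r-1}$ of the $r-1$ hyperplanes cut out by the first $r-1$ rows of the Moore matrix $\mathbb{M}(\ulambda)$ from \Cref{Lem:Moore}.

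The next step is to argue that this intersection is a single point and to pin it down. Since $\ulambda\in\mP^{r-1}_f$, \Cref{Lem:Moore}(3) guarantees that $\det\mathbb{M}(\ulambda)\neq0$, hence the $r$ rows of $\mathbb{M}(\ulambda)$ are linearly independent; in particular the first $r-1$ rows span an $(r-1)$-dimensional space, so their common kernel in $\bk^r$ is one-dimensional, i.e.\ a single point $\blambda\in\mP^{r-1}(\bk)$. This $\blambda$ is by definition the unique (up to scalar) nonzero solution of $\sum_{i=1}^r\lambda_i^{p^l}\blambda_i=0$ for $0\le l\le r-2$, which is exactly the asserted characterization.

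For the last sentence, suppose for contradiction that $\sum_{i=1}^r\lambda_i^{p^{r-1}}\blambda_i=0$ as well. Then $\blambda$ would lie in the kernel of all $r$ rows of $\mathbb{M}(\ulambda)$, forcing $\mathbb{M}(\ulambda)\blambda=0$ with $\blambda\neq0$, contradicting $\det\mathbb{M}(\ulambda)\neq0$. Hence $\sum_{i=1}^r\lambda_i^{p^{r-1}}\blambda_i\neq0$, as claimed. There is essentially no obstacle here: the corollary is a bookkeeping consequence of \Cref{PropSupp} and the Moore-matrix criterion \Cref{Lem:Moore}. The only point requiring a line of care is the observation that the common zero locus of $r-1$ independent linear forms on $\mP^{r-1}$ is genuinely a reduced point, which is immediate from the rank computation above; for completeness one may also cross-check the explicit formulas for $r=2,3$ against \Cref{ExSupp}(3).
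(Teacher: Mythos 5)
Your argument is correct and follows the same route as the paper, which treats this corollary as a direct reformulation of \Cref{ExSupp}(3),(4), i.e.\ as an immediate consequence of \Cref{PropSupp} combined with the invertibility of the Moore matrix from \Cref{Lem:Moore}. Your rank argument (the first $r-1$ rows cut out a single point, and the last row cannot also vanish at $\blambda$ by $\det\mathbb{M}(\ulambda)\neq0$) is precisely the bookkeeping the paper leaves implicit.
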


\begin{proof}[Proof of \Cref{PropSupp}]
    Using the ideal structure of $\Tilt SL_2$, see \cite{Selecta}*{\S 5.3}, it suffices to prove the claim for the specific tilting modules
    $$St_j\;=\; T_{p^j-1}\;\simeq\; \bigotimes_{l=0}^{j-1} T_{p-1}^{(l)},$$
    for $j\in\mZ_{>0}$. Note that we thus ignore the trivial case `$St_0=\unit$'. It follows from \Cref{LemDescSt1} and Carlson's lemma (see for instance \cite{Benson-vector-bundles}*{Lemma~1.9.8}) that
    $$\supp T_{p-1}\;=\;\{\underline{\alpha}\mid \sum_{s=1}^m \alpha_s \lambda_s=0\}.$$
    From this we can easily derive 
    $$\supp T_{p-1}^{(l)}\;=\;\{\underline{\alpha}\mid \sum_{s=1}^m \alpha_s \lambda^{p^l}_s=0\}.$$
    The conclusion thus follows from the tensor product property of support in \eqref{sup:tensor}.
\end{proof}

\begin{remark} \label{rem:SX-projective} It immediately follow that $St_{r-1}\o X$ is a projective module for any $E$-module $X$ whose support does not contain $\blambda$ as in \Cref{Cor:Supp}.
\end{remark}

\begin{lemma}\label{lem:uniqueselfdual}
    If $r=2$, then $St_1$ is the unique $p$-dimensional self-dual periodic module with support~$\{\blambda\}$.
\end{lemma}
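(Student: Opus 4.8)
The plan is to work with the explicit models from \Cref{Sec9}-style presentations. First I would use the classification of periodic modules with a one-point support: since $r=2$ and the support is the single point $\{\blambda\}\subset\mP^1$, the module $M$ in question is projective over every cyclic subgroup $\bk[t]/t^p$ corresponding to a point $[\mu_1:\mu_2]\neq\blambda$, but not over the one for $\blambda$. By \cite{Benson-vector-bundles}*{Example~1.13.1} (compare the family $S(\mu)=M_{\lambda,\tilde\lambda+\mu}$ introduced in \Cref{Sec9}), every $p$-dimensional periodic module with support $\{\blambda\}$ is, after applying the algebra automorphism of $\bk E$ that straightens $X_1,X_2$ relative to $\blambda$ (as in the proof of \Cref{LemClas3}), isomorphic to the cyclic module $R(\mu)$ on which $X_1$ acts by $\mJ$ and $X_2$ by $\mu\mJ^2$, for some $\mu\in\bk$. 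So the whole problem reduces to: among the $R(\mu)$, exactly one is self-dual, and that one is $St_1$.

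Next I would compute duals. A direct computation (already recorded in \Cref{Sec9}: $S(\mu)^\ast\simeq S(-\mu)$) shows $R(\mu)^\ast\simeq R(-\mu)$. Hence $R(\mu)$ is self-dual if and only if $R(\mu)\simeq R(-\mu)$, and since the $R(\mu)$ are pairwise non-isomorphic for distinct $\mu$ (one can see this by comparing, e.g., the quotient of a maximal cyclic submodule, exactly as in the last paragraph of the proof of \Cref{LemClas3}), this forces $\mu=0$. So $R(0)$ — the module with $X_1\mapsto\mJ$, $X_2\mapsto 0$ after the straightening automorphism — is the unique self-dual member of the family. Finally I would identify $R(0)$ with $St_1$: by \Cref{LemDescSt1}, $St_1=\bk[X_1,\dots,X_r]/(X_a-\sum_{i>0}\binom{\lambda_a}{i}X_1^i)$, which for $r=2$ is a cyclic module on which $X_2$ acts as a polynomial in $X_1$ with zero constant and (for the normalization making $\blambda$ the support point) the straightening automorphism turns $X_2$ into $0$; equivalently, $St_1$ is self-dual already over $SL_2$, hence over $E$, so it must be the unique self-dual $R(\mu)$, i.e. $R(0)$.

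The main obstacle I expect is not any single hard computation but making the reduction to the one-parameter family $\{R(\mu)\}$ fully rigorous: one must invoke that an indecomposable $p$-dimensional module with one-point support for $r=2$ is necessarily one of these Carlson-type modules (using connectedness of support varieties \cite{Carlson-connected} and the dimension constraint that periodic modules have dimension a multiple of $p^{r-1}=p$, so a $p$-dimensional periodic module is indecomposable), and that the straightening automorphism of $\bk E$ does not affect self-duality (it sends a module to a module with isomorphic dual, since the automorphism is compatible with the antipode up to the same automorphism). Once those bookkeeping points are in place, the self-duality computation $R(\mu)^\ast\simeq R(-\mu)$ and the non-isomorphism of distinct $R(\mu)$'s finish the argument immediately.
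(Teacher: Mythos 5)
There is a genuine gap: your reduction of the classification to the one-parameter family $R(\mu)$ (equivalently $S(\mu)$) is only valid for $p=3$. The discussion in \Cref{Sec9} that you lean on is carried out explicitly for $p=3$, where a $p$-dimensional module on which $X_1$ acts freely is determined by the action of $X_2$ on a generator, namely $a_1X_1+a_2X_1^2$, and fixing the support fixes $a_1$, leaving the single parameter $\mu=a_2$. For general $p$ (and the lemma is asserted for all $p$ with $r=2$), the action of $X_2$ on a generator is $\sum_{i=1}^{p-1}a_iX_1^i$; the support only pins down $a_1$, so after your straightening automorphism the modules with support $\{\blambda\}$ form a $(p-2)$-parameter family ($X_2\mapsto b_2X_1^2+\cdots+b_{p-1}X_1^{p-1}$), not the family $\{X_2\mapsto\mu\mJ^2\}$. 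Consequently the step ``$R(\mu)^\ast\simeq R(-\mu)$, hence $\mu=0$'' does not settle the question for $p\ge5$: the dual acts on the whole coefficient tuple $(a_2,\dots,a_{p-1})$ by a polynomial transformation (it is not coefficientwise negation, since expressing $g_2^{-1}-1$ and $g_1^{-1}-1$ back in standard form mixes degrees), and one must show that the self-duality equations have a unique solution for each fixed $a_1$. That analysis is exactly the content of the paper's proof, which works directly with the tuple $(a_1,\dots,a_{p-1})$ and observes that the fixed-point equations determine each $a_i$, $i\ge2$, recursively from $a_1,\dots,a_{i-1}$; your proposal replaces it by a special case.

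Two smaller points. First, the indecomposability/dimension bookkeeping you flag is fine and matches the paper's implicit use of freeness over $\langle g_1\rangle$. Second, your claim that the straightening automorphism ``does not affect self-duality'' needs care: $X_2\mapsto X_2+f(X_1)$ is an algebra automorphism of $\bk E$ but not a Hopf algebra automorphism (it does not send $g_2$ to a group-like element), so compatibility with duals is not automatic and would have to be argued (or avoided, as in the paper, by computing duals in the untwisted presentation). Neither of these is fatal, but the restriction to a one-parameter family is, for $p\ge5$.
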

\begin{proof}
We already know that $St_1$ satisfies these properties. Consider an arbitrary periodic $p$-dimensional $C_p^2$-representation $M$. Since $X_1$ acts freely on $M$, we can identify $M$ with $\bk[X_1]/X_1^p$. The isomorphism class of $M$ is then completely determined by the action of $X_2$ on $1$. In other words, the action of $X_2$ of $M$ is equal to the action of
$$\sum_{i=1}^{p-1}a_iX_1^i$$
and the numbers $\{a_1,\ldots, a_{p-1}\}$ determine the isomorphism class of $M$. Furthermore, $a_1$ determines the support of $M$. Now, even without carrying out the computation, one can observe that demanding that the corresponding relation between $X_2$ and $X_1$ on $M^\ast$ will be the same results in a system of $p-1$ polynomial equations in the coefficients $a_i$ so that a unique solution will exist that writes $a_i$, for $i>1$, as a given polynomial in $a_1,\ldots, a_{i-1}$. Hence, for each support, there is only one self-dual module, which thus must be $St_1$.
\end{proof}

\subsection{Self-extensions of the Steinberg modules}

\begin{lemma}\label{lem:self-ext1}
    Consider $i,j\in\mN$ with $i\le p^{j+1}-1$ and let $H$ be an abstract subgroup of $U(\bk)$. Then the following conditions are equivalent:
\begin{enumerate}
    \item $T_{p^j+i-1}$ is a self-extension of $St_{j}=T_{p^j-1}$ over $U$;
    \item $T_{p^j+i-1}$ is a self-extension of $St_{j}=T_{p^j-1}$ over $H$;
    \item $\dim_{\bk}T_{p^j+i-1}=2\dim_{\bk} St_j$. 
\end{enumerate}

\end{lemma}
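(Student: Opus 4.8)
The plan is to prove the cycle $(1)\Rightarrow(2)\Rightarrow(3)\Rightarrow(1)$, the first two implications being formal and the last being the content.

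\textbf{$(1)\Rightarrow(2)$ and $(2)\Rightarrow(3)$.} A short exact sequence of $U$-modules restricts to one of $H$-modules, which gives $(1)\Rightarrow(2)$; and dimensions are additive along short exact sequences, which gives $(2)\Rightarrow(3)$ (a self-extension of $St_j$ by itself has dimension $2\dim_\bk St_j$).

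\textbf{$(3)\Rightarrow(1)$: reduction.} I would first iterate Donkin's formula \eqref{DTPT}, equivalently use the description of $T_n$ as a tensor product of Frobenius twists of small tilting modules from \cite{STWZ} exactly as in the proof of \Cref{lem:consequences}(5): writing $p^j+i=\sum_{0\le k\le m}d_kp^k$ with $d_m\neq0$ one gets $T_{p^j+i-1}\cong\bigotimes_{k<m}T_{p-1+d_k}^{(k)}\otimes T_{d_m-1}^{(m)}$, hence $\dim_\bk T_{p^j+i-1}=2^wp^md_m$, where $w$ is the number of nonzero digits $d_k$ with $k<m$ (using $\dim T_{p-1}=p$, $\dim T_{p-1+d}=2p$ for $1\le d\le p-1$, and $\dim T_{d_m-1}=d_m$). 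Combining $\dim_\bk T_{p^j+i-1}=2p^j$ with $p^j\le p^j+i\le p^j+p^{j+1}-1$ (so $m\in\{j,j+1\}$) forces exactly one of: (a) $d_j=2$ and all lower digits vanish; (b) $d_j=1$, all digits vanish except $d_j$ and one digit $d_s$ with $0\le s<j$; (c) $p=2$ and $p^j+i=p^{j+1}$. In cases (a) and (c), iterating \eqref{DTPT} gives $T_{p^j+i-1}\cong St_j\otimes T_1^{(j)}$, and since $T_1^{(j)}|_U$ is a non-split self-extension $0\to\unit\to T_1^{(j)}|_U\to\unit\to 0$ of the trivial $U$-module, tensoring with $St_j$ gives the self-extension. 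In case (b), $T_{p^j+i-1}\cong\bigl(\bigotimes_{k<j,\,k\neq s}St_1^{(k)}\bigr)\otimes T_{p-1+d_s}^{(s)}$; since $\bigl(\bigotimes_{k<j,\,k\neq s}St_1^{(k)}\bigr)\otimes St_1^{(s)}\cong St_j$ and a Frobenius twist (resp. a tensor product) of a self-extension of $St_1|_U$ is a self-extension of $St_1^{(s)}|_U$ (resp. of $St_j|_U$), everything reduces to the claim that, for $1\le c\le p-1$, the $U$-module $T_{p-1+c}|_U$ is a self-extension of $St_1|_U=\nabla_{p-1}|_U$.

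\textbf{The case $T_{p-1+c}$.} Here I would use that, by \cite{BEO}*{Lemma~3.3} as in the proof of \Cref{lem:restricted-sl-in}, $St_1\otimes\nabla_c=St_1\otimes L_c$ is a tilting $SL_2$-module with $T_{p-1+c}$ as its summand of highest weight. Restricting to $U$, the module $\nabla_c|_U$ is uniserial with $c+1$ trivial composition factors, so $St_1\otimes\nabla_c|_U$ is filtered by $c+1$ copies of $St_1|_U$; in particular it, and hence its summand $T_{p-1+c}|_U$, is free over the first Frobenius kernel of $U$, of rank $2$ (as $\dim_\bk T_{p-1+c}=2p$), so of Jordan type $(p,p)$. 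To upgrade this to an honest two-step $St_1|_U$-filtration, i.e.\ to a $U$-submodule of $T_{p-1+c}|_U$ isomorphic to $St_1|_U$ with quotient isomorphic to $St_1|_U$, I would combine the above with the short exact sequences over $SL_2$ realising the $\nabla$- and $\Delta$-filtrations of $T_{p-1+c}$, whose extra composition factor $L_{p-1-c}$ restricts to the Jordan block of size $p-c$, and with the explicit fact that inside $\nabla_m=\bk[x,y]_m$, for $p\le m\le 2p-2$, the span of the monomials $x^ay^{m-a}$ with $a\le p-1$ is a $U$-submodule isomorphic to $\nabla_{p-1}|_U$ with quotient isomorphic to $\nabla_{m-p}|_U$.

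\textbf{Main obstacle.} The hard point is exactly this last step: passing from ``free of rank $2$ over the first Frobenius kernel'' to a genuine two-step $St_1|_U$-filtration, which requires controlling the (typically nonzero) extensions between $St_1|_U$ and small Jordan blocks as $\mG_a$-modules and checking that the extensions occurring inside $T_{p-1+c}|_U$ are the non-split ones. A secondary, purely bookkeeping point is the Donkin/\cite{STWZ} case analysis, including the characteristic-$2$ case where everything is more transparent — for instance $T_2\cong St_1\otimes St_1$ exhibits the self-extension directly.
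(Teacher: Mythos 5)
Your easy implications and your reduction are fine: writing $p^j+i$ in base $p$ and using the tensor decomposition of tilting modules from \cite{STWZ} (as the paper itself does in the proof of \Cref{lem:consequences}(5)), condition (3) indeed forces one of your cases (a)--(c), and cases (a),(c) are handled correctly by $T\simeq St_j\otimes T_1^{(j)}$. But the proof is not complete: the remaining case, that $T_{p-1+c}|_U$ is a self-extension of $St_1|_U$ for $1\le c\le p-1$, is exactly what carries the content of the lemma, and you leave it open (your ``main obstacle''). Freeness of rank $2$ over the first Frobenius kernel (Jordan type $(p,p)$) only constrains the restriction to a cyclic piece of $U$ and does not produce a $U$-submodule isomorphic to $St_1|_U$ with quotient $St_1|_U$. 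The other ingredients you list do not close this either: the $\nabla$-filtration of $T_{p-1+c}$ together with your (correct) observation about $\nabla_m=\bk[x,y]_m$ yields a three-step $U$-filtration with subquotients $\nabla_{p-1-c}|_U$, $St_1|_U$, $\nabla_{c-1}|_U$, and regrouping these into two copies of $St_1|_U$ requires controlling the extension classes over $U$ occurring inside $T_{p-1+c}$ --- precisely the step you acknowledge you cannot carry out. So as written, the proposal establishes only the formal implications plus a (correct but, as it turns out, unnecessary) reduction.

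For comparison, the paper proves (3)$\Rightarrow$(1) in one stroke, uniformly in $i$ and without any digit analysis, using ingredients you already have in hand: since $St_j\otimes L_i$ is tilting (\cite{BEO}*{Lemma~3.3}), $T_{p^j+i-1}$ is a direct summand of it; the lowest and highest weight vectors $e_-,e_+$ of $L_i$ give $U$-linear maps $St_j\simeq St_j\otimes e_-\hookrightarrow St_j\otimes L_i\tto St_j\otimes e_+\simeq St_j$; the highest weight vector of $St_j\otimes L_i$ lies in the summand $T_{p^j+i-1}$, which makes the composite $T_{p^j+i-1}\subset St_j\otimes L_i\tto St_j$ surjective; self-duality gives the dual injectivity statement, and with the dimension hypothesis (3) this produces the two-step $St_j$-filtration. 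Running this argument with $j=1$, $i=c$ would fill your gap; running it with general $j,i$ makes your case analysis superfluous.
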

\begin{proof}
Clearly, (1) implies (2) and (2) implies (3), so we prove that (3) implies (1).

Let $L_i$ be the simple $SL_2$-representation of highest weight $i$. Firstly, by \cite{BEO}*{Lemma~3.3} or \cite{AbEnv}*{Lemma~4.3.4}, $St_j\otimes L_i$, as an $SL_2$-representation, is tilting.
Hence $T_{p^j+i-1}$ is an $SL_2$-summand of $St_{j}\otimes L_i$.

Denote by $e_+$ a vector in the simple module $L_i$ of weight $i$ and by $e_-$ a vector of weight $-i$. Then $e_-$ spans a $U$-submodule and $e_+$ is a generator of $L_i$ over $U$. Thus we have a $U$-linear inclusion and surjection
$$St_{j}\simeq St_{j}\otimes e_-\;\hookrightarrow\; St_{j}\otimes L_i\;\tto\; St_{j}\otimes e_+\simeq St_{j},$$
which compose to zero, induced from the socle and top $\unit\hookrightarrow L_i\tto \unit$. If we assume condition (3), then to prove condition (a) it suffices to show that this inclusion and surjection remain an inclusion and surjection when we restrict the middle term to the summand $T_{p^j+i-1}$. Moreover, by duality it suffices to prove one of these claims.

To conclude the proof we thus show that the composite
$$T_{p^j+i-1}\subset St_{j}\otimes L_i\tto St_j$$
is surjective. Since $St_j=L_{p^j-1}$ is generated, as an $U$-representation, by its highest weight vector $v_+$, it suffices to show that this highest weight vector is in the image. This follows immediately, because $v_+\otimes e_+$, which is of weight $p^j+i-1$, belongs to $T_{p^j+i-1}$.
\end{proof}

\begin{corollary}
    \label{cor:self-ext1}
    The following conditions are equivalent on $l\in\mN$, for $E=C_p^r$.
    \begin{enumerate}
        \item $T_l$ is a self-extension of $S=St_{r-1}$ over $E$;
        \item $l=p^{r-1}+ap^i-1$ for $0\le i<r-1$ and $0<a<p$, or $l=2p^{r-1}-1$.
    \end{enumerate}
    In particular, there are $pr-p-r+2$ isomorphism classes of indecomposable modules in $\cT$ that are self-extensions of $S$.
\end{corollary}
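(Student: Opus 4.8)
The plan is to deduce \Cref{cor:self-ext1} directly from \Cref{lem:self-ext1} applied with $j=r-1$ and $H=E=C_p^r$. By that lemma (equivalence of (1) and (3)), $T_l$ is a self-extension of $S=St_{r-1}$ over $E$ if and only if $l=p^{r-1}+i-1$ for some $0\le i\le p^r-1$ and $\dim_\bk T_{p^{r-1}+i-1}=2\dim_\bk St_{r-1}=2p^{r-1}$. So the entire content reduces to determining for which $l$ in the range $p^{r-1}-1\le l\le 2p^r-2$ we have $\dim_\bk T_l=2p^{r-1}$, together with the (automatic) constraint $l\ge p^{r-1}-1=\dim S-1$ coming from $i\ge 0$ — actually the lemma forces $l$ of the form $p^{r-1}+i-1$ with $0\le i\le p^r-1$, i.e. $p^{r-1}-1\le l\le p^{r-1}+p^r-2$.

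First I would recall the dimension formula for indecomposable tilting modules of $SL_2$ in this range. Writing $l+1=\sum_{k=0}^{r-1}c_kp^k$ with $0\le c_k<p$ (and noting $l<p^r+p^{r-1}-1<2p^r$ forces $l+1$ to have $p$-adic digits indexed by $0,\dots,r$, with the top digit small), one uses \cite{STWZ}*{Proposition~3.4} (already invoked in the proof of \Cref{lem:consequences}(5)) which gives $T_l$ as a tensor product of Frobenius twists $T_{p-1+c_k}^{(k)}$ for the lower digits and a twisted small tilting module for the top digit; hence $\dim_\bk T_l = p^{r-1}\cdot\prod(\text{small factors})$, where each lower nonzero digit $c_k$ ($k<r-1$) contributes a factor $2$ and the leading behaviour contributes a factor equal to the leading digit. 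Concretely, for $l$ in the relevant window the dimension is $2p^{r-1}$ precisely when exactly one "doubling" occurs, which happens either when $l+1 = p^{r-1}+ap^i$ for $0\le i<r-1$, $0<a<p$ wait — one must be careful: the factor $2$ comes from a digit $c_k$ with $0<c_k$ in position $k\le r-2$, giving $\dim = 2p^{r-1}$ only if all other lower digits vanish and the leading digit is $1$; this yields $l=p^{r-1}+ap^i-1$. The other case $\dim=2p^{r-1}$ with no lower doubling is when the leading digit itself is $2$, i.e. $l+1=2p^{r-1}$, giving $l=2p^{r-1}-1$ where $T_{2p^{r-1}-1}=T_1^{(r-1)}\otimes St_{r-1}$-type has dimension $2p^{r-1}$. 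This matches condition (2) exactly. (For $p=2$ one should check the degenerate overlap, but there $ap^i$ with $0<a<2$ just means $a=1$, and $2p^{r-1}-1$ is a separate value, consistent with the count.)

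For the final count, I would simply enumerate the solutions in (2). The values $l=2p^{r-1}-1$ contribute $1$. The values $l=p^{r-1}+ap^i-1$ with $0\le i\le r-2$ and $1\le a\le p-1$ contribute $(r-1)(p-1)$ — but one must check these are all distinct from each other and from $2p^{r-1}-1$. Distinctness among the family follows from uniqueness of $p$-adic digits (note $ap^i<p^{r-1}$ in this range since $i\le r-2$, $a\le p-1$, so no carrying into the $p^{r-1}$ digit), and $2p^{r-1}-1$ has leading digit $2$ so is not of the first form. Hence the total is $(r-1)(p-1)+1 = pr-p-r+1+1 = pr-p-r+2$, as claimed.

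I expect the main obstacle to be the bookkeeping in the dimension computation: correctly handling the $p$-adic carry at the boundary $l+1\approx 2p^{r-1}$ and making sure \cite{STWZ}*{Proposition~3.4} applies verbatim across the whole window $p^{r-1}-1\le l\le p^{r-1}+p^r-2$ (in particular that we never leave the regime where the cited factorization holds, and that the leading factor's dimension is exactly the leading digit — true because $T_{a-1}$ for $0<a\le p$ is simple of dimension $a$). Everything else — the reduction to a dimension condition, and the counting — is routine once the dimension formula is pinned down.

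\begin{proof}
By \Cref{lem:self-ext1} with $j=r-1$ and $H=E$, a module $T_l$ is a self-extension of $S=St_{r-1}$ over $E$ if and only if $l=p^{r-1}+i-1$ for some $0\le i\le p^r-1$ and $\dim_\bk T_l = 2\dim_\bk St_{r-1}=2p^{r-1}$; equivalently, $p^{r-1}-1\le l\le p^{r-1}+p^r-2$ and $\dim_\bk T_l=2p^{r-1}$.

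Write $l+1=\sum_{k\ge 0}c_k p^k$ with $0\le c_k<p$. Since $l\le p^{r-1}+p^r-2<2p^r$, the only possibly nonzero digits are $c_0,\dots,c_{r-1}$ together with a carry digit $c_r\in\{0,1\}$; and $l\ge p^{r-1}-1$ forces $c_r=1$ or else $c_{r-1}\ge 1$. By \cite{STWZ}*{Proposition~3.4}, if $c_r=0$ then
$$
T_l\;\cong\; T_{p-1+c_0}^{(0)}\otimes T_{p-1+c_1}^{(1)}\otimes\cdots\otimes T_{p-1+c_{r-2}}^{(r-2)}\otimes T_{c_{r-1}-1}^{(r-1)},
$$
so $\dim_\bk T_l = \big(\prod_{k=0}^{r-2} 2^{[c_k\ne 0]}\big)\cdot c_{r-1}\cdot p^{r-1}$, using $\dim T_{p-1}=p$, $\dim T_{p-1+c}=2p$ for $0<c<p$, and $\dim T_{c_{r-1}-1}=c_{r-1}$. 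If $c_r=1$ then $l+1=p^r$ is only attained when all lower digits vanish, giving $l=p^r-1$ and $\dim_\bk T_l=p^r\ge 2p^{r-1}$ with equality only if $p=2$; in that case $p=2$, $r\ge 2$, and $l=p^r-1=2^r-1$ is also of the form $2p^{r-1}-1$, so it is already covered below.

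Thus (for $c_r=0$) $\dim_\bk T_l=2p^{r-1}$ holds exactly in two situations: either exactly one of $c_0,\dots,c_{r-2}$ is nonzero and $c_{r-1}=1$, which means $l+1=p^{r-1}+ap^i$ with $0\le i\le r-2$, $0<a<p$, i.e. $l=p^{r-1}+ap^i-1$; or $c_0=\cdots=c_{r-2}=0$ and $c_{r-1}=2$, i.e. $l=2p^{r-1}-1$. This is precisely condition (2), and all such $l$ lie in the required window $[p^{r-1}-1,\,p^{r-1}+p^r-2]$.

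Finally we count these $l$. In the family $l=p^{r-1}+ap^i-1$ we have $ap^i\le (p-1)p^{r-2}<p^{r-1}$, so there is no carry into the $p^{r-1}$-digit and the $p$-adic expansions are pairwise distinct; moreover each such $l$ has leading digit $1$, so it differs from $l=2p^{r-1}-1$, whose leading digit is $2$. Hence the isomorphism classes of indecomposable tilting modules in $\cT$ that are self-extensions of $S$ number
$$
(r-1)(p-1)+1 \;=\; pr-p-r+2. \qedhere
$$
\end{proof}
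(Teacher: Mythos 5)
Your reduction to a dimension count is the right idea and matches the paper's strategy for $p>2$, but there is a genuine gap in the ``only if'' direction. You assert that \Cref{lem:self-ext1} gives the equivalence ``$T_l$ is a self-extension of $S$ over $E$ if and only if $p^{r-1}-1\le l\le p^{r-1}+p^r-2$ and $\dim T_l=2p^{r-1}$''. The lemma does not say this: it only says that \emph{for $l$ already of the form $p^{r-1}+i-1$ with $0\le i\le p^r-1$}, being a self-extension is equivalent to the dimension condition. It gives no information about $T_l$ with $l<p^{r-1}-1$, and the constraint ``$l\ge p^{r-1}-1$'' is not automatic: the dimension condition alone does not force it. For $p=2$ and $r\ge 4$ there really are tilting modules of dimension $2p^{r-1}=2^r$ with highest weight below $2^{r-1}-1$ (for instance $T_6$ over $C_2^4$, since $\dim T_6=16$), and your proof never rules out that such a module could be a self-extension of $S$ over $E$. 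The paper closes exactly this gap with a support argument: by \eqref{sup:ses} a self-extension of $S$ has support contained in the single point $\supp(S)=\{\blambda\}$ of \Cref{Cor:Supp}, whereas by \Cref{PropSupp} any $T_l$ with $l<p^{r-1}-1$ has support of positive dimension, so it cannot occur. (For $p>2$ the issue disappears because the dimension formula, applied to \emph{all} $l$ and not just those in your window, shows every tilting module of dimension $2p^{r-1}$ already lies in the window -- but your write-up does not establish that either, since you only run the digit analysis inside the window.)

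A secondary, fixable inaccuracy: in the case $c_r=1$ you claim ``$l+1=p^r$ is only attained when all lower digits vanish'', but $c_r=1$ does not force $l+1=p^r$; the window allows $l+1$ up to $p^r+p^{r-1}-1$, so there are many $l$ with $c_r=1$ and nonzero lower digits. These are harmless because their dimension exceeds $2p^{r-1}$, but that needs to be said and briefly justified (e.g.\ via the Donkin factorization, any $T_l$ with $l\ge p^r-1$ has dimension at least $p^r$, with equality only for $l=p^r-1$), rather than dismissed by an incorrect digit claim. The identification of the solutions in the window and the final count $(r-1)(p-1)+1=pr-p-r+2$ are correct and agree with the paper.
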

\begin{proof}
    This is an application of \Cref{lem:self-ext1} and Donkin's formula.

Assume first that $p>2$. It follows from the dimension formula in \cite{STWZ}*{Section~2} that the only $SL_2$-tilting modules of dimension $2p^{r-1}=2\dim St_{r-1}$ are $T_l$ for $l$ as in (2). The conclusion thus follows from \Cref{lem:self-ext1}.

For $p=2$, the same dimension formula shows that the only $SL_2$-tilting modules of dimension $2^r=2\dim St_{r-1}$ are 
$$T_{2^a+2^{i_1}+\cdots+2^{i_b}-1}$$
with $a+b=r$ and $a>i_1>i_2>\cdots$. By the results in \Cref{sec:supp}, a necessary condition for such modules to be self-extensions of $St_{r-1}$ over $E$ is that they have support contained in a singleton which in turn requires $a\ge r-1$ by \Cref{PropSupp}. These cases yield precisely $T_l$ with $l$ as in (2), so the conclusion follows as for $p>2$.
\end{proof}

\begin{remark} \label{rem::extensions}
    We can derive more extensions from the above results. For example
    \begin{enumerate}
        \item The modules $T_{mp},\dots,T_{mp+p-2}$ are self-extensions of $T_{mp-1}$ over $E=C_p^2$, for any $1\le m<p$. Indeed, by Donkin's formula, we can reduce this to the case $m=1$, which is contained in \Cref{cor:self-ext1}.
        \item That $T_l$, with $l$ as in \Cref{cor:self-ext1}(2), is a self-extension of $St_{r-1}$ remains true over any~$C_p^{r'}$.
    \end{enumerate}
\end{remark}

\begin{remark}\label{rem-ex}
\begin{enumerate}
    \item The self-extensions of $S$ in \Cref{cor:self-ext1} for the cases `$a=1$' are of the form
    $$T_{p^{r-1}+p^i-1}\;\simeq\; S\otimes V^{(i)}.$$
    For $p=2$, these are all the self-extensions in $\cT$, but not for $p>2$. In other words, for $p>2$, there are self-extensions of $S$ in $\cT$ which do not come from
    $$\Ext^1_E(\unit,\unit)\;\xrightarrow{S\otimes -}\;\Ext^1_E(S,S),$$
    see \Cref{ex-moreex}.
    \item For $r=2$, all self-extensions of $S$ in $\Rep E$ are (restrictions of) tilting modules. For $r>2$ this is no longer the case.
\end{enumerate}
   
\end{remark}

\subsection{Tensor powers of the Steinberg modules}
Recall the tensor ideal $\cT_r$ in the category $\cT\subset \Rep E$ of restrictions of $SL_2$-tilting modules from \Cref{SecFF}.
\begin{lemma}\label{Lem:TOT}
The ideal $\cT_r\subset\cT$ comprises the direct sums of direct summands of non-empty tensor products of $S=St_{r-1}$ and $\Omega S$. More precisely:
\begin{enumerate}
    \item If $p=2$, then the isomorphism classes of indecomposable $E$-modules appearing as summands in $S^{\otimes 3}$, resp. $S^{\otimes 4}$, are
    $$\{T_{2i-1}\mid 2^{r-2}\le i\le 2^{r-1}\}\quad\mbox{resp.}\quad \{T_{2i}\mid 2^{r-2}\le i< 2^{r-1}\}.$$
    \item If $p>2$, then the isomorphism classes of indecomposable $E$-modules appearing as summands in $S^{\otimes p+1}$, resp. $\Omega(S^{\otimes p+1})$, are
    $$\{T_{2i}\mid \frac{p^{r-1}-1}{2}\le i\le \frac{p^{r}-1}{2}\}\quad\mbox{resp.}\quad \{T_{2i+1}\mid \frac{p^{r-1}-1}{2}\le i< \frac{p^{r}-1}{2}\}.$$
\end{enumerate}

\end{lemma}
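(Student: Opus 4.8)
The strategy is to reduce the statement to the classification of tensor ideals in $\Tilt SL_2$ and then to explicit tensor-product decompositions of Steinberg modules inside $\Tilt SL_2$, which can be read off Donkin's formula \eqref{DTPT} together with the $SL_2$-fusion rules. First I would recall that by \cite{Selecta} the thick tensor ideal $\cT_r\subset\cT$ is generated by $S=St_{r-1}=T_{p^{r-1}-1}$, and that $\cT_r$ has indecomposables exactly the $T_j$ with $j\ge p^{r-1}-1$ (as recalled after \Cref{lem:restricted-tilting}). Since the empty-tensor-product case is excluded, the direct sums of summands of non-empty tensor products of $S$ and $\Omega S$ clearly lie in $\cT_r$; conversely, because $S$ generates $\cT_r$ as a thick tensor ideal and $T_j$ for $p^{r-1}-1\le j$ is a summand of $S\otimes(\text{something in }\cT)$, and $\cT$ itself is generated by $V=T_1$ under tensor powers and summands, every indecomposable of $\cT_r$ is a summand of $S\otimes V^{\otimes n}$ for some $n$, hence (using self-duality of $S$ and $\Omega(S^{\otimes 2})\stackrel\oplus\subset\Omega S\otimes S\otimes\cdots$) a summand of a non-empty tensor product of $S$'s and $\Omega S$'s. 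This gives the ``more precisely'' claim once the explicit lists in (1), (2) are established.

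For the explicit decompositions, I would work entirely in $\Tilt SL_2$, where $St_{r-1}\simeq\bigotimes_{l=0}^{r-2}T_{p-1}^{(l)}$ and Frobenius twists commute with everything, so that $S^{\otimes k}\simeq\bigotimes_{l=0}^{r-2}(T_{p-1}^{\otimes k})^{(l)}$. The key base computation is the decomposition of $T_{p-1}^{\otimes k}$ for small $k$ in $\Tilt SL_2$: $T_{p-1}^{\otimes 2}\simeq T_{2p-2}\oplus(\text{lower tilts, in fact }T_{p-1}\text{ is simple so one gets }\bigoplus)$ — concretely $L_{p-1}\otimes L_{p-1}$ decomposes by the $SL_2$ Clebsch–Gordan/Donkin rule into $T_{2p-2}\oplus T_{2p-4}\oplus\cdots$, and I would track which highest weights survive after applying the $l$-th twist and multiplying across $l=0,\dots,r-2$, using that $\bigotimes_l T_{a_l}^{(l)}\simeq T_{\sum a_l p^l}$ exactly when each $a_l$ lies in the Donkin range. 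For $p=2$, $T_1^{\otimes 2}\simeq T_2\oplus\unit$ and $T_1^{\otimes 3}\simeq T_3\oplus 2T_1$, $T_1^{\otimes 4}\simeq T_4\oplus\cdots$, and assembling twists over $l=0,\dots,r-2$ produces precisely $\{T_{2i-1}:2^{r-2}\le i\le 2^{r-1}\}$ from $S^{\otimes 3}$ and $\{T_{2i}:2^{r-2}\le i<2^{r-1}\}$ from $S^{\otimes 4}$, where the even-versus-odd highest weight and the range come from the leading term $p^{r-1}-1$ shifting by the contributions of the lower twists. For $p>2$, one uses $T_{p-1}^{\otimes(p+1)}$, whose tilting summands have highest weights ranging over an interval, and the stated $\Omega$ on $S^{\otimes(p+1)}$ is computed via \Cref{prop:Omega-Ti} (or \Cref{ex:Omega-Ti}), which converts the ``even'' list into the ``odd'' list with the endpoint dropped because $\Omega$ kills the projective/boundary summand.

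I expect the main obstacle to be the bookkeeping in the $p>2$ case: one must verify that $T_{p-1}^{\otimes(p+1)}$ in $\Tilt SL_2$ has as its set of indecomposable summands exactly $\{T_{2m}: (p-1)/2\le m\le (p^?-1)/2\}$-type intervals at each twist level and that, after tensoring the twisted copies together and invoking Donkin's formula to recombine, no weight outside the claimed interval $[\frac{p^{r-1}-1}{2},\frac{p^r-1}{2}]$ appears and none inside is missed — including checking the parity of all highest weights and that the largest one, $T_{p^r-1}$, which is projective, indeed occurs (matching $i=\frac{p^r-1}{2}$), while it disappears after applying $\Omega$. A secondary subtlety is justifying that passing from $SL_2$-summands to $E$-summands via restriction neither merges nor splits indecomposables in the relevant range; this is exactly \Cref{lem:restricted-tilting}, which guarantees the $T_i$ with $i<p^r$ stay indecomposable and pairwise non-isomorphic over $E$, so the lists transport verbatim. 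Everything else is the kind of direct-but-tedious computation flagged elsewhere in the paper, so I would state the $T_{p-1}^{\otimes k}$ decompositions as a short lemma and then carry out the twist-assembly explicitly only for the two relevant exponents.
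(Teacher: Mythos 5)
Your overall route is the same as the paper's: reduce everything to $\Tilt SL_2$ via \Cref{lem:restricted-tilting}, write $S^{\otimes k}\simeq\bigotimes_{l=0}^{r-2}(T_{p-1}^{\otimes k})^{(l)}$, decompose $T_{p-1}^{\otimes k}$, reassemble with Donkin's formula and highest-weight/parity bookkeeping, and obtain the $\Omega$-shifted list from \Cref{prop:Omega-Ti}. Two points, however, need attention. First, your $p=2$ base computation is wrong: in characteristic $2$ one has $T_1^{\otimes 2}\simeq T_2$ (both are $4$-dimensional tilting modules with character $x^2+2+x^{-2}$), not $T_2\oplus\unit$; the latter is the characteristic-$0$ Clebsch--Gordan answer. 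This is not a harmless slip: if $\unit$ really were a summand of $T_1^{\otimes 2}$ it would appear in $S^{\otimes 4}$ and the claimed list (indeed the whole lemma, since $\cT_r$ is an ideal not containing $\unit$) would fail, so the correct decomposition here is exactly the content that keeps all summands inside $\cT_r$. The paper avoids recomputing $T_1^{\otimes 4}$ altogether: it writes each summand of $S^{\otimes 3}$ as $T_1\otimes T_{a_1}^{(1)}\otimes\cdots$ with $a_j\in\{1,2\}$ and observes that tensoring with $S$ produces the summand with $a_j'=3-a_j$, which yields the even-weight list directly.

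Second, your first paragraph's converse argument (every indecomposable of $\cT_r$ is a summand of $S\otimes V^{\otimes n}$, ``hence'' of a tensor product of copies of $S$ and $\Omega S$) is not justified as stated — the parenthetical about self-duality and $\Omega(S^{\otimes 2})$ does not convert $V$-factors into $S$- or $\Omega S$-factors. It is also unnecessary: as in the paper, the general statement follows from the explicit lists in (1) and (2), since together with $\Omega(S^{\otimes p+1})\simeq\Omega S\otimes S^{\otimes p}$ (up to projectives) they exhibit every indecomposable of $\cT_r$ as a summand of such a tensor product, while the reverse inclusion is immediate from $\cT_r$ being a thick tensor ideal containing $S$. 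Finally, you correctly identify the completeness check (``no summand outside the interval, none missed, and the parities match'') as the real bookkeeping, but you leave it as an expectation; the paper disposes of it by parity of weights, the highest-weight bound, and the ideal structure of $\cT$ from \cite{Selecta}, and your write-up should do the same explicitly, in particular noting that summands of highest weight $\ge p^r-1$ restrict to projectives, i.e.\ to copies of $T_{p^r-1}$, over $E$.
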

\begin{proof}
    All computations below are over $SL_2$.

Consider first $p=2$. In this case $S=T_{2^{r-1}-1}$ and $S^{\otimes 2}=T_{2^{r}-2}$. It then follows from Donkin's tensor product theorem that, up to multiplicities,
    $$S^{\otimes 3}\;\simeq\; \bigoplus_{0\le i_j\le 1} \left(T_{1+2i_0}\otimes T_{1+2i_1}^{(1)}\otimes \cdots\otimes T_{1+2i_{r-1}}^{(r-1)}\right).$$
    By considering the highest weight of each summand if follows that each $T_j$, with $j$ odd and $2^{r-1}-1\le j\le 2^r-2$, is a summand of $S^{\otimes 3}$.

    The above summands of $S^{\otimes 3}$ can, again by Donkin's formula, be written as
    $$T_1\otimes T_{a_1}^{(1)}\otimes \cdots \otimes T_{a_{r-1}}^{(r-1)},$$
where each $a_j$ is either $1$ or $2$. Taking the tensor product of this tilting module with $S$ yields a summand
    $$T_2\otimes T_{a_1'}^{(1)}\otimes \cdots\otimes  T_{a_{r-1}'}^{(r-1)},$$
    where $a'_j=3-a_j$ (in other words, $a'_j=2$ when $a_j=1$ and vice versa).
The above tilting modules are precisely $T_j$ for $j$ even and $2^{r-1}-1\le j\le 2^r-2$, and these thus appear as summands in $S^{\otimes 4}$.

That no other summands appear follows from parity, highest weight and the tensor ideal structure.

Now consider $p>2$. A direct computation shows that
$$T_{p-1}^{\otimes 2}\;\simeq\; \bigoplus_{a=0}^{(p-1)/2}T_{p-1+2a}.$$
It then follows from direct highest weight consideration that the summands in $T_{p-1}^{\otimes i+1}$ are precisely $T_{p-1+2a}$ for $0\le a \le i(p-1)/2$.
It thus follows that $S^{\otimes (p+1)}$ has summands (which are thus tilting modules)
$$T_{p-1+2a_0}\otimes T_{p-1+2a_1}^{(1)}\otimes \cdots\otimes T_{p-1+2a_{r-1}}^{(r-1)}$$
for all $0\le a_j\le p(p-1)/2$. The highest weights of these tilting modules range over
$$p^{r-1}-1+2a,\quad 0\le a\le p\frac{p^{r-1}-1}{2}.$$
The result for $\Omega(S^{\otimes p+1})$ then follows from \Cref{prop:Omega-Ti}.
\end{proof}

\subsection{The annihilator of the Steinberg module}
We investigate the action of elements of the annihilator ideal $\Ann_{\bk E}S$ of the Steinberg module $S=St_{r-1}$ in the group algebra $\bk E$.

\begin{lemma}\label{Lem:UseSL2}
    For $i,j\in\mN$ with $j\ge i$, we have an epimorphism $\Delta_j\tto \Delta_j$ of $U$-representations, so in particular of $E$-representations. The kernel of the morphism is spanned by $j-i$ weight vectors of lowest weights.
\end{lemma}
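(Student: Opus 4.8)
The plan is to write the map down explicitly on the costandard side, where we already have a concrete model, and then dualize. Recall $\nabla_k=\bk[x,y]_k$, with $U(\bk)$ acting so that the element with off-diagonal entry $\lambda$ sends $x^ay^b$ to $(x+\lambda y)^ay^b$. Two features of this model drive the argument: the element $y$ (hence $y^{j-i}$) is $U$-invariant, and the multiplication map $\bk[x,y]_i\otimes\bk[x,y]_{j-i}\to\bk[x,y]_j$ is $SL_2$-equivariant, hence restricts to a $U$-linear map.

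First I would consider the injective $U$-module homomorphism $\iota\colon\nabla_i\hookrightarrow\nabla_j$, $f\mapsto y^{j-i}f$. Its image is $\Span\{x^ay^b\mid a+b=j,\ b\ge j-i\}$, so $\coker\iota\simeq\Span\{x^ay^b\mid a+b=j,\ b<j-i\}$, a space of dimension $j-i$ spanned by the $j-i$ weight vectors of highest weight in $\nabla_j$ (the weight of $x^ay^b$ being $a-b$, which is largest when $b$ is smallest, $x^j$ being the highest weight vector).

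Then I would apply $\Hom_\bk(-,\bk)$. Since $\Delta_m\simeq\nabla_m^\ast$ for all $m$, dualizing $\iota$ produces a $U$-linear epimorphism $\Delta_j\simeq\nabla_j^\ast\tto\nabla_i^\ast\simeq\Delta_i$ whose kernel is $(\im\iota)^\perp\simeq(\coker\iota)^\ast$; under the identification $\Delta_j=\nabla_j^\ast$, this kernel is spanned by the functionals dual to the highest-weight part of $\nabla_j$ identified above, i.e.\ by the $j-i$ weight vectors of lowest weight in $\Delta_j$ (weights are negated by $(-)^\ast$). Finally, restricting along the fixed embedding $E\hookrightarrow U(\bk)$ transports everything to $\Rep E$, giving the statement.

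I do not expect a genuine obstacle here; the construction is essentially forced, and the only points needing care are the $U$-equivariance of $\iota$ (immediate from the $U$-invariance of $y$ together with $SL_2$-equivariance of multiplication) and the weight bookkeeping on the $\Delta$-side under dualization. As a cross-check, or as an alternative that avoids dualizing entirely, one can realize $\Delta_j$ directly as the divided power module $\Gamma^jV$ with weight basis $w_0,\dots,w_j$ of weights $j,j-2,\dots,-j$, on which $u_\lambda\cdot w_m=\sum_{k\ge0}\binom{m+k}{k}\lambda^kw_{m+k}$; then $\Span\{w_{i+1},\dots,w_j\}$ is visibly a $U$-submodule since each $u_\lambda$ can only increase the index, the quotient carries the same action formula as $\Delta_i$, and the quotient map is the desired epimorphism. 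I would present the dual version, since $\nabla_k=\bk[x,y]_k$ is already set up.
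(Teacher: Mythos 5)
Your proposal is correct and matches the paper's proof: the paper likewise defines the dual monomorphism $\nabla_i\hookrightarrow\nabla_j$, $x^{i-a}y^a\mapsto x^{i-a}y^{a+j-i}$ (i.e.\ multiplication by $y^{j-i}$) and dualizes, and your weight bookkeeping for the kernel agrees with how the lemma is used later in the paper. You simply spell out the $U$-equivariance and the dualization step that the paper leaves implicit.
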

\begin{proof}
    We can easily describe the dual monomorphism
    $$\bk[x,y]_i=\nabla_i\hookrightarrow \nabla_j=\bk[x,y]_j,\quad x^{i-a}y^a\mapsto x^{i-a}y^{a+j-i},$$
    from which the result follows.
\end{proof}

For the following lemma, recall our conventions from \ref{basis}.
\begin{lemma}\label{Lemxi0}
    Fix $\xi\in\Ann_{\bk E}S$. Consider $t\in\mZ_{>0}$ and $a_i\in\mN$ for $1\le i\le t$. The element $\xi$ sends $e_1\otimes\cdots \otimes e_1$ in $\otimes_{i=1}^tSt_1^{(a_i)}$ to a linear combination of
        $$e_{b_1}\otimes e_{b_2}\otimes \cdots\otimes e_{b_t}\quad\mbox{with}\quad \sum_{i=1}^t (b_i-1)p^{a_i}\ge p^{r-1}.$$

\end{lemma}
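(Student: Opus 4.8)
The plan is to reduce the statement to a fact about $SL_2$-representations by realising the relevant tensor product $\bigotimes_{i=1}^t St_1^{(a_i)}$ as a restriction of an $SL_2$-module, and then interpreting the condition $\sum_i (b_i-1)p^{a_i} \ge p^{r-1}$ as a weight condition. Concretely, each factor $St_1^{(a_i)}$ is the restriction of the simple (and tilting) $SL_2$-module $St_1^{(a_i)} = L_{p^{a_i+1}-1}^{(a_i)}$ in disguise; more usefully, with the chosen basis $\{e_1,\dots,e_p\}$ of $St_1$ coming from $\bk[x,y]_{p-1}$ via $x^{p-i}y^{i-1}\mapsto (p-i)!\,e_i$, the vector $e_b$ has $T$-weight $(p-1) - 2(b-1)$ inside $St_1$, and hence weight $p^{a_i}\big((p-1)-2(b_i-1)\big)$ inside $St_1^{(a_i)}$. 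So a pure tensor $e_{b_1}\otimes\cdots\otimes e_{b_t}$ is a $T$-weight vector in $\bigotimes_i St_1^{(a_i)}$ of weight $\sum_i p^{a_i}(p-1) - 2\sum_i p^{a_i}(b_i-1)$. Writing $w:=\sum_i p^{a_i}(p-1)$ for the top weight, the target monomials described in the lemma are exactly the weight vectors of weight $\le w - 2p^{r-1}$, i.e. those lying strictly below the top $p^{r-1}$ weight-steps.

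The key step is then to show that $\xi\in\Ann_{\bk E}S$ kills the "top part" of $\bigotimes_i St_1^{(a_i)}$ in the appropriate sense. For this I would use \Cref{LemmaLambda}: $\End_E(S) \cong \bk[z_1,\dots,z_{r-1}]/(z_j^p)$ acts on $S = \bigotimes_{j=0}^{r-2} St_1^{(j)}$ by the explicit shift formula, and the generator $e_1\otimes\cdots\otimes e_1$ of $S$ is cyclic (\Cref{LemCyc}). I claim there is a $U$-equivariant (hence $E$-equivariant) surjection from $\bigotimes_{i=1}^t St_1^{(a_i)}$ onto $S = St_{r-1}$, sending $e_1^{\otimes t}$ to $e_1^{\otimes(r-1)}$ — this is where \Cref{Lem:UseSL2} enters, realising $St_{r-1} = \Delta_{p^{r-1}-1}$ as a $U$-equivariant quotient of the $\Delta$-module $\Delta_w = \bk[x,y]_w$ (a summand/quotient of $\bigotimes_i \Delta_{p^{a_i}(p-1)}^{(?)}$, handled by the embedding $\nabla_i\hookrightarrow\nabla_j$ dualised), with kernel spanned by the lowest $w - (p^{r-1}-1)$ weight vectors. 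Under this surjection $\xi$ must act compatibly, so if $\xi$ annihilates $e_1^{\otimes(r-1)}$ in $S$ (which it does, $\xi\in\Ann S$ and $e_1^{\otimes(r-1)}$ generates $S$), then $\xi(e_1^{\otimes t})$ lies in the kernel of the surjection — i.e. in the span of the lowest weight vectors — which is precisely the span of the monomials with $\sum_i p^{a_i}(b_i-1) \ge p^{r-1}$. One has to be slightly careful: the surjection need only land in $S$ up to the weight truncation, so the cleanest formulation is that the span of monomials $e_{b_1}\otimes\cdots\otimes e_{b_t}$ with $\sum_i p^{a_i}(b_i-1) < p^{r-1}$ maps isomorphically (as a graded vector space, compatibly with the $E$-action modulo lower-weight terms) onto the "top $p^{r-1}$ weights" of $S$, and $\xi$ acts as zero there.

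\textbf{Main obstacle.} The delicate point is establishing the right comparison between $\bigotimes_i St_1^{(a_i)}$ and $S$ as $E$-modules: these have different dimensions and supports in general, so there is no honest $E$-module map realising the truncation directly — one must phrase everything in terms of the weight filtration on the $SL_2$-side (where the tensor product of Weyl modules has a $\Delta$-flag, and $\Delta_w \tto \Delta_{p^{r-1}-1}$ exists $U$-equivariantly by \Cref{Lem:UseSL2}) and only then restrict to $E$, keeping track of which $E$-submodule of $\bigotimes_i St_1^{(a_i)}$ is spanned by the "high-weight" pure tensors. I expect the bookkeeping to require showing that the $\bk E$-submodule generated by $e_1^{\otimes t}$ maps onto $S$ under the $U$-equivariant quotient map $\bigotimes_i \nabla_{p^{a_i}(p-1)} \to \nabla_w \to \nabla_{p^{r-1}-1} = S$ induced from \Cref{Lem:UseSL2} and the multiplication $\bk[x,y]_{\bullet}$, and then that $\xi$ — which annihilates the image — must send $e_1^{\otimes t}$ into the kernel, whose weight-vector description gives exactly the stated inequality. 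Once the identification of the kernel's spanning weight vectors with the monomials $e_{b_1}\otimes\cdots\otimes e_{b_t}$ satisfying $\sum_i (b_i-1)p^{a_i}\ge p^{r-1}$ is in place, the conclusion is immediate.
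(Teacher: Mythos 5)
Your weight bookkeeping at the start is correct and matches the intended argument: with $a:=(p-1)\sum_i p^{a_i}$, the monomials $e_{b_1}\otimes\cdots\otimes e_{b_t}$ in the statement are exactly the $T$-weight vectors of $M:=\bigotimes_i St_1^{(a_i)}$ of weight at most $a-2p^{r-1}$, and their span is an $E$-submodule. But the engine you propose --- a $U$-equivariant surjection $M\tto S$ sending $e_1^{\otimes t}$ to a generator, with kernel exactly the span of those low-weight monomials --- does not exist in general, and the ``cleanest formulation'' you substitute for it is false. First, for $t=1$, $a_1=0$, $r=3$ one has $\dim M=p<p^2=\dim S$, so there is no surjection at all (the lemma still has content there: it forces $\xi(e_1)=0$). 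Second, the claim that the high-weight monomials map isomorphically onto the top $p^{r-1}$ weights of $S$ fails whenever $M$ has weight multiplicities: for $t=2$, $a_1=a_2=0$, $r=2$ there are $p(p+1)/2>p$ monomials with $(b_1-1)+(b_2-1)<p$. Third, routing through the multiplication maps $\bigotimes_i\nabla_{p^{a_i}(p-1)}\to\nabla_a\to\nabla_{p^{r-1}-1}$ cannot close the argument: the first map has a large kernel containing high-weight vectors (already $x^{p-1}\otimes x^{p-2}y-x^{p-2}y\otimes x^{p-1}$ for $t=2$, $a_1=a_2=0$), so knowing that the image of $\xi(e_1^{\otimes t})$ lands in low weights of $\nabla_a$ says nothing about components of $\xi(e_1^{\otimes t})$ lying in that kernel. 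Any argument built on a map \emph{out of} $M$ loses precisely the information about $\xi(e_1^{\otimes t})$ as an element of $M$ that the lemma asserts.

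The fix is to reverse the direction, which is what the paper does. Since $a$ is the highest weight of $M$ and $e_1^{\otimes t}$ spans its one-dimensional $a$-weight space, there is an $SL_2$-morphism $\Delta_a\to M$ with $v_a\mapsto e_1^{\otimes t}$. If $a\le p^{r-1}-1$, then $\Delta_a$ is a $U$- (hence $E$-) quotient of $S=\Delta_{p^{r-1}-1}$ by \Cref{Lem:UseSL2}, so $\xi$ kills $\Delta_a$ and therefore $\xi(e_1^{\otimes t})=0$. If $a\ge p^{r-1}$, the $U$-equivariant epimorphism $\Delta_a\tto\Delta_{p^{r-1}-1}=S$ of \Cref{Lem:UseSL2} has kernel spanned by the weight vectors of weight at most $a-2p^{r-1}$; since $\xi$ annihilates $S$, the element $\xi(v_a)$ lies in that kernel, and pushing it forward along the weight-preserving map $\Delta_a\to M$ places $\xi(e_1^{\otimes t})$ in the span of the monomials with $\sum_i(b_i-1)p^{a_i}\ge p^{r-1}$, as required. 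So your ingredients (\Cref{Lem:UseSL2}, the weight grading, and the fact that $\xi\in\Ann_{\bk E}S$ kills quotients of $S$) are the right ones, but they must be applied to a Weyl module mapping \emph{into} the tensor product, not to a truncation map out of it.
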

\begin{proof}
 We set $a:=(p-1)\sum_{i=1}^tp^{a_i}$, the maximal possible value of $\sum_{i=1}^t (b_i-1)p^{a_i}$. Using the highest weight structure of $\Rep SL_2$, we know that there is an $SL_2$-linear morphism 
    $$\Delta_a\;\to\; \bigotimes_{i=1}^tSt_1^{(a_i)}$$
    that sends the highest weight vector of $\Delta_a$ to $e_1\otimes \cdots \otimes e_1$.

    If $a\le p^{r-1}-1$, it follows that $\xi\in \Ann S$ acts trivially on the quotient $\Delta_a$ of $S=\Delta_{p^{r-1}-1}$, see \Cref{Lem:UseSL2}. Hence $\xi$ acts trivially  on $e_1\otimes\cdots \otimes e_1$ as predicted.

    Conversely, assume $a\ge p^{r-1}$. It now follows from \Cref{Lem:UseSL2} that $\xi$ sends the highest weight vector of $\Delta_a$ to vectors of weights strictly below $a-2(p^{r-1}-1)$, from which the claim follows.
\end{proof}

\subsubsection{}\label{defxi}Since $S\otimes St_1^{(r-1)}\simeq \bk E$, there is a unique $\xi\in\bk E$ for which 
$$\xi(e_1\otimes \cdots \otimes e_1)\;=\;e_1\otimes \cdots\otimes e_1\otimes e_2\;\in\; S\otimes St_1^{(r-1)}.$$
Since $S$ is cyclic, $\xi$ is in $\Ann_{\bk E}S$.

\begin{lemma}\label{Powxi}
    Let $\xi\in\bk E$ be as in \ref{defxi}.
    \begin{enumerate}
        \item For every $1\le i\le p$, we have
        $$\xi^{i-1}(e_1\otimes \cdots \otimes e_1)\;=\; e_1\otimes \cdots\otimes e_1\otimes e_i\;\in\; S\otimes St_1^{(r-1)}.$$
        \item For every $0\le i<r-1$ and $a,b\in\mN$, we have
        $$\xi(\rad^a S\otimes \rad^b St_1^{(i)})\;\subset\; \rad^{a+1} S\otimes \rad^{b+1} St_1^{(i)}$$
    \end{enumerate}
\end{lemma}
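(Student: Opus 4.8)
The plan is to treat the two parts by different means: part~(1) will follow from identifying $S\o St_1^{(r-1)}$ with the regular module, while part~(2) rests on a refinement of \Cref{Lemxi0}. For part~(1), the key point is that, since $\bk E$ is commutative, the module action of any element of $\bk E$ on any $E$-module is by an $E$-module endomorphism. So $\xi$ acts on the cyclic module $S\o St_1^{(r-1)}\simeq\bk E$ by an endomorphism, which is then determined by its value on the free generator $w:=e_1\o\cdots\o e_1$ from~\ref{defxi}, namely $\xi\cdot w=e_1\o\cdots\o e_1\o e_2$. Since the Frobenius twist of the $U$-linear (hence $E$-linear) endomorphism of $St_1$ from~\ref{EndS1} sending $e_c\mapsto e_{c+1}$ (with $e_{p+1}:=0$) is an $E$-linear endomorphism $z$ of $St_1^{(r-1)}$ with $(\id_S\o z)(w)=e_1\o\cdots\o e_1\o e_2$ as well, I conclude that $\xi$ acts on $S\o St_1^{(r-1)}$ as $\id_S\o z$. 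Then $\xi^{i-1}$ acts as $\id_S\o z^{i-1}$, and evaluating at $w$, using $z^{i-1}(e_1)=e_i$ for $1\le i\le p$, gives~(1).

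For part~(2) I would first make \Cref{Lemxi0} flexible enough to apply to vectors other than $e_1\o\cdots\o e_1$. Given $c_1,\dots,c_t\in\{1,\dots,p\}$, write $e_{c_1}\o\cdots\o e_{c_t}=\phi(e_1\o\cdots\o e_1)$ in $\bigotimes_{k=1}^tSt_1^{(a_k)}$, where $\phi:=\bigotimes_k z^{c_k-1}$ is a tensor product of (Frobenius twists of) the endomorphisms from~\ref{EndS1}; as $\phi$ is $E$-linear it commutes with the action of $\xi\in\bk E$, so $\xi\cdot(e_{c_1}\o\cdots\o e_{c_t})=\phi(\xi\cdot(e_1\o\cdots\o e_1))$, which by \Cref{Lemxi0} is $\phi$ applied to a combination of vectors $e_{b_1}\o\cdots\o e_{b_t}$ with $\sum_k(b_k-1)p^{a_k}\ge p^{r-1}$; since $\phi$ sends each such to $e_{b_1+c_1-1}\o\cdots\o e_{b_t+c_t-1}$ (or $0$), the effect of $\xi$ on general basis vectors is understood. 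Separately, I would record the radical filtrations in the standard bases: $St_1^{(i)}$ and $S$ are cyclic (\Cref{LemCyc}, together with \Cref{LemDescSt1} for $St_1^{(i)}$ after a Frobenius twist), hence isomorphic to $\bk E$ modulo their annihilators, and by~\ref{EndS1} and \Cref{LemmaLambda} these quotients are $\bk[z]/z^p$ and $\bk[z_1,\dots,z_{r-1}]/(z_j^p)$ respectively; so $\rad^bSt_1^{(i)}=\Span(e_{b+1},\dots,e_p)$ and $\rad^aS=\Span\{e_{1+a_1}\o\cdots\o e_{1+a_{r-1}}\mid\sum_j a_j\ge a\}$.

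Then I would apply this to $S\o St_1^{(i)}=\bigotimes_{k=1}^rSt_1^{(a_k)}$ with $(a_1,\dots,a_r)=(0,1,\dots,r-2,i)$ and a standard basis vector $e_{c_1}\o\cdots\o e_{c_{r-1}}\o e_{c_r}$ of $\rad^aS\o\rad^bSt_1^{(i)}$, so $\sum_{j=1}^{r-1}(c_j-1)\ge a$ and $c_r-1\ge b$. By the above, $\xi\cdot(e_{c_1}\o\cdots\o e_{c_r})$ is a combination of vectors $e_{d_1}\o\cdots\o e_{d_r}$ with $d_k=b_k+c_k-1\le p$, where the $b_k$ arise from $\xi\cdot(e_1\o\cdots\o e_1)$ and hence satisfy $\sum_{j=1}^{r-1}(b_j-1)p^{j-1}+(b_r-1)p^i\ge p^{r-1}$. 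Since $\sum_{j=1}^{r-1}(b_j-1)p^{j-1}\le p^{r-1}-1$, this forces $b_r\ge2$, whence $d_r-1=(b_r-1)+(c_r-1)\ge b+1$; and it forces $\sum_{j=1}^{r-1}(b_j-1)\ge1$, since otherwise $(b_r-1)p^i\ge p^{r-1}$ would require $b_r-1\ge p^{\,r-1-i}\ge p$, impossible as $b_r\le p$ --- this is precisely the step that uses the hypothesis $i<r-1$. Therefore $\sum_{j=1}^{r-1}(d_j-1)\ge a+1$ and $d_r-1\ge b+1$, so by the radical descriptions every output vector lies in $\rad^{a+1}S\o\rad^{b+1}St_1^{(i)}$, and~(2) follows.

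The step I expect to be the main obstacle is not any single inequality but pinning down the radical filtration of $S$ precisely: the obvious weight estimate only gives the inclusion $\rad^aS\subseteq\{v\in S\mid\mathrm{wt}(v)\le(p^{r-1}-1)-2a\}$, which is strictly too weak for the argument, so one really needs \Cref{LemmaLambda} to identify $\bk E/\Ann_{\bk E}S$ with the multi-variable truncated polynomial algebra. The one inequality that could in principle fail, namely $\sum_{j<r}(b_j-1)\ge1$, closes up exactly because $i\le r-2$ forces $p^{\,r-1-i}>p-1$.
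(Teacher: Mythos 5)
Your proof is correct, but for part (2) it takes a genuinely different route from the paper. For part (1) the content is essentially the paper's: the paper argues by induction on $i$, identifying $\Span(e_2,\dots,e_p)\subset St_1^{(r-1)}$ with the quotient $St_1^{(r-1)}/\bk e_p$ via the endomorphism of \ref{EndS1}; your reformulation --- that $\xi$ and $\id_S\o z$ are $E$-module endomorphisms of the cyclic module $S\o St_1^{(r-1)}\simeq\bk E$ agreeing on the free generator fixed in \ref{defxi}, hence equal --- is the same idea packaged as a single endomorphism identity. For part (2), the paper does not use \Cref{Lemxi0} at all: it proves the general fact that any $\eta\in\Ann(U_1)\cap\Ann(U_2)$ satisfies $\eta(\rad^aU_1\o\rad^bU_2)\subset\rad^{a+1}U_1\o\rad^{b+1}U_2$, because $\eta\in\rad\bk E$ forces $\Delta(\eta)\in\eta\o1+1\o\eta+\rad\bk E\o\rad\bk E$, and then applies it using that $St_1^{(i)}$ is a quotient of $S$ for $i\le r-2$ (which is where the hypothesis $i<r-1$ enters there). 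Your argument instead upgrades \Cref{Lemxi0} to arbitrary basis vectors by commuting $\xi$ with the $E$-linear operators $\bigotimes_k z^{c_k-1}$, and combines this with the exact radical filtrations of $S$ and $St_1^{(i)}$ extracted from \Cref{LemmaLambda}, \Cref{LemDescSt1} and \ref{EndS1}; the inequalities you check are right, and you correctly isolate the use of $i\le r-2$ in ruling out $(b_r-1)p^i\ge p^{r-1}$. The paper's proof buys brevity and generality (it applies verbatim to any pair of modules annihilated by the given element, needing nothing beyond $\xi\in\Ann_{\bk E}S$), while yours buys explicit control --- the identification $\rad^aS=\Span\{e_{1+a_1}\o\cdots\o e_{1+a_{r-1}}\mid\sum_ja_j\ge a\}$ is a useful by-product --- at the price of relying on the $SL_2$-weight estimate of \Cref{Lemxi0}.
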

\begin{proof}
    Part (1) can be proved by induction on $i$, since we can interpret, via \ref{EndS1}, the submodule of $St_1^{(r-1)}$ spanned by $\{e_2,\ldots, e_p\}$ as the quotient $St_1^{(r-1)}/\bk e_p$, where $e_{l+1}$ in the former corresponds to the equivalence class of $e_l$.

    It is a general property that for two $E$-representations $U_1,U_2$ and $\eta\in \Ann(U_1)\cap \Ann(U_2)$, 
    $$\eta (\rad^a U_1\otimes \rad^b U_2)\;\subset\;\rad^{a+1} U_1\otimes \rad^{b+1} U_2.$$
Indeed, without loss of generality we can assume that $U_1\not=0$, so that $\eta\in \rad \bk E$. It then follows that
$$\Delta(\eta)\;=\; \eta\otimes 1+1\otimes \eta+ \sum_i\eta^1_i\otimes \eta^2_i$$
    with $\eta^j_i\in \rad \bk E$. Applying this expression to $U_1\otimes U_2$ demonstrates the claim.

    Since $\xi$ acts trivially on $S$, it also acts trivially on its quotient $St_1^{(i)}$, for $i\le r-2$. 
\end{proof}

\section{Extensions between specific uniserial modules} \label{sec:ext}

The main goal of \Cref{sec::subcategories} below is to compute the image of the map
\begin{equation} \label{eq:psi}
\omega_{U,W} = S\o-: \Ext^1_E( U,W)\to\Ext^1_E(S\o U,S\o W)    
\end{equation}
for $S=St_{r-1}$ the highest Steinberg module in $\Rep E=\Rep C_p^r$, determined by $\ulambda\in\mA^r_f$ as before, and for modules $U,W$ in a full abelian subcategory $\cC\subset\Rep E$ that is equivalent to $\Rep C_p$ such that $S\o Q\simeq\kk E$, for $Q$ the indecomposable projective in $\cC$. 

The description of the image of $\psi$ will be used in subsequent sections. As an immediate consequence, we will also obtain the following result:

\begin{theorem} \label{thm:S-tensor-ext-uniserials} Let $U$ be a uniserial $p$-dimensional representation with $\blambda\not\in\supp(U)$. Then for any extension $X$ of two quotients of $U$, we have $S\otimes X\in\cT$.
\end{theorem}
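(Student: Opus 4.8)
The plan is to realise $U$ as the projective generator of a copy of $\Rep C_p$ inside $\Rep E$, reduce the statement to the description of the image of the map $\omega$ in \eqref{eq:psi}, and obtain that description from the behaviour of $S\otimes-$ on this subcategory. First I would set up the subcategory: since $U$ is uniserial of dimension $p$ it is cyclic, so $U\cong A:=\bk E/\Ann_{\bk E}(U)$ as $A$-modules, and $A$ is a commutative local $\bk$-algebra of dimension $p$ having a composition series of length $p$ with one-dimensional layers (a Loewy series of $U$); hence $A\cong\bk[t]/(t^p)$. Consequently the full subcategory $\cC\subseteq\Rep E$ of modules killed by $\Ann_{\bk E}(U)$ is abelian, closed under subobjects and quotients, equivalent to $\Rep C_p$, and has $U$ as its unique indecomposable projective. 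Moreover $\supp(S\otimes U)=\{\blambda\}\cap\supp(U)=\varnothing$ by \eqref{sup:tensor} and \Cref{Cor:Supp}, so $S\otimes U$ is projective of dimension $p^r$, i.e.\ $S\otimes U\cong\bk E$. Thus $U$ and $\cC$ fit the set-up of \eqref{eq:psi} with $U$ in the role of $Q$.

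Next I would carry out the reduction. Every quotient of $U$ in $\Rep E$ is a quotient of $U$ in $\cC$, namely some $U/\rad^a U$ with $1\le a\le p$; in particular, if $X$ fits in $0\to Q_1\to X\to Q_2\to 0$ with $Q_1,Q_2$ quotients of $U$, then $Q_1,Q_2$ are objects of $\cC$. Applying the exact functor $S\otimes-$ produces a short exact sequence $0\to S\otimes Q_1\to S\otimes X\to S\otimes Q_2\to 0$ representing the image under $\omega_{Q_2,Q_1}$ of the class of $X$; so $S\otimes X$ is the middle term of an extension whose class lies in $\operatorname{image}(\omega_{Q_2,Q_1})$. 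Hence the theorem reduces to the following (special case of the) main computation of this section: \emph{for quotients $Q_1,Q_2$ of $U$, every extension whose class lies in $\operatorname{image}(\omega_{Q_2,Q_1})$ has its middle term in $\cT$}; by \Cref{lem:justS} and \Cref{rem:T-tilde}(4) this even gives $S\otimes X\in\cT_r$.

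For the computation itself I would first identify $S\otimes(U/\rad^aU)$. It is a quotient of $S\otimes U\cong\bk E$, hence cyclic and therefore indecomposable; tensoring the radical filtration of $U$ with $S$ gives a filtration $\bk E=N_0\supset N_1\supset\cdots\supset N_p=0$, $N_i:=S\otimes\rad^iU$, with each $N_{i-1}/N_i\cong S$, in particular short exact sequences $0\to S\to S\otimes(U/\rad^aU)\to S\otimes(U/\rad^{a-1}U)\to 0$. Using these together with \Cref{prop:Omega-Ti}, \Cref{cor:self-ext1} and the annihilator analysis of \Cref{sec:analyse} (which controls $\Ann_{\bk E}S$ and hence the ideals $N_i\subseteq\bk E$), one forces $S\otimes(U/\rad^aU)\cong T_{ap^{r-1}-1}$ by induction on $a$. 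Second, for $Q_1=U/\rad^aU$, $Q_2=U/\rad^bU$ one analyses $\Ext^1_E(Q_2,Q_1)$ as generated by the classes coming from $\Ext^1_\cC(Q_2,Q_1)$ — whose $\omega$-images are extensions of the $T_{\bullet p^{r-1}-1}$ realised inside $\bk E$ along the filtration $\{N_i\}$, hence restrictions of tilting modules — together with Yoneda-twisting by $\Ext^1_E(\bk,\bk)=H^1(E,\bk)$, whose contribution to $\operatorname{image}(\omega)$ is governed by $\omega_{\bk,\bk}\colon H^1(E,\bk)\to\Ext^1_E(S,S)$, whose image is spanned by the tilting self-extensions $S\otimes V^{(i)}\cong T_{p^{r-1}+p^i-1}$ of $S$ (\Cref{cor:self-ext1}, \Cref{rem-ex}); a naturality argument merges the two and shows every module with class in $\operatorname{image}(\omega_{Q_2,Q_1})$ is the restriction of an $SL_2$-tilting module.

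The main obstacle is this last step, and within it the fact that $U$ is an \emph{arbitrary} uniserial $p$-dimensional module with $\blambda\notin\supp U$ rather than (a Frobenius twist of) $St_1$: one cannot transport the answer from $SL_2$-representation theory but must instead exploit that $S\otimes U$ is \emph{freely} generated over $\bk E$, and use the annihilator computations of \Cref{sec:analyse} to pin down both the ideals $S\otimes\rad^aU$ and the effect of $S\otimes-$ on the extensions that leave $\cC$. Granting the description of $\operatorname{image}(\omega)$, \Cref{thm:S-tensor-ext-uniserials} follows immediately from the reduction above.
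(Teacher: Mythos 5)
Your first two paragraphs are correct and coincide with the paper's own route: the support argument giving $S\otimes U\cong\bk E$, the identification of the modules killed by $\Ann_{\bk E}(U)$ with a copy of $\Rep C_p$ having $U$ as projective generator, and the reduction of the theorem to the statement that every class in $\im(\omega_{Q_2,Q_1})$ has middle term in $\cT$ — this is exactly how the paper deduces \Cref{thm:S-tensor-ext-uniserials} from \Cref{cor:S-tensor-ext-Ud-Ue}, with your $U$ playing the role of $Q$ from \ref{Qxi}. The problem is that your third paragraph, which carries all the mathematical content, is a plan rather than a proof, and the ingredients you cite do not supply it. First, the identification $S\otimes(U/\rad^aU)\cong T_{ap^{r-1}-1}$ is not visibly forced by cyclicity plus the filtration by copies of $S$; the paper proves it by a different argument (\Cref{lem::S-Ud}), exhibiting a surjection $\bk E\cong S\otimes U\to S\otimes T_{a-1}^{(r-1)}$ annihilated by $\xi^a$ and counting dimensions. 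Note also that the ``annihilator analysis of \Cref{sec:analyse}'' (\Cref{Powxi} in particular) concerns the specific $\xi$ attached to $Q=St_1^{(r-1)}$; for an arbitrary $U$ one needs the general $\xi$ of \ref{Qxi}, and the relevant computations are those of \Cref{sec::subcategories}, not of \Cref{sec:analyse}.

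The more serious gap is the determination of $\im(\omega_{Q_2,Q_1})$ itself. \Cref{cor:self-ext1} and \Cref{rem-ex} tell you which restricted tilting modules are self-extensions of $S$; they say nothing about the image of $H^1(E,\bk)\xrightarrow{S\otimes-}\Ext^1_E(S,S)$, which a priori is spanned by the classes of $S\otimes V_{\umu}$ for \emph{all} $\umu$, not just the Frobenius twists $V^{(i)}$. Moreover, even granting a spanning set consisting of classes with tilting middle terms, that is not enough: you must show that \emph{every} non-zero element of the image — an arbitrary linear combination — still has tilting middle term. In the paper this is precisely the second assertion of \Cref{prop::im-omega} (every non-zero element of $\im\omega_{d,e}$ lies in the automorphism orbit of one of finitely many standard maps), which rests on the delicate \Cref{lem::xi-S-S} (the coefficients $u_i$ occurring in $\xi|_{S\otimes S}$ modulo $S\otimes\rad^2(\End(S))$ are units), on the concrete K\"unneth splitting \Cref{lem::extensions-from-subcategory}, and on \Cref{lem::identify-extensions}. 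Your ``naturality argument merges the two'' is exactly the step these results make precise; as written, you have flagged this as the main obstacle but not overcome it, so the proposal does not yet constitute a proof.
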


\begin{proof} See \Cref{cor:S-tensor-ext-Ud-Ue}.
\end{proof}

\subsection{Parametrising extensions} In this subsection, we describe the extensions in the source of $\psi$, see \Cref{eq:psi}.

To alleviate notation, we fix a generator $1_X=1$ for any cyclic module $X$. For any uniserial module $U$, such as $St_1$, we additionally fix an endomorphism $z_U=z$ with a one-dimensional kernel. By abuse of notation, this allows us to denote a basis of $U$ by 
 $$(z_U^i(1_U))_{0\le i< \dim U}=(z_U^i)_{0\le i< \dim U}=(z^i)_{0\le i< \dim U}.$$ 

\subsubsection{} Fix some $1\le k\le r$. Assume $\cC\subset\Rep E$ is a topologising subcategory equivalent to $\Rep C_p^{r-k}$. Let $Q$ be the indecomposable projective object of $\cC$. Assume also $\cC'\subset\Rep E$ is another topologising subcategory such that $\cC'\simeq\Rep C_p^k$, and let $Q'$ be its indecomposable projective object. Assume $Q\o Q'\simeq\kk E$ in $\Rep E$. 

We will write $\Omega_{\cC}$ for the shift inside $\cC$.

\begin{remark} The assumptions on $\cC$ imply that it is the full subcategory on those objects of $\Rep E$ which are subobjects of $Q^m$ for some $m\ge1$, or equivalently, quotient objects of $Q^m$ for some $m\ge1$.
Hence, $\one\in\Rep E$ is the unique simple object of $\cC$ up to isomorphism.
\end{remark}

\begin{example} (a) Let $\cC$ be the full subcategory on those modules on which $X_1,\dots,X_k\in\kk E$ all act as zero. Then $\cC$ satisfies the above assumptions. Furthermore, we could let $\cC'$ be the full subcategory on those modules on which $X_{k+1},\dots,X_r\in\kk E$ all act as zero

(b) Let $\cC$ be the full abelian (or equivalently, topologising) subcategory generated by $St_{r-k}$. Then $\cC$ satisfies the above assumptions. For $\cC'$ we can take the full abelian subcategory generated by $St_{1}^{(r-k)}\otimes \cdots St_1^{(r)}$.
\end{example}

\begin{definition} For any $m\ge1$ and any epimorphism $\pi:Q^m\to U$ in $\cC$, we define two subspaces of $Q^m\o Q'$,
$$
M_\pi:=\ker\pi \o Q'+Q^m\o \rad(Q') , 
$$
$$
\text{and}\quad
K_\pi:=\ker\pi\o\rad(Q')+Q^m\o\rad^2(Q') .
$$
\end{definition}
The following results can be viewed as a concrete realisation of the Künneth formula
$$
\Ext^1_E(U,W)\;\simeq\;\left(\Ext^1_{\cC}(U,W)\otimes \Hom_{\cC'}(\unit,\unit)\right)\oplus \left(\Hom_{\cC}(U,W)\otimes \Ext^1_{\cC'}(\unit,\unit)\right).
$$

\begin{lemma} \label{lem::extensions-from-subcategory} For any $U\in\cC$, there is a morphism $\pi_U\in\stHom(\Omega U,\Omega_{\cC} U\oplus U^k)$ such that the induced map
$$
\stHom(\Omega_{\cC}U\oplus U^k,W)\to \stHom(\Omega U,W),\quad
f\mapsto f \pi_U ,
$$
is an isomorphism for all $W\in\cC$.

More concretely, assume $\pi:Q^m\to U$ is an epimorphism for some $m\ge1$, and set $M:=M_\pi$ and $K:=K_\pi$. Then $M$ is stably isomorphic to $\Omega U$, $M/K\simeq\ker\pi\oplus U^k$, and $\pi_U$ can be taken to be the natural epimorphism $M\to M/K$.
\end{lemma}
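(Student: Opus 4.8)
The plan is to first establish the abstract statement and then verify the concrete description, which in fact implies the abstract one once we check the naturality. The key input is that $\cC\simeq\Rep C_p^{r-k}$, so every object of $\cC$ is both a submodule and a quotient of some $Q^m$, and that $Q\o Q'\simeq\kk E$; in particular $Q'$ is the indecomposable projective of $\cC'\simeq\Rep C_p^k$, so $\rad(Q')/\rad^2(Q')$ has dimension $k$ (the number of $X_i$-generators of $\cC'$), which is where the exponent $k$ in the formula $M/K\simeq\ker\pi\oplus U^k$ comes from.

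First I would fix an epimorphism $\pi:Q^m\tto U$ in $\cC$ and compute $\Omega U$ as an $E$-module. Since $Q\o Q'\simeq\kk E$ is projective, $Q^m\o Q'$ is projective over $E$, and the natural epimorphism $Q^m\o Q'\to Q^m\o(Q'/\rad Q')$; here $Q'/\rad Q'\simeq\unit$, so $Q^m\o Q'\tto Q^m\tto U$ exhibits $Q^m\o Q'$ as a projective object covering $U$. Hence $\Omega U$ is stably the kernel of $Q^m\o Q'\tto U$, which is exactly $M_\pi=\ker\pi\o Q'+Q^m\o\rad(Q')$ — one checks these two subspaces together form the kernel of the composite (a vector $v\o w$ maps to $\pi(v)\cdot\bar w$ under $Q^m\o Q'\to U$, where $\bar w$ is the image of $w$ in $Q'/\rad Q'\simeq\unit$, so the kernel is spanned by vectors in $\ker\pi\o Q'$ and in $Q^m\o\rad Q'$). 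So $M:=M_\pi$ is stably isomorphic to $\Omega U$. (If $P'$ is not literally a projective cover of $U$, one only loses projective summands, which is harmless in $\Stab E$.)

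Next I would identify the quotient $M/K$. We have $K:=K_\pi=\ker\pi\o\rad(Q')+Q^m\o\rad^2(Q')\subset M$. Computing $M/K$: the summand $\ker\pi\o Q'$ contributes $\ker\pi\o(Q'/\rad Q')\simeq\ker\pi\o\unit\simeq\ker\pi$, and the summand $Q^m\o\rad(Q')$ contributes $Q^m\o(\rad(Q')/\rad^2(Q'))$, but modulo $\ker\pi\o\rad Q'$ this becomes $U\o(\rad(Q')/\rad^2(Q'))\simeq U^k$ because $\rad(Q')/\rad^2(Q')$ is a $k$-dimensional \emph{trivial} $E$-module (it is $\Soc$ of a filtration of the augmentation ideal restricted to $\cC'$), and the overlap $\ker\pi\o\rad Q'\cap Q^m\o\rad^2 Q'$ is accounted for in $K$. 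Assembling these, $M/K\simeq\ker\pi\oplus U^k$ as $E$-modules. One then defines $\pi_U$ to be the natural projection $M\to M/K$, and composing with the stable isomorphism $M\simeq\Omega U$ gives the desired $\pi_U\in\stHom(\Omega U,\Omega_{\cC}U\oplus U^k)$ after identifying $\ker\pi$ with $\Omega_{\cC}U$ in $\Stab\cC$ (the projective cover of $U$ inside $\cC$ being $Q^m$, possibly after trimming projective summands).

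Finally I would check that $f\mapsto f\pi_U$ is an isomorphism $\stHom(\Omega_{\cC}U\oplus U^k,W)\to\stHom(\Omega U,W)$ for all $W\in\cC$. The cleanest route is dimension counting via the Künneth formula displayed before the lemma: the left side has dimension $\dim\Ext^1_\cC(U,W)+k\dim\Hom_\cC(U,W)$, and the right side is $\dim\Ext^1_E(U,W)$, which the Künneth formula equals (using $\Ext^1_{\cC'}(\unit,\unit)$ is $k$-dimensional and $\Hom_{\cC'}(\unit,\unit)=\kk$), so it suffices to prove injectivity. Injectivity follows because $\pi_U$ is surjective (it is the projection onto a quotient) hence an epimorphism in $\Stab E$ — strictly, one argues that the kernel $K$ of $M\to M/K$ is contained in $\rad M$, so no nonzero $f$ killing $M/K$ can factor through a stable isomorphism; equivalently, $M\to M/K$ is a stable epimorphism with "small" kernel. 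The main obstacle I anticipate is precisely this last step: making the identification $\ker\pi\simeq\Omega_{\cC}U$ and the claim "$\pi_U$ induces an isomorphism on $\stHom(-,W)$" fully rigorous, i.e., controlling the interplay between projective summands over $E$ versus over $\cC$ and verifying that $K$ is stably negligible (contained in the radical of $M$), rather than the identification of $M$ and $M/K$ themselves, which are direct linear-algebra computations with the tensor-product filtration.
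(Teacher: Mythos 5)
Your identifications of $M$ with $\Omega U$ (via the projective module $Q^m\o Q'$ and the kernel computation) and of $M/K$ with $\ker\pi\oplus U^k$ are essentially the paper's own arguments and are fine. The genuine gap is in the final step, which you yourself flag as the anticipated obstacle: you never prove that precomposition with $M\to M/K$ hits (let alone bijects onto) the relevant stable Hom-space. The paper's proof of exactly this point rests on one concrete device that is absent from your proposal: since $Q\o Q'\simeq\kk E$, one can choose elements $\eta_i\in\kk E$ with $\eta_i(1\o1)=1\o X_i$, so that $\eta_i|_{Q\o Q'}=Q\o z_i$; then $\sum_i\eta_i(M)=K$, while $\eta_i(Q)\simeq\eta_i(Q\o\soc Q')=0$, so each $\eta_i$ annihilates every object of $\cC$. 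Hence \emph{every} $E$-morphism $M\to W$ with $W\in\cC$ kills $K$ and factors uniquely through $M/K$, which is what makes $f\mapsto f\pi_U$ an isomorphism. This is the heart of the lemma and nothing in your write-up substitutes for it.

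Your two replacement arguments do not work. For surjectivity you invoke the K\"unneth formula to match dimensions, but in this setting $\cC$ and $\cC'$ are merely topologising subcategories with $Q\o Q'\simeq\kk E$ (e.g.\ $\cC$ generated by $St_{r-k}$), not the representation categories of complementary direct factors of $E$, so the displayed K\"unneth isomorphism is not an available black box --- the paper explicitly presents it as a \emph{consequence} (``concrete realisation'') of this lemma, so using it as input is circular; moreover, even granting it, identifying $\dim\stHom(\Omega_\cC U\oplus U^k,W)$ with $\dim\Ext^1_\cC(U,W)+k\dim\Hom_\cC(U,W)$ already requires the kind of control over maps factoring through $E$-projectives that you have not established. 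For injectivity, ``$\pi_U$ is surjective, hence an epimorphism in $\Stab E$'' is false as a principle: in a triangulated category every epimorphism splits, and a surjection of modules is generally not a categorical epimorphism in the stable category. The fallback ``$K\subset\rad M$'' is beside the point: on ordinary Hom-spaces injectivity of precomposition with a surjection is automatic, and the real issue (both for injectivity and surjectivity at the stable level) is controlling which maps factor through $E$-projectives and why every map $M\to W$ kills $K$ --- which is precisely what the $\eta_i$-argument delivers and your proposal lacks.
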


\begin{proof}

By the assumptions, $\ker\pi$ is stably isomorphic to $\Omega_\cC U$ in the stable category of $\cC$. We have a short exact sequence
$$
0\to M\to Q^m\o Q'\xrightarrow{\pi\o (Q'\twoheadrightarrow\one)} U\to 0
$$
with $M=\ker\pi \o Q'+Q^m\o \rad(Q')$. Hence $M$ is stably isomorphic to $\Omega U$.

Under the equivalence $\cC'\simeq\Rep C_p^k$, $Q'$ can be viewed as $\kk[X_1,\dots,X_k]/(X_i^p)$. Let $z_i$ be the endomorphism of $Q'$ sending $1\mapsto X_i$. By assumption, $Q\o Q'\simeq\kk E$. Hence we can pick $(\eta_i)_i\in\kk E$ such that $\eta_i(1\o 1)=1\o X_i$ in $Q\o Q'$ for all $i$. Then $\eta_i|_{Q\o Q'}=Q\o z_i$, so 
$$
\sum \eta_i(M)
= \sum \eta_i(\ker\pi\o Q' + Q^m\o \rad(Q'))
= \ker\pi\o \rad(Q') + Q^m\o\rad^2(Q')
= K
.
$$

Also, $\eta_i Q\simeq \eta_i(Q\o \soc(Q'))\simeq 0$ for all $i$. This implies $\eta_i|_W=0$ for all $W\in\cC$ and all $i$. This means any morphism in $\Hom(M,W)$ factors uniquely through $M/K$. 

To describe $M/K$, note that the intersection of the subspaces $\ker\pi\o Q'$ and $Q^m\o\rad(Q')$ of $Q\o Q'$ is $\ker\pi\o\rad(Q')$. Hence, their images intersect trivially already in the quotient space $Q\o Q' / (\ker\pi\o\rad(Q')$, which implies that $M/K$ is isomorphic to
$$
\frac{\ker\pi\o Q'}{(\ker\pi\o\rad(Q')+Q^m\o\rad^2(Q'))\cap( \ker\pi\o Q')}
\oplus
\frac{Q^m\o\rad(Q')}{\ker\pi\o\rad(Q')+Q^m\o\rad^2(Q')} ,
$$
which in turn is isomorphic to
$$\ker\pi\o \frac{Q'}{\rad(Q')}
\oplus
\frac{Q^m}{\ker\pi}\o\frac{\rad(Q')}{\rad^2(Q')}
\cong \ker\pi \oplus U^k ,
$$
as asserted. 
\end{proof}

\begin{corollary} \label{cor::R-o-Hom} In the situation of \Cref{lem::extensions-from-subcategory}, let $R\in\Rep E$ be any module such that $R\o Q$ is projective in $\Rep E$. Consider $U,W\in\cC$ and the map
$$
\psi:R\o-:\stHom(\Omega U,W)\to\stHom(R\o\Omega U,R\o W). 
$$
Then with $\pi_U$ as in \Cref{lem::extensions-from-subcategory}, the induced map
$$
\stHom_\cC(\Omega_{\cC}U\oplus U^k,W)\to \im(\psi),\quad
f\mapsto R\o f \pi_U ,
$$
is an epimorphism.
\end{corollary}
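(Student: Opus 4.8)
The plan is to derive \Cref{cor::R-o-Hom} formally from \Cref{lem::extensions-from-subcategory} together with the only new hypothesis, namely that $R\o Q$ is projective in $\Rep E$. First I would recall the concrete situation from (the proof of) \Cref{lem::extensions-from-subcategory}: fix an epimorphism $\pi\colon Q^m\to U$ in $\cC$, set $M:=M_\pi$ and $K:=K_\pi$, and use that $M$ fits into a short exact sequence $0\to M\to Q^m\o Q'\to U\to 0$ in which $Q^m\o Q'\simeq(\kk E)^m$ is projective in $\Rep E$, so that $M\simeq\Omega U$ in $\Stab E$. The structural fact established there is that every $\kk E$-module homomorphism $M\to W$ with $W\in\cC$ factors uniquely through the natural epimorphism $\pi_U\colon M\tto M/K$, together with the identification $M/K\simeq\ker\pi\oplus U^k$ where $\ker\pi$ is stably isomorphic in $\cC$ to $\Omega_\cC U$; in particular $\stHom_\cC(M/K,W)\simeq\stHom_\cC(\Omega_\cC U\oplus U^k,W)$, and this is the source of the map in the corollary.

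Next I would verify that $f\mapsto R\o f\pi_U$ is well defined with image contained in $\im(\psi)$. For well-definedness, note that a projective object of $\cC$ is a direct sum of copies of $Q$, so $R\o-$ sends any homomorphism factoring through a projective of $\cC$ to one factoring through $R\o Q^n$, which is projective in $\Rep E$ by hypothesis; hence for a module representative $\bar f\colon M/K\to W$ the class $R\o(\bar f\pi_U)$ in $\stHom(R\o M,R\o W)$ depends only on the class of $\bar f$ in $\stHom_\cC(M/K,W)$. For the containment in $\im(\psi)$, observe that the composite $\bar f\pi_U\colon M\to W$ is an honest module map, and transporting it along the stable isomorphism $M\simeq\Omega U$ yields a class $g\in\stHom(\Omega U,W)$ with $R\o\bar f\pi_U=R\o g=\psi(g)$, using that $R\o-$ is an additive endofunctor of $\Stab E$ (since it preserves projectives).

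For surjectivity onto $\im(\psi)$: an arbitrary element of $\im(\psi)$ is $\psi(g)=R\o g$ for some $g\in\stHom(\Omega U,W)\simeq\stHom(M,W)$; representing $g$ by a module map $M\to W$ and applying the unique factorisation recalled above gives $g=\bar g\pi_U$ for some module map $\bar g\colon M/K\to W$, whose class $f\in\stHom_\cC(M/K,W)\simeq\stHom_\cC(\Omega_\cC U\oplus U^k,W)$ satisfies $R\o f\pi_U=R\o g=\psi(g)$ by the previous paragraph. This exhibits every element of $\im(\psi)$ in the image of the corollary's map, which together with the previous paragraph proves the claim.

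The substantive content has all been carried out in \Cref{lem::extensions-from-subcategory}; the only point here that requires genuine care is the interplay between the two stable categories $\Stab\cC$ and $\Stab E$, and in particular that the composite $f\pi_U$ and its image under $R\o-$ are unambiguously defined — which is exactly what the projectivity of $R\o Q$ buys us. I do not anticipate any real obstacle beyond this bookkeeping.
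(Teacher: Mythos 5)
Your proof is correct and is essentially the argument the paper intends: the corollary is stated without proof as an immediate consequence of \Cref{lem::extensions-from-subcategory}, and your write-up is exactly that deduction, correctly isolating the two points that need the hypothesis — that $R\o Q$ projective kills morphisms factoring through projectives of $\cC$ (so the source may be taken to be $\stHom_\cC$) and that $R\o-$ descends to stable categories, so surjectivity follows from the unique factorisation of module maps $M\to W$ through $M/K$.
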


We can now specialize $k$ to $r-1$ to obtain concrete descriptions of $\im(\psi)$. For all $1\le d\le p$, we pick a representative $U_d$ for the isomorphism class of indecomposable objects of dimension $d$ in $\cC\simeq\Rep C_p$. Note that $Q\simeq U_p$ and there are short exact sequences
$$
0\to U_{p-d}\to U_p\to U_d\to 0
$$
for all $d$, so $\Omega_\cC U_d\simeq U_{p-d}$.

For all $1\le d,e\le p$, we define an index set
$$
I_{d,e} := \{m\in\mZ\mid \max(0,e-d)\le m <\min(p-d,e)\} .
$$

\begin{lemma} \label{lem::Rep-Cp-stable-homs} For all $1\le d,e\le p$, the maps $(1\mapsto z^m)_{m\in I_{d,e}}$ correspond to a basis of $\stHom_\cC(U_d,U_e)$. 
\end{lemma}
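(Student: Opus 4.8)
The plan is to reduce everything to an explicit computation inside the local algebra $\kk C_p\cong\kk[X]/(X^p)$ under the equivalence $\cC\simeq\Rep C_p$. Under this identification $U_d$ becomes $\kk[X]/(X^d)$, and the chosen endomorphism $z=z_{U_d}$ with one-dimensional kernel has the form $uX$ for a unit $u\in\kk[X]/(X^d)$, so that $z^m$ and $X^m$ span the same subspace $X^mU_d$ for every $m$. Hence it is harmless to carry out the computations with $X$ and translate the answer back into powers of $z$ at the end; I will make this remark once and then ignore the distinction.

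First I would describe $\Hom_\cC(U_d,U_e)$. Since $U_d$ is generated by $1_{U_d}$ subject to the single relation $X^d\cdot 1_{U_d}=0$, a morphism $U_d\to U_e$ is exactly a choice of image $v\in U_e$ with $X^dv=0$. Writing $v$ in the basis $(z^m)_{0\le m<e}$ of $U_e$, the condition $X^dv=0$ says that the coefficient of $z^m$ vanishes whenever $m+d<e$, i.e.\ whenever $m<e-d$. This recovers the classical fact that $(1\mapsto z^m)_{\max(0,e-d)\le m<e}$ is a basis of $\Hom_\cC(U_d,U_e)$. I would then identify the subspace of morphisms factoring through a projective object: as $\kk C_p$ is a local Frobenius algebra, $Q\simeq U_p$ is the unique indecomposable projective--injective, so a morphism factors through a projective iff it factors through a finite direct sum of copies of $U_p$; splitting such a map into its components, the $\kk$-span of all morphisms factoring through projectives equals the $\kk$-span of all composites $U_d\xrightarrow{g}U_p\xrightarrow{h}U_e$. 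By the same computation as above, $\Hom_\cC(U_d,U_p)$ has basis $(1\mapsto z^m)_{p-d\le m<p}$ and $\Hom_\cC(U_p,U_e)$ has basis $(1\mapsto z^{m'})_{0\le m'<e}$, and the composite of $1\mapsto z^m$ with $1\mapsto z^{m'}$ is $1\mapsto z^{m+m'}$. Therefore the morphisms factoring through projectives are precisely the span of $(1\mapsto z^m)_{p-d\le m<e}$ inside $\Hom_\cC(U_d,U_e)$ (note $p-d\ge\max(0,e-d)$ since $d,e\le p$, so this is indeed a subset of the basis above).

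Finally, $\stHom_\cC(U_d,U_e)$ is the quotient of $\Hom_\cC(U_d,U_e)$ by this subspace. Since both the ambient space and the subspace are spanned by disjoint parts of the same basis set $\{1\mapsto z^m\}$, cut out by the intervals $\max(0,e-d)\le m<e$ and $p-d\le m<e$ respectively, the quotient has as basis the images of $1\mapsto z^m$ for $m$ in the complementary interval $\max(0,e-d)\le m<\min(e,p-d)$, which is exactly $I_{d,e}$, as claimed. I do not anticipate any genuine obstacle; the only points demanding care are the bookkeeping that $I_{d,e}$ is the precise complement (including the degenerate case where it is empty, consistent with $\stHom_\cC(U_d,U_e)=0$ when $d+e\le p$), and the initial observation that replacing $X$ by the fixed endomorphism $z$ leaves all the subspaces in play unchanged.
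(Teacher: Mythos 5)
Your proof is correct and follows essentially the same route as the paper: identify the basis $(1\mapsto z^m)_{\max(0,e-d)\le m<e}$ of $\Hom_\cC(U_d,U_e)$, show that the maps with $m\ge p-d$ are exactly those spanning the morphisms factoring through a projective (the paper phrases this via the containment $\soc_d(Q)=\rad^{p-d}(Q)$ for projective $Q$, you via the explicit composites through $U_p$, which amounts to the same computation), and pass to the quotient. Your initial remark reducing $z$ to $X$ up to a unit, and the observation that the relevant index sets are upward closed so spans are unaffected, adequately covers the only point of imprecision (the composite being $1\mapsto z^{m+m'}$ only up to a unit-triangular change of basis).
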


\begin{proof} The maps $(1\mapsto z^m)$ as above for $\max(0,e-d)\le m\le e-1$ form a basis for $\Hom(U_d,U_e)$. Those with $m\ge p-d$ factor via the indecomposable projective $U_p$ in $\cC$. On the other hand, any projective object $Q$ in $\cC$ is a direct sum of copies of the uniserial module $U_p$, any map from $U_d$ to $Q$ has its image in $\soc_d(Q)=\rad_{p-d}(Q)$, so any map from $U_d$ to $U_e$ via $Q$ has its image in $\rad_{p-d}(U_e)$. This means the maps $(1\mapsto z^m)$ with $m\ge p-d$ span the space of morphism that factor via a projective object. This implies the assertion.
\end{proof}

\begin{definition} 
For any $1\le d\le p$ and $U:=U_d$, we fix a surjection $\pi:U_p\to U$ and an epimorphism $\pi_U: M_\pi\tto M_\pi/K_\pi\simeq \Omega_\cC U_d\oplus U_d^{r-1}\simeq U_{p-d}\oplus U_p^{r-1}$ as in \Cref{lem::extensions-from-subcategory}.
Let $\rho_0:U_{p-d}\oplus U_d^{r-1}\tto U_{p-d}$ and $(\rho_i:U_{p-d}\oplus U_d^{r-1}\tto U_d)_{1\le i\le r-1}$ be the structural projections of the direct sum.
We set
$$
f_{d,e,i,m} := (1\mapsto z^m) \rho_i \pi_U : M_\pi\to U_e .
$$
for all $1\le e\le p$, $0\le i\le r-1$, and $m\in\begin{cases} I_{p-d,e} & i=0 \\ I_{d,e} & 1\le i\le r-1 \end{cases}$.
\end{definition}

\begin{corollary} \label{lem::parametrization-stable-hom} If $k=r-1$, then in the situation of \Cref{lem::extensions-from-subcategory} and \Cref{cor::R-o-Hom} with $U=U_d$ and $W=U_e$ in $\cC\simeq\Rep C_p$, the space $\im(\psi)$ is spanned by the maps induced by the maps $(R\o f_{d,e,0,m})_{m\in I_{p-d,e}}$ and $(R\o f_{d,e,i,m})_{1\le i\le r-1,m\in I_{d,e}}$ just defined.
\end{corollary}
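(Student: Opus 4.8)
The plan is to obtain this corollary purely by combining the three preceding statements: \Cref{cor::R-o-Hom} identifies $\im(\psi)$ as the image of an explicit epimorphism out of a stable Hom space in $\cC$, \Cref{lem::Rep-Cp-stable-homs} supplies a basis of the relevant stable Hom spaces in $\cC\simeq\Rep C_p$, and additivity of $\stHom$ in its first variable glues the two together. I do not expect a genuine obstacle here; the one point requiring care is keeping track of which index set ($I_{p-d,e}$ or $I_{d,e}$) attaches to which direct summand.

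First I would apply \Cref{cor::R-o-Hom} in the case $k=r-1$, with $U=U_d$ and $W=U_e$ in $\cC\simeq\Rep C_p$. Since $\Omega_\cC U_d\simeq U_{p-d}$ via the short exact sequence $0\to U_{p-d}\to U_p\to U_d\to 0$, this produces an epimorphism $f\mapsto R\o f\,\pi_U$ from $\stHom_\cC(U_{p-d}\oplus U_d^{r-1},\,U_e)$ onto $\im(\psi)$, where we identify $R\o M_\pi$ with $R\o\Omega U_d$ up to projective summands. Since an epimorphism carries any spanning set of its source to a spanning set of its image, it suffices to write down a basis of $\stHom_\cC(U_{p-d}\oplus U_d^{r-1},\,U_e)$ and push it forward through $f\mapsto R\o f\,\pi_U$.

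Next I would use that $\stHom_\cC(-,U_e)$ sends finite direct sums to direct sums, so that via the structural projections $\rho_0,\rho_1,\dots,\rho_{r-1}$ one gets
$$
\stHom_\cC(U_{p-d}\oplus U_d^{r-1},\,U_e)
\;=\;
\stHom_\cC(U_{p-d},U_e)\,\rho_0
\;\oplus\;
\bigoplus_{i=1}^{r-1}\stHom_\cC(U_d,U_e)\,\rho_i .
$$
Feeding \Cref{lem::Rep-Cp-stable-homs} into each summand, a basis of the left-hand side is given by the maps $(1\mapsto z^m)\,\rho_0$ for $m\in I_{p-d,e}$ together with the maps $(1\mapsto z^m)\,\rho_i$ for $1\le i\le r-1$ and $m\in I_{d,e}$. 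Composing each of these with $\pi_U$ reproduces exactly the morphisms $f_{d,e,0,m}$ and $f_{d,e,i,m}$ of the preceding definition, so their images $R\o f_{d,e,0,m}$ (for $m\in I_{p-d,e}$) and $R\o f_{d,e,i,m}$ (for $1\le i\le r-1$, $m\in I_{d,e}$) span $\im(\psi)$, which is the assertion. The case distinction built into the definition of $f_{d,e,i,m}$ — index set $I_{p-d,e}$ when $i=0$ and $I_{d,e}$ otherwise — is precisely the one dictated by this decomposition of the source, and is the one bit of bookkeeping worth double-checking.
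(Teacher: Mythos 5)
Your proposal is correct and follows essentially the same route as the paper, whose proof is exactly the combination of \Cref{cor::R-o-Hom} with \Cref{lem::Rep-Cp-stable-homs}; you merely spell out the additivity of $\stHom$ over the direct sum $U_{p-d}\oplus U_d^{r-1}$ and the index-set bookkeeping, which the paper leaves implicit.
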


\begin{proof} This follows immediately from \Cref{cor::R-o-Hom} with \Cref{lem::Rep-Cp-stable-homs}.
\end{proof}

\subsection{Tensor products with Steinberg modules} \label{sec::subcategories}

\subsubsection{}\label{Qxi} Recall $S:=St_{r-1}=St\o St^{(1)}\o\dots St^{(r-2)}$. We fix a $Q\in\Rep E$ that generates a full topologising subcategory equivalent to $\Rep C_p$ and that satisfies $S\o Q\simeq\kk E$.

Recall that for any uniserial module $U$, such as $St$ and $Q$, we have fixed a generator $1_U=1$ and an endomorphism $z_U=z$ with a one-dimensional kernel, yielding a basis $(z_U^i)_{0\le i<\dim U}=(z^i)_{0\le i< \dim U}$. 

Let $z_i:=St\o\dots\o z\o St^{(i)}\o\dots\o St^{(r-2)}$ be the corresponding endomorphism of $S$ for all $1\le i\le r-1$. Set $\psi'_i:=z_{i+1}^{p-1}\dots z_{r-1}^{p-1}$ for all $0\le i\le r-1$, so $\psi'_{r-1}=\id_S$.

As $S\o Q\simeq\kk E$, we can pick $\xi\in\kk E$ such that $\xi(1\o 1)=1\o z$ in $S\o Q$. Then $\xi|_{S\o Q}=S\o z$ and $\xi|_S=0$. We fix such a $\xi$.

\begin{remark} It follows immediately that any $Q$ as in \ref{Qxi} is uniserial of dimension $p$, and its support does not contain the one point which is in the support of $S$ (see \Cref{sec:supp}).
\end{remark}

We will use the following idea for cyclic modules like $S$:

\begin{lemma}
Let $M$ be a cyclic $E$-module with an endomorphism $f$ and $i\in\mZ_{>0}$. Then $f\in\rad^i(\End(M))$ if and only if $\im(f)\subset\rad^i(M)$.
\end{lemma}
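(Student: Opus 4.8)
Let $M$ be cyclic over $E$ with generator $m_0$, so that evaluation at $m_0$ gives a surjection of $\kk E$-modules $\kk E\tto M$, $a\mapsto a m_0$. The statement to prove is that an endomorphism $f\in\End_E(M)$ lies in $\rad^i(\End(M))$ if and only if $\im(f)\subset\rad^i(M)$. The plan is to exploit the fact that $\End_E(M)$ is a (commutative, since $E$ is abelian) local ring whose unique maximal ideal $\rad(\End(M))$ consists exactly of the non-invertible endomorphisms, i.e.\ those $f$ with $f(m_0)\in\rad(M)$ — equivalently $\im(f)\subsetneq M$, equivalently $\im(f)\subset\rad(M)$ since $\rad(M)$ is the unique maximal submodule of the local module $M$. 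This handles the case $i=1$ directly and also pins down $\rad(M)=\rad(\kk E)\cdot m_0$.

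For the general case, the key observation is that $\rad^i(M)=\rad^i(\kk E)\cdot m_0$, because $M=\kk E\cdot m_0$ and the radical filtration is multiplicative on a cyclic module: applying $\rad(\kk E)$ repeatedly to $M$ gives $\rad^i(\kk E)M=\rad^i(M)$. Then, writing $f(m_0)=\xi m_0$ for some $\xi\in\kk E$, one direction is immediate: if $f\in\rad^i(\End(M))$, write $f=g_1\cdots g_i$ with each $g_j\in\rad(\End(M))$; by the $i=1$ case each $g_j(M)\subset\rad(M)=\rad(\kk E)M$, and since each $g_j$ is $\kk E$-linear, composing gives $\im(f)\subset\rad(\kk E)^i M=\rad^i(M)$. (Here commutativity is not even needed, only that $\rad(\End(M))$ is stable under composition, which holds for any ring.)

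The reverse direction is where the cyclicity is really used. Suppose $\im(f)\subset\rad^i(M)=\rad^i(\kk E)m_0$. Then $f(m_0)=\eta m_0$ for some $\eta\in\rad^i(\kk E)$. Write $\eta$ as a sum of products $\eta=\sum_k a_{k,1}\cdots a_{k,i}$ with each $a_{k,j}\in\rad(\kk E)=(X_1,\dots,X_r)$. For each such product, I want to realise $a m_0\mapsto (a'a)m_0$-type maps as elements of $\rad(\End(M))$: concretely, the assignment $m_0\mapsto X_\ell m_0$ extends to a well-defined $E$-endomorphism $h_\ell$ of $M$ (well-defined because it is just left multiplication by $X_\ell$, which commutes with the $E$-action as $E$ is abelian), and $h_\ell\in\rad(\End(M))$ since $\im(h_\ell)\subset\rad(M)$ by the $i=1$ case. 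The endomorphism $g$ defined by $m_0\mapsto \eta m_0$ is then a $\kk$-linear combination of composites $h_{\ell_1}\cdots h_{\ell_i}$ (one such for each monomial appearing in the expansion of $\eta$ into the $X_\ell$'s, grouped into blocks of length $\ge i$), hence $g\in\rad^i(\End(M))$; and $g$ agrees with $f$ on the generator $m_0$, so $g=f$. Therefore $f\in\rad^i(\End(M))$.

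\textbf{Main obstacle.} The only genuinely delicate point is the identification $\rad(\End_E(M))=\{f:\im(f)\subset\rad(M)\}$ and the multiplicativity $\rad^i(\kk E)\cdot m_0=\rad^i(M)$; once these are in place, the argument is bookkeeping. Both rely on $M$ being a local module over the local algebra $\kk E$ (true since $\kk E$ is local, $E$ being a $p$-group and $\kk$ of characteristic $p$) and on commutativity of $E$ to guarantee that ``left multiplication by $a\in\kk E$'' is an $E$-endomorphism of $M$. I expect the write-up to take essentially one short paragraph for $i=1$, one line for $\rad^i(M)=\rad^i(\kk E)m_0$, and then the two inclusions as above.
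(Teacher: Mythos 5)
Your argument is correct and follows essentially the same route as the paper: both directions rest on the fact that, since $M$ is cyclic, endomorphisms are realised by multiplication by elements of $\kk E$, so that $\rad^i(\End(M))$ and $\rad^i(\kk E)\cdot M=\rad^i(M)$ can be matched up. Your explicit $i=1$ step (identifying $\rad(\End(M))$ with the non-invertible endomorphisms of the local module) is just a slightly more spelled-out version of the paper's "nilpotent, hence in the radical" and "otherwise it would be invertible" remarks.
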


\begin{proof}
Assume $\im(f)$ lies in $\rad^i(M)$. Let $m$ be a generator of $M$. Then there is an element $y\in\rad^i(\kk E)$ such that $f(m)=ym$. This implies $f=y|_M$. Furthermore, $y$ is a sum of $i$-fold products of elements from $\rad(\kk E)$. Elements from $\rad(\kk E)$ are nilpotent, hence their action on $M$ lies in the radical of $\End(M)$. This means $f$ lies in $\rad^i(\End(M))$.

Now assume $f$ lies in $\rad^i(\End(M))$. Then $f$ is a sum of $i$-fold products of elements from $\rad(\End(M))$. As $M$ is cyclic, each element in $\rad(\End(M))$ is given by the action of an element in $\kk E$, which will have to lie in $\rad(\kk E)$, as otherwise, it would be invertible in $\kk E$. This means $\im(f)$ lies in $\rad^i(M)$.
\end{proof}

One of the most important properties of the Steinberg module $S$ for our purposes is the following result. We are interested in $S\otimes S$ as we will realise $S\otimes \Omega X$, for the relevant $X$, as a submodule in $S\otimes S\otimes X$. $\xi$ is as in \Cref{Qxi}.

\begin{lemma} \label{lem::xi-S-S}
There are automorphisms $(u_i)_{0\le i\le r-2}$ of $S$ such that
$$
\xi|_{S\o S} \in \sum_{0\le i\le r-2} u_i \psi'_i \o z_{i+1} + S\o\rad^2(\End(S)) .
$$    
\end{lemma}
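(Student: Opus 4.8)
The idea is to pin down the action of $\xi$ on $S\o S$ by transporting it, via the isomorphism $S\o Q\simeq \kk E$, to a statement about how $\xi$ acts on a concrete tensor product of Steinberg modules, and then to use the combinatorics of the $z_i$ and the convention from \ref{basis}. Concretely, recall $S=St\o St^{(1)}\o\dots\o St^{(r-2)}$ and $Q$ is uniserial of dimension $p$ with $S\o Q\simeq \kk E$; by \Cref{Qxi}, $\xi$ is characterised by $\xi(1\o 1)=1\o z$ in $S\o Q$ and $\xi|_S=0$. First I would use that $\End_E(S)=\uEnd_E(S)\simeq\bk[z_1,\dots,z_{r-1}]/(z_i^p)$ by \Cref{LemmaLambda}, and that $S\o S$ is tilting (a sum of $T_j$ with $j<p^r-1$), so $\uHom_E(S,S\o S)=\Hom_E(S,S\o S)$; hence the restriction $\xi|_{S\o S}$, viewed as a map $S\to S\o S$ composed appropriately, can be analysed module the radical filtration of $\End(S)$.

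\textbf{Key steps.} (1) Identify the target more carefully: the claim is that modulo $S\o\rad^2(\End(S))$, the operator $\xi|_{S\o S}$ is a sum over $0\le i\le r-2$ of terms $u_i\,\psi'_i\o z_{i+1}$, where $\psi'_i=z_{i+1}^{p-1}\dots z_{r-1}^{p-1}$ acts on the first tensor factor $S$ and $z_{i+1}$ acts on the second factor $St^{(i)}$ inside the second copy of $S$. So I would first compute $\xi(1\o 1)$ where now $1\o1=(e_1\o\dots\o e_1)\o(e_1\o\dots\o e_1)\in S\o S$, using \Cref{Lemxi0}: $\xi$ lands in the span of tensors $e_{b_1}\o\dots\o e_{b_{r-1}}\o e_{c_1}\o\dots\o e_{c_{r-1}}$ with $\sum(b_j-1)p^{j-1}+\sum(c_j-1)p^{j-1}\ge p^{r-1}$. (2) The constraint $\ge p^{r-1}$ together with the fact that each $b_j,c_j\le p$ forces, modulo $\rad^2$ (i.e.\ modulo raising the total degree by $2$), exactly the ``carry'' terms: either $c_{r-1}=2$ and all other indices are $1$ (this is the $\xi$-on-$S$-relation inside the last factor, corresponding to $i=r-2$, $\psi'_{r-2}=z_{r-1}^{p-1}$ on the first factor? — careful bookkeeping needed here), or one produces a term where the first factor is pushed to its top degree in the $z_{i+1},\dots,z_{r-1}$ directions and the second factor is shifted by $z_{i+1}$ in the $St^{(i)}$ slot. (3) Matching these combinatorial possibilities to the listed sum, with the $u_i$ absorbing the (necessarily nonzero, since the leading coefficient of $\xi$ on each relevant monomial is a unit) scalars and any lower-order corrections, which one can check are automorphisms because they are ``upper triangular with units on the diagonal'' with respect to the radical filtration of $S$. (4) Cleaning up: everything that is ``order $\ge 2$'' goes into $S\o\rad^2(\End(S))$ by the lemma relating $\im(f)\subset\rad^i(M)$ to $f\in\rad^i(\End(M))$ (and its evident generalisation to $S\o S$, using that $S\o S$ is a sum of cyclic — indeed tilting — modules; alternatively apply the lemma to the second factor which is cyclic and absorb the first factor's endomorphism).

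\textbf{Main obstacle.} The hard part is the precise bookkeeping in step (2): translating the single inequality $\sum_{i}(b_i-1)p^{a_i}\ge p^{r-1}$ from \Cref{Lemxi0} (applied with the $t=2(r-1)$ twists $a_i=0,1,\dots,r-2,0,1,\dots,r-2$) into exactly the $r-1$ leading ``carry'' contributions, showing that all other solutions of the inequality contribute to $\rad^2$, and checking that the coefficient of each leading term is a unit so that the $u_i$ are genuine automorphisms rather than merely endomorphisms. I expect this to reduce to a clean statement: the minimal-degree ways to reach total weighted degree $\ge p^{r-1}$ from the all-ones configuration are obtained by incrementing $c_{i+1}$ by one (contributing $p^i$) and simultaneously saturating the first $S$ in slots $i+1,\dots,r-1$ (contributing $(p-1)(p^i+\dots+p^{r-2})=p^{r-1}-p^i$), for $0\le i\le r-2$, which sum to exactly $p^{r-1}$; any other solution overshoots the degree by at least one more unit, landing in $\rad^2$. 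Once this is established, using Part (1) of \Cref{Powxi} to identify $\psi'_i$ as the relevant power of $\xi$ acting within the last factors finishes the identification, and the automorphism claim for $u_i$ follows from invertibility of these leading coefficients together with \Cref{LemmaLambda}.
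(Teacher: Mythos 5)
Your combinatorial first half is essentially the paper's: applying \Cref{Lemxi0} to $\xi(1\o1)\in S\o S$ and sorting the tuples by the degree in the second tensor factor, one sees that modulo operators sending $1\o 1$ into $S\o\rad^2(S)$ the only surviving terms are $m(z_1,\dots,z_i)\,\psi'_i\o z_{i+1}$ (second-factor degree $0$ is impossible, degree $1$ forces saturation $a_j=p-1$ for $j\ge i$ in the first factor, degree $\ge 2$ is the error term). One small imprecision: your criterion ``any other solution overshoots the total degree and lands in $\rad^2$'' is not the right dichotomy -- terms that overshoot only in the \emph{first} factor (extra $z_1,\dots,z_i$) are not error terms, they are exactly the higher-order part of the polynomials $u_i$; the error term is defined by degree $\ge 2$ in the \emph{second} factor. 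This is repairable.

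The genuine gap is the automorphism claim. You assert that the constant terms of the $u_i$ are units ``since the leading coefficient of $\xi$ on each relevant monomial is a unit,'' and later say this ``follows from invertibility of these leading coefficients together with \Cref{LemmaLambda}'' -- but that is circular: the invertibility of those coefficients \emph{is} the statement to be proved, \Cref{LemmaLambda} only describes $\End_E(S)$, and \Cref{Powxi}(1) concerns $S\o St_1^{(r-1)}\simeq\kk E$, not $S\o S$, so it gives no direct control on the expansion of $\xi(1\o1)$ in $S\o S$. There is no a priori reason all $r-1$ coefficients are non-zero: $\xi$ is only pinned down by $\xi(1\o1)=1\o z$ in $S\o Q$ and $\xi|_S=0$. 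The paper closes this gap with a non-trivial contradiction argument: if $u_j$ had constant term zero, then composing with the endomorphism $f=(z_1\cdots z_j)^{p-1}\o(z_1\cdots z_j z_{j+2}\cdots z_{r-1})^{p-1}z_{j+1}^{p-2}$ of $S\o S$, whose image is $St_1^{(j)}\o\cdots\o St_1^{(r-2)}\o V^{(j)}$, shows $\xi$ kills that module; an $SL_2$-argument ($T_{p^{r-1-j}}^{(j)}\hookleftarrow\Delta_{p^{r-1-j}}^{(j)}\tto L_{p^{r-1-j}}^{(j)}\simeq V^{(r-1)}$) then shows $\xi$ kills $V^{(r-1)}=St_1^{(r-1)}/\rad^2$, hence $\xi$ sends $St_1^{(r-1)}$, and therefore $\kk E\simeq S\o St_1^{(r-1)}$, into its second radical, contradicting $\xi(1\o1)=1\o z$. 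Some argument of this kind (relating the $S\o S$ coefficients back to the defining property of $\xi$ on $S\o Q$ via suitable subquotients) is indispensable; without it your proposal establishes only that the $u_i$ are endomorphisms, which is strictly weaker than the lemma.
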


\begin{proof}
As a special case \Cref{Lemxi0} we have
$$
\xi(1\o 1) \in \sum (z_1^{a_0}\dots z_{r-1}^{a_{r-2}}\o z_1^{b_0}\dots z_{r-1}^{b_{r-2}}) (1\o 1)
$$
for those tuples $(a_i)_i, (b_i)_i$ in $\{0,\dots,p-1\}$ that satisfy
$$
\sum_{0\le i\le r-2} (a_i+b_i) p^i  \ge p^{r-1} .
$$
The tuples corresponding to elements in $S\o\rad^2(S)$ are those with $\sum b_i>1$. The remaining tuples satisfying the condition correspond exactly to elements $(m_i(z_1,\dots,z_i) \psi'_i\o z_{i+1})(1\o1)$ for some $i$ and some monomials $m_i$. Hence,
$$
\xi(1\o1) \in \sum_{0\le i\le r-2} (u_i(z_1,\dots,z_i) \psi'_i \o z_{i+1})(1\o1) + S\o\rad^2(S)  
$$
for some polynomials $u_i$.

Assume, in order to find a contradiction, that $u_j$ has constant term $0$ for some $j$.  Consider the endomorphism 
$$
f := (z_1\dots z_j)^{p-1}\o (z_1\dots z_j z_{j+2}\dots z_{r-1})^{p-1} z_{j+1}^{p-2}
$$
of $S\o S$. As $\im(z_{St^{(i)}}^{p-1})\simeq\one$ and $\im(z_{St^{(i)}}^{p-2})\simeq V^{(i)}$, the image of $f$ is isomorphic to $St^{(j)}\o\dots\o St^{(r-2)}\o V^{(j)}$, but the above formula for $\xi$ and the assumption that $u_j$ has constant term zero imply $\xi|_{S\o S}\, f=0$. So $\xi$ acts as zero on $St^{(j)}\o\dots\o St^{(r-2)}\o V^{(j)}$. But then $\xi$ acts as zero on $V^{(r-1)}$, which is a subquotient of the latter module, as can be seen via the inclusions and surjections of $SL_2$-modules 
$$
St_1^{(j)}\otimes St_1^{(j+1)}\otimes \cdots \otimes St^{(r-2)}_1\otimes V^{(j)}\simeq T_{p^{r-1-j}}^{(j)}
\;\hookleftarrow\; \Delta_{p^{r-1-j}}^{(j)} \;\tto\;  L_{p^{r-1-j}}^{(j)}\simeq V^{(r-1)} .
$$
$\xi$ acting as zero on $V^{(r-1)}$ implies that $\xi$ sends $St^{(r-1)}$ to its second radical. But then it would follow that $\xi$ sends 
$$\kk E\simeq S\o St^{(r-1)}\simeq S\otimes Q$$ to its second radical, which is a contradiction, as $\xi_{S\o Q}(1\o1)=1\o z$ by assumption.

Hence, the constant terms of all $u_i$ are non-zero, so they are automorphisms of $S$.
\end{proof}

\begin{definition}
Set $$\psi_{r-1}:=\psi'_{r-1}=\id_S,\quad \psi_i:=u_i \psi'_i$$ with $u_i$ as in \Cref{lem::xi-S-S} for all $0\le i\le r-2$.
\end{definition}

\begin{corollary} \label{lem::image-xi-d} For all $1\le d\le p$,
$$
\xi^d|_{S\o Q\o S} \in \sum_{0\le i\le r-2} (\psi_i\o \rad^{d-1}(\End(Q))\o z_{i+1}) + S\o \rad^d(\End(Q))\o S + S\o Q\o \rad^2(\End(S)) .
$$
\end{corollary}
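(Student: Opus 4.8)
The plan is to establish the claim for $d=1$ first and then bootstrap to general $d$ by multiplying with $\xi$ and carefully tracking radical filtrations.

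\medskip

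\textbf{Step 1 (base case $d=1$).} I would apply \Cref{lem::xi-S-S} to the middle pair of tensor factors. More precisely, on $S\o Q\o S$, since $\xi$ kills $S$ (the rightmost factor) but acts as $S\o z$ on $S\o Q$ (the leftmost pair), I would first argue that $\xi|_{S\o Q\o S}$ is governed by its restriction to the subquotients where two of the three factors are ``collapsed.'' The key input is that $\xi|_{S\o S}$ has the form given in \Cref{lem::xi-S-S}, namely $\sum_i u_i\psi'_i\o z_{i+1} + S\o\rad^2(\End(S))$, and $\xi|_{S\o Q}=S\o z$. Using the comultiplication expansion $\Delta(\xi)$ as in the proof of \Cref{Powxi}, i.e.\ writing the action of $\xi$ on a triple tensor product via $\xi\o1\o1+1\o\xi\o1+\cdots$ with correction terms in $\rad\,\kk E$ in each slot, I would match the leading terms: the terms where $\xi$ acts ``diagonally'' on the $S\o S$-part (factors $1$ and $3$) contribute $\sum \psi_i\o\id_Q\o z_{i+1}$ after absorbing the automorphisms $u_i$ into the $\psi_i$; the terms where $\xi$ acts on the $Q$-slot contribute the middle summand $S\o\rad^0(\End Q)\o S$ (which for $d=1$ reads $S\o\End(Q)\o S$, consistent with $\rad^{d-1}=\rad^0$); and the $S\o\rad^2(\End S)$ error from \Cref{lem::xi-S-S} survives into the last summand. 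This gives the $d=1$ case.

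\medskip

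\textbf{Step 2 (inductive step).} Assume the formula for $\xi^{d-1}|_{S\o Q\o S}$. I would multiply by $\xi|_{S\o Q\o S}$ using the $d=1$ description. The product of the three displayed summands with $\xi$ is analyzed termwise. The crucial point is that $\xi|_{S\o Q\o S}$ always contributes a factor increasing the radical degree in the $Q$-slot by exactly one (its leading behaviour is $S\o z\o S$-like, $z\in\rad(\End Q)$), which matches the jump from $\rad^{d-2}(\End Q)$ to $\rad^{d-1}(\End Q)$ in the first summand and from $\rad^{d-1}$ to $\rad^d$ in the middle summand. The $S\o Q\o\rad^2(\End S)$ summand, once entered, is an ideal-type trap: multiplying it by anything keeps it in $S\o Q\o\rad^2(\End S)$ since $\rad^2(\End S)$ absorbs further multiplication by $\End S$ and cross-terms. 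The only subtlety is cross-terms: when $\xi$ acts on, say, the middle $Q$-slot of a term from the first summand $\psi_i\o\rad^{d-2}(\End Q)\o z_{i+1}$, we need that it lands back in one of the three listed summands — it lands in the first one with $\rad^{d-1}(\End Q)$. When $\xi$ acts via its $\psi_j\o z_{j+1}$-part (on slots $1$ and $3$) of a first-summand term, the $z_{i+1}$ from slot $3$ combines with a $z_{j+1}$; if $i\ne j$ these are different commuting nilpotents and the product of the slot-$3$ endomorphisms lies in $\rad^2(\End S)$, pushing the term into the last summand; if $i=j$ we get $z_{i+1}^2\in\rad^2$, again the last summand — \emph{unless} the composite with $\psi_i$ keeps things in $\rad^1$, but $\psi_i=u_i\psi'_i$ with $\psi'_i=z_{i+1}^{p-1}\cdots z_{r-1}^{p-1}$ already, so $z_{i+1}\psi'_i$ has $z_{i+1}^p=0$; this forces cancellation or escalation to $\rad^2$ as needed. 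I would verify these few cases explicitly.

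\medskip

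\textbf{Main obstacle.} The hard part is the bookkeeping in the inductive step: precisely controlling the cross-terms when $\xi$ acts simultaneously on different tensor slots via $\Delta(\xi)$, and confirming that none of them produce a term outside the three listed summands — in particular that products of the slot-$3$ endomorphisms $z_{i+1}$ coming from different applications of $\xi$ never stay in $\rad^1(\End S)$ with the ``wrong'' $Q$-degree. This is where one genuinely uses that $\psi'_i$ already carries the full power $z_{i+1}^{p-1}$ (from \Cref{lem::xi-S-S} via \Cref{Lemxi0}'s weight bound $\sum(a_i+b_i)p^i\ge p^{r-1}$), so that any additional $z_{i+1}$ annihilates it, and one uses the radical-filtration compatibility $\xi(\rad^a S\o\rad^b Q)\subset\rad^{a+1}S\o\rad^{b+1}Q$ type statements analogous to \Cref{Powxi}(2). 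Once these finitely many interaction patterns are checked, the containment follows by a clean induction on $d$.
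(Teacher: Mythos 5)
Your overall strategy is the same as the paper's (get $d=1$ from \Cref{lem::xi-S-S}, then induct on $d$ using that the $\psi_i$'s multiply to zero), but as written there are concrete errors and a real gap. First, you have the radical exponents in the wrong slots: in the statement the \emph{first} summand carries $\rad^{d-1}(\End Q)$ and the \emph{middle} summand carries $\rad^{d}(\End Q)$, whereas your Step 1 places the $Q$-slot error of $\xi$ into ``$S\o\rad^{0}(\End Q)\o S=S\o\End(Q)\o S$''. That subspace is all of $\End(S)\o\End(Q)\o\End(S)$, so the base case you assert is vacuous; the actual content of $d=1$ is that every term of $\xi$ not of the form $\psi_i\o\id_Q\o z_{i+1}$ has $Q$-slot in $\rad^1(\End Q)$ or third slot in $\rad^2(\End S)$. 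Relatedly, the ``crucial point'' driving your Step 2 — that each factor of $\xi$ raises the $Q$-slot radical degree by exactly one — is false: the part $\sum_i\psi_i\o\id_Q\o z_{i+1}$ has $Q$-degree zero, and this is precisely why the conclusion has only $\rad^{d-1}$ in the first summand (exactly one factor may contribute this part, since $\psi_i\psi_j=0$).

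Second, the one genuinely dangerous product is never treated: (first-summand part of one factor)$\cdot$(middle error part of the other). Knowing only the stated containments, this lands in $\psi_i\End(S)\o\rad^{d-1}(\End Q)\o z_{i+1}\End(S)$, and elements such as $z_1\psi_1\o\rad^{d-1}(\End Q)\o z_2$ (already for $r\ge 3$) lie in none of the three summands; so the induction does not close on the weak form alone, and your case analysis only covers products of first-summand parts with each other and with a purely ``$\id\o z\o\id$''-type middle term. What rescues the argument is finer information about $\xi$ that you never isolate: since $\xi|_{S\o Q}=\id_S\o z$, $\xi|_{Q\o S}=z\o\id_S$ exactly and $\xi|_S=0$, the unique expression of $\xi$ in $\End(S)\o\End(Q)\o\End(S)$ is $\id_S\o z\o\id_S+\sum_i\psi_i\o\id_Q\o z_{i+1}$ plus terms lying in $\End(S)\o\End(Q)\o\rad^2(\End S)$ or in $\rad(\End S)\o\rad(\End Q)\o\rad(\End S)$; with this refinement a multinomial expansion of $\xi^d$ (at most one $\psi$-factor, every other non-$\psi$ factor contributing either $Q$-degree $\ge1$ or outer-slot radicals that push into the last two summands) gives the claim. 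Your ``Main obstacle'' paragraph essentially acknowledges that this bookkeeping is unverified, so the hard part of the proof is missing, and the parts that are spelled out contradict the exponent pattern of the statement.
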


\begin{proof} \Cref{lem::xi-S-S} proves the case $d=1$. The cases $d>1$ follow by induction in $d$ using that $\psi_i\psi_j=0$ for all $0\le i,j\le r-2$.
\end{proof}

Now let $\cC$ and $\cC'$ be the full abelian subcategories of $\Rep E$ generated by $Q$ and $Q':=S$, respectively. Then $\cC\simeq\Rep C_p$ and $\cC'\simeq\Rep C_p^{r-1}$.

\begin{lemma} \label{lem::s-o-om-c-u} For any indecomposable $U\in\cC$,  $S\o\Omega_\cC U$ is stably isomorphic to $\Omega(S\o U)$.
Moreover, the stable isomorphism can be realised as follows: pick an epimorphism $\pi:Q\to U$ and set $d:=\dim(U)$. Then with $M=M_\pi\subset Q\o S$ as in \Cref{lem::extensions-from-subcategory}, the vector $\xi^d(1\o1\o1)$ in $S\o Q\o S$ lies in the submodule $S\o M$ and generates a direct summand isomorphic to $S\o\ker\pi$ with a projective complement in $S\o M$.
\end{lemma}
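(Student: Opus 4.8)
The plan is to combine three facts: (i) $S\o Q\simeq \kk E$, so that the endomorphism $\xi$ of $\kk E\simeq S\o Q$ is (up to the identification) the endomorphism $z$ of $Q$ tensored with $S$; (ii) the explicit description in \Cref{lem::extensions-from-subcategory} of $M=M_\pi\subset Q\o Q'$ (here $Q'=S$) as a module stably isomorphic to $\Omega U$, with $M/K_\pi\simeq \ker\pi\oplus U^{r-1}$; and (iii) the control on $\xi^d$ acting on $S\o Q\o S$ provided by \Cref{lem::image-xi-d}. First I would fix the epimorphism $\pi:Q\to U$ with $d=\dim U$, so $\ker\pi\simeq\rad^d(Q)$ and $\Omega_\cC U\simeq\ker\pi$ inside $\cC$. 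Tensoring the defining short exact sequence $0\to M\to Q\o S\xrightarrow{\pi\o(S\tto\one)} U\to 0$ with $S$ and using $S\o Q\simeq\kk E$ realises $S\o M$ as a submodule of the projective module $S\o Q\o S$ whose quotient is $S\o U$; hence $S\o M$ is stably isomorphic to $\Omega(S\o U)$, which is the first assertion.

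For the ``moreover'' part, the point is to identify which element of $S\o Q\o S$ generates the summand $S\o\ker\pi$ inside $S\o M$. Since $\ker\pi=\rad^d(Q)$ is generated over $\kk E$ by $z^d(1_Q)$, and under $S\o Q\simeq\kk E$ the endomorphism $\xi$ corresponds to $S\o z$, the element $\xi^d(1\o1\o1)=(S\o z^d\o S)(1\o1\o1)$ should generate a copy of $S\o\rad^d(Q)=S\o\ker\pi$ inside $S\o Q\o S$. So I would first check that $\xi^d(1\o1\o1)$ lies in $S\o M$: this is immediate because $z^d(1_Q)\in\ker\pi$, hence $\xi^d(1\o1\o1)\in S\o\ker\pi\o S\subset S\o M$ using $M\supset\ker\pi\o S$. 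Then I would show the cyclic submodule it generates is isomorphic to $S\o\ker\pi$ with a projective complement in $S\o M$. Here is where \Cref{lem::image-xi-d} enters: modulo $S\o Q\o\rad^2(\End(S))$ and modulo the ``lower'' term $S\o\rad^d(\End(Q))\o S$, the operator $\xi^d$ restricted to $S\o Q\o S$ is congruent to $\sum_{0\le i\le r-2}\psi_i\o\rad^{d-1}(\End(Q))\o z_{i+1}$; comparing dimensions and using that the $\psi_i$ are (built from automorphisms composed with) the orthogonal-type idempotent-like maps $\psi'_i$ whose images are the $St^{(i)}\o\cdots\o St^{(r-2)}$, one reads off that the image of $\xi^d$ on $S\o M$, as opposed to on all of $S\o Q\o S$, is exactly $d\cdot p^{r-1}$-dimensional, matching $\dim(S\o\ker\pi)=d\cdot p^{r-1}$, so the cyclic submodule generated by $\xi^d(1\o1\o1)$ has the right dimension.

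To finish, I would argue that this cyclic submodule is \emph{split} off inside $S\o M$. The cleanest route: $M/K_\pi\simeq\ker\pi\oplus U^{r-1}$ and tensoring with $S$ kills the $U^{r-1}$ part projectively (since $S\o U$ is projective: $U$ is a quotient of $Q$ and $S\o Q\simeq\kk E$ means $S\o U$ is a summand of $\kk E$ up to the exact sequence — more precisely $S\o U$ has support disjoint from $\supp S$ by the remark after \ref{Qxi}, hence is projective), and tensoring $K_\pi$ with $S$ is also projective by the same support/\eqref{sup:tensor} argument since $K_\pi$ is built from $\rad(Q')=\rad(S)$-type pieces and $\ker\pi\o\rad(Q')$. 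So in the stable category $S\o M\simeq S\o\ker\pi$, and lifting the generator back, $\xi^d(1\o1\o1)$ provides a generator of the non-projective part; a dimension count then shows the complement in $\Rep E$ is projective. The main obstacle I anticipate is the bookkeeping in this last step: making rigorous that the cyclic submodule generated by $\xi^d(1\o1\o1)$ is genuinely a \emph{direct summand} of $S\o M$ (not merely stably isomorphic after quotienting), which requires either explicitly producing a projective complement using the $\psi_i$-decomposition of \Cref{lem::image-xi-d}, or invoking that a submodule of $S\o M$ isomorphic to $S\o\ker\pi$ with projective quotient in $S\o M$ must split because $S\o\ker\pi$ has no projective summands (its support is $\{\blambda\}$) while the quotient is projective, so $\Ext^1$ between them vanishes.
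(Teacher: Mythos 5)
There are genuine gaps, and two of the key claims are false as stated. First, your argument that $\xi^d(1\o1\o1)\in S\o\ker\pi\o S$ "because $z^d(1_Q)\in\ker\pi$" tacitly identifies the action of $\xi$ on the triple product $S\o Q\o S$ with $S\o z\o S$; this is not true, since $\xi$ acts through the coproduct and produces cross terms in the outer factors --- that is exactly the content of \Cref{lem::xi-S-S} and \Cref{lem::image-xi-d}. (The membership $\xi^d(1\o1\o1)\in S\o M$ is nevertheless correct, e.g.\ because $\xi$ acts on the quotient $S\o U$ of $S\o Q$ as $S\o\bar z$ with $\bar z^d=0$, or by reading all three summands in \Cref{lem::image-xi-d} into $M_\pi=\ker\pi\o S+Q\o\rad(S)$; but your stated reason does not give it.) Second, the dimension bookkeeping is off: $\dim(S\o\ker\pi)=(p-d)p^{r-1}$, not $d\,p^{r-1}$, and in any case matching the dimension of "the image of $\xi^d$ on $S\o M$" would not identify the cyclic submodule generated by the single vector $\xi^d(1\o1\o1)$ with $S\o\ker\pi$. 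Third, and most seriously, your splitting step rests on the claims that $S\o U$ and $S\o K_\pi$ are projective. This is false: by \Cref{lem::S-Ud} one has $S\o U_d\simeq T_{dp^{r-1}-1}$, non-projective for every $d<p$ (for $d=1$ it is $S$ itself). The remark after \ref{Qxi} only says $\blambda\notin\supp(Q)$; proper quotients and subquotients of $Q$ (such as $U_d$, or the constituents of $K_\pi$) do have $\blambda$ in their support in general, so the support argument collapses. Consequently neither the stable isomorphism $S\o M\simeq S\o\ker\pi$ --- note that the first assertion of the lemma concerns $S\o\Omega_\cC U=S\o\ker\pi$, not $S\o M$, so your first paragraph does not yet prove it --- nor the splitness of the cyclic submodule is established, and your fallback ($\Ext^1$ vanishing against a projective quotient) presupposes exactly the projectivity of the quotient that is missing.

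The idea you are missing is simpler than the machinery you invoke: compare the two short exact sequences $0\to S\o M\to S\o Q\o S\to S\o U\to0$ and $0\to S\o\ker\pi\to S\o Q\xrightarrow{S\o\pi} S\o U\to0$ via the map $S\o Q\o S\to S\o Q$ determined by $1\o1\o1\mapsto1\o1$ (well defined since $S\o Q\o S$ is free on $1\o1\o1$), with the identity on $S\o U$. Since $S\o Q\simeq\kk E$ is projective, this surjection of projectives splits, and the splitting may be chosen to send $1\o1\mapsto1\o1\o1$; it restricts to a splitting of the induced map $S\o M\to S\o\ker\pi$, whose kernel coincides with the kernel of the middle arrow and is therefore projective (a direct summand of the projective $S\o Q\o S$). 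Because the section is $\kk E$-linear and $S\o\ker\pi$ is generated by $\xi^d(1\o1)$ inside $S\o Q$ (here the identity $\xi|_{S\o Q}=S\o z$ really is available), its image is generated by $\xi^d(1\o1\o1)$. This yields, in one stroke, the stable isomorphism $S\o\Omega_\cC U\simeq\Omega(S\o U)$ and the "moreover" statement, with no need for projectivity of $S\o U$, for the dimension count, or for the decomposition in \Cref{lem::image-xi-d}.
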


\begin{proof}
We have a commutative diagram
$$
\begin{tikzcd}[column sep=2cm]
S\o\Omega U
    \ar[r]
    \ar[d]
&
S\o Q\o S
    \ar[r,"S\o(1\o1 \mapsto 1)"]
    \ar[d,"S\o(1\o1 \mapsto 1)"]
&
S\o U
    \ar[d,"="]
\\
S\o\ker\pi
    \ar[r]
&
S\o Q
    \ar[r,"S\o\pi"]
&
S\o U
\end{tikzcd}
$$
where the rows are exact, and the left vertical arrow is determined by the remaining diagram. The middle vertical arrow splits, as $S\o Q$ is projective, and we may assume the splitting sends $1\o1\mapsto 1\o1\o1$. This induces a splitting of the left vertical arrow. As $S\otimes \ker\pi$ is generated by $\xi^d(1\o1)$ inside $S\o Q$, the image of $S\o\ker\pi$ under the splitting of the left vertical arrow is generated by $\xi^d(1\o1\o1)$.
\end{proof}

Recall that we have picked representatives $(U_d)_d$ for the isomorphism classes of indecomposable objects in $\cC$, labeled by their dimension $d$. Note that if $\pi:Q\to U_d$ is an epimorphism, then $\ker\pi\simeq U_{p-d}$. For all $1\le d,e\le p$, we consider the map 
$$
\omega_{d,e}:=S\o- : \stHom(\Omega U_d,U_e)\to\stHom(\Omega(S\o U_d),S\o U_e)\simeq\stHom(S\o U_{p-d},S\o U_e).
$$

Recall that, for all $1\le d,e\le p$,
$$
I_{d,e} = \{m\in\mZ\mid \max(0,e-d)\le m\le \min(p-d,e)-1\} .
$$

\begin{proposition} \label{prop::im-omega} $\im(\omega_{d,e})$ is spanned by the set $F_1\cup F_2$, where 
$$
F_1 := \{S\o (1\mapsto z^m)\}_{m\in I_{p-d,e}} 
\quad\text{and}\quad
F_2 := \begin{cases}
\{ \psi_i\o z^{d-1} \}_{0\le i\le r-2} & d=e \\
\emptyset & d\neq e
\end{cases}.
$$
Any non-zero element in $\im(\omega_{d,e})$ equals one of the elements in $F_1\cup F_2$ up to an automorphism of $S\o U_{p-d}$ and of $S\o U_e$.
\end{proposition}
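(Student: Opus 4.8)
### Proof proposal for Proposition \ref{prop::im-omega}

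The plan is to apply \Cref{lem::parametrization-stable-hom} with $R=S$ and $k=r-1$, which already tells us that $\im(\omega_{d,e})$ is spanned by the maps $(S\o f_{d,e,0,m})_{m\in I_{p-d,e}}$ together with $(S\o f_{d,e,i,m})_{1\le i\le r-1,\ m\in I_{d,e}}$, where the $f_{d,e,i,m}$ factor through the direct sum decomposition $M_\pi/K_\pi\simeq U_{p-d}\oplus U_d^{r-1}$ provided by \Cref{lem::extensions-from-subcategory}. So the real content is to identify, after tensoring with $S$, what these maps become as elements of $\stHom(S\o U_{p-d},S\o U_e)$, and in particular that the $r-1$ "diagonal" contributions coming from the $U_d^{r-1}$-summand collapse to the $r-1$ maps $\psi_i\o z^{d-1}$ ($0\le i\le r-2$) when $d=e$, and vanish otherwise.

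First I would use \Cref{lem::s-o-om-c-u}: tensoring the whole picture with $S$, the relevant generator of $S\o\Omega_\cC U_d\simeq S\o U_{p-d}$ sitting inside $S\o Q\o S$ is $\xi^d(1\o1\o1)$. Feeding $\xi^d(1\o1\o1)$ through the structural projections of $M_\pi/K_\pi$ and then into $S\o U_e$, the contribution of the $U_{p-d}$-summand (projection $\rho_0$) reproduces exactly the maps $S\o(1\mapsto z^m)$ for $m\in I_{p-d,e}$, which is the set $F_1$; here one is just transporting the description of $\stHom_\cC(U_{p-d},U_e)$ from \Cref{lem::Rep-Cp-stable-homs} along $S\o-$. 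For the $U_d^{r-1}$-summands (projections $\rho_i$, $1\le i\le r-1$), I would plug in \Cref{lem::image-xi-d}, which expresses $\xi^d|_{S\o Q\o S}$ modulo $S\o\rad^d(\End Q)\o S + S\o Q\o\rad^2(\End S)$ as $\sum_{0\le i\le r-2}(\psi_i\o\rad^{d-1}(\End Q)\o z_{i+1})$. The term $S\o Q\o\rad^2(\End S)$ dies stably on the $U_d$-summands (it lands in the second radical of the $S$-factor, hence in a proper submodule not seen by maps to the simple-top module $U_d$ of $\cC$... more precisely it factors through something that becomes zero after composing with a projection to $U_d\in\cC$ tensored with $S$), and the $\rad^{d-1}(\End Q)$-factor is precisely $z^{d-1}$ up to automorphisms of $Q$ since $Q\simeq U_p$ is uniserial with $\rad^{d-1}(\End Q)=z^{d-1}\kk[z]/z^p$. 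This yields the $r-1$ maps $\psi_i\o z^{d-1}$, $0\le i\le r-2$, landing in $S\o U_d$; composing with any $S\o(1\mapsto z^n)$ with $n\ge1$ kills them (as $\psi_i\psi_j=0$ and $z\psi_i=0$ by the shape of $\psi'_i$), so nontrivially they only map to $S\o U_d$ itself, giving $F_2$ when $d=e$ and nothing when $d\neq e$.

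The remaining assertion — that any non-zero element of $\im(\omega_{d,e})$ equals one of the listed elements up to automorphisms of source and target — I would get by arguing that the listed spanning set is in fact "triangular": the maps in $F_1$ have pairwise distinct images $\rad^m(S\o U_e)$ (for distinct $m$) by the radical filtration, so no non-trivial combination of two of them can itself be (up to automorphisms) a single $z^m$; and $F_2$, when present, lies in a different piece of the decomposition $\Ext^1_E(U_d,U_e)\simeq(\Ext^1_\cC\otimes\Hom_{\cC'})\oplus(\Hom_\cC\otimes\Ext^1_{\cC'})$ recalled before \Cref{lem::extensions-from-subcategory}, so $F_1$ and $F_2$ contributions don't mix. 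One then checks that $\stEnd(S\o U_e)$ acts transitively (in the appropriate sense) on the non-zero scalar multiples within each graded piece, which is where the cyclicity of $S$ and the lemma relating $\rad^i(\End M)$ to $\im$ get used, and that $\stAut(S\o U_{p-d})$ similarly lets one normalise the source.

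The main obstacle I expect is the second, "rigidity" half of the statement rather than the computation of the span: one has to rule out that an incautiously chosen linear combination of the spanning maps is secretly conjugate (by automorphisms on both sides) to a single basis element, which requires knowing $\stEnd(S\o U_e)$ and $\stEnd(S\o U_{p-d})$ well enough — most cleanly via the graded/radical-series structure inherited from $S=St\o\cdots\o St^{(r-2)}$ and the uniseriality of the $U_\bullet$. Organising this cleanly, and handling the edge cases where $I_{p-d,e}$ or $I_{d,e}$ is empty or where $d=e$ forces overlap between the two families, is the fiddly part; the algebra itself is routine given \Cref{lem::image-xi-d} and \Cref{lem::s-o-om-c-u}.
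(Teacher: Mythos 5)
Your route is the paper's own: \Cref{lem::parametrization-stable-hom} with $R=S$ and $k=r-1$, then \Cref{lem::s-o-om-c-u} to restrict everything to the summand generated by $\xi^d(1\o1\o1)$, then \Cref{lem::image-xi-d} to evaluate the diagonal maps. The gap is in how you dispose of the diagonal contributions when $d\neq e$. Those maps are $S\o f_{d,e,\ell,m}$ with $(1\mapsto z^m):U_d\to U_e$ for $m\in I_{d,e}$, and evaluating on the generator gives (modulo the terms killed via $K_\pi$) a scalar multiple of $\psi_{\ell-1}\o z^{m+d-1}$. Your justification that ``composing with any $S\o(1\mapsto z^n)$ with $n\ge1$ kills them, as $\psi_i\psi_j=0$ and $z\psi_i=0$'' is not correct: $z$ acts on the uniserial factor $U_e$ and $\psi_i$ on the factor $S$, so there is no relation $z\psi_i=0$, and $\psi_i\psi_j=0$ is irrelevant here. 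The correct reason these terms vanish for $m>e-d$ (hence for \emph{all} $m\in I_{d,e}$ when $d>e$, including $m=0$, which your ``$n\ge1$'' formulation misses) is simply that $z^{m+d-1}=0$ on $U_e$ once $m+d-1\ge e$. More seriously, when $d<e$ the term with $m=e-d$ does \emph{not} vanish pointwise: it is $\psi_{\ell-1}\o z^{e-1}$, a non-zero module map, and your argument gives no reason for it to disappear. The paper's extra step is needed here: $1\mapsto z^{e-1}:U_{p-d}\to U_e$ factors through the indecomposable projective $U_p$ of $\cC$ (because $e-1\ge d$, cf.\ \Cref{lem::Rep-Cp-stable-homs}), and $S\o U_p\simeq\kk E$ is projective in $\Rep E$, so this map is \emph{stably} zero. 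Without that argument your proof only establishes the claim $F_2=\emptyset$ for $d>e$, not for $d<e$.

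On the second, ``up to automorphisms'' assertion: your sketch is in the right spirit and the paper itself leaves this step essentially implicit, but the clean way to finish is not via a graded/triangularity argument. Rather, for a general element $S\o(1\mapsto h(z))+\sum_i c_i\,\psi_i\o z^{d-1}$ one writes $\psi_i\o z^{d-1}=(\psi_i\o z^{d-1-m_0})\circ\bigl(S\o(1\mapsto z^{m_0})\bigr)$ with $m_0$ the lowest power occurring in $h$, and uses that $S\o U_e$ is cyclic, so by the lemma relating $\rad^i(\End)$ to images, ``unit plus radical'' endomorphisms of $S\o U_e$ (and the analogous statements in $\End(S)$, which is commutative local by \Cref{LemmaLambda}) are automorphisms; when $h=0$ one instead uses $\psi_i\in\End(S)\psi_j$ for $i\le j$. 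Spelling this out is short, but it does need the explicit relations rather than the vague appeal to radical filtrations you give.
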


\begin{proof} Set $U=U_d$ and $W=U_e$. Then we are in the set-up of \Cref{lem::parametrization-stable-hom} with $\im(\omega_{d,e})=\im(\psi)$. Using \Cref{lem::parametrization-stable-hom} and \Cref{lem::s-o-om-c-u}, all we have to show is that $(S\o f_{d,e,\ell,m})|_{\langle \xi^d(1\o1\o1) \rangle}$ is stably zero or one of the maps in $F_2$ for all $1\le \ell\le r-1$ and $m\in I_{d,e}$, depending on $d$ and $e$. Using \Cref{lem::image-xi-d}, we know that
$$
\xi^d(1\o1\o1) \in \sum_i (1\o\eta^{d-1})(\psi_i\o Q\o z_{i+1})(1\o1\o1) + S\o\rad^d(Q)\o S + S\o Q\o\rad^2(S).
$$
for $\eta\in\kk E$ such that $\eta(1\o1)=z\o1$ in $Q\o S$. Using the concrete formula for $f_{d,e,\ell,m}$ shows then that
$$
(S\o f_{d,e,\ell,m}) (\xi^d(1\o1\o1)) \in (1\o\eta^{d-1}) (\psi_{\ell-1}\o z^m)(1\o1)
= (\psi_{\ell-1}\o z^{m+d-1})(1\o1)
\quad\in S\o U_e
$$
which is $0$ as soon as $m>e-d$. This is true, in particular, if $d>e$. If $d\le e$, then the only non-zero cases correspond to $m=e-d$ and satisfy
$$
(S\o f_{d,e,\ell,e-d})( \xi^d(1\o1\o1) ) \in (\psi_{\ell-1}\o z^{e-1})(1\o1) \kk ,
$$
which yields the desired result if $d=e$. Finally, if $d<e$, then the map $U_{p-d}\to U_e, 1\mapsto z^{e-1},$ factors through the indecomposable projective $U_p$ in $\cC$. Hence, the map $S\o f_{d,e,\ell,e-d}$ viewed as a map from $\Omega(S\o U_p)\simeq S\o U_{p-d}$ to $S\o U_e$ is stably zero in this case, too. This shows the assertions.
\end{proof}

We can now identify the extensions that form the spanning set in \Cref{prop::im-omega}.

\begin{lemma} \label{lem::S-Ud}
For any $1\le d\le p$, we have isomorphisms
$$ S\o U_d \simeq S\o T_{d-1}^{(r-1)}\simeq T_{dp^{r-1}-1} .
$$    
\end{lemma}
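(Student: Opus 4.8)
The plan is to prove the two displayed isomorphisms separately. The second one, $S\o T_{d-1}^{(r-1)}\simeq T_{dp^{r-1}-1}$, lives entirely inside $\Rep SL_2$ and is pure bookkeeping with Donkin's formula: writing the $p$-adic expansion $dp^{r-1}=\sum_k i_k p^k$, for $1\le d\le p$ one has $i_k=0$ for $k<r-1$, with $i_{r-1}=d$ when $d<p$ and $i_{r-1}=0,\ i_r=1$ when $d=p$. Iterating \eqref{DTPT} exactly as in \Cref{prop:Omega-Ti} and in the proof of \Cref{lem:consequences}(5) then yields
\[
T_{dp^{r-1}-1}\;\simeq\; T_{p-1}^{(0)}\o T_{p-1}^{(1)}\o\dots\o T_{p-1}^{(r-2)}\o T_{d-1}^{(r-1)}\;=\;St_{r-1}\o T_{d-1}^{(r-1)}\;=\;S\o T_{d-1}^{(r-1)},
\]
an isomorphism of $SL_2$-modules, which restricts to $E$. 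For $d=p$ this specialises to $S\o St_1^{(r-1)}\simeq T_{p^r-1}\simeq\kk E$.

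For the first isomorphism $S\o U_d\simeq S\o T_{d-1}^{(r-1)}$, I would reduce $S\o U_d$ to the group algebra. Recall that $Q$ from \ref{Qxi} is uniserial of dimension $p$, that $U_d\simeq Q/\rad^d Q$ with $\rad^d Q\simeq U_{p-d}$, and that $\xi\in\kk E$ satisfies $\xi|_{S\o Q}=S\o z_Q$, so $\xi^d|_{S\o Q}=S\o z_Q^d$ has image $S\o\rad^d Q$. Tensoring $0\to\rad^d Q\to Q\to U_d\to 0$ with $S$ and using $S\o Q\simeq\kk E$ identifies $S\o U_d\simeq (S\o Q)/(S\o\rad^d Q)\simeq\kk E/\xi^d\kk E$. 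Now I would run the same computation for the particular admissible choice $Q=St_1^{(r-1)}$ (legitimate by the first paragraph): there the relevant element is the $\xi$ of \ref{defxi}, \Cref{Powxi}(1) gives $\xi^d\kk E=S\o\rad^d St_1^{(r-1)}$, and hence $\kk E/\xi^d\kk E\simeq S\o(St_1^{(r-1)}/\rad^d St_1^{(r-1)})\simeq S\o T_{d-1}^{(r-1)}$, using that $T_{d-1}^{(r-1)}$ is the $d$-dimensional top quotient of the uniserial module $St_1^{(r-1)}$ (Frobenius-twist \eqref{eqsesT}). With the second isomorphism this settles the case $Q=St_1^{(r-1)}$.

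The genuine obstacle is then the independence of the admissible choice: one must show $\kk E/\xi^d\kk E$ (equivalently, $S\o U_d$) is the same restricted tilting module $T_{dp^{r-1}-1}$ for \emph{every} full abelian $\cC\simeq\Rep C_p$ with $S\o Q\simeq\kk E$ --- note $U_d$ itself does vary with $\cC$, since already $U_2=Q/\rad^2 Q$ runs through many non-isomorphic two-dimensional modules. I expect to extract this from the structural description of $\xi$ acting on $S\o S$ and $S\o Q\o S$ in \Cref{lem::xi-S-S} and \Cref{lem::image-xi-d}, which pin $\xi$ down up to automorphisms of the tensor factors $S$, so that the submodule $\xi^d\kk E\subset\kk E=S\o Q$ is carried onto $S\o\rad^d St_1^{(r-1)}$ by an automorphism of $\kk E$, forcing the quotients to coincide. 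Should that route prove awkward, the cheap alternative is to fix $Q:=St_1^{(r-1)}$ (and the corresponding $\cC$) once and for all in \ref{Qxi}; then $U_d\simeq T_{d-1}^{(r-1)}$ outright and the lemma is immediate from the first two paragraphs.
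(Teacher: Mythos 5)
Your first two paragraphs are fine (the second isomorphism via Donkin's formula, and the identification $S\o U_d\simeq (S\o Q)/(S\o\rad^d Q)\simeq\kk E/\xi^d\kk E$ together with the computation for the particular choice $Q=St_1^{(r-1)}$), but the step you yourself flag as "the genuine obstacle" is exactly the content of the lemma and is not proved. Since $S\o U_d\simeq\kk E/\xi^d\kk E$, showing that this quotient is independent of the admissible pair $(Q,\xi)$ is literally equivalent to the statement being proved, and the gesture towards \Cref{lem::xi-S-S} and \Cref{lem::image-xi-d} does not close it: those lemmas describe $\xi$ on $S\o S$ and $S\o Q\o S$ only modulo higher radical terms, and no argument is given producing an automorphism of $\kk E=S\o Q$ carrying $\xi^d\kk E$ onto $S\o\rad^d St_1^{(r-1)}$. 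The "cheap alternative" of fixing $Q:=St_1^{(r-1)}$ once and for all is not a proof of the lemma but a weakening of it: the set-up in \ref{Qxi} fixes an \emph{arbitrary} admissible $Q$, and the downstream applications genuinely need this generality --- in \Cref{lem::identify-extensions} the modules $S\o U_m$ for the $U_m$ in an arbitrary $\cC$ must be identified as $T_{mp^{r-1}-1}$, which is what makes \Cref{cor:S-tensor-ext-Ud-Ue} and hence \Cref{thm:S-tensor-ext-uniserials} apply to every uniserial $p$-dimensional $U$ with $\blambda\notin\supp(U)$, not just to $St_1^{(r-1)}$.

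The paper closes this gap by a short argument that sidesteps choice-independence entirely: define $f:\kk E\simeq S\o U_p\to S\o T_{d-1}^{(r-1)}$ by $1\o1\mapsto 1\o1$. It is surjective because the target is cyclic (a quotient of $S\o St_1^{(r-1)}\simeq\kk E$). Crucially, the $\xi$ attached to the \emph{general} $Q$ satisfies $\xi|_S=0$ and $\xi\in\rad(\kk E)$, so its action on $S\o T_{d-1}^{(r-1)}$ lies in $S\o\rad\End(T_{d-1}^{(r-1)})$; as $T_{d-1}^{(r-1)}$ is uniserial of dimension $d$, this radical is nilpotent of order $d$, so $\xi^d$ annihilates $S\o T_{d-1}^{(r-1)}$. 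Hence $f$ kills $\xi^d(S\o U_p)=S\o\rad^d Q$ and factors through $S\o U_d$, and a dimension count ($dp^{r-1}$ on both sides) gives the isomorphism. If you want to salvage your approach, this is the missing ingredient you should supply in place of your third paragraph.
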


\begin{proof} The second isomorphism is an application of Steinberg's tensor product theorem / Donkin's formula.

For the first isomorphisms, consider the map
$$
f: \kk E\simeq S\o U_p \to S\o T_{d-1}^{(r-1)}, \quad 1\o 1\mapsto 1\o 1.
$$
It is surjective, as the module on the right-hand side is a quotient of $S\o T_{p-1}^{(r-1)}\simeq S\o St^{(r-1)}\simeq\kk E$, so it is cyclic. We claim that $f \xi^d|_{S\o U_p}=0$. Indeed, the claim follows from noting that 
$$
\xi|_{S\o T_{d-1}^{(r-1)}} \in S\o\rad\End(T_{d-1}^{(r-1)}),
$$
as $\xi|_S=0$ and $\xi\in\rad(\kk E)$. From the claim it follows that $f$ factors through $S\o (U_p/z^d U_p)\simeq S\o U_d$. A dimension count then confirms that we have constructed an isomorphism as desired.
\end{proof}

\begin{lemma} \label{lem::identify-extensions}
In the setting of \Cref{prop::im-omega}, the maps $(S\o(1\mapsto z^m))_{m\in I_{p-d,e}}$ in $F_1$ correspond to the modules $S\o(U_m\oplus U_{d+e-m})\simeq T_{mp^{r-1}-1}\oplus T_{(d+e-m)p^{r-1}-1}$, viewed as extensions via the short exact sequences
$$
S\o \Big(U_e\xrightarrow{1\mapsto(-1,z^{d-m})} U_m\oplus U_{d+e-m}
\xrightarrow{(1,0)\mapsto z^{d-m}, (0,1)\mapsto 1} U_d \Big)
$$
(with the conventions , $T_{-1}:=0$, $U_0:=0$, $1_{U_0}:=0$). Assuming $d=e$, the maps $(\psi_i\o z^{d-1})_{0\le i\le r-2}$ in $F_2$ correspond to the modules $S\o U_d\o V^{(i)}\simeq T_{dp^{r-1}+p^i-1}$, viewed as extensions via the short exact sequences
$$
S\o U_d\o\Big( 
\one \xrightarrow{1\mapsto z} V^{(i)}\xrightarrow{1\mapsto 1} \one \Big).
$$
\end{lemma}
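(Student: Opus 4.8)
The plan is to leverage \Cref{prop::im-omega}, whose proof has already identified the spanning maps of each $\im(\omega_{d,e})$, and to reinterpret those maps as classes of extensions. The starting point is that, because $S\o-$ is exact, the map $\omega_{d,e}$ agrees — under the canonical identification $\stHom(\Omega(S\o U_d),S\o U_e)\simeq\Ext^1_E(S\o U_d,S\o U_e)$ — with the map $S\o-\colon\Ext^1_E(U_d,U_e)\to\Ext^1_E(S\o U_d,S\o U_e)$ that sends the class of an extension $\mathcal E$ of $U_d$ by $U_e$ to that of $S\o\mathcal E$. Tracing through the proof of \Cref{prop::im-omega} (in particular \Cref{lem::s-o-om-c-u} and \Cref{lem::image-xi-d}), the element of $F_1$ labelled by $m$ is the image under this map of the class classified by $f_{d,e,0,m}$, and, when $d=e$, the element $\psi_i\o z^{d-1}$ of $F_2$ is the image of the class classified by $f_{d,d,i+1,0}$. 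It therefore suffices to identify these two extensions of $U_d$ by $U_e$ inside $\Rep E$ and then apply $S\o-$, \Cref{lem::S-Ud}, and Donkin's formula \eqref{DTPT}.

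For the $F_1$ maps: By \Cref{lem::extensions-from-subcategory} and the Künneth-type decomposition $\Ext^1_E(U_d,U_e)\simeq\Ext^1_{\cC}(U_d,U_e)\oplus\Hom_{\cC}(U_d,U_e)^{\oplus(r-1)}$ realised there, the map $f_{d,e,0,m}=(1\mapsto z^m)\,\rho_0\,\pi_U$ corresponds to $(1\mapsto z^m)\in\stHom_{\cC}(\Omega_{\cC}U_d,U_e)=\Ext^1_{\cC}(U_d,U_e)$, so the extension it classifies already lives in $\cC\simeq\Rep C_p$. First I would compute the pushout of the minimal presentation $0\to U_{p-d}\xrightarrow{1\mapsto z^d}U_p\xrightarrow{1\mapsto1}U_d\to0$ along $1\mapsto z^m$; this is a routine computation in $\Rep C_p$ and, after the appropriate choice of splitting, produces precisely the short exact sequence $0\to U_e\xrightarrow{1\mapsto(-1,z^{d-m})}U_m\oplus U_{d+e-m}\to U_d\to0$ with the maps displayed in the statement (the conventions $U_0:=0$, $1_{U_0}:=0$ absorbing the degenerate cases $m=0$ and $d+e-m=p$). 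Applying the exact functor $S\o-$ together with $S\o U_j\simeq T_{jp^{r-1}-1}$ from \Cref{lem::S-Ud} (with $T_{-1}:=0$) then yields the first assertion.

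For the $F_2$ maps (the case $d=e$): Again by \Cref{lem::extensions-from-subcategory}, $f_{d,d,i+1,0}=\rho_{i+1}\pi_U$ corresponds to $\id_{U_d}$ sitting in the $(i+1)$-th copy of $\Hom_{\cC}(U_d,U_d)$, that is, to the class $\id_{U_d}\o\epsilon_{i+1}\in\Hom_{\cC}(U_d,U_d)\o\Ext^1_{\cC'}(\one,\one)$, where $\cC'=\langle S\rangle\simeq\Rep C_p^{r-1}$ and $\epsilon_{i+1}$ is the self-extension of $\one$ in $\cC'$ in the direction of the endomorphism $z_{i+1}$ of $S$ (using the presentation $\End_E(S)\simeq\bk[z_1,\dots,z_{r-1}]/(z_j^p)$ of \Cref{LemmaLambda}). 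This class is represented by $0\to U_d\to U_d\o Y_{i+1}\to U_d\to0$, with $Y_{i+1}$ the two-dimensional module carrying $\epsilon_{i+1}$. Since $S=St_1\o St_1^{(1)}\o\cdots\o St_1^{(r-2)}$ and $z_{i+1}$ acts as $z$ on the factor $St_1^{(i)}$ and trivially on the rest, $Y_{i+1}\simeq St_1^{(i)}/\rad^2(St_1^{(i)})$; and \Cref{LemDescSt1} shows that each $X_a$ acts on $St_1$ modulo $\rad^2$ as $\lambda_aX_1$, so $St_1/\rad^2(St_1)\simeq V_{\ulambda}=V$ and hence $Y_{i+1}\simeq V^{(i)}$. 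Tensoring the non-split sequence $0\to\one\to V^{(i)}\to\one\to0$ by $S\o U_d$ then gives the sequence in the statement, and $S\o U_d\o V^{(i)}\simeq S\o T_{d-1}^{(r-1)}\o T_1^{(i)}\simeq T_{dp^{r-1}+p^i-1}$ by iterating Donkin's formula (using $T_{p-1}\o T_1\simeq T_p$), exactly as in the proof of \Cref{lem::S-Ud}; this last identification also follows from \Cref{cor:self-ext1} and \Cref{rem-ex}, which already exhibit $S\o V^{(i)}$ as a self-extension of $S$.

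The step I expect to cause the most trouble is the bookkeeping in the $F_2$ case: one has to line up, consistently, the index $i$ in the automorphism $\psi_i$ (defined in \Cref{lem::xi-S-S} through $z_{i+1}$), the structural projection $\rho_{i+1}$ of $M_\pi/K_\pi$ from \Cref{lem::parametrization-stable-hom}, the self-extension $\epsilon_{i+1}$ of $\one$ in $\cC'$, and the Frobenius twist $V^{(i)}$, all of which depend on the identification $\cC'\simeq\Rep C_p^{r-1}$ via the endomorphisms $z_1,\dots,z_{r-1}$ of $S$ and on the conventions fixed in \Cref{sec::subcategories}. Once that alignment is in place, the remaining work is only the exactness of $S\o-$, the pushout computation in $\Rep C_p$, and an application of Donkin's formula.
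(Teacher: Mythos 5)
Your treatment of the $F_1$ maps is essentially the paper's own argument: you form the pushout of $0\to U_{p-d}\to U_p\to U_d\to 0$ along $1\mapsto z^m$ in $\cC$ and apply the exact functor $S\o-$, exactly as in the paper's commutative pushout diagram, then invoke \Cref{lem::S-Ud}. For the $F_2$ maps, however, you take a genuinely different route. The paper first argues by counting (the listed extensions have pairwise non-isomorphic middle terms, their classes lie in $\im\omega_{d,e}$, and there are as many of them as elements of $F_1\cup F_2$, so maps and extensions are in bijection) and then pins down the indices by a forward computation: it specialises $Q=St_1^{(r-1)}$, lifts the projective cover $\kk E\simeq S\o St_1^{(r-1)}\to S\o V^{(i)}$ through $S\o V^{(r-1)}$ using \Cref{Powxi}, and reads off from \Cref{lem::xi-S-S} that the classifying map of the known extension $S\o V^{(i)}$ is $\psi_i$. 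You instead go backwards: you trace $\psi_i\o z^{d-1}$ through the parametrisation of \Cref{lem::extensions-from-subcategory} to the class of $f_{d,d,i+1,0}$, identify that class (via the K\"unneth bookkeeping) as the self-extension $U_d\o Y_{i+1}$ with $Y_{i+1}$ the two-dimensional quotient of $S$ in the $z_{i+1}$-direction, show $Y_{i+1}\simeq St_1^{(i)}/\rad^2\simeq V^{(i)}$ using \Cref{LemDescSt1}, and then tensor with $S$. This avoids the counting argument altogether, which is a real structural difference, and your index alignment ($\rho_{i+1}\leftrightarrow z_{i+1}\leftrightarrow V^{(i)}\leftrightarrow\psi_i$) does come out consistent with the conventions of \Cref{sec::subcategories}.

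Two points in the $F_2$ half are asserted rather than proved and need to be made explicit. First, the claim that the class attached to $f_{d,d,i+1,0}$ is represented by $U_d\o V^{(i)}$ requires a short verification: one checks that the square formed by $M_\pi\hookrightarrow Q\o S$, the map $\rho_{i+1}\pi_U$ precomposed appropriately, the quotient $Q\o S\to U_d\o Y_{i+1}$ (namely $\pi\o\mathrm{pr}$), and the inclusion $U_d\to U_d\o Y_{i+1}$, $u\mapsto u\o(\text{socle generator})$, commutes, so that the pushout along the projective presentation is indeed $U_d\o Y_{i+1}$; this is a few lines but is not in \Cref{lem::extensions-from-subcategory} as stated. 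Second, and more seriously as written, the proof of \Cref{prop::im-omega} only shows that $(S\o f_{d,d,i+1,0})$ restricted to the summand $\langle\xi^d(1\o1\o1)\rangle$ lies in the \emph{line} $\kk\,(\psi_i\o z^{d-1})$ --- a priori the scalar could vanish --- so your statement that $\psi_i\o z^{d-1}$ \emph{is} the image of that class is not yet justified by the results you cite; this is exactly the issue the paper's counting argument and explicit $\xi$-diagram are designed to circumvent. The patch is short once the first point is in place: since the class equals $[U_d\o V^{(i)}]$ and $S\o U_d\o V^{(i)}\simeq T_{dp^{r-1}+p^i-1}$ is indecomposable (so the extension is non-split), the image is a non-zero multiple of $\psi_i\o z^{d-1}$, and the non-zero scalar is absorbed into an automorphism of $S\o U_d$, which is within the tolerance of the lemma. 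With these two additions your argument is complete and constitutes a valid alternative to the paper's proof of the $F_2$ correspondence.
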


\begin{proof} Using that by \Cref{lem::S-Ud}, $S\o U_d\simeq T_{dp^{r-1}-1}$ and, consequently, $S\o U_d\o V^{(i)}\simeq T_{dp^{r-1}+p^i-1}$, the given extensions are non-isomorphic as modules. As the orbits of the maps in $F_1\cup F_2$ under the action of the automorphism groups contain $\im\omega_{d,e}$, and as there are as many extensions as elements in $F_1\cup F_2$, the extensions and maps are in bijection.

To identify which extension corresponds to each map, we first observe that for all $m\in I_{p-d,e}$, the diagram
$$
\begin{tikzcd}[column sep=3cm]
U_{p-d}
    \ar[r,"{1\mapsto z^d}"] 
    \ar[d,"{1\mapsto z^m}"]
& U_p
    \ar[r,"{1\mapsto 1}"]
    \ar[d,"{1\mapsto (-z^{m-d},1)}"]
& U_d
    \ar[d,"="]
\\
U_e
    \ar[r,"{1\mapsto(-1,z^{d-m})}"]
& U_{m} \oplus U_{d+e-m}
    \ar[r,"{(1,0)\mapsto z^{d-m}, (0,1)\mapsto 1}"]
& U_d
\end{tikzcd}
$$
commutes, has exact rows, and the left square is a pushout. Tensoring the diagram with $S$ shows the desired correspondence for the maps in $F_1$. This also implies that the maps in $F_2$ are in bijection with the remaining extensions.

To relate the remaining extensions with the maps in $F_2$, we realise $V^{(i)}$ as the quotient of $St_1^{(i)}$ via the map $1\mapsto 1$, see \Cref{Lem:UseSL2}. 
We specialise $Q=St_1^{(r-1)}$, which is possible as $S\o St_1^{(r-1)}\simeq\kk E$. Note that the relation between a map in $F_2$ and the self-extension of $S$ it determines is independent of the choice of $Q$. 
It follows from \Cref{Powxi} that the morphism 
$$\bk E\simeq S\otimes St_1^{(r-1)}\;\to\; S\otimes V^{(i)},\quad 1\o 1\mapsto 1\o 1 ,$$
factors through the quotient $S\otimes V^{(r-1)}$. Indeed, using \Cref{Powxi}(1) it suffices to verify that 
$$
1\o z^2 =  \xi^2(1\o1) \in S\otimes St_1^{(r-1)}$$
is sent to zero, which is implied by \Cref{Powxi}(2).
It follows that there exists a commutative diagram (whose rows are short exact sequences) 
$$
\begin{tikzcd}[column sep=3cm]
S 
    \ar[r,"1\mapsto1\o z"] 
    \ar[d,"\psi",dashed] 
& S\o V^{(r-1)} 
    \ar[r,"s\o z^k\mapsto s \delta_{k,0}"] 
    \ar[d,"1\o1\mapsto1\o1"] 
& S \ar[d,"="] 
\\
S 
    \ar[r,"1\mapsto1\o z"] 
& S\o V^{(i)} 
    \ar[r,"s\o z^k\mapsto s \delta_{k,0}"]
& S
\end{tikzcd}\qquad,
$$
where $\psi$ is uniquely defined by the identity $\psi(1)\o z=\xi(1\o 1)$ in $S\o V^{(i)}$. Now \Cref{lem::xi-S-S} shows that $\psi=\psi_i$, which identifies the remaining extensions as asserted.
\end{proof}

Combining \Cref{prop::im-omega} and \Cref{lem::identify-extensions}, we obtain the following implications for \Cref{conj-a}:

\begin{corollary} \label{cor:S-tensor-ext-Ud-Ue} For all $1\le d,e\le p$ and any extension $X$ in $\Rep E$ of $U_d$ and $U_e$, $X$ lies in $\widetilde\cT$, i.e., $S\o X$ lies in $\cT$.
\end{corollary}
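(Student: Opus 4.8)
The plan is to read the statement off from the description of $\im(\omega_{d,e})$ in \Cref{prop::im-omega} together with the identification of the corresponding extensions in \Cref{lem::identify-extensions}. First I would fix a short exact sequence $0\to U_e\to X\to U_d\to 0$ realising $X$ (since $d,e$ range over all pairs, this entails no loss of generality), and record its class $\epsilon\in\Ext^1_E(U_d,U_e)\simeq\stHom(\Omega U_d,U_e)$. Tensoring with $S$ produces $0\to S\o U_e\to S\o X\to S\o U_d\to 0$, whose class is $\omega_{d,e}(\epsilon)\in\im(\omega_{d,e})$. The key observation is that if two short exact sequences in $\Rep E$ have connecting morphisms that become isomorphic as arrows of $\Stab E$ — in particular, connecting morphisms differing by automorphisms of source and target — then their middle terms are isomorphic in $\Stab E$, since the middle term is a cone of the connecting morphism; hence the stable isomorphism type of $S\o X$ is pinned down by $\omega_{d,e}(\epsilon)$ up to isomorphism of arrows.

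I would then split into two cases. If $\omega_{d,e}(\epsilon)=0$, the sequence splits in $\Rep E$ and \Cref{lem::S-Ud} gives $S\o X\simeq T_{ep^{r-1}-1}\oplus T_{dp^{r-1}-1}$, which lies in $\cT$ by \Cref{lem:restricted-tilting}. If $\omega_{d,e}(\epsilon)\neq0$, then by \Cref{prop::im-omega} it agrees, up to automorphisms of $S\o U_{p-d}$ and of $S\o U_e$, with one of the maps in $F_1\cup F_2$, so by the previous paragraph and \Cref{lem::identify-extensions} the module $S\o X$ is stably isomorphic either to $T_{mp^{r-1}-1}\oplus T_{(d+e-m)p^{r-1}-1}$ for some $m\in I_{p-d,e}$, or, when $d=e$, to $T_{dp^{r-1}+p^i-1}$ for some $0\le i\le r-2$. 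Here one checks the index ranges: every $m\in I_{p-d,e}$ satisfies $0\le m<\min(d,e)$ and $d+e-m\le p$, so the exponents $mp^{r-1}-1$ and $(d+e-m)p^{r-1}-1$ never exceed $p^r-1$; in the $F_2$ case the exponent $dp^{r-1}+p^i-1$ can exceed $p^r-1$ only when $d=p$, but there $S\o U_p\simeq\kk E$ is already projective, and in general $T_j$ with $j\ge p^r-1$ is projective over $E$ and hence still in $\cT$. In all cases the module to which $S\o X$ is stably isomorphic is a restriction of an $SL_2$-tilting module, so it belongs to $\cT$.

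Finally I would upgrade this to the claim that $S\o X$ itself, and not merely its stable class, lies in $\cT$: if $S\o X$ is stably isomorphic to $M\in\cT$, then $S\o X\oplus P\simeq M\oplus P'$ for projective $E$-modules $P,P'$; since $\cT$ contains $\kk E=T_{p^r-1}$ and is closed under direct sums it contains $M\oplus P'$, and since it is closed under direct summands it contains $S\o X$. Then $X\in\widetilde\cT$ by \Cref{lem:justS}. The substantive content is entirely carried by \Cref{prop::im-omega} and \Cref{lem::identify-extensions}; the only points requiring care in writing this out are the bookkeeping of projective summands and the verification that each tilting module $T_j$ appearing has $j\le p^r-1$ or is otherwise harmlessly projective, and I do not expect any genuine obstacle there.
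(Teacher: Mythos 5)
Your argument is correct and is essentially the paper's own: the corollary is stated there as an immediate combination of \Cref{prop::im-omega} and \Cref{lem::identify-extensions}, which is exactly the route you take. The extra steps you spell out (the split case, the automorphism-orbit/cone argument, the index-range and projective-summand bookkeeping, and passing to $\widetilde\cT$ via \Cref{lem:justS}) are the routine details the paper leaves implicit, and they all check out.
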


This proves \Cref{thm:S-tensor-ext-uniserials}, as for any uniserial $p$-dimensional representation $U$ with $\blambda\not\in\supp(U)$, $U\o S\cong\kk E$.

Specialising $d=e=1$ in \Cref{prop::im-omega}, we obtain the following result which will be relevant in \Cref{sec:Carlson} and \Cref{sec:Loewy}:

\begin{corollary} \label{cor:im-omega-1-1}
(a) $\im(\omega_{1,1})\subset\stHom(\Omega S,S)\simeq\stHom(S\o U_{p-1},S)$ is spanned by $(\psi_i\o f_0)_{0\le i\le r-1}$,  where $f_0:U_{p-1}\to\one$ is any epimorphism.

(b) Every non-zero element in $\im(\omega_{1,1})$ is (uniquely) of the form $u\psi_i \o f_0$ for some $0\le i\le r-1$ and $u\in \Aut(S)$.

(c) The stable morphism represented by $\psi_i\o f_0$ corresponds to the self-extension $S\o V^{(i)}$ of $S$ under $\Ext^1(S,S)\simeq \uHom(\Omega S,S)$. 

\end{corollary}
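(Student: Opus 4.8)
The strategy is to obtain this corollary as the special case $d=e=1$ of \Cref{prop::im-omega} and \Cref{lem::identify-extensions}, after folding the single $F_1$-term into the $F_2$-family. In that case $U_d=U_e=U_1=\one$, so $S\o U_e\simeq S$, and $U_{p-d}=U_{p-1}\simeq\Omega_\cC\one$, so $S\o U_{p-1}$ is stably isomorphic to $\Omega(S\o\one)=\Omega S$ by \Cref{lem::s-o-om-c-u} (it is in fact honestly isomorphic, as a dimension count using \Cref{LemCyc} shows). The first step is to compute the two index sets of \Cref{prop::im-omega}: for $p\ge 2$ one has $I_{1,1}=\{0\}=I_{p-1,1}$. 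Hence $F_1$ is the single map $S\o(1\mapsto 1)$, i.e. $S\o f_0$ for the epimorphism $f_0\colon U_{p-1}\tto\one$, and $F_2=\{\psi_i\o f_0\}_{0\le i\le r-2}$. Since $\psi_{r-1}=\psi'_{r-1}=\id_S$, the $F_1$-term $S\o f_0$ equals $\psi_{r-1}\o f_0$, so $F_1\cup F_2=\{\psi_i\o f_0\mid 0\le i\le r-1\}$; this is the spanning statement (a).

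For (c) I would quote \Cref{lem::identify-extensions}. Its $F_2$-part identifies $\psi_i\o f_0$ for $0\le i\le r-2$ with the self-extension $S\o U_d\o V^{(i)}=S\o V^{(i)}$ of $S$ arising from $\one\hookrightarrow V^{(i)}\tto\one$, which by \Cref{lem::S-Ud} is $T_{p^{r-1}+p^i-1}$. The one remaining case, the $F_1$-term $\psi_{r-1}\o f_0=S\o f_0$, is by the $F_1$-part of \Cref{lem::identify-extensions} (with $m=0$, $U_0:=0$) the self-extension $S\o U_2$ of $S$ arising from $\one\hookrightarrow U_2\tto\one$, and $S\o U_2\simeq T_{2p^{r-1}-1}=T_{p^{r-1}+p^{r-1}-1}$. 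To see that this coincides with $S\o V^{(r-1)}$, I would use the freedom (exploited already in the proof of \Cref{lem::identify-extensions}) to take $Q=St_1^{(r-1)}$: then $U_2$ is the unique two-dimensional quotient of the uniserial module $St_1^{(r-1)}$, and so is $V^{(r-1)}$ (via the $U$-epimorphism $\Delta_{p-1}^{(r-1)}\tto\Delta_1^{(r-1)}$ of \Cref{Lem:UseSL2}), so $U_2\simeq V^{(r-1)}$ compatibly with their socle-top extension structure. Hence the two self-extensions of $S$ agree, which settles (c) for all $0\le i\le r-1$.

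Finally, for (b) I would argue directly from (a). By bilinearity $\psi_i\o f_0=\psi_i\circ(\id_S\o f_0)$, so every element of $\im(\omega_{1,1})$ is of the form $g\o f_0$ for some $g=\sum_{i=0}^{r-1}c_i\psi_i\in\End(S)$. Now $\End(S)=\uEnd(S)\simeq\kk[z_1,\dots,z_{r-1}]/(z_i^p)$ is commutative and local (\Cref{LemmaLambda}), and $\psi_i=u_i\psi'_i$ with $u_i\in\Aut(S)$ and $\psi'_i=z_{i+1}^{p-1}\cdots z_{r-1}^{p-1}$; since the monomial $\psi'_i$ divides $\psi'_j$ for every $j\le i$, one gets $\psi_j\in\psi_i\End(S)$ for all $j\le i$. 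Hence, if $g\ne 0$ and $i:=\max\{j\mid c_j\ne 0\}$, then $g=u\psi_i$ for the unit $u:=c_i+(\text{element of }\rad\End(S))\in\Aut(S)$, so the element equals $u\psi_i\o f_0=u\circ(\psi_i\o f_0)$. As $\psi_i\o f_0$ represents the non-split self-extension with indecomposable middle term $T_{p^{r-1}+p^i-1}$ by (c), it is non-zero, and the index $i$ is determined by the element (distinct $i$ give non-isomorphic middle terms), so in particular the maps $(\psi_i\o f_0)_{0\le i\le r-1}$ are linearly independent and form a basis of $\im(\omega_{1,1})$. The step I expect to require the most care is exactly this divisibility argument inside $\End(S)$ combined with the non-vanishing input from (c) — i.e. converting the a priori two-sided automorphism ambiguity into the one-sided normal form claimed in (b); the reference to \Cref{prop::im-omega} is needed only for the spanning statement.
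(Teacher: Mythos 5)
Your proposal is correct and follows the paper's intended route: the corollary is obtained there exactly by specialising \Cref{prop::im-omega} (together with \Cref{lem::identify-extensions}) to $d=e=1$, which is what you do, and your filled-in details — computing $I_{1,1}=I_{p-1,1}=\{0\}$, folding the single $F_1$-term into the family via $\psi_{r-1}=\id_S$, identifying $U_2\simeq V^{(r-1)}$ (for $Q=St_1^{(r-1)}$, with the middle-term identification via \Cref{lem::S-Ud} covering general $Q$), and the divisibility argument in the local commutative ring $\End(S)$ that upgrades the two-sided automorphism ambiguity of \Cref{prop::im-omega} to the one-sided normal form in (b) — are all sound. The only point you do not address is uniqueness of the unit $u$ itself (as opposed to the index $i$), which would follow from injectivity of $g\mapsto g\o f_0$ (compare \Cref{lem::Hom-Omega-S-S}), but this is not needed for any later application of the corollary.
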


\section{Carlson modules}
\label{sec:Carlson}

\subsection{Statement of the results}

Fix $r\in\mZ_{>1}$. We set $E=C_p^r$, and take
$$\ulambda = (\lambda_1,\lambda_2,\ldots,\lambda_r)\,\in\,\mA_f^{r}(\bk),$$ defining an embedding of $E$ into $SL_2(\bk)$, which we use to interpret $T_i$ as $E$-representations. In particular, we consider $V=T_1$ and
\begin{equation}\label{Sp2}
    S:=St_{r-1}=T_{p^{r-1}-1}=St_1\otimes St_1^{(1)}\otimes \cdots\otimes  St_1^{(r-2)}.
\end{equation}

\subsubsection{}
Recall, see for instance \cite{Benson-vector-bundles}*{\S 1.10}, that for a finite group $G$, $d\in\mN$ and $\zeta\in H^d(G,\bk)$, we have the corresponding Carlson module $L_\zeta$. This is defined via a short exact sequence
$$0\to L_\zeta\to \Omega^d\unit\xrightarrow{\hat{\zeta}}\unit\to 0,$$
where we used $\zeta\mapsto \hat{\zeta}$ for the isomorphism $H^d(G,\bk)\simeq \uHom_G(\Omega^d\unit,\unit)$.
We point out that, contrary to \cite{Benson-vector-bundles}, we do not make an exception for $\zeta=0$, meaning that $L_0\simeq \Omega^d \unit$.

The following theorem, which is the main result of this section, `verifies \Cref{conj-a} for Carlson modules'.

\begin{theorem}\label{ThmCarlson}
    For any $d\in\mN$ and $\zeta\in H^d(E,\bk)$, the tensor product
    $S\otimes L_\zeta$ is, up to projective summands, either isomorphic to $0$, to $S$, to
    $$T_{p^r-p^{r-1}+p^i-1}\;=\; \Omega(S\otimes V^{(i)}),\quad\text{ for some }0\le  i\le r-2,$$
    or, if $p>2$, isomorphic to
    $$T_{p^r-2p^{r-1}-1}\;=\; \Omega(S\otimes V^{(r-1)}),\quad\mbox{or}\quad T_{p^{r}-p^{r-1}-1} \;=\;\Omega(S) .$$ 
\end{theorem}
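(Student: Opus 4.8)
The plan is to reduce the statement to a computation about the graded ring homomorphism $\omega=S\o-\colon H^*(E,\bk)\to\Ext^*_E(S,S)$, using \Cref{cor:im-omega-1-1} as the degree-one input. First dispose of the easy cases: if $\zeta=0$ then $L_\zeta=\Omega^d\unit$, so $S\o L_\zeta\simeq\Omega^d S$ in $\Stab E$ (tensoring with $S$ commutes with $\Omega$), which by \Cref{prop:Omega-Ti} and \Cref{ex:Omega-Ti} is $S$ or $\Omega S=T_{p^r-p^{r-1}-1}$ since $S$ has period $1$ for $p=2$ and period exactly $2$ for $p>2$; and if $\blambda\notin\supp(L_\zeta)$ then $S\o L_\zeta$ has empty support by \eqref{sup:tensor} and \Cref{Cor:Supp}, hence is $0$ up to projectives. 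So assume $\zeta\neq0$ and $\blambda\in\supp(L_\zeta)$. Then $\hat\zeta$ is surjective, and tensoring $0\to L_\zeta\to\Omega^d\unit\xrightarrow{\hat\zeta}\unit\to0$ with $S$, using $S\o\Omega^d\unit\simeq\Omega^d S$ in $\Stab E$, exhibits $S\o L_\zeta$ (up to projectives) as the fibre of $\omega(\zeta)\colon\Omega^d S\to S$ in the triangulated category $\Stab E$; its stable type then depends only on $\omega(\zeta)$ up to composing with automorphisms of source and target, and $S\o L_\zeta$ is projective exactly when $\omega(\zeta)$ is a stable isomorphism (equivalently, when $\blambda\notin\supp(L_\zeta)$).

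For $d=1$: since $\dim H^1(E,\bk)=r=\dim\im(\omega_{1,1})$ by \Cref{cor:im-omega-1-1}(a), the map $\omega_{1,1}$ is injective and $\omega(\zeta)\neq0$; by \Cref{cor:im-omega-1-1}(b),(c) it equals, up to automorphisms, the self-extension class $\psi_i\o f_0$ of $S$ for a unique $0\le i\le r-1$, whose cofibre is $S\o V^{(i)}$, so $S\o L_\zeta\simeq\Omega(S\o V^{(i)})$. By Donkin's formula and \Cref{prop:Omega-Ti} this is $T_{p^r-p^{r-1}+p^i-1}$ for $i\le r-2$; for $i=r-1$ it is $\Omega(T_{2p^{r-1}-1})$, which equals $T_{p^r-2p^{r-1}-1}$ when $p>3$, equals $S$ when $p=3$, and is projective when $p=2$ — all within the asserted list.

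For general $d$ I would induct on $d$, using that $H^*(E,\bk)$ is generated by a basis $x_1,\dots,x_r$ of $H^1$ together with the Bockstein classes in $H^2$ (for $p>2$), and the standard short exact sequences expressing $L_{\zeta\zeta'}$ in terms of $\Omega$-shifts of $L_\zeta$ and $L_{\zeta'}$; tensoring these with $S$ presents $S\o L_{\zeta\zeta'}$ as an extension of members of the list. Two inputs should make this collapse back to the list: the Bockstein classes satisfy $\blambda\notin\supp(L_{\beta(x)})$ because $\blambda$ has no vanishing coordinate (by the Moore-matrix criterion for faithfulness, \Cref{Lem:Moore}, together with \Cref{Cor:Supp}), so $\omega$ sends them to units and essentially only $H^1$ contributes new behaviour; and the relations $\psi_i\psi_j=0$ for $0\le i,j\le r-2$ from the proof of \Cref{lem::image-xi-d} force every Yoneda product of the $\omega(x_k)$, after absorbing automorphisms and powers of the period class, to be invertible (fibre $0$), or to lie in the $\Aut(S)$-orbit of a single $\psi_i\o f_0$ (fibre $\Omega(S\o V^{(i)})$, as in degree one), or to vanish. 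The main obstacle is to rule out this last possibility whenever $\blambda\in\supp(L_\zeta)$, and more broadly to control $\im(\omega)$ in every degree: one has to track the Yoneda products of the degree-one classes against the nilpotence relations and the periodicity of $\Ext^*_E(S,S)$, and match the vanishing locus of $\omega(\zeta)$ with $\supp(L_\zeta)$ — e.g.\ by restricting $\omega(\zeta)$ along the cyclic shifted subgroup determined by $\blambda$ — so that no intermediate extension or fibre leaves the finite list.
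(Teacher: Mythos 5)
Your handling of the trivial cases and of $d=1$ is correct and is essentially the paper's own argument: both rest on \Cref{cor:im-omega-1-1} and \Cref{prop:Omega-Ti} (one small point: injectivity of $\omega_{1,1}$ needs the linear independence of the $\psi_i\o f_0$, which is true but not part of the statement of \Cref{cor:im-omega-1-1}(a)). For general $d$, however, the paper does not induct on factorizations of $\zeta$ via the $L_{\zeta\zeta'}$-sequences. It exploits the periodicity of $S$: a cohomology class chosen to be invertible at $\blambda$ gives a central stable isomorphism $\phi\colon\Omega^\tau S\to S$ ($\tau=1$ for $p=2$, $\tau=2$ for $p>2$); this packages $S\o-$ into the algebra map $\Upsilon^S_\phi\colon H^*(E,\kk)\to\bigoplus_{i<\tau}\uHom(\Omega^iS,S)$, \Cref{LemPsi} shows $S\o L_\zeta$ depends only on $\Upsilon^S_\phi(\zeta)$, and the whole theorem is reduced to $d\le\tau$ by proving that the image of the generating degrees is multiplicatively closed. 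For $p=2$ that closure is just $\psi_i\psi_j=0$; for $p>2$ it additionally requires the genuinely computational \Cref{PrepH2} and \Cref{ImH2}, which pin down the image of $\Span\{x_1,\dots,x_r\}\subset H^2$ as exactly $\kk\,\id_S$. Your sketch has no substitute for this degree-two computation: the observation that the Bockstein generators go to units (each $\blambda_a\neq0$) covers only the individual $x_a$, not their linear combinations, and the combinations are exactly where the difficulty lives.

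More seriously, the step you yourself flag as the main obstacle --- ruling out $\omega(\zeta)=0$ whenever $\zeta\neq0$ and $\blambda\in\supp(L_\zeta)$ --- is not merely unproved; it is false, so no matching of the vanishing locus of $\omega(\zeta)$ with $\supp(L_\zeta)$ (e.g.\ by restricting along the shifted subgroup at $\blambda$) can close it. Indeed, by the paper's own computations $\omega$ kills all products of two degree-one classes when $p>2$ (they are of the form $(af)(bf)=abf^2=0$), for instance $\zeta=y_1y_2$, and by \Cref{ImH2} it kills every $\zeta=\sum_a\zeta_ax_a$ with $\sum_a\zeta_a\blambda_a^p=0$ (these classes span that hyperplane by \Cref{Cor:Supp} and the Moore matrix); for $p=2$ one can take $\zeta=\eta^2$ where $\omega(\eta)$ is one of the nilpotent elements $\psi_i\o f_0$, $i\le r-2$. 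For all such $\zeta$ the map $S\o\hat\zeta$ is stably zero while $\zeta\neq0$, so the defining triangle gives $S\o L_\zeta\simeq\Omega^dS\oplus\Omega S$ up to projectives; this is non-projective, hence $\blambda\in\supp(L_\zeta)$, yet $\omega(\zeta)=0$ (for $C_2^2$ one can verify directly that $S\o L_{\eta^2}\simeq S\oplus S\oplus\kk E$). So any induction along your lines must admit the extra decomposable outcome $\Omega^dS\oplus\Omega S$ --- an outcome which, incidentally, is also silently absent from the theorem's list and from the assertion in the paper's proof that $\zeta\in H^2$ yields $0$ or $S$, harmlessly for \Cref{cor:Carlson} since $S\oplus\Omega S$ lies in $\cT_r$ --- and even after that correction it still needs exact control of $\im\omega$ in degree two, i.e.\ precisely the content of \Cref{PrepH2} and \Cref{ImH2}, which your proposal leaves open.
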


\begin{corollary} \label{cor:Carlson} Any Carlson module is contained in $\widetilde\cT$.
\end{corollary}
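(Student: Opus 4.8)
The plan is to read this off directly from \Cref{ThmCarlson}. First I would recall, via \Cref{lem:justS}, that a module $X\in\Rep E$ lies in $\widetilde\cT$ precisely when $S\otimes X\in\cT$. So for a fixed $d\in\mN$ and $\zeta\in H^d(E,\bk)$ it suffices to verify $S\otimes L_\zeta\in\cT$.

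Next I would invoke \Cref{ThmCarlson}, which identifies $S\otimes L_\zeta$, up to projective summands, with one of: the zero module; $S=T_{p^{r-1}-1}$; a module $T_{p^r-p^{r-1}+p^i-1}$ with $0\le i\le r-2$; or, when $p>2$, $T_{p^r-2p^{r-1}-1}$ or $T_{p^r-p^{r-1}-1}$. In every case the non-projective part is either $0$ or a tilting module $T_j$ with $j<p^r$, hence lies in $\cT$ by \Cref{lem:restricted-tilting}. The projective summands that were discarded also lie in $\cT$: by \Cref{lem:restricted-tilting} we have $T_{p^r-1}\simeq\bk E\in\cT$, and $\cT$, being a pseudo-tensor subcategory, is closed under direct sums and summands, so it contains every (free, hence every projective) $\bk E$-module. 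Therefore $S\otimes L_\zeta\in\cT$, and \Cref{lem:justS} yields $L_\zeta\in\widetilde\cT$, which is the claim.

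Since the corollary is thus immediate from \Cref{ThmCarlson}, there is no real obstacle at this stage: all the difficulty is absorbed into the proof of \Cref{ThmCarlson} itself (computing $S\otimes\Omega^d\unit$, controlling the classifying map $\hat\zeta$, and matching the outcome against the classification of self-extensions of $S$ and their Heller shifts developed in \Cref{sec:analyse} and \Cref{sec:ext}).
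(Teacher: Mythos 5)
Your argument is correct and is exactly the intended one: the paper treats \Cref{cor:Carlson} as an immediate consequence of \Cref{ThmCarlson} via \Cref{lem:justS}, with the tilting modules $T_j$, $j<p^r$, and the projective summands all lying in $\cT$ by \Cref{lem:restricted-tilting}. Nothing is missing; as you note, all the substance is in the proof of \Cref{ThmCarlson} itself.
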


\begin{remark}
    Each option in \Cref{ThmCarlson} actually occurs, as follows from the proof. 
\end{remark}

\begin{example}\label{Exam:1}
We use the standard description of the cohomology rings of elementary $p$-groups, as recalled in \Cref{Carlson2} and \Cref{Carlsonodd} below.
   Using the isomorphism
        $$\chi:H^1(E,\bk)\;\xrightarrow{\sim}\; \mA^r, \quad \sum_{i=1}^r \mu_i y_i\mapsto (\mu_1,\mu_2,\ldots, \mu_r),$$
 for any non-zero $\zeta\in H^1(E,\bk)$, it follows from the definitions that, with notation for two-dimensional modules as in \ref{Sec2dim},
        $$ L_\zeta\;\simeq\; \Omega(V_{\chi(\zeta)})\quad\mbox{and hence}\quad S\otimes L_\zeta\simeq\Omega(S\otimes V_{\chi(\zeta)}).$$
        \Cref{ThmCarlson} thus also implies that every two-dimensional module is in $\widetilde{\cT}$.
\end{example}

The rest of the section is devoted to the proof. The proof will be split up into the separate cases $p=2$ and $p>2$, but we start with some generalities that will apply to both cases.

\subsection{Generalities on periodic modules}
Let $G$ be a finite group such that $p$ divides $|G|$.

\subsubsection{}\label{DefineOmega} For the remainder of this section, we {\em define} $\Omega:=\Omega^1$ as the functor on $\Rep G$ or $\Stab G$
$$\Omega\;:=\; -\otimes \Omega(\unit),$$
with $\Omega(\unit)$ defined as the kernel of the counit $\varepsilon: \bk G\to \bk$.

\subsubsection{}\label{central} For any $M\in \Rep G$, we have the algebra morphism
\begin{equation}\label{HomAlgMor}
    H^\ast(G,\bk)\;\simeq\; \Ext^\ast_G(\unit,\unit)\;\xrightarrow{M\otimes-}\; \Ext^\ast_G(M,M)\;\simeq\; \bigoplus_{i\ge 0}\uHom_G(\Omega^i M,M),
\end{equation}
where for convenience we assumed that $\End_G(M)\tto \uEnd_G(M)$ is an isomorphism. It is well-known that this algebra morphism is central in the `super sense', meaning that 
$$f (M\otimes h)\;=\; (-1)^{ab} (M\otimes h)f$$
for $f\in \Ext^a(M,M)$ and $h\in \Ext^b(\unit,\unit)$.

The algebra structure on the right-hand side in \eqref{HomAlgMor} is given by
$$fg\;:=\; f\circ \Omega^a(g),$$
for $f:\Omega^a\unit\to\unit$ and $g:\Omega^b\unit\to\unit$.

\subsubsection{}\label{tauperiod}

Now assume that $M$ is periodic with period $\tau\in\mZ_{>0}$. Then there exists 
$$\phi:\Omega^\tau M\xrightarrow{\sim}M  $$
in $\Stab G$. We assume further that $\phi$ is central as an element of $\Ext^\ast_G(M,M)$.
We can define the quotient algebra of $\Ext^\ast_G(M,M)$ by the ideal generated by the (non-homogeneous) central element 
$$\id_M-\phi\;\in \; \End_G(M)\oplus \uHom_G(\Omega^\tau M,M)\;\subset\;\Ext^\ast_G(M,M). $$
Since $\phi$ induces isomorphisms between $\uHom_G(\Omega^i M,M)$ and $\uHom_G(\Omega^{i+\tau} M,M),$
it follows that this quotient algebra of $\Ext^\ast_G(M,M)$ can naturally be identified with its subspace
$$\bigoplus_{i=0}^{\tau-1}\Ext^i_G(M,M)\;\simeq\; \bigoplus_{i= 0}^{\tau-1}\uHom_G(\Omega^i M,M).$$
With the inherited algebra structure on this space, \eqref{HomAlgMor} thus yields an algebra morphism
$$\Upsilon^{M}_{\phi}\;:\; H^\ast(G,\bk)\;\to\;\bigoplus_{i= 0}^{\tau-1}\uHom_G(\Omega^i M,M). $$

\begin{lemma}\label{LemPsi}\label{Propf}We use the assumptions and notation from \ref{tauperiod}.
    For $\zeta\in H^d(G,\bk)$, the object $M\otimes L_\zeta $ of $\Stab G$ depends only on $\Upsilon^M_\phi(\zeta)$. 
\end{lemma}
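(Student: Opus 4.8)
The plan is to show that the stable isomorphism class of $M\o L_\zeta$ is determined by the image $\Upsilon^M_\phi(\zeta)\in\bigoplus_{i=0}^{\tau-1}\uHom_G(\Omega^iM,M)$, using the defining triangle of the Carlson module. Recall the short exact sequence $0\to L_\zeta\to \Omega^d\unit\xrightarrow{\hat\zeta}\unit\to 0$; tensoring with $M$ (which is exact) gives a short exact sequence $0\to M\o L_\zeta\to M\o\Omega^d\unit\xrightarrow{M\o\hat\zeta}M\to 0$, hence a distinguished triangle in $\Stab G$
$$
M\o L_\zeta\to \Omega^d M\xrightarrow{\;M\o\hat\zeta\;} M\to \Omega^{-1}(M\o L_\zeta).
$$
Thus $M\o L_\zeta$ is, up to stable isomorphism, the cone (shifted) of the stable morphism $M\o\hat\zeta:\Omega^dM\to M$, i.e. it depends only on the stable homotopy class of $M\o\hat\zeta$. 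Under the identification in \eqref{HomAlgMor}, $M\o\hat\zeta$ is exactly the component of the image of $\zeta$ under the algebra morphism $H^*(G,\bk)\to\Ext^*_G(M,M)$.

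First I would make precise that the cone of a morphism in a triangulated category depends only on the morphism up to the equivalence relation identifying $f$ and $f'$ whenever $f-f'$ factors through a projective — but in $\Stab G$ that is automatic, since $\uHom$ already quotients by maps through projectives. So the class of $M\o L_\zeta$ in $\Stab G$ depends only on the element $M\o\hat\zeta\in\uHom_G(\Omega^dM,M)$. Next I would use the periodicity isomorphism $\phi:\Omega^\tau M\xrightarrow{\sim}M$ to reduce $d$ modulo $\tau$: precomposing $M\o\hat\zeta$ with powers of $\phi$ (or $\phi^{-1}$) gives, for $d\equiv j\pmod\tau$ with $0\le j<\tau$, a morphism $\Omega^jM\to M$ in the same orbit, and by centrality of $\phi$ this identification is compatible with the quotient of $\Ext^*_G(M,M)$ by $(\id_M-\phi)$ described in \ref{tauperiod}. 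Consequently the relevant datum is precisely the image of $\zeta$ in $\bigoplus_{i=0}^{\tau-1}\uHom_G(\Omega^iM,M)$ under $\Upsilon^M_\phi$; if $\Upsilon^M_\phi(\zeta)=\Upsilon^M_\phi(\zeta')$, then $M\o\hat\zeta$ and $M\o\hat{\zeta'}$ agree after transport by $\phi$, so their cones, hence $M\o L_\zeta$ and $M\o L_{\zeta'}$, are stably isomorphic.

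The main subtlety I expect is bookkeeping the grading/degree shifts: $\hat\zeta\in\uHom_G(\Omega^d\unit,\unit)$ lives in cohomological degree $d$, and one must check that applying $M\o-$ and then collapsing via $\phi$ really lands one in the single summand indexed by $d\bmod\tau$ with no loss of information — in other words that the quotient map $\Ext^*_G(M,M)\to\bigoplus_{i=0}^{\tau-1}\uHom_G(\Omega^iM,M)$ sends $M\o\hat\zeta$ to the component one wants. This is exactly where centrality of $\phi$ (assumed in \ref{tauperiod}) is used: it guarantees that the two-sided ideal generated by $\id_M-\phi$ is the same as the one-sided one, so the quotient is the clean graded collapse described, and that $M\o\hat\zeta$ and $\phi\circ\Omega^\tau(M\o\hat\zeta)$ are identified there. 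Everything else is the formal behaviour of cones under pre-composition with isomorphisms, which is routine. I would close by remarking that this lemma is what lets one compute $S\o L_\zeta$ in \Cref{ThmCarlson} by first computing the finite-dimensional algebra $\bigoplus_{i=0}^{\tau-1}\uHom_E(\Omega^iS,S)$ and the image of $\Upsilon^S_\phi$, since $\tau\le 2$ for the periodic module $S$.
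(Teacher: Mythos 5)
Your proposal is correct and follows essentially the same route as the paper: tensor the defining sequence of $L_\zeta$ with $M$ to get the triangle with edge $M\otimes\hat\zeta$, observe this edge is the image of $\zeta$ under \eqref{HomAlgMor}, and note that the (co)cone is unchanged under precomposition with the isomorphisms $\Omega^i(\phi)$, so only the class in the quotient, i.e.\ $\Upsilon^M_\phi(\zeta)$, matters. The extra discussion of centrality is harmless but not needed at this point — it is only used in \ref{tauperiod} to make the quotient algebra well defined.
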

\begin{proof}
    By definition, and using $\Omega^dM=M\otimes \Omega^d\unit$, we have the exact triangle in $\Stab G$ 
    $$
    M\otimes L_\zeta \to \Omega^dM \xrightarrow{~M\otimes\hat{\zeta}~} M \to
    $$
where $M\otimes \hat{\zeta}$ is precisely the image of $\zeta$ under \eqref{HomAlgMor}. 
Moreover, the assignment that sends an element of $\sqcup_i \uHom(\Omega^iM,M)$ to the isomorphism class of the cocone in $\Stab G$ is invariant under the operation that sends $f:\Omega^iM\to M$ to $f\phi=f\circ \Omega^i(\phi)$, since $\Omega^i(\phi)$ is an isomorphism.
\end{proof}

\subsection{Even characteristic}\label{Carlson2}
Here we consider the case $p=\mathrm{char}(\bk)=2$. Then $St_1=V$, so in particular
\begin{equation}\label{Sp22}
    S:=St_{r-1}=T_{2^{r-1}-1}=V\otimes V^{(1)}\otimes \cdots\otimes  V^{(r-2)}.
\end{equation}

\subsubsection{}Recall from \cite{Benson-vector-bundles}*{\S 1.8} that 
$$H^\ast(E,\bk)\;\simeq\; \bk[y_1,\ldots , y_r]$$
as graded algebras, where $y_i$ has degree $1$, so that
$$\widehat{\cdot}\;:\;H^1(E,\bk)\xrightarrow{\sim}\uHom(\Omega^1\unit,\unit)=\Hom(\Omega^1\unit,\unit),\qquad \sum_{a=1}^r c_ay_a\mapsto \{X_i\mapsto c_i\},$$
with $\Omega^1\unit$ realised as the radical in $\bk C_2^r=\bk[X_1,\ldots,X_r]/(X_i^2)$.

\subsubsection{}\label{defphi2}For any non-zero $\zeta\in H^1(E,\bk)$, we have the epimorphism
$$S\otimes\hat{\zeta}\,:\, \Omega(S):=S\otimes\Omega^1\unit\to S$$
with kernel $S\otimes L_\zeta$. This morphism is an isomorphism in $\Stab E$ if and only if $S\otimes L_\zeta$ is projective, which, by \cite{Benson-vector-bundles}*{Proposition~1.10.3} and \Cref{Cor:Supp}, is equivalent to the condition $\sum_{i=1}^r \zeta_i \blambda_i$ being non-zero. Also by \Cref{Cor:Supp}, this condition is satisfied for
$$\zeta\;=\;\sum_{i=1}^r \lambda_i^{2^{r-1}}y_i.$$
Moreover, by construction, see \ref{central}, the resulting
\begin{equation}\label{phi2}\phi\,:\, \Omega(S)=S\otimes\Omega^1\unit\,\to\, S,\quad v\otimes X_i\mapsto \lambda_i^{p^{r-1}}v\end{equation}
yields an isomorphism in $\Stab E$ which is central in $\Ext^\ast(S,S)$.
In conclusion, we are in the situation of \ref{tauperiod}, with $\tau=1$ and algebra morphism
\begin{equation}\label{AlgM2}
    \Upsilon^{S}_{\phi}\;:\; H^\ast(E,\bk)\;\to\;\End_E(S).
\end{equation}

\begin{proof}[Proof of \Cref{ThmCarlson} for $p=2$]
By \Cref{cor:im-omega-1-1}(a) applied to the case $Q=V^{(r-1)}$, the image~$I$ of \eqref{AlgM2} restricted to
$$H^1(E,\bk)\;\to\; \End(S)$$
is spanned by the endomorphisms $\psi_i\in \End(S)$, for $0\le i\le r-1$. Since $\psi_0$ is the identity and $\psi_i\psi_j=0$ for $i,j>0$, it follows that this image $I$ is a subalgebra. The algebra $H^\ast(E,\bk)$ is generated in degree $1$ and hence the image of the algebra morphism \eqref{AlgM2} equals $I$.

By \Cref{Propf}, it is therefore sufficient to prove the theorem for $ L_\zeta\in H^1(E,\bk)$. Firstly, we have
$$S\otimes L_0\;=\; S\otimes\Omega^1\unit\;\simeq\; S.$$
By \Cref{cor:im-omega-1-1}(b) and (c) and \Cref{Exam:1}, the non-zero cases produce
$$S\otimes L_0\;\simeq\; S\otimes V^{(i)}\;\simeq\;T_{2^{r-1}+2^i-1},$$
which concludes the proof.
\end{proof}



\subsection{Odd characteristic}\label{Carlsonodd}

\subsubsection{}\label{HomOdd}
Here we consider the case $p=\mathrm{char}(\bk)>2$. Recall from \cite{Benson-vector-bundles}*{\S 1.8} that 
$$H^\ast(E,\bk)\;\simeq\; \bk[x_1,\ldots , x_r]\otimes \Lambda[y_1,\ldots , y_r]$$
as graded algebras, where $x_i$ is of degree $2$ and $y_i$ is of degree $1$.

We can construct an isomorphism $\Omega^2 S\to S$ similarly as for the case $p=2$ in \ref{defphi2}. Indeed, by \cite{Benson-vector-bundles}*{Proposition~1.10.3}\footnote{There is actually a minor mistake in the formulation of \cite{Benson-vector-bundles}*{Proposition~1.10.3} for $p$ odd. In the formula on the penultimate line $f^{(p)}(Y_1,\ldots, Y_r)$ should be replaced by $f(Y_1^p,\ldots, Y_r^p)$, see also \cite{Benson-vector-bundles}*{Corollary~1.10.2} or \cite{Carlson-coh-ring-mod}*{2.20, 4.5, or proof of 7.2}.}, an element $\zeta=\sum_a \zeta_a x_a\in H^2(E,\bk)$ induces an isomorphism $S\otimes \hat\zeta$ in $\Stab E$ if and only if $\sum_a\zeta_a\blambda_a^p\not=0$. Hence we can define
$$\phi: \;S\otimes \Omega^2\unit\xrightarrow{S\otimes \widehat{\zeta}}S,\qquad\mbox{for }\;\zeta=\sum_{a=1}^r\lambda_a^{p^{r}}x_a.$$
Again, since $H^{2\ast}(E,\bk)\to\Ext^\ast(S,S)$ is central, it follows that $\phi$ is central and we are in the situation of \ref{tauperiod} for $\tau=2$, with $\mZ/2$-graded algebra morphism
\begin{equation}\label{AlgMp}
    \Upsilon^{S}_{\phi}\;:\; H^\ast(E,\bk)\;\to\;\End_E(S)\oplus\uHom_E(\Omega S,S)
\end{equation}

\begin{lemma}\label{PrepH2}
    Under the isomorphism $\Ext^2_E(\unit,\unit)\simeq H^2(E,\bk)$, the exact sequence
$$0\to \unit\xrightarrow{1\mapsto e_p} St_1^{(l)}\xrightarrow{e_1\mapsto e_2} St_1^{(l)}\xrightarrow{e_1\mapsto 1} \unit\to 0,$$
for $0\le l<r$, is sent to $(\sum_a\lambda_a^{p^{l+1}}x_a)/(p-1)!$.
\end{lemma}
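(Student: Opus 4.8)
\emph{Strategy.} The plan is to reduce to the case $l=0$ and then pin down the class by restricting to shifted cyclic subgroups of $E$. For the reduction, note that restricting the $l$-th Frobenius twist $St_1^{(l)}$ to $E$ via the embedding attached to $\ulambda$ agrees, compatibly with the chosen bases, with restricting $St_1$ via the embedding attached to $\ulambda^{(l)}=(\lambda_1^{p^l},\dots,\lambda_r^{p^l})\in\mA_f^r$, while the generators $x_a\in H^2(E,\bk)$ are intrinsic to $E$. Hence it is enough to prove that the class $\zeta\in\Ext^2_E(\unit,\unit)\simeq H^2(E,\bk)$ of
$$0\to\unit\xrightarrow{1\mapsto e_p}St_1\xrightarrow{e_1\mapsto e_2}St_1\xrightarrow{e_1\mapsto1}\unit\to0,$$
for $E$ embedded via an arbitrary $\ulambda\in\mA_f^r$, equals $\tfrac{1}{(p-1)!}\sum_a\lambda_a^p x_a$; substituting $\ulambda\mapsto\ulambda^{(l)}$ then gives the lemma.

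\emph{Killing the exterior part.} Since $H^2(E,\bk)=\Span_\bk\{x_1,\dots,x_r\}\oplus\Span_\bk\{y_iy_j:i<j\}$, and $\beta(x_a)=0$ while $\beta(y_iy_j)=x_iy_j-x_jy_i$, the class $\zeta$ lies in the first summand if and only if its Bockstein $\beta(\zeta)$ vanishes. I would verify this by restricting the $4$-term sequence to the honest subgroups $\langle g_i,g_j\rangle\simeq C_p^2$: here $St_1$ restricts to a uniserial periodic module of dimension $p$ (its support is a single point of $\mP^1$ by \Cref{PropSupp}), and a direct computation shows that the associated class over $C_p^2$ has vanishing Bockstein; by naturality of $\beta$, and since the $y_iy_j$-component of $\Res_{\langle g_i,g_j\rangle}(\zeta)$ equals that of $\zeta$, this gives $\beta(\zeta)=0$, so $\zeta=\sum_a\zeta_a x_a$.

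\emph{Computing the coefficients.} Restrict everything to the shifted cyclic subgroup $\langle t_\mu\rangle$, $t_\mu:=\sum_a\mu_a X_a$, for $\mu$ with $\sum_a\mu_a\lambda_a\neq0$ (a dense condition). By the Frobenius twist in the comparison of support and rank varieties --- i.e. the corrected form of \cite{Benson-vector-bundles}*{Proposition~1.10.3} recalled in the footnote above --- we have $\Res^E_{\langle t_\mu\rangle}(x_a)=\mu_a^p x$, so $\Res^E_{\langle t_\mu\rangle}(\zeta)=\big(\sum_a\zeta_a\mu_a^p\big)x$. On the other hand, by \Cref{LemDescSt1} the element $t_\mu$ acts on $St_1$ as $(\sum_a\mu_a\lambda_a)X_1+O(X_1^2)$, a regular nilpotent; hence $St_1|_{\langle t_\mu\rangle}\simeq\bk C_p$ with $e_1$ a cyclic generator, $t_\mu^{p-1}e_1=(\sum_a\mu_a\lambda_a)^{p-1}e_p$, and $e_1\mapsto e_2$ becomes multiplication by $(\sum_a\mu_a\lambda_a)^{-1}t_\mu+O(t_\mu^2)$. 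Comparing the restricted $4$-term sequence with the standard period-$2$ resolution of $\bk$ over $\bk C_p$ and with the normalisation of $x\in H^2(C_p,\bk)$ from \cite{Benson-vector-bundles}*{\S1.8} (the unit in the middle map does not affect the class; the scalings of the two outer maps combine to a power of $\sum_a\mu_a\lambda_a$) yields $\Res^E_{\langle t_\mu\rangle}(\zeta)=\tfrac{1}{(p-1)!}\big(\sum_a\mu_a\lambda_a\big)^p x=\tfrac{1}{(p-1)!}\big(\sum_a\mu_a^p\lambda_a^p\big)x$. Comparing the two expressions and letting $\mu$ vary gives $\zeta_a=\tfrac{\lambda_a^p}{(p-1)!}$.

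\emph{Main obstacle.} The delicate point is the scalar bookkeeping in this last step: one must simultaneously produce the Frobenius ($p$-th power), which is responsible for $\lambda_a^{p^{l+1}}$ rather than $\lambda_a^{p^l}$ and is the content of the support/rank-variety comparison, and the precise constant $\tfrac{1}{(p-1)!}$, which comes from the factorial normalisation $x^{p-i}y^{i-1}=(p-i)!\,e_i$ of the Steinberg basis in \ref{basis} together with the chosen normalisation of the degree-$2$ generators of $H^*(E,\bk)$. Since $\tfrac{1}{(p-1)!}=-1$ in $\bk$ by Wilson's theorem, this constant is ultimately a sign, which can be pinned down on $E=C_p$, where $St_1\simeq\bk C_p$ and the $4$-term sequence is conjugate to the standard period-$2$ extension. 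The remaining steps --- the reduction to $l=0$ and the vanishing of the exterior part --- are routine; alternatively, the whole computation can be run using the explicit descriptions of Carlson modules in \cite{Carlson-coh-ring-mod} and \cite{Benson-vector-bundles}*{Corollary~1.10.2, Proposition~1.10.3} together with \Cref{LemDescSt1}.
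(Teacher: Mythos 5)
Your overall strategy (reduce to $l=0$ via $\ulambda\mapsto\ulambda^{(l)}$, then determine the class by restrictions) is a genuinely different route from the paper, which instead writes down the first three terms of the minimal resolution of $\unit$ over $\bk E$, lifts the identity into the four-term sequence, and reads off the degree-two cocycle directly; that computation is uniform in $r$ and settles both the $x_a$-coefficients and the vanishing of the $y_iy_j$-components in one stroke. The genuine gap in your version is exactly the second point: your ``killing the exterior part'' step rests on the assertion that ``a direct computation shows'' the restricted class over $\langle g_i,g_j\rangle\simeq C_p^2$ has vanishing Bockstein. Restrictions to cyclic shifted subgroups, which drive the rest of your argument, cannot see the $y_iy_j$-components at all (they die in $H^2$ of a cyclic group), and the results you invoke (\cite{Benson-vector-bundles}*{Corollary~1.10.2, Proposition~1.10.3}) likewise only control the polynomial part. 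So this unproved rank-two statement is precisely where the content of the lemma sits, and verifying it amounts to computing the class over $C_p^2$ by a resolution comparison --- i.e.\ to doing (a special case of) the paper's computation, which you have not supplied. As it stands, your argument only pins $\zeta$ down modulo $\Span\{y_iy_j\}$.

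Two further points on the coefficient step. First, your diagnosis of the constant is off: the factorial normalisation $x^{p-i}y^{i-1}\mapsto(p-i)!\,e_i$ from \ref{basis} does not produce the $1/(p-1)!$. If one carries out the chain-level comparison with the rules $X_ae_c=\sum_{b>c}\tfrac{\lambda_a^{b-c}}{(b-c)!}e_b$, the lift of $a_a$ is $(\lambda_a+O(z))e_1$ and $X_a^{p-1}$ acts as $\lambda_a^{p-1}z^{p-1}$, so all factorials cancel and the cocycle takes the value $\lambda_a^p$ on the degree-two generator $b_a$ of the minimal resolution (and $0$ on the $b_{ij}$); the $(p-1)!$ is purely a normalisation relating Benson's generators $x_a$ (and the Yoneda/cocycle identification) to that dual basis, so ``pinning the sign down on $C_p$'' is the right idea but must be done against that convention, not against the $e_i$-basis. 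Second, since the cited Benson results only tell you when a restriction is zero or nonzero, your comparison as written determines $\zeta$ only up to a global nonzero scalar; to get the exact coefficient you must carry out the restriction to $\langle t_\mu\rangle$ at the chain level with a fixed normalisation of the degree-two generator on both sides (your parenthetical ``the scalings combine to a power of $\sum_a\mu_a\lambda_a$'' is where this bookkeeping is hiding, and it is not yet a proof). The reduction to $l=0$ and the Bockstein-injectivity observation on the exterior part of $H^2$ are fine.
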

\begin{proof}
We prove the case $l=0$. The general case can be proved similarly, or simply reduced to $l=0$ by replacing $\ulambda$ with $\ulambda^{(l)}$.
    For $l=0$, we need to fill in the dashed arrows in the commutative diagram with exact rows
    $$\xymatrix{
    &\bk E^r\oplus \bk E^{\frac{1}{2}r(r-1)}\ar[r]\ar@{-->}[d]& \bk E^r\ar[r]\ar@{-->}[d]&\bk E\ar[r]\ar@{-->}[d]&\unit\ar[r]\ar@{=}[d]&0\\
    0\ar[r]&\unit\ar[r]&St_1\ar[r]& St_1\ar[r] & \unit\ar[r]&0.
    }$$
In the upper row, $\bk E\to\unit$ is the counit, the morphism $(\bk E)^r\to\bk E$ sends the $r$ generators $1\in \bk E$ to $\{X_i\mid 1\le i\le n\}$. For the left-most arrow on the top row, the first $r$ copies of $\bk E$ send $1$ to $X_i^{p-1}$ in the corresponding copy of $\bk E$ in the target. The remaining $r(r-1)/2$ copies of $\bk E$, labelled by $1\le i<j\le r$, send their generator to the difference of $X_i$ in the $j$th copy with $X_j$ in the $i$th copy. The right term in the upper arrow is then naturally labelled by the $r$ elements $x_i\in H^2(E,\bk)$ and the $r(r-1)/2$ elements $y_iy_j\in H^2(E,\bk)$, for $i<j$.

Using the rules
$$X_i e_a\;=\; \sum_{b>a}\frac{\lambda_i^{b-a}}{(b-a)!}e_b,\quad \mbox{for}\; 1\le i\le n\mbox{ and }1\le a\le p,$$
then allows one to verify that the left-most dashed arrow corresponds to $(\sum_i\lambda_i^p x_i)/(p-1)!$.
\end{proof}

\begin{lemma}\label{ImH2}
    The image of
    $$\Span\{x_1,\ldots, x_r\}\subset H^2(E,\bk)\;\to\; \uEnd_E(S)$$
    under $\Upsilon^S_\phi$ is $\bk \id_S$.
\end{lemma}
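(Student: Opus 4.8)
The plan is to leverage that $\Upsilon^S_\phi$ is an algebra homomorphism, together with the explicit identification of the relevant degree-$2$ classes in \Cref{PrepH2}. For $0\le l\le r-1$ put
$$\zeta_l:=\sum_{a=1}^r\lambda_a^{p^{l+1}}x_a\;\in\;H^2(E,\bk).$$
The coefficient matrix $(\lambda_a^{p^{l+1}})_{0\le l\le r-1,\,1\le a\le r}$ is the Moore matrix $\mathbb{M}(\ulambda^{(1)})$, obtained by applying the Frobenius to every entry of $\mathbb{M}(\ulambda)$; hence its determinant equals $(\det\mathbb{M}(\ulambda))^p$, which is non-zero since $\ulambda\in\mA^r_f$ by \Cref{Lem:Moore}. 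So $\zeta_0,\dots,\zeta_{r-1}$ form a basis of $\Span\{x_1,\dots,x_r\}$, and it is enough to compute each $\Upsilon^S_\phi(\zeta_l)$.

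The top case is immediate: by construction in \ref{HomOdd}, $\phi=S\otimes\widehat{\zeta}$ for $\zeta=\sum_a\lambda_a^{p^r}x_a=\zeta_{r-1}$, and passing to the quotient by $\id_S-\phi$ that enters the definition of $\Upsilon^S_\phi$ in \ref{tauperiod} identifies $\phi$ with $\id_S$. Thus $\Upsilon^S_\phi(\zeta_{r-1})=\id_S$, and it remains to prove $\Upsilon^S_\phi(\zeta_l)=0$ for $0\le l\le r-2$; granting this, $\Upsilon^S_\phi(\Span\{x_1,\dots,x_r\})=\Span\{\Upsilon^S_\phi(\zeta_0),\dots,\Upsilon^S_\phi(\zeta_{r-1})\}=\bk\,\id_S$, as asserted.

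So fix $0\le l\le r-2$. By \Cref{PrepH2}, $(p-1)!^{-1}\zeta_l$ is the Yoneda class in $\Ext^2_E(\unit,\unit)$ of the exact sequence
$$0\to\unit\xrightarrow{1\mapsto e_p}St_1^{(l)}\xrightarrow{e_1\mapsto e_2}St_1^{(l)}\xrightarrow{e_1\mapsto1}\unit\to0 ,$$
so $\Upsilon^S_\phi(\zeta_l)$ equals $(p-1)!$ times the class, read inside $\uEnd_E(S)$ via $\phi$, of the $2$-extension of $S$ by $S$ obtained by applying $S\otimes-$ to this sequence. By Donkin's tensor product formula \eqref{DTPT}, $S\otimes St_1^{(l)}\simeq\bigoplus_{a=0}^{(p-1)/2}T_{p^{r-1}-1+2ap^l}$, a direct sum of non-projective tilting modules (all in $\cT_r$) in which $S$ itself occurs, as the $a=0$ summand. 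The technical core is then to verify that the connecting morphism $\Omega^2 S\to S$ attached to this $2$-extension is stably zero; I would do this by a direct computation with the explicit action of $\bk E$ on these modules, using the formulas $X_ie_a=\sum_{b>a}\tfrac{\lambda_i^{b-a}}{(b-a)!}e_b$ from \Cref{PrepH2} (for instance by producing a splitting making the $2$-extension Yoneda-trivial, or by chasing the relevant chain maps), and I expect this step to be the main obstacle.

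As an internal consistency check: the non-vanishing part is automatic from $\zeta_{r-1}$, and the claimed vanishing for $l\le r-2$ is at least compatible with \Cref{Cor:Supp}. Indeed there $\sum_a\lambda_a^{p^l}\blambda_a=0$, hence $\sum_a\lambda_a^{p^{l+1}}\blambda_a^p=(\sum_a\lambda_a^{p^l}\blambda_a)^p=0$, so $\blambda$ lies on the hypersurface $\zeta_l=0$; thus $\blambda\in\supp(S)\cap\supp(L_{\zeta_l})$, the module $S\otimes L_{\zeta_l}$ is non-projective, and $\Upsilon^S_\phi(\zeta_l)=S\otimes\widehat{\zeta_l}$ (composed with $\phi^{-1}$) is not a unit in $\uEnd_E(S)$. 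Upgrading this "not a unit'' to the actual vanishing is precisely what the chain-level computation above must supply.
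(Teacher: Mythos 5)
Your reduction is the same as the paper's: pass to the basis $\zeta_l=\sum_a\lambda_a^{p^{l+1}}x_a$ via the Moore matrix $\mathbb{M}(\ulambda^{(1)})$, observe that $\zeta_{r-1}$ maps to (a scalar multiple of) $\id_S$ essentially by the construction of $\phi$, and then claim $\Upsilon^S_\phi(\zeta_l)=0$ for $0\le l\le r-2$. But that last vanishing is the entire content of the lemma, and you do not prove it: you explicitly defer it to an unspecified ``direct computation with the explicit action of $\bk E$'' and flag it as the main obstacle. That is a genuine gap, not a routine verification. In the paper this step is carried out by tensoring the four-term sequence of \Cref{PrepH2} with $S$, comparing it with the reference sequence built from $St_1^{(r-1)}$ through a commutative diagram whose left vertical arrow is exactly $\theta_l$, and then using the element $\xi\in\Ann_{\bk E}S$ of \ref{defxi} together with the radical estimates of \Cref{Powxi} to show that the induced map lands in $St_1\otimes\cdots\otimes\rad^p St_1^{(l)}\otimes\cdots=0$; none of this machinery (or a substitute for it) appears in your outline.

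Your ``consistency check'' via support varieties does not close the gap: it only shows that $S\otimes\widehat{\zeta_l}$ is not a stable isomorphism, i.e.\ that $\Upsilon^S_\phi(\zeta_l)$ is not a unit of $\uEnd_E(S)$. Since $\uEnd_E(S)\cong\bk[z_1,\dots,z_{r-1}]/(z_i^p)$ by \Cref{LemmaLambda} has a large radical, a non-unit image could still be a non-zero nilpotent element, in which case the image of $\Span\{x_1,\dots,x_r\}$ would be strictly larger than $\bk\,\id_S$ and the lemma (and the subsequent argument that the image of $H^1\oplus H^2$ is a subalgebra) would fail. The Donkin decomposition of $S\otimes St_1^{(l)}$ you record is correct but does not by itself control the connecting map of the $2$-extension. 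So the skeleton matches the paper, but the decisive vanishing argument is missing.
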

\begin{proof} By \Cref{Lem:Moore}, the determinant of $\mathbb{M}(\ulambda)$, or equivalently, that of $\mathbb{M}(\ulambda^{(1)})$ is non-zero. It follows that the span of the elements $\sum_{a=1}^r \lambda_a^{p^{i+1}}x_a$, for $0\le i<$r equals the span of the $x_a$.
    It follows from \Cref{PrepH2} and our choice of $\phi$ that
    $$\theta_i\;:=\;\Upsilon^S_\phi\left(\sum_{a=1}^r \lambda_a^{p^{i+1}}x_a\right)/(p-1)!$$
    fits into a commutative diagram
    $$\xymatrix{
0\ar[r]& S\ar[r]^-{v\mapsto v\otimes e_p}\ar[d]_{\theta_i}&S\otimes St_1^{(r-1)}\ar[d]\ar[rr]^-{v\otimes e_j\mapsto v\otimes e_{j+1}}&&S\otimes St_1^{(r-1)}\ar[r]^-{v\otimes e_1\mapsto v}\ar[d]& S\ar[r]\ar@{=}[d]&0\\
0\ar[r]& S\ar[r]_-{v\mapsto v\otimes e_p}&S\otimes St_1^{(i)}\ar[rr]_{v\otimes e_j\mapsto v\otimes e_{j+1}}&&S\otimes St_1^{(i)}\ar[r]_-{v\otimes e_1\mapsto v}& S\ar[r]&0.
}$$
Furthermore, any commutative diagram of the above form must have $\theta_i$ for its left arrow. In particular, $\theta_{r-1}=\id_S$.

Now consider $i<r-1$. We can choose the right morphism from $S\otimes St_1^{(r-1)}$ to $S\otimes St_1^{(i)}$ to send $v\otimes e_1$ to $v\otimes e_1$. With $\xi$ from \ref{defxi} it follows from \Cref{Powxi}(1) that the diagonal morphism from $S\otimes St_1^{(r-1)}$ to $S\otimes St_1^{(i)}$ then sends $v\otimes e_1$ to $\xi(v\otimes e_1)$. Applying \Cref{Powxi}(2) twice shows that this element is contained in
$$S\otimes \rad St_1^{(i)}\;\cap\; St_1\otimes \cdots \otimes St_1^{(i-1)}\otimes \rad St_1^{(i)}\otimes St_1^{(i+1)}\otimes\cdots St_1^{(r-2)}\otimes St_1^{(i)}. $$
Consequently, the left morphism from $S\otimes St_1^{(r-1)}$ to $S\otimes St_1^{(i)}$ can be chosen to take values in
$$St_1\otimes \cdots \otimes St_1^{(i-1)}\otimes \rad St_1^{(i)}\otimes St_1^{(i+1)}\otimes\cdots St_1^{(r-2)}\otimes St_1^{(i)}.$$
By \Cref{Powxi}(1) it then follows that the composition of $\theta_i$ with the inclusion $S\subset S\otimes St_1^{(i)}$ takes values in
\begin{eqnarray*}
    &&\xi^{p-1}\left(St_1\otimes \cdots \otimes St_1^{(i-1)}\otimes \rad St_1^{(i)}\otimes St_1^{(i+1)}\otimes\cdots St_1^{(r-2)}\otimes St_1^{(i)}\right)\\
    &\subset& St_1\otimes \cdots \otimes St_1^{(i-1)}\otimes \rad^p St_1^{(i)}\otimes St_1^{(i+1)}\otimes\cdots St_1^{(r-2)}\otimes St_1^{(i)}\;=\;0,
\end{eqnarray*}
where the inclusion follows from \Cref{Powxi}(2) applied to the non-standard choice of factors $S$ and $St_1^{(i)}$ in $S\otimes St_1^{(i)}$. Hence, we find $\theta_i=0$, which concludes the proof.
\end{proof}

\begin{proof}[Proof of \Cref{ThmCarlson} for $p>2$]
    First we show that the image of
    $$H^1(E,\bk)^{\otimes 2}\;\to\; \uHom_E(\Omega S,S)^{\otimes 2}\;\to\; \uEnd_E(S)$$
    is zero, where the second map is multiplication inside the algebra structure on
    $$\uEnd_E(S)\oplus\uHom_E(\Omega S,S)$$
    from \ref{tauperiod}.
    Indeed, let $f\in \uHom_E(\Omega S,S)$ correspond to $1\otimes f_0$ as in \Cref{cor:im-omega-1-1}, for $Q=St_1^{(r-1)}$, in particular, it is in the image of $H^1(E,\bk)\to\uHom(\Omega S,S)$. Then it follows that the image of $H^1(E,k)$ in $\uHom(\Omega S,S)$ consists of elements $af$ with $a\in \End(S)$. But inside the algebra $\Ext^\ast_E(S,S)$ (so certainly inside the above quotient) we have, for arbitrary $a,b\in\uEnd_E(S)$, we have
    $$(af)(bf)\;=\;ab f^2\;=\;0,$$
    as elements of $H^1(E,\bk)$ square to zero.

    The previous paragraph, together with \Cref{ImH2}, shows that the image of $H^2(E,\bk)$ under~$\Upsilon_\phi^S$ in \eqref{AlgMp} is the span of the identity. It thus follows that the image of $H^1(E,\bk)\oplus H^2(E,\bk)$ under~$\Upsilon_\phi^S$ is a $\mZ/2$-graded subalgebra. Indeed, it consists of the direct sum of the space spanned by the identity and a space that squares to zero, by the above paragraph.

    By \Cref{LemPsi}, and the fact that $H^\ast(E,\bk)$ is generated in degrees 1 and 2 (and the fact that morphism~\eqref{AlgMp} is $\mZ/2$-graded), it thus suffices to prove the theorem for $d=1$ and $d=2$. It follows from the above description of the image of $H^2(E,\bk)$ in $\End(S)$ and \Cref{LemPsi} that, for any $\zeta\in H^2(E,\bk)$, the module $S\otimes L_\zeta$ is, up to projective summands, $0$ or isomorphic to $S$. It follows from \Cref{cor:im-omega-1-1}(2) and \Cref{Exam:1} that for $\zeta\in H^1(E,\bk)$, the module
    $S\otimes L_\zeta$ is, up to projective summands,
    $$\Omega(S\otimes V^{(i)})\;\simeq\; \Omega(T_{p^{r-1}-1+p^i})\;\simeq\;
    \begin{cases} 
    T_{p^{r}-p^{r-1}+p^i-1},&\text{for } 0\le i<r-1,\\
    T_{p^r-2p^{r-1}-1},&\text{for } i=r-1,
    \end{cases}$$
    where we used \Cref{prop:Omega-Ti}, or alternatively $S\otimes L_\zeta\simeq\Omega S$.
\end{proof}

\section{Modules of Loewy length two} \label{sec:Loewy}

In this section, we will show that, for general $p$ and $r$, tensor products $S\o X$, for modules $X$ of Loewy length $2$, split into modules of the form $S\o Y$, for $Y$ of dimension $1$ or $2$. It will then follow that $S\o X$ is always in $\cT$ (and hence, even in $\cT_r$ by \Cref{rem:T-tilde}(4)).

\subsection{Simplifying extensions} \label{sec::simplify-ext}
We start with a very general observation. Let $\cA$ be a Frobenius category, let $\stHom$ and $\stEnd$ denote hom-spaces in its stable category $\underline{\cA}$. For $X,Y\in\cA$ and $f\in\stHom(\Omega X,Y)$, we denote by $E_f$ the extension of $Y$ by $X$ determined by $f$.

\begin{lemma} \label{lem::extensions-posets} Consider two finite sets of objects $(X_i)_{i\in I}$ and $(Y_j)_{j\in J}$ and morphisms $f_{ij}\in\stHom(\Omega X_i,Y_j)$ in $\underline{\cA}$, for all $i\in I,j\in J$.
Assume $k\in I,\ell\in J$ are fixed indices and for all $k\neq i\in I$ and $\ell\neq j\in J$, there are $a_i\in\stHom(\Omega X_i,\Omega X_k)$ and $a'_j\in\stHom(Y_\ell, Y_j)$ such that
$$
f_{i\ell} = f_{k\ell} a_i
\quad\text{and}\quad
f_{kj}= a'_j f_{k\ell} .
$$
Then $E_f$, the extension of $\bigoplus_j Y_j$ by $\bigoplus_i X_i$ determined by $f=(f_{ij})_{ij}$, is isomorphic to a direct sum $E_{f_{kl}}\oplus E_{\hat f}$, where $E_{\hat f}$ is the extension of $\bigoplus_{j\neq\ell} Y_j$ by $\bigoplus_{i\neq k} X_i$ determined by morphisms $\hat f=(\hat f_{ij})_{i\neq k,j\neq \ell}$, where
$$
\hat f_{ij} 
= f_{ij} - f_{kj} a_i 
= f_{ij} - a'_j f_{k\ell} a_i
= f_{ij} - a'_j f_{i\ell}
\quad\text{for all }i\neq k,j\neq\ell .
$$
\end{lemma}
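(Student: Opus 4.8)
\emph{Approach.} The plan is to view the whole family $(f_{ij})$ as a single stable morphism $f\colon\Omega\big(\bigoplus_i X_i\big)=\bigoplus_i\Omega X_i\to\bigoplus_j Y_j$ in $\underline{\cA}$, to perform ``row and column operations'' on this block matrix that bring it into the block-diagonal form $f_{k\ell}\oplus\hat f$, and to observe that these operations do not change the isomorphism type of the middle term of the associated extension. I will use two standard facts, valid in any Frobenius category (recall $\underline{\cA}$ is triangulated and $\Omega$ is an autoequivalence). First, for $X,Y\in\cA$ the bijection $\stHom(\Omega X,Y)\xrightarrow{\sim}\Ext^1(X,Y)$, $f\mapsto E_f$, is functorial: the pullback $p^\ast E_f$ is classified by $f\circ\Omega(p)$ for $p\colon X'\to X$, and the pushout $q_\ast E_f$ is classified by $q\circ f$ for $q\colon Y\to Y'$; moreover it is additive, so a block-diagonal morphism classifies the direct sum of the extensions of its blocks. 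Second, pulling back or pushing out a short exact sequence along an \emph{isomorphism} in $\cA$ yields an isomorphic middle term.

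\emph{The operations.} Write $I=\{k\}\sqcup I'$ and $J=\{\ell\}\sqcup J'$. Since $\Omega$ is fully faithful on $\underline{\cA}$, for each $i\in I'$ there is a morphism $X_i\to X_k$ whose image under $\Omega$ is $-a_i$; choosing honest representatives and assembling them with identities produces an honest endomorphism $\tilde u$ of $\bigoplus_i X_i$, invertible because it is unitriangular, with $\Omega\tilde u$ equal to the block matrix $u$ that is the identity except for the off-diagonal entries $-a_i\colon\Omega X_i\to\Omega X_k$. A direct computation in $\stHom$ shows that the $\ell$-th row of $f\circ u$ vanishes away from the $(k,\ell)$ entry, exactly because $f_{i\ell}=f_{k\ell}a_i$. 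Dually, choosing honest representatives of the $a'_j\colon Y_\ell\to Y_j$ and assembling them with identities yields an honest invertible endomorphism $\tilde v$ of $\bigoplus_j Y_j$, whose underlying block matrix of stable morphisms I call $v$; then the $k$-th column of $v\circ f\circ u$ vanishes away from the $(k,\ell)$ entry, because $f_{kj}=a'_j f_{k\ell}$. The entries that survive are $(vfu)_{k\ell}=f_{k\ell}$ and, for $i\in I'$, $j\in J'$, $(vfu)_{ij}=f_{ij}-f_{kj}a_i=\hat f_{ij}$; the three expressions for $\hat f_{ij}$ in the statement agree by the hypotheses $f_{kj}=a'_jf_{k\ell}$ and $f_{k\ell}a_i=f_{i\ell}$.

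\emph{Conclusion and the main point.} Combining these: $vfu=f_{k\ell}\oplus\hat f$ is block-diagonal, so by additivity it classifies $E_{f_{k\ell}}\oplus E_{\hat f}$; on the other hand $E_{vfu}=\tilde v_\ast\,\tilde u^\ast E_f$ by functoriality, and since $\tilde u$ and $\tilde v$ are honest isomorphisms its middle term is isomorphic to that of $E_f$. Hence $E_f\simeq E_{f_{k\ell}}\oplus E_{\hat f}$, as asserted. I expect the only delicate part to be bookkeeping: the hypotheses are identities in $\stHom$ only, so one must pass through honest representatives and honest isomorphisms in order to apply the pullback/pushout facts, and one must fix the matrix and sign conventions carefully so that the surviving entries land precisely on $\hat f_{ij}=f_{ij}-a'_jf_{k\ell}a_i$; the homological input itself is entirely standard.
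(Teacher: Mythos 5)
Your proof is correct and takes essentially the same route as the paper's: Gaussian elimination on the classifying matrix, using unitriangular automorphisms built from the $a_i$ and $a'_j$ to bring $f$ into the block-diagonal form $f_{k\ell}\oplus\hat f$. The only difference is that you lift the unipotent stable automorphisms to honest automorphisms of $\bigoplus_i X_i$ and $\bigoplus_j Y_j$ so that the middle term is preserved on the nose -- a point the paper's one-line proof (which works directly with stable automorphisms $\sigma,\tau$ of $\bigoplus_i\Omega X_i$ and $\bigoplus_j Y_j$) leaves implicit.
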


\begin{proof} The statement is a version of Gaussian elimination for the matrix $(f_{ij})_{i,j}$.

Set $a_k:=0\in \stEnd(\Omega X_k)$ and $a'_\ell:=0\in\stEnd(Y_\ell)$. Let $\sigma$ be the automorphism of $\bigoplus_i \Omega X_i$ uniquely determined by its restrictions $\sigma|_{X_i}=\id_{\Omega X_i}-a_i$, let $\tau$ be the automorphism of $\bigoplus_j Y_j$ uniquely determined by its corestrictions $\tau|^{Y_j}=\id_{Y_j}-a'_j$. Then $\tau f \sigma$ is a direct sum of the morphism $f_{k\ell}$ and the morphism $\hat f:\bigoplus_{i\neq k}\Omega X_i\to\bigoplus_{j\neq \ell} Y_j$ specified above.
\end{proof}

\begin{corollary} \label{cor::linear-poset-ext}
Consider $X\in\cA$. Assume $A\subset\stEnd(X)$ and $A'\subset\stEnd(\Omega X)$ are subalgebras and $M\subset\stHom(\Omega X,X)$ is an $(A,A')$-subbimodule.  Assume $(g_k)_{1\le k\le t}$ is a finite set of elements in $M$ such that

(a) for any non-zero $g\in M$, there is an index $k$ and automorphisms $u\in A^\times$, $u'\in (A')^\times$ such that $g=u g_k=g_k u'$, and

(b) $g_k\in A g_\ell$ and $g_k\in g_\ell A'$ for all $1\le k\le \ell\le t$.

Assume $f=(f_{ij})_{i,j}$ is a $m\times n$-matrix with entries in $M$. Then $E_f$ is isomorphic to a direct sum of objects from the set $\{X\}\cup\{E_{g_k}\}_k$.
\end{corollary}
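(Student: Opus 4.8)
The plan is to prove the statement by induction on $m+n$, using \Cref{lem::extensions-posets} repeatedly as a Gaussian-elimination procedure for the matrix $f=(f_{ij})$, splitting off one summand $E_{g_k}$ at each step. The base cases are $m=0$, $n=0$, or $f=0$: in each of these the extension class vanishes, so $E_f$ is the split extension $X^{m+n}$, a direct sum of copies of $X$. So assume $m,n\ge 1$ and that $f$ has a non-zero entry.

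First I would fix the pivot. By (a), every non-zero entry of $f$ determines at least one index in $\{1,\dots,t\}$; let $k$ be the largest index arising in this way, occurring at a position $(k_0,\ell_0)$, so $f_{k_0\ell_0}=u g_k=g_k u'$ for some $u\in A^\times$ and $u'\in (A')^\times$. Multiplying the $k_0$-th row by $u^{-1}$ (an automorphism of $X^m$; this keeps all entries in $M$, since $u^{-1}\in A$ and $M$ is a left $A$-module) I may assume $f_{k_0\ell_0}=g_k$.

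Next I would verify the hypotheses of \Cref{lem::extensions-posets} for the pivot indices $k_0,\ell_0$. For $i\neq k_0$ with $f_{i\ell_0}\neq 0$, write $f_{i\ell_0}=g_{k'}v'$ with $v'\in (A')^\times$ and $k'\le k$, using (a) and the maximality of $k$; since $k'\le k$, condition (b) gives $g_{k'}=g_k b$ with $b\in A'$, so $f_{i\ell_0}=g_k a_i=f_{k_0\ell_0}a_i$ with $a_i:=b v'\in A'\subset\stEnd(\Omega X)$ (and $a_i:=0$ if $f_{i\ell_0}=0$). Symmetrically, for $j\neq\ell_0$ one obtains $f_{k_0 j}=a'_j g_k=a'_j f_{k_0\ell_0}$ with $a'_j\in A\subset\stEnd(X)$, using the other half of (b). Thus \Cref{lem::extensions-posets} yields $E_f\simeq E_{f_{k_0\ell_0}}\oplus E_{\hat f}=E_{g_k}\oplus E_{\hat f}$, where $\hat f$ is the $(m-1)\times(n-1)$ matrix with $\hat f_{ij}=f_{ij}-a'_j f_{k_0\ell_0}a_i=f_{ij}-a'_j g_k a_i$.

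The crucial point, and the main thing to get right, is that $\hat f$ again has entries in $M$: as $a'_j\in A$, $g_k\in M$, and $a_i\in A'$, the bimodule structure of $M$ gives $a'_j g_k a_i\in M$. Hence the inductive hypothesis applies to $E_{\hat f}$, and $E_f$ is a direct sum of objects from $\{X\}\cup\{E_{g_k}\}_k$, as claimed. I expect the only real subtlety to be exactly this bookkeeping: it is what forces the pivot to correspond to the \emph{maximal}-index $g_k$ present, so that (b) runs in the direction that lets the pivot row and column factor through the single element $g_k$, and it is what forces the normalisation $f_{k_0\ell_0}=g_k$, so that the cofactors $a_i$ and $a'_j$ land in $A'$ and $A$ rather than merely in $\stEnd(\Omega X)$ and $\stEnd(X)$; this is precisely what keeps the reduced matrix inside $M$ so that the induction can continue.
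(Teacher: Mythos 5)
Your proof is correct and follows essentially the same route as the paper: pivot at an entry corresponding to the maximal index $g_k$, use (a) and (b) to factor the pivot's row and column through $g_k$ with cofactors in $A$ and $A'$, apply \Cref{lem::extensions-posets}, and note that the bimodule property keeps the reduced matrix inside $M$ so the induction (Gaussian elimination) can continue. One cosmetic slip: to normalise the pivot to $g_k$ you should either left-multiply the $\ell_0$-th column by $u^{-1}\in A$ (an automorphism of the target copy of $X$) or right-multiply the $k_0$-th row by $(u')^{-1}\in A'$ (an automorphism of the source copy of $\Omega X$); your ``$k_0$-th row by $u^{-1}$'' mixes the two sides, but either correct version is immediate from hypothesis (a) and does not affect the argument.
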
 

\begin{proof} This follows from repeated applications of \Cref{lem::extensions-posets}.

Indeed, each non-zero $f_{ij}$ equals $g_k$ for some $k$ up to units in $A$ and $A'$ by (a). Let $k_0$ be the maximal $k$ appearing, let $i_0,j_0$ be such that $f_{i_0,j_0}$ corresponds to $g_{k_0}$. Then by \Cref{lem::extensions-posets} using (b), $E_f$ has a direct summand isomorphic to $E_{m_{k_0}}$, and one isomorphic to $\hat f$, where $\hat f$ is the $(m-1)\times(n-1)$-matrix obtained from $f$ via Gaussian elimination and removal of the $i_0$-th row and $j_0$-th column, using elements of $A$ and $A'$ as ``scalars''.

The assumptions ensure that $\hat f$ will still be of the form required to repeat the process as long as $\hat f$ has at least one row and column.
\end{proof}

\subsection{Set-up} 
We now turn to the category $\Rep C_p^r$, for an arbitrary prime $p$ and some $r\ge1$. As before, we fix an embedding $C_p^r\hookrightarrow SL_2$. We denote by $(T_i)_{0\le i\le p^r-1}$ the restrictions of the $SL_2$-tilting objects in $\Rep C_p^r$, and by $(St_j=T_{p^j-1})_{1\le j\le r}$ the restricted Steinberg modules. Also as before, we set $V:=T_1$ and $S:=St_{r-1}$.

\begin{lemma} \label{lem::Omega-stD} $\Omega$ restricts to the endofunctor $-\o T_{p-2}^{(r-1)}$ on the full subcategory of $\Stab C_p^r$ on $S$-projective objects.
\end{lemma}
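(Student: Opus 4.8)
The plan is to deduce the statement from its special case $X=S$, which is a short computation, and then to promote it to arbitrary $S$-projective modules by a direct-summand argument.

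First I would prove $\Omega S\simeq S\otimes T_{p-2}^{(r-1)}$ in $\Stab C_p^r$. On one hand, \Cref{prop:Omega-Ti} with $a=p^{r-1}-1$ (which lies in the admissible range $[p^{r-1}-1,2p^{r-1}-2]$) and $b=0$ gives $\Omega S=\Omega\bigl(T_{p^{r-1}-1}\bigr)\simeq T_{p^{r-1}-1+p^{r-1}(p-2)}=T_{p^{r}-p^{r-1}-1}$. On the other hand, iterating Donkin's formula exactly as in the proof of \Cref{prop:Omega-Ti} gives $S\otimes T_{p-2}^{(r-1)}=T_{p^{r-1}-1}\otimes T_{p-2}^{(r-1)}\simeq T_{p^{r-1}-1+p^{r-1}(p-2)}=T_{p^{r}-p^{r-1}-1}$. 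Comparing the two, $\Omega S\simeq S\otimes T_{p-2}^{(r-1)}$; since both sides are indecomposable and non-projective over $C_p^r$, this is even a genuine isomorphism of representations, so there is a short exact sequence $0\to S\otimes T_{p-2}^{(r-1)}\to P\to S\to 0$ with $P$ projective.

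Next, writing $\Omega$ as the endofunctor $-\otimes\Omega(\unit)$ (the model $K\otimes-$ with $K$ the augmentation ideal), for any $M\in\Rep C_p^r$ one has
$$
\Omega(M\otimes S)=M\otimes S\otimes\Omega(\unit)=M\otimes\Omega(S)\simeq M\otimes S\otimes T_{p-2}^{(r-1)}=(M\otimes S)\otimes T_{p-2}^{(r-1)},
$$
equivalently, tensoring the exact sequence above with $M$ exhibits $\Omega(M\otimes S)\simeq(M\otimes S)\otimes T_{p-2}^{(r-1)}$. Thus the two endofunctors $\Omega$ and $-\otimes T_{p-2}^{(r-1)}$ of $\Stab C_p^r$ already agree on every object of the form $M\otimes S$, and hence on all of their direct sums.

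Finally, an arbitrary $S$-projective module $X$ is a direct summand of $X\otimes S\otimes S^{\ast}$ (and $S^{\ast}\simeq S$), so $X$ is a summand of $M\otimes S$ for a suitable $M$; writing $M\otimes S\simeq X\oplus X'$ in $\Stab C_p^r$, with $X'$ again $S$-projective, the previous step yields $\Omega X\oplus\Omega X'\simeq\bigl(X\otimes T_{p-2}^{(r-1)}\bigr)\oplus\bigl(X'\otimes T_{p-2}^{(r-1)}\bigr)$, and I would conclude $\Omega X\simeq X\otimes T_{p-2}^{(r-1)}$. I expect the main obstacle to be precisely this last passage to a direct summand: making it rigorous amounts to checking that $-\otimes S$ reflects isomorphisms between $S$-projective modules — equivalently, that the isomorphism $\Omega(M\otimes S)\simeq(M\otimes S)\otimes T_{p-2}^{(r-1)}$ is compatible with the idempotent cutting out $X$ — and this is where a Krull–Schmidt cancellation together with a suitable induction (e.g. on the number of indecomposable summands, or on $\dim M$ chosen minimally) will be needed, whereas the first two steps are routine.
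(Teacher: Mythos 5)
Your steps 1 and 2 are fine: the identifications $\Omega S\simeq T_{p^r-p^{r-1}-1}\simeq S\o T_{p-2}^{(r-1)}$ via \Cref{prop:Omega-Ti} and Donkin's formula are correct, and they do give $\Omega(M\o S)\simeq (M\o S)\o T_{p-2}^{(r-1)}$ in $\Stab C_p^r$ for every $M$. The genuine gap is exactly the passage you flag at the end, and it is not a technicality you can defer to ``Krull--Schmidt cancellation plus induction''. From $\Omega X\oplus\Omega X'\simeq (X\o T_{p-2}^{(r-1)})\oplus(X'\o T_{p-2}^{(r-1)})$ Krull--Schmidt only tells you that the multisets of indecomposable summands agree; there is no common summand to cancel, and nothing rules out a priori that the matching is ``crossed'' (e.g.\ $\Omega X\simeq X'\o T_{p-2}^{(r-1)}$). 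Neither of your suggested inductions is shown to break this symmetry, and an unstructured object-by-object comparison also falls short of what the lemma is used for later: in \Cref{lem::Hom-Omega-S-S} one needs the identification on morphisms, $f\mapsto\Omega f\leftrightarrow f\o T_{p-2}^{(r-1)}$, i.e.\ a natural isomorphism of the two functors on $S$-projectives, which your route does not produce even if the cancellation were repaired.

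The missing idea, and the reason the paper's proof is a two-line argument, is to work with the short exact sequence $0\to T_{p-2}^{(r-1)}\to T_{p-1}^{(r-1)}\to\one\to 0$ rather than with an abstract stable isomorphism for $\Omega S$. If $X$ is $S$-projective, then $X\o T_{p-1}^{(r-1)}$ is projective: the paper deduces this from supports (any non-projective $S$-projective module has the one-point support $\blambda$ of \Cref{Cor:Supp}, which does not lie in $\supp T_{p-1}^{(r-1)}$); alternatively, in the spirit of your computation, $S\o T_{p-1}^{(r-1)}\simeq T_{p^r-1}\simeq\kk E$ by Donkin, so $X\o T_{p-1}^{(r-1)}$ is a summand of a projective module whenever $X$ is a summand of some $M\o S$ (\Cref{DefWProj}). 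Tensoring the displayed sequence with $X$ then has projective middle term, so $\Omega X\simeq X\o T_{p-2}^{(r-1)}$ in $\Stab C_p^r$, and since this isomorphism comes from a fixed sequence tensored with $X$ it is natural in $X$, which is the functorial statement the lemma asserts. So your first two steps should be replaced by (or supplemented with) this projectivity observation; once you have it, the summand difficulty disappears entirely.
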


\begin{proof} $S$, and hence any non-projective $S$-projective object $X$, has a one-point support consisting of a point not fixed by the Frobenius automorphism by \Cref{Cor:Supp}. Hence, $T_{p-1}^{(r-1)}\o S\o X$ is projective, and the assertion follows, as $T_{p-2}^{(r-1)}\simeq\ker(T_{p-1}^{(r-1)}\to\one)$.
\end{proof}

In the following, we consider $\stHom(\Omega S, S)$ as an $\stEnd(S)$-bimodule, using $\Omega$ for the right action.

\begin{lemma} \label{lem::Hom-Omega-S-S} For any non-zero $C_p^r$-morphism $f_0:T_{p-2}^{(r-1)}\to\one$, the map 
$$
\stEnd(S)\to\stHom(\Omega S,S)\simeq\stHom(S\o T_{p-2}^{(r-1)} ,S), \quad
f\mapsto f\o f_0,
$$
is an isomorphism of $\stEnd(S)$-bimodules, using the identifications
$$
\stEnd(S)\simeq\stEnd(\Omega S)\simeq\stEnd(S\o T_{p-2}^{(r-1)}), \quad
f\mapsto \Omega f\mapsto f\o T_{p-2}^{(r-1)} .
$$
\end{lemma}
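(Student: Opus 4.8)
The plan is to show that $\Phi\colon\stEnd(S)\to\stHom(\Omega S,S)$, $f\mapsto f\o f_0$, is (i) a homomorphism of $\stEnd(S)$-bimodules, (ii) injective, and (iii) a map between $\bk$-vector spaces both of dimension $p^{r-1}$; an injective linear map between finite-dimensional spaces of equal dimension is bijective, and (i) then upgrades it to a bimodule isomorphism. For (i) I would simply compute: using the identification $\Omega S\simeq S\o T_{p-2}^{(r-1)}$ from \Cref{lem::Omega-stD}, post-composing $f\o f_0$ with $g_1\in\stEnd(S)$ gives $(g_1f)\o f_0$, while pre-composing with the image $g_2\o\id_{T_{p-2}^{(r-1)}}$ of $g_2\in\stEnd(S)$ under $\stEnd(S)\simeq\stEnd(\Omega S)$ gives $(fg_2)\o f_0$; hence $g_1\cdot\Phi(f)\cdot g_2=\Phi(g_1fg_2)$.

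For (ii) the key preliminary observation is that $\stHom(\Omega S,S)=\Hom_E(\Omega S,S)$, i.e.\ no nonzero morphism $\Omega S\to S$ is stably trivial. Applying $\Hom_E(-,S)$ to $0\to\Omega S\to\bk E\to S\to0$, where $\bk E$ is the projective cover of the cyclic module $S$, gives an exact sequence $0\to\End_E(S)\to\Hom_E(\bk E,S)\to\Hom_E(\Omega S,S)\to\Ext^1_E(S,S)\to0$. By \Cref{LemmaLambda}, $\dim_\bk\End_E(S)=p^{r-1}=\dim_\bk S=\dim_\bk\Hom_E(\bk E,S)$, so the first arrow is an isomorphism and the restriction map $\Hom_E(\bk E,S)\to\Hom_E(\Omega S,S)$ is zero. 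Since $\bk E$ is self-injective with a one-dimensional socle, and that socle lies in $\Omega S=\ker(\bk E\twoheadrightarrow S)$ (because $S$ is cyclic but not free), $\bk E$ is also the injective envelope of $\Omega S$; hence the stably trivial morphisms $\Omega S\to S$ are precisely those factoring through $\Omega S\hookrightarrow\bk E$, i.e.\ the image of the restriction map, which is zero. Granting this, injectivity of $\Phi$ is immediate: under $\Omega S\simeq S\o T_{p-2}^{(r-1)}$ the honest morphism $f\o f_0$ is $s\o t\mapsto f_0(t)f(s)$, which vanishes iff $f=0$ (as $f_0\neq0$), and $\stEnd(S)=\End_E(S)$ by \Cref{LemmaLambda}.

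For (iii), $\dim\stEnd(S)=p^{r-1}$ by \Cref{LemmaLambda}, and the exact sequence above identifies $\dim\stHom(\Omega S,S)$ with $\dim\Ext^1_E(S,S)$, so it remains to show the latter equals $p^{r-1}$. Here I would use a minimal projective resolution of $S$: since $S$ is cyclic, $P(S)=\bk E$ and $\dim\Omega S=p^r-p^{r-1}$; since $\supp S$ is a single point (\Cref{Cor:Supp}), $S$ is periodic of period dividing $2$, so $\Omega^2S\simeq S$, and comparing dimensions forces $P(\Omega S)=\bk E$ as well, i.e.\ $\Omega S$ is cyclic. Thus the minimal resolution of $S$ is $\cdots\to\bk E\to\bk E\to\bk E\to S\to0$, periodic of period dividing $2$. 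Applying $\Hom_E(-,S)$ gives a periodic cochain complex $S\xrightarrow{d^0}S\xrightarrow{d^1}S\to\cdots$ whose $0$-th cohomology is $\End_E(S)$; since $\End_E(S)$ already has the full dimension $p^{r-1}$ one gets $d^0=0$, and for $p$ odd the identification $\Ext^2_E(S,S)\cong\uHom(\Omega^2S,S)=\uHom(S,S)=\End_E(S)$ then forces $d^1=0$ too, so $\Ext^1_E(S,S)\cong S$ is $p^{r-1}$-dimensional (for $p=2$ the resolution has period $1$, all differentials coincide and hence all vanish, with the same conclusion). Combining (i)--(iii) finishes the proof. The routine parts are (i) and the reduction $\stHom=\Hom$ in (ii); the one step needing genuine input is the dimension count $\dim\Ext^1_E(S,S)=p^{r-1}$ in (iii), which could also be obtained from Tate duality over the symmetric algebra $\bk E$, $\uHom(\Omega S,S)\cong\uHom(\Omega^{-2}S,S)^\ast\cong\End_E(S)^\ast$, using that $S$ has period dividing $2$.
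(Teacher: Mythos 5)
Your proof is correct, but it reaches the two key facts by a genuinely different route than the paper. The paper establishes $\stHom(\Omega S,S)=\Hom(\Omega S,S)$ by showing that $S\o\Omega S$ has no projective summands, read off from the highest weights in the corresponding tensor product of $SL_2$-tilting modules; injectivity is then immediate, and surjectivity follows from the one-line bound $\dim\Hom(\Omega S,S)\le\dim S=p^{r-1}$, using that $\Omega S$ is cyclic. You instead stay entirely inside $\Rep E$: you deduce $\stHom=\Hom$ from the long exact sequence of $0\to\Omega S\to\bk E\to S\to 0$ together with the observation that $\bk E$, being local self-injective with simple socle, is the injective hull of $\Omega S$, so the stably trivial maps are exactly the image of the (zero) restriction map. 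This is a more elementary replacement for the $SL_2$-weight argument, resting only on $\dim\End_E(S)=\dim S$ from \Cref{LemmaLambda}. For the dimension count you compute $\dim\Ext^1_E(S,S)=p^{r-1}$ from a periodic minimal resolution, using the one-point support of $S$ (\Cref{Cor:Supp}); this works, but note that you prove cyclicity of $\Omega S$ along the way, and at that point you could simply have concluded $\dim\Hom(\Omega S,S)\le\dim S=p^{r-1}$ and stopped, which is exactly the paper's shortcut. One small point worth making explicit: your injectivity step evaluates $f\o f_0$ as an honest map on the representative $S\o T_{p-2}^{(r-1)}$ of $\Omega S$, while your vanishing of stably trivial maps was proved for the minimal $\Omega S\subset\bk E$; this transfers because $\dim(S\o T_{p-2}^{(r-1)})=p^r-p^{r-1}<p^r$, so this representative has no free (hence no projective) summands over the local algebra $\bk E$ and is honestly isomorphic to the minimal $\Omega S$.
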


\begin{proof}
    The map is a well-defined morphism of bimodules. $S\o\Omega S=T_{p^{r-1}-1}\o T_{p^r-p^{r-1}-1}$ has no projective summands, as follows directly from considering the tensor product of the corresponding $SL_2$-tilting modules, which does not have a submodule isomorphic to $T_{p^r-1}$, as the corresponding highest weight does not appear. This means $\stHom(\Omega S,S)\simeq\Hom(\Omega S,S)$, which means the described bimodule morphism is injective. 
    
    As $\Omega S$ is cyclic, 
    $$\dim\stHom(\Omega S,S)\le\dim S= p^{r-1},$$ which is exactly the dimension of $\stEnd(S)$ by \Cref{LemmaLambda}.
\end{proof}

For the rest of the section, we fix a choice of $f_0:T_{p-2}^{(r-1)}\tto\one$.

\subsection{Groups of rank two} 
We first consider $r=2$. While the implications (like \Cref{prop::self-ext-S}) will also follow from the more general results in \Cref{sec::LL2-r-arbitrary}, they are somewhat more elementary, as they do not rely on the ideas and results of \Cref{sec::subcategories}.

\begin{lemma} \label{lem::ext-maps-r2} Assume $r=2$. Then there are $(\phi_i)_{1\le i\le p}\in\stEnd(S)$ such that

(a) for any non-zero element $g\in\stHom(\Omega S,S)$, there is an $1\le i\le p$ and automorphisms $u,u'\in\stAut(S)$ such that $g=u \phi_i \o f_0=\phi_i u'\o f_0$, and

(b) $\phi_i\in \phi_j\stEnd(\Omega S)$ and $\phi_i\in\stEnd(S)\phi_j$ for all $1\le j\le i\le p$. 
\end{lemma}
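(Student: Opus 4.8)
The plan is to reduce the whole statement to the elementary divisor theory of the truncated polynomial algebra $\bk[z]/(z^p)$. Since $r=2$, \Cref{LemmaLambda} identifies $\stEnd_E(S)=\uEnd_E(S)=\End_E(S)$ with $\bk[z]/(z^p)$ as a $\bk$-algebra (this uses that $S=St_1$ is cyclic of dimension $p$, so the isomorphism is forced by a dimension count). In particular $\stEnd(S)$ is a commutative local ring, its units are exactly the elements with non-zero constant term, these units form $\stAut(S)$, and every non-zero element of $\stEnd(S)$ can be written as $u\,z^{i-1}$ for a unit $u$ and a unique $1\le i\le p$, with $i-1$ the order of vanishing. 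I would then set $\phi_i\in\stEnd(S)$ to be the endomorphism corresponding to $z^{i-1}$, for $1\le i\le p$.

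Next I would apply \Cref{lem::Hom-Omega-S-S} with the already-fixed $f_0$: it gives an isomorphism of $\stEnd(S)$-bimodules $\stEnd(S)\xrightarrow{\sim}\stHom(\Omega S,S)$, $f\mapsto f\o f_0$, where the right module structure is transported along the identification $\stEnd(\Omega S)\cong\stEnd(S)$. Since $\stEnd(S)$ is commutative, this realises $\stHom(\Omega S,S)$ as the free rank-one bimodule on which the left and right actions both agree with multiplication in $\bk[z]/(z^p)$. Claim (a) is then immediate: a non-zero $g\in\stHom(\Omega S,S)$ equals $\tilde g\o f_0$ for a non-zero $\tilde g\in\stEnd(S)$, and factoring $\tilde g=u\,z^{i-1}=z^{i-1}\,u$ with $u\in\stAut(S)$ yields $g=u\phi_i\o f_0=\phi_i u\o f_0$, so one may take $u'=u$. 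For (b), when $1\le j\le i\le p$ the right ideal $\phi_j\stEnd(\Omega S)$ and the left ideal $\stEnd(S)\phi_j$ both correspond, under these identifications, to $z^{j-1}\,\bk[z]/(z^p)\subset\bk[z]/(z^p)$, which contains $z^{i-1}$, i.e. contains $\phi_i$.

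The only step carrying any content is the identification of $\stEnd(S)$ and of the $\stEnd(S)$-bimodule $\stHom(\Omega S,S)$, which is exactly what \Cref{LemmaLambda} and \Cref{lem::Hom-Omega-S-S} supply; once those are granted, the argument is a routine computation inside $\bk[z]/(z^p)$, so I do not expect a genuine obstacle. It is worth recording in the proof that one must exhibit all $p$ representatives $\phi_1,\dots,\phi_p$, one for each orbit of $\stAut(S)$ on the non-zero elements of $\stHom(\Omega S,S)$: this is precisely the data needed to run the Gaussian-elimination scheme of \Cref{cor::linear-poset-ext} for the full bimodule $\stHom(\Omega S,S)$.
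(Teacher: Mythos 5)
Your proposal is correct and follows essentially the same route as the paper's proof: both identify $\stEnd(S)\simeq\kk[z]/(z^p)$ via \Cref{LemmaLambda}, use \Cref{lem::Hom-Omega-S-S} to view $\stHom(\Omega S,S)$ as the regular bimodule, take $\phi_i=z^{i-1}$, and conclude from the fact that every non-zero element of this local ring is a unit times a power of $z$. Your write-up merely spells out the details the paper leaves implicit.
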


\begin{proof} Set $A:=\stEnd(S)\simeq\End(S)\simeq \kk[z]/z^p$, see \Cref{LemmaLambda}. By \Cref{lem::Hom-Omega-S-S}, $M:=\stHom(\Omega S,S)$ is isomorphic as an $A$-bimodule to the regular $A$-bimodule. 

As $z\in A$ is nilpotent, any element of $A$ is a power of $z$ up to multiplication by a unit. The assertions follow with $\phi_i:=z^{i-1}$.
\end{proof}

\begin{proposition} \label{prop::self-ext-S} Assume $r=2$. For all $a,b\ge0$, any extension of $S^a$ by $S^b$ is in $\langle S=T_{p-1},T_p,\dots, T_{2p-1}\rangle_\oplus\subset\cT$. 
\end{proposition}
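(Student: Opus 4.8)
The plan is to present an arbitrary extension of $S^a$ by $S^b$ as a matrix of stable maps and reduce it, by Gaussian elimination over $\stEnd_E(S)$, to a direct sum of $S$ and self-extensions of $S$; then to identify all self-extensions of $S$ as restricted tilting modules by a dimension count. First I would note that any extension of $S^a$ by $S^b$ has the form $E_f$ for an $a\times b$ matrix $f=(f_{ij})$ with entries in $M:=\stHom_E(\Omega S,S)\simeq\Ext^1_E(S,S)$, where $i,j$ index the copies of $S$ in the sub- and quotient modules. By \Cref{LemmaLambda} (with $r=2$), $A:=\stEnd_E(S)=\End_E(S)\simeq\kk[z]/z^p$, and $A':=\stEnd_E(\Omega S)\simeq A$ via $\Omega$; by \Cref{lem::Hom-Omega-S-S}, $f\mapsto f\o f_0$ identifies $M$ with the regular $(A,A')$-bimodule. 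Since $A$ is commutative and local with nilpotent maximal ideal $(z)$, the elements $g_k:=\phi_{p+1-k}\o f_0$, $1\le k\le p$, with the $\phi_i$ of \Cref{lem::ext-maps-r2}, satisfy hypotheses (a) and (b) of \Cref{cor::linear-poset-ext}: each non-zero element of $M$ is a two-sided unit multiple of some $g_k$, and $g_k\in Ag_\ell\cap g_\ell A'$ for $k\le\ell$, as $\phi_{p+1-k}$ is divisible by $\phi_{p+1-\ell}$. \Cref{cor::linear-poset-ext} then gives that $E_f$ is a direct sum of copies of $S=T_{p-1}$ and of the modules $E_{g_k}$.

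The remaining task is to show each $E_{g_k}$ lies in $\langle T_{p-1},T_p,\dots,T_{2p-1}\rangle_\oplus$. As $g_k\neq0$, $E_{g_k}$ is a non-split self-extension of $S$, so it suffices to prove that, up to isomorphism, the non-split self-extensions of $S$ in $\Rep E$ are exactly $T_p,T_{p+1},\dots,T_{2p-1}$. I would argue by comparing two counts. On one side, $\dim_\kk\Ext^1_E(S,S)=\dim_\kk A=p$, and the isomorphism type of the middle term of a self-extension depends only on the extension class modulo pushout and pullback along automorphisms of $S$; under $\Ext^1_E(S,S)\simeq M\simeq A$ (using $\Aut_E(S)=\stAut_E(S)$, since $\End_E(S)=\stEnd_E(S)$) this becomes the $A^\times\times A^\times$-action on the regular bimodule, which has exactly $p$ non-zero orbits, one per $z$-adic valuation $0,\dots,p-1$. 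So there are at most $p$ non-split self-extensions of $S$ up to isomorphism. On the other side, by \Cref{cor:self-ext1} with $r=2$, the $p$ modules $T_l$ for $l\in\{p,p+1,\dots,2p-1\}$ are self-extensions of $S$ over $E$, and they are pairwise non-isomorphic and indecomposable (hence genuinely non-split) by \Cref{lem:restricted-tilting}, as $l<p^2$. Comparing counts, these are all the non-split self-extensions, and each lies in $\cT$ by \Cref{lem:restricted-tilting}.

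Combining the two steps gives that $E_f$ is a direct sum of modules among $T_{p-1},T_p,\dots,T_{2p-1}$, and the cases $a=0$ or $b=0$ are trivial. The hard part is the second step: for $p>2$ there is no explicit classification of indecomposable $E$-modules, so one cannot simply enumerate the self-extensions of $S$; the argument above circumvents this by matching $\dim_\kk\Ext^1_E(S,S)$ against the supply of tilting self-extensions furnished by \Cref{cor:self-ext1}, which forces every self-extension of $S$ to be one of them.
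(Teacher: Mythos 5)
Your proof is correct and follows essentially the same route as the paper: parametrise the extension as a matrix over $\stHom(\Omega S,S)$, reduce it via \Cref{lem::ext-maps-r2} and \Cref{cor::linear-poset-ext}, and then match the at most $p$ isomorphism classes of non-split self-extensions of $S$ against the $p$ tilting self-extensions supplied by \Cref{cor:self-ext1}. Your explicit reindexing $g_k:=\phi_{p+1-k}\o f_0$ and the remark that the $T_l$ are indecomposable hence non-split are just slightly more detailed versions of steps the paper leaves implicit.
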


\begin{proof} The extensions are parametrised by $\stHom((\Omega S)^a,S^b)\simeq \stHom(\Omega S,S)^{a\times b}$.

It follows from \Cref{lem::ext-maps-r2} with \Cref{cor::linear-poset-ext} that any extension as above is a direct sum of self-extensions of $S$ corresponding to the $p$ maps $\phi_i$, and copies of $S$ itself.

By \Cref{lem::ext-maps-r2} there are at most $p$ isomorphism classes of non-split self-extensions of $S$. The conclusion follows from \Cref{cor:self-ext1} which realises $p$ such extensions.
\end{proof}

It follows that \Cref{conj-a} is true for $r=2$ when restricted to modules of Loewy length at most $2$:
\begin{corollary} \label{cor:ll-2}
    Any module of Loewy length at most $2$ lies in $\widetilde\cT$.
\end{corollary}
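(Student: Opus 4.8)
The plan is to reduce an arbitrary module $X$ of Loewy length at most $2$ to the situation already handled in \Cref{prop::self-ext-S}. Recall that by \Cref{lem:justS} it suffices to show $S\o X\in\cT$, and by \Cref{lem:closure}, $\widetilde\cT$ is closed under direct sums and summands, so we may assume $X$ is indecomposable. If $X$ is semisimple then $X$ is trivial, hence a direct sum of copies of $\one$, and $S\o\one=S\in\cT$. Otherwise $X$ is a non-split extension $0\to\rad X\to X\to\top X\to 0$ with $\rad X$ and $\top X$ both trivial modules, say $\rad X\simeq\one^b$ and $\top X\simeq\one^a$. Thus $S\o X$ is an extension of $S^a$ by $S^b$, and \Cref{prop::self-ext-S} immediately gives $S\o X\in\langle S, T_p,\dots,T_{2p-1}\rangle_\oplus\subset\cT$, whence $X\in\widetilde\cT$.

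First I would spell out the reduction to indecomposables and the semisimple base case, both of which are one line each given \Cref{lem:closure} and \Cref{lem:justS}. Then I would note that for a non-semisimple module of Loewy length $\le 2$ over the elementary abelian $p$-group $E=C_p^2$, the radical is contained in the socle and the group acts trivially on both $\rad X$ and $X/\rad X$ (since $\rad X\supseteq \rad(\kk E)\cdot X$ and $(\rad\kk E)\cdot\rad X\subseteq\rad^2 X=0$, and similarly the quotient is killed by $\rad\kk E$). Tensoring the defining short exact sequence of $X$ with the exact functor $S\o-$ then exhibits $S\o X$ as an extension of $S^{\oplus a}$ by $S^{\oplus b}$ where $a=\dim\top X$, $b=\dim\rad X$, and \Cref{prop::self-ext-S} finishes it.

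I do not expect a serious obstacle here: \Cref{prop::self-ext-S} has already done the real work of classifying, via Gaussian elimination over $\stEnd(S)$, all extensions of direct sums of copies of $S$ by direct sums of copies of $S$, and of identifying the resulting indecomposable summands with the tilting modules $T_{p-1},\dots,T_{2p-1}$. The only point requiring a touch of care is the observation that a Loewy-length-$\le 2$ module over $C_p^2$ (indeed over any finite $p$-group) necessarily has trivial radical and trivial head, so that $S\o X$ really is an extension of a sum of copies of $S$ by a sum of copies of $S$ and not something more general; this is standard since $\one$ is the unique simple module in $\Rep_\kk E$ for $E$ a $p$-group over a field of characteristic $p$.

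\begin{proof}
By \Cref{lem:justS} it suffices to show that $S\o X\in\cT$ for every module $X$ of Loewy length at most $2$, and since $\widetilde\cT$ is closed under direct sums and summands by \Cref{lem:closure}, we may assume $X$ is indecomposable. As $\one$ is the unique simple object of $\Rep_\kk C_p^2$, any module of Loewy length at most $2$ has both its radical and its head isomorphic to a direct sum of copies of $\one$. If $X$ is semisimple it is trivial, so $S\o X$ is a direct sum of copies of $S=T_{p-1}\in\cT$. Otherwise, writing $a:=\dim_\kk(X/\rad X)$ and $b:=\dim_\kk(\rad X)$, the defining short exact sequence $0\to\rad X\to X\to X/\rad X\to 0$ becomes, after applying the exact functor $S\o-$, a short exact sequence exhibiting $S\o X$ as an extension of $S^{\oplus a}$ by $S^{\oplus b}$. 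By \Cref{prop::self-ext-S}, $S\o X\in\langle S=T_{p-1},T_p,\dots,T_{2p-1}\rangle_\oplus\subset\cT$. Hence $X\in\widetilde\cT$.
\end{proof}
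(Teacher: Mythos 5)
Your proof is correct and takes essentially the same route as the paper: the paper's one-line argument is exactly that $S\o X$ for $X$ of Loewy length at most $2$ is an extension of copies of $S$ by copies of $S$, so \Cref{prop::self-ext-S} applies; you merely spell out the (standard) facts that the head and radical of such an $X$ are trivial modules and that one may reduce to indecomposables, the latter not even being necessary since the proposition allows arbitrary $a,b\ge 0$.
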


\begin{proof}
    $S\o X$, for $X$ of Loewy length $2$, is of the form studied in \Cref{prop::self-ext-S}.
\end{proof}

\begin{corollary} \label{cor:extensions-p2} For $p=2$ and $r=2$, the tensor product $S\o X$ is in $\langle T_1,T_2,T_3\rangle_\oplus=\cT_r\subset\cT$ for all $X\in\Rep C_2^2$.
\end{corollary}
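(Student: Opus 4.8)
The plan is to deduce the statement from \Cref{cor:ll-2} (the Loewy length $\le 2$ case, already established for $r=2$) together with the $\Omega$-invariance of $\widetilde\cT$ recorded in \Cref{lem:closure}. Recall that for $p=r=2$ we have $S=St_1=T_1$ and $\cT_r=\langle T_1,T_2,T_3\rangle_\oplus$ (\Cref{lem:restricted-tilting}), and that by \Cref{lem:justS} a module $X$ lies in $\widetilde\cT$ if and only if $S\o X\in\cT$, if and only if $S\o X\in\cT_r$. So it suffices to prove $\widetilde\cT=\Rep C_2^2$.

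The key observation I would record is that for \emph{every} $X\in\Rep C_2^2$, the syzygy $\Omega X$ has Loewy length at most $2$. Indeed, fix a projective cover $\pi\colon P\tto X$ and set $\Omega X:=\ker\pi$; since $\pi$ is a projective cover, $\Omega X\subseteq\rad(P)$. Now $P$ is a finite direct sum of copies of $\bk E=\bk[X_1,X_2]/(X_1^2,X_2^2)$, and $\rad(\bk E)$ has Loewy length $2$: its radical $\rad^2(\bk E)=\bk\,X_1X_2$ is one-dimensional and $\rad^3(\bk E)=0$. Hence $\rad(P)$ has Loewy length $2$, and so does its submodule $\Omega X$, because $\rad^2(M)=\rad^2(\bk E)M\subseteq\rad^2(\bk E)N=\rad^2(N)$ whenever $M\subseteq N$.

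To conclude: by \Cref{cor:ll-2}, $\Omega X\in\widetilde\cT$. Since $\widetilde\cT$ is closed under $\Omega$-shifts (in particular the inverse Heller shift), direct sums, and direct summands (\Cref{lem:closure}), and since $X$ is a direct summand of $\Omega^{-1}(\Omega X)\oplus (\bk E)^{\oplus k}$ for some $k\ge 0$ — with $(\bk E)^{\oplus k}$ lying in $\widetilde\cT$, as $S\o\bk E$ is projective hence in $\cT$ — it follows that $X\in\widetilde\cT$. Therefore $\widetilde\cT=\Rep C_2^2$, and in particular $S\o X\in\cT_r=\langle T_1,T_2,T_3\rangle_\oplus$ for all $X$.

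There is no real obstacle here once \Cref{cor:ll-2} is available: the entire content is the observation in the second paragraph, and it is genuinely special to $r=2$ — for $r>2$ one has $\rad(\bk C_2^r)$ of Loewy length $r$, so syzygies need not have Loewy length $\le 2$. The only points requiring a moment's care are that one must take $\Omega X$ via a \emph{projective} cover (so that $\ker\pi$ lands inside $\rad(P)$, not merely inside $P$), and that $\widetilde\cT$ is closed under $\Omega^{-1}$, which is part of \Cref{lem:closure}.
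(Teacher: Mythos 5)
Your proof is correct, and it takes a slightly different route from the paper's. The paper argues directly at the level of objects: it reduces to indecomposable non-projective $X$ and observes that such $X$ automatically have Loewy length at most $2$, because the unique indecomposable projective $\bk C_2^2$-module has Loewy length $3$ (an indecomposable of Loewy length $3$ would contain, hence split off, a copy of the injective module $\bk E$). You instead keep $X$ arbitrary, note that any syzygy $\Omega X\subseteq\rad(P)$ has Loewy length at most $2$ since $\rad(\bk C_2^2)$ does, apply \Cref{cor:ll-2} to $\Omega X$, and then recover $X$ via $\Omega^{-1}$ and projective summands using the closure properties of $\widetilde\cT$. Both arguments rest entirely on \Cref{cor:ll-2}; the paper's reduction is shorter, while yours trades the splitting-off-a-projective observation for the $\Omega$-invariance of $\widetilde\cT$, which makes the special role of $r=2$ (Loewy length of $\rad(\bk E)$) very transparent. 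One small point of care: if one reads ``$\Omega$-shifts'' in \Cref{lem:closure} as referring only to $\Omega$ (its proof only treats $\Omega$ explicitly), closure under $\Omega^{-1}$ still follows from what is proved there, since $\Omega^{-1}(X)$ agrees up to projective summands with $(\Omega(X^*))^*$, membership in $\widetilde\cT$ is insensitive to projective summands, and $\widetilde\cT$ is closed under duals and $\Omega$; so your use of $\Omega^{-1}$ is legitimate, but it is worth making that deduction explicit.
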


\begin{proof} It suffices to consider non-projective indecomposable objects $X$. As the projective indecomposable $C_2^2$-module has Loewy length $3$, such $X$ have Loewy length at most $2$.
\end{proof}

The above proves \Cref{conj-a} for $p=r=2$. A direct proof of this case using the classification of indecomposable $C_2^2$-modules was given in \cite{CF}*{Remark~5.4.4}. \Cref{cor:extensions-p2} is the simplest proof of \Cref{conj-b} in this special case without invoking the classification of indecomposable $C_2^2$-modules that we are aware of.

\begin{corollary}\label{CorSelfExt} For any $p$ and $r=2$, and for $a,b\ge0$, any extension of $(\Omega S)^a$ by $(\Omega S)^b$ is in $\langle  T_{p^2-2p-1},\Omega S=T_{p^2-p-1},T_{p^2-p},\dots,T_{p^2-2}\rangle_\oplus$. 
\end{corollary}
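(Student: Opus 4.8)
The plan is to obtain this as a direct consequence of \Cref{prop::self-ext-S} by applying the shift autoequivalence $\Omega$ of $\Stab C_p^2$. Since $\Omega$ is a triangulated autoequivalence, any extension of $(\Omega S)^a$ by $(\Omega S)^b$ in $\Rep C_p^2$ is, in $\Stab C_p^2$, the $\Omega$-translate of an extension of $S^a$ by $S^b$ (with the two middle terms matching up to projective direct summands, as $\Omega^{-1}\Omega S\simeq S$ in $\Stab$). So I would first invoke \Cref{prop::self-ext-S} to write an arbitrary extension of $S^a$ by $S^b$ as a direct sum of modules drawn from $\{T_{p-1},T_p,\dots,T_{2p-1}\}$, and then apply $\Omega$ summand by summand, using that $\Omega$ commutes with finite direct sums.

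The one remaining computation is that of $\Omega T_c$ for $p-1\le c\le 2p-1$, via \Cref{prop:Omega-Ti} with $r=2$. For $p-1\le c\le 2p-2$ (so $c$ itself is the parameter ``$a$'' there and ``$b=0$'') one gets $\Omega T_c\simeq T_{c+p(p-2)}=T_{c+p^2-2p}$; as $c$ ranges over $\{p-1,\dots,2p-2\}$ this yields exactly $T_{p^2-p-1}=\Omega S,\ T_{p^2-p},\ \dots,\ T_{p^2-2}$. For $c=2p-1$ one takes the parameters $p-1$ and $1$ in \Cref{prop:Omega-Ti} (valid for $p\ge3$, since then $1\le p-2$) to get $\Omega T_{2p-1}\simeq T_{(p-1)+p(p-3)}=T_{p^2-2p-1}$; when $p=2$ the module $T_{2p-1}=T_3=\kk E$ is projective, so $\Omega T_3=0$, consistent with the convention $T_{-1}:=0$. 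Hence the $\Omega$-image of $\{T_{p-1},\dots,T_{2p-1}\}$ is precisely $\{T_{p^2-2p-1},\ \Omega S=T_{p^2-p-1},\ T_{p^2-p},\ \dots,\ T_{p^2-2}\}$, which is the asserted generating set.

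I do not expect a genuine obstacle: the statement is essentially formal once \Cref{prop::self-ext-S} is available. The two points requiring a little care are (i) checking that $c=2p-1$ lies in the range of applicability of \Cref{prop:Omega-Ti}, and treating $p=2$ as a degenerate case separately as above; and (ii) the bookkeeping of projective summands — translating an extension by $\Omega$ can introduce copies of $\kk E$ in the middle term (for instance $\Omega$ applied to the self-extension $0\to S\to T_{2p-1}\to S\to0$ has middle term $T_{p^2-2p-1}\oplus\kk E$), so the conclusion, exactly like that of \Cref{prop::self-ext-S}, is to be read up to projective direct summands.
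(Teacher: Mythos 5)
Your argument is correct and is essentially the paper's own proof, which reads in full ``By taking Heller shifts and identifying the obtained modules using \Cref{prop:Omega-Ti}''; your computation of $\Omega T_c$ for $p-1\le c\le 2p-1$ (including $c=2p-1$ via the parameters $a=p-1$, $b=1$ for $p\ge3$, and the degenerate case $p=2$ where $T_3=\kk E$ and $T_{p^2-2p-1}=T_{-1}=0$) just fills in the details left implicit there. Your caveat (ii) is also well taken: since, for example, the shift of $0\to S\to T_{2p-1}\to S\to 0$ has middle term $T_{p^2-2p-1}\oplus\kk E$, the stated generating set must indeed be read as allowing projective summands, a bookkeeping point the paper's one-line proof does not comment on (note, though, that in \Cref{prop::self-ext-S} itself no such caveat is needed, as the non-split self-extensions of $S$ are exactly $T_p,\dots,T_{2p-1}$ with no hidden projective summands).
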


\begin{proof} By taking Heller shifts and identifying the obtained modules using \Cref{prop:Omega-Ti}.  
\end{proof}

\begin{example} \label{expl::self-ext-S} For $p=3$, $r=2$, the non-split self-extensions of $S$ are exactly the modules $T_3,T_4,T_5$ by \Cref{prop::self-ext-S} and its proof.
\end{example}

\subsection{Groups of arbitrary rank} \label{sec::LL2-r-arbitrary}

For $r>2$, \Cref{prop::self-ext-S} will no longer be true, see \Cref{rem-ex}, but an analogue of \Cref{cor:ll-2} will still hold.
To establish this, we restrict to ``$S$-projective'' extensions. Consider the map $$
\omega_{\one,\one}=S\o-:\stHom(\Omega\one,\one)\to\stHom(\Omega S,S).
$$
We summarize some result from \Cref{sec::subcategories}, yielding an ``$S$-projective'' analogue of \Cref{lem::ext-maps-r2}.

\begin{lemma} \label{lem::ext-maps-any-r} There is a subalgebra $A\subset\stEnd(S)$ and elements $(\psi_i)_{1\le i\le r}\in A$ such that $M:=A\o f_0=\im\omega_{\one,\one}$ and

(a) for any non-zero element $g\in M$, there is an $1\le i\le r$ and automorphisms $u,u'\in A^\times$ such that $g=u \psi_i \o f_0=\psi_i u'\o f_0$, 

(b) $\psi_i\in \psi_j A$ and $\psi_i\in A\psi_j$ for all $1\le j\le i\le r$,

\end{lemma}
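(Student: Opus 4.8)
The plan is to move the whole computation into the commutative local algebra $\stEnd(S)$, exactly as in the proof of \Cref{lem::ext-maps-r2}. Since $S\o S$ is a direct sum of restricted tilting modules $T_j$ with $j<p^r-1$ it has no free summand, so $\stEnd(S)=\End(S)=\uEnd(S)\cong\kk[z_1,\dots,z_{r-1}]/(z_i^p)$ by \Cref{LemmaLambda}, and $-\o f_0\colon\stEnd(S)\xrightarrow{\sim}\stHom(\Omega S,S)$ is an isomorphism of $\stEnd(S)$-bimodules by \Cref{lem::Hom-Omega-S-S}. I would take $A$ to be the $\kk$-span of $\id_S$ together with the endomorphisms $\psi_i$ of \Cref{Qxi} (where $\psi_{r-1}=\id_S$ and $\psi_i=u_i\psi'_i$ with $\psi'_i=z_{i+1}^{p-1}\cdots z_{r-1}^{p-1}$ and $u_i\in\Aut(S)$), relabelled by $1\le i\le r$ so that larger labels correspond to the $\psi'_i$ of larger support, i.e.\ the smaller principal ideal. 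By \Cref{cor:im-omega-1-1}(a), under $-\o f_0$ the subspace $A$ corresponds to $\im\omega_{\one,\one}$, so $M:=A\o f_0=\im\omega_{\one,\one}$.

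Next I would record the ring-theoretic facts. $A$ is a subalgebra: $\id_S\in A$, and $\psi_i\psi_j=u_iu_j\psi'_i\psi'_j=0$ for $i,j\le r-2$, because each such $\psi'_i$ is divisible by $z_{r-1}^{p-1}$ and $z_{r-1}^{2(p-1)}=0$; thus $A=\kk\,\id_S\oplus N$ with $N^2=0$. For (b) the crucial observation is the chain of divisibilities $\psi'_i=(z_{i+1}z_{i+2}\cdots z_j)^{p-1}\,\psi'_j$ for $i\le j$, which places $\psi'_i$ — and hence $\psi_i$, after absorbing $u_i,u_j$ — inside the principal ideal generated by $\psi_j$; together with $\psi_{r-1}=\id_S$ and the commutativity of $\stEnd(S)$ this yields the two-sided divisibilities $\psi_i\in\psi_jA\cap A\psi_j$ for all $1\le j\le i\le r$. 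For (a) I would start from \Cref{cor:im-omega-1-1}(b), which already writes any non-zero $g\in M$ as $u\psi_i\o f_0$ for some $i$ and $u\in\Aut(S)$; expanding $g$ in the basis $\{\psi_j\o f_0\}$ and factoring out the $\psi'_{j_0}$ with $j_0$ the top index occurring, the complementary factor is $c_{j_0}u_{j_0}$ plus a nilpotent term, hence invertible, which identifies $g$ with $\psi_{j_0}\o f_0$ up to a unit.

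The delicate step — and the one I expect to cost the most — is controlling the automorphism twists $u_i$: these are not canonical (they are produced in the proof of \Cref{lem::xi-S-S}), so one has to check that replacing the monomials $\psi'_i$ by the actual generators $\psi_i=u_i\psi'_i$ of $M$ does not spoil the divisibility relations needed for (a) and (b), and in particular that the invertible factors that appear can be taken in $A$. The tool for this is the $\mZ_{\ge0}$-grading of $\stEnd(S)$: the $\psi'_i$ occupy pairwise distinct degrees, $\psi_i$ has leading term $c_i\psi'_i$, and the corrections $n_i\psi'_i$ are of strictly higher degree, so the identities can be verified degree by degree. Once (a) and (b) are in place, the lemma feeds straight into \Cref{cor::linear-poset-ext} in the same way \Cref{lem::ext-maps-r2} does for $r=2$, which is how it will be used to treat modules of Loewy length at most two.
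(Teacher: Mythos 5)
Your set-up agrees with what the paper actually uses: $\stEnd(S)=\End(S)\cong\kk[z_1,\dots,z_{r-1}]/(z_i^p)$ (\Cref{LemmaLambda}), the bimodule isomorphism $-\o f_0$ (\Cref{lem::Hom-Omega-S-S}), the choice $A=\Span(\id_S,\psi_0,\dots,\psi_{r-2})$, the equality $A\o f_0=\im\omega_{\one,\one}$ from \Cref{cor:im-omega-1-1}(a), and the subalgebra property via $\psi_i\psi_j=0$; the paper's own proof is nothing more than the citation of \Cref{cor:im-omega-1-1}. The genuine gap is exactly the step you flag as delicate, and it cannot be repaired by the proposed degree-by-degree argument, because for $r\ge3$ the statements you are trying to verify are false for this $A$ (which is forced, since $-\o f_0$ is injective). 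Writing $N=\Span(\psi_0,\dots,\psi_{r-2})$, you correctly note $N^2=0$; but this immediately gives $u\psi_i\in\kk^\times\psi_i$ for every $u\in A^\times$ and every $\psi_i\in N$, and $\psi_jA=\kk\psi_j$. Hence (b) fails for distinct non-identity indices, e.g.\ $\psi_0\notin\psi_1A=\kk\psi_1$: the cofactor $z_1^{p-1}$ (even after absorbing $u_0,u_1$) is a unit multiple of a monomial that does not lie in $A$. Likewise (a) fails: $g=(\psi_0+\psi_1)\o f_0$ is a non-zero element of $M$, and your factorisation writes it as $v\psi_1\o f_0$ with $v\in\stAut(S)$, but $v$ has no reason to lie in $A$, and indeed no $u\in A^\times$ works since $A^\times\psi_i=\kk^\times\psi_i$ and $g$ has trivial $\id_S$-component. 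More strongly, since $A^\times$ has infinitely many orbits on $N\setminus\{0\}$ once $\dim N\ge2$, no finite family $(\psi_i)$ can satisfy (a) with units taken in $A^\times$, so no grading trick can close this gap.

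What \Cref{cor:im-omega-1-1} delivers — and what your computation actually proves — is the exact analogue of \Cref{lem::ext-maps-r2}: every non-zero $g\in\im\omega_{\one,\one}$ equals $u\,\psi_i\o f_0=\psi_i u'\o f_0$ for units $u,u'$ of the full rings $\stEnd(S)\cong\stEnd(\Omega S)$, and $\psi_i\in\psi_j\stEnd(S)$ for the relevant ordered pairs. Your argument (factor out the deepest $\psi'_{j_0}$ occurring; the complementary factor is a unit of $\stEnd(S)$ because its lowest-order term is $c_{j_0}u_{j_0}$) is correct for that weaker form and is essentially the content of the paper's one-line proof. So the fix is not to push the units and cofactors into $A$ — that is impossible — but to work with $\stAut(S)$ and $\stEnd(S)$ as in the rank-two case; be aware that this means the subsequent application to \Cref{prop::self-ext-S-any-r} via \Cref{cor::linear-poset-ext} needs a little extra care (one must check that the Gaussian-elimination corrections still vanish or stay inside $\im\omega_{\one,\one}$, which follows from the divisibility chain $\psi'_i=(z_{i+1}\cdots z_j)^{p-1}\psi'_j$ and $z_k^{2(p-1)}=0$), rather than following formally from the hypotheses as you state them.
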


\begin{proof} This follows immediately from \Cref{cor:im-omega-1-1}.   
\end{proof}

\begin{proposition} \label{prop::self-ext-S-any-r} Let $X$ be any module of Loewy length $2$. Then $S\o X$ is in $\langle T_{p^{r-1}-1}=S, T_{p^{r-1}}\simeq S\o V, T_{p^{r-1}+p-1}\simeq S\o V^{(1)},\dots, T_{p^{r-1}+p^{r-1}-1}\simeq S\o V^{(r-1)} \rangle_\oplus \subset \cT$.
\end{proposition}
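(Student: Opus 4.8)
The plan is to reduce the statement to the "$S$-projective" extension-classification machinery already assembled in \Cref{sec::simplify-ext} and \Cref{sec::subcategories}. First I would observe that a module $X$ of Loewy length $2$ is an extension of a semisimple module $\top(X)\simeq\one^a$ by a semisimple module $\rad(X)\simeq\one^b$, so $S\o X$ is an extension of $(S\o\one)^a=S^a$ by $S^b$, classified by an element of $\stHom((\Omega S)^a, S^b)\simeq\stHom(\Omega S,S)^{a\times b}$. The crucial point is that this extension lies in the image of $\omega_{\one,\one}=S\o-$, because the original extension $X$ itself is an extension of trivial modules: concretely, $X$ is classified by a matrix $(g_{ij})$ with $g_{ij}\in\stHom(\Omega\one,\one)$, and the extension $S\o X$ is classified by the matrix $(\omega_{\one,\one}(g_{ij}))$, whose entries all lie in $M:=\im\omega_{\one,\one}\subset\stHom(\Omega S,S)$.

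Next I would invoke \Cref{lem::ext-maps-any-r}, which supplies a subalgebra $A\subset\stEnd(S)$ together with elements $(\psi_i)_{1\le i\le r}\in A$ such that $M=A\o f_0$, every non-zero element of $M$ equals some $\psi_i\o f_0$ up to units $u,u'\in A^\times$ acting on the two sides, and the $\psi_i$ form a chain under left and right multiplication by $A$ (properties (a) and (b)). This is exactly the hypothesis needed to apply \Cref{cor::linear-poset-ext} with $X$ there taken to be our $S$, with the subalgebras $A\subset\stEnd(S)$ and (via the identification $\stEnd(S)\simeq\stEnd(\Omega S)$ of \Cref{lem::Hom-Omega-S-S}) $A'\simeq A\subset\stEnd(\Omega S)$, with the $(A,A')$-subbimodule $M\subset\stHom(\Omega S,S)$, and with the distinguished elements $g_k=\psi_k\o f_0$. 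The corollary then yields that $S\o X\cong E_f$ decomposes, up to projectives, as a direct sum of copies of $S$ itself and copies of the self-extensions $E_{\psi_i\o f_0}$ of $S$.

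It remains to identify these self-extensions. By \Cref{cor:im-omega-1-1}(c), the stable morphism represented by $\psi_i\o f_0$ corresponds to the self-extension $S\o V^{(i)}$ of $S$ under $\Ext^1(S,S)\simeq\uHom(\Omega S,S)$ (with the indexing $0\le i\le r-1$; here I would line up the labelling of the $\psi_i$ in \Cref{lem::ext-maps-any-r} with that in \Cref{cor:im-omega-1-1}, noting $\psi_{r-1}=\psi_{r-1}'=\id_S$ gives the split extension $S\oplus S$ up to the convention). Then Donkin's formula / Steinberg's tensor product theorem identifies $S\o V^{(i)}\simeq T_{p^{r-1}-1}\o T_1^{(i)}\simeq T_{p^{r-1}+p^i-1}$, and for $i=0$ one gets $S\o V\simeq T_{p^{r-1}}$; for $i=r-1$ one gets $S\o V^{(r-1)}\simeq T_{p^{r-1}+p^{r-1}-1}=T_{2p^{r-1}-1}$ by \Cref{lem::S-Ud} (the case $d=2$). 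Collecting these shows $S\o X$ lies in $\langle T_{p^{r-1}-1}, T_{p^{r-1}}, T_{p^{r-1}+p-1},\dots,T_{2p^{r-1}-1}\rangle_\oplus$, all of which are restrictions of $SL_2$-tilting modules and hence in $\cT$ (indeed in $\cT_r$). I expect the main obstacle to be purely bookkeeping: making sure the bimodule structure on $\stHom(\Omega S,S)$ used in \Cref{lem::ext-maps-any-r} (with the right action via $\Omega$) is exactly the one under which extensions of $S^a$ by $S^b$ are classified, so that \Cref{cor::linear-poset-ext} applies verbatim, and then matching the indices across \Cref{lem::ext-maps-any-r}, \Cref{cor:im-omega-1-1}, and the list of $T_i$'s in the statement.

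\begin{proof}
Since $X$ has Loewy length at most $2$, it is an extension of $\top(X)$ by $\rad(X)$, both of which are semisimple, hence trivial: $\rad(X)\simeq\one^b$ and $\top(X)\simeq\one^a$ for some $a,b\ge0$. Such an extension is classified by a matrix $(g_{ij})_{1\le i\le a,1\le j\le b}$ with entries $g_{ij}\in\stHom(\Omega\one,\one)$. Tensoring with $S$, the module $S\o X$ is the extension of $S^a=(S\o\one)^a$ by $S^b=(S\o\one)^b$ classified by the matrix with entries $\omega_{\one,\one}(g_{ij})\in M:=\im\omega_{\one,\one}\subset\stHom(\Omega S,S)$, where $\omega_{\one,\one}=S\o-$.

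By \Cref{lem::ext-maps-any-r}, there is a subalgebra $A\subset\stEnd(S)$ and elements $(\psi_i)_{1\le i\le r}\in A$ such that $M=A\o f_0$, satisfying properties (a) and (b) of that lemma. Under the identifications $\stEnd(S)\simeq\stEnd(\Omega S)$ and $\stHom(\Omega S,S)\simeq\stEnd(S)\o f_0$ of \Cref{lem::Hom-Omega-S-S}, the subalgebra $A\subset\stEnd(S)$ gives a subalgebra $A'\subset\stEnd(\Omega S)$, and $M$ is an $(A,A')$-subbimodule of $\stHom(\Omega S,S)$. Conditions (a) and (b) of \Cref{lem::ext-maps-any-r} are precisely the hypotheses of \Cref{cor::linear-poset-ext} for the object $S$, the subalgebras $A,A'$, the subbimodule $M$, and the distinguished elements $g_k:=\psi_k\o f_0$. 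Applying \Cref{cor::linear-poset-ext}, we conclude that $S\o X$ is isomorphic, up to projective summands, to a direct sum of copies of $S$ and of the self-extensions $E_{\psi_i\o f_0}$ of $S$, $1\le i\le r$.

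It remains to identify these self-extensions. By \Cref{cor:im-omega-1-1}(c), the stable morphism represented by $\psi_i\o f_0$ corresponds, under $\Ext^1(S,S)\simeq\uHom(\Omega S,S)$, to the self-extension $S\o V^{(i)}$ of $S$ (with $0\le i\le r-1$; the value $\psi_{r-1}=\id_S$ corresponds to the split extension). By Donkin's tensor product formula, $S\o V^{(i)}\simeq T_{p^{r-1}-1}\o T_1^{(i)}\simeq T_{p^{r-1}+p^i-1}$ for $1\le i\le r-1$, and $S\o V\simeq T_{p^{r-1}}$; the case $i=r-1$ also follows from \Cref{lem::S-Ud} with $d=2$, giving $S\o V^{(r-1)}\simeq T_{2p^{r-1}-1}$. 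Hence every indecomposable summand of $S\o X$ is projective or isomorphic to one of
$$
T_{p^{r-1}-1}=S,\quad T_{p^{r-1}}\simeq S\o V,\quad T_{p^{r-1}+p-1}\simeq S\o V^{(1)},\quad\dots,\quad T_{p^{r-1}+p^{r-1}-1}\simeq S\o V^{(r-1)}.
$$
All of these are restrictions of $SL_2$-tilting modules, so $S\o X$ lies in the claimed subcategory of $\cT$.
\end{proof}
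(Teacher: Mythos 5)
Your proof is correct and takes essentially the same route as the paper's: view $S\o X$ as an extension of copies of $S$ by copies of $S$ classified by a matrix with entries in $\im\omega_{\one,\one}$, decompose it via \Cref{lem::ext-maps-any-r} combined with \Cref{cor::linear-poset-ext}, and identify the resulting self-extensions through \Cref{cor:im-omega-1-1} as the modules $S\o V^{(i)}\simeq T_{p^{r-1}+p^i-1}$. One small slip in your parenthetical: by \Cref{cor:im-omega-1-1}(c) the map $\psi_{r-1}\o f_0=\id_S\o f_0$ corresponds to the non-split self-extension $S\o V^{(r-1)}\simeq T_{2p^{r-1}-1}$, not to the split extension (the latter corresponds to the zero stable class); this does not affect your conclusion, since both modules lie in the asserted span.
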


\begin{proof} The object $S\o X$ can be viewed as an extension of $S^a$ by $S^b$, for $a=\dim(X)-\dim\soc(X)$ and $b=\dim\soc(X)$, corresponding to a map in $(\im\omega_{\one,\one})^{a\times b}\subset \stHom(\Omega S,S)^{a\times b}$.

It follows from \Cref{lem::ext-maps-any-r} with \Cref{cor::linear-poset-ext} that any extension as above is a direct sum of self-extensions of $S$ corresponding to the maps $\psi_i$ and copies of $S$ itself.

The self-extensions of $S$ corresponding to the maps $(\psi_i)_{1\le i\le r}$ are identified as the modules $(S\o V^{(j)})_{1\le j\le r}$ in \Cref{cor:im-omega-1-1}.
\end{proof}

\begin{corollary} \label{cor:ll-2-any-r}
    Any module of Loewy length at most $2$ lies in $\widetilde\cT$.
\end{corollary}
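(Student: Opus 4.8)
The plan is to deduce this immediately from \Cref{prop::self-ext-S-any-r} via \Cref{lem:justS}, which tells us that $X\in\widetilde\cT$ if and only if $S\o X\in\cT$. So the only thing to verify is that $S\o X\in\cT$ whenever $X$ has Loewy length at most $2$.

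First I would dispose of the semisimple case. Since $E=C_p^r$ is a $p$-group over a field of characteristic $p$, its only simple module is the trivial module $\unit$; hence a module of Loewy length at most $1$ is a direct sum of copies of $\unit$, so $S\o X$ is a direct sum of copies of $S=T_{p^{r-1}-1}$, which lies in $\cT$ by \Cref{lem:restricted-tilting}.

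For $X$ of Loewy length exactly $2$, the short exact sequence $0\to\rad X\to X\to\top X\to 0$ has semisimple ends, so $\rad X\cong\unit^{\oplus b}$ and $\top X\cong\unit^{\oplus a}$ with $a=\dim\top X$ and $b=\dim\rad X$. Tensoring with $S$ exhibits $S\o X$ as an extension of $S^{\oplus a}$ by $S^{\oplus b}$, which is exactly the situation analysed in \Cref{prop::self-ext-S-any-r}; that proposition gives $S\o X\in\langle S, S\o V, S\o V^{(1)},\dots, S\o V^{(r-1)}\rangle_\oplus\subset\cT$. Applying \Cref{lem:justS} then yields $X\in\widetilde\cT$.

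There is essentially no obstacle remaining at this stage: all the substantive work lies upstream, namely the description of $\im(\omega_{\one,\one})$ coming from \Cref{cor:im-omega-1-1} (itself built on the computations of \Cref{sec::subcategories}) together with the Gaussian-elimination mechanism of \Cref{cor::linear-poset-ext}, both of which \Cref{prop::self-ext-S-any-r} already combines. The only point requiring (minimal) care is the elementary bookkeeping identifying a Loewy-length-$2$ module, after tensoring with $S$, with a self-extension of $S^{\oplus a}$ by $S^{\oplus b}$ as above, so that \Cref{prop::self-ext-S-any-r} applies verbatim.
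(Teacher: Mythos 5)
Your proposal is correct and follows the paper's own route: the corollary is an immediate consequence of \Cref{prop::self-ext-S-any-r} (which already handles the identification of $S\o X$ as an extension of copies of $S$) together with \Cref{lem:justS}, with the Loewy length $\le 1$ case being trivial. The only quibble is a harmless slip in the paper's convention for ``extension of $A$ by $B$'' (submodule vs.\ quotient), which does not affect the argument.
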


\begin{example}\label{ex-moreex} Consider $p=3,r=2$. In view of \Cref{expl::self-ext-S}, note that out of the three non-split self-extensions of $S$, only only $T_3\cong S\o V$ and $T_5\simeq S\o V^{(1)}$ can appear in a tensor product of the form $S\o X$, for $X$ of Loewy length $2$.
\end{example}


\section{Cyclic \texorpdfstring{$S$}{S}-projective modules}
\label{sec:cyclic}

In this section, we take a slightly different approach to the previous sections. Before we proved that for specific $E$-modules $X$, the module $S\otimes X$ belongs to $\cT$ as conjectured. In the current section, we look at specific (namely cyclic) modules $Y$ that can appear in summands of modules $S\otimes X$, for arbitrary $X$, and show that they must be in $\cT$ as conjectured.

As before, we fix a prime $p$ and some $r\ge1$. The main results will be for $r=2$. We still have $E:=C_p^r$. The sets $(T_i)_{0\le i\le p^r-1}$ and $(\Delta_i)_{0\le i\le p^r-1}$ denote the restricted tilting and standard modules, and $St_i:=T_{p^i-1}$ is the restricted Steinberg modules for $0\le i\le r$. We set $S:=St_{r-1}$.

\subsection{Generalities}

Recall from \Cref{DefWProj} the notions of $S$-projective modules and $S$-split morphisms.
Also as before, we denote by $(g_i)_{1\le i\le r}$ the generators of $E=C_p^r$.
We start by establishing some general properties of $S$-projective modules (note that this is in accordance with \Cref{lem:consequences}(5)).

\begin{lemma} \label{lem:dim-periodic} For any $p,r$, let $X$ be an indecomposable non-zero non-projective $S$-projective $E$-module. 

(a) The restriction of $X$ to the subgroup $\langle g_1,\dots,g_{r-1}\rangle\cong C_p^{r-1}$ is projective.

(b) Set $d:=\dim\top X$, then $\dim X\in p^{r-1} \{d,d+1,\dots,(p-1)d\}$.

(c) If $p=2$, then $\dim X=2^{r-1} \dim\top X$.
\end{lemma}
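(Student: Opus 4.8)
\textbf{Plan for the proof of \Cref{lem:dim-periodic}.}

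The plan is to exploit the defining property of $S$-projectivity together with the explicit support-variety information from \Cref{Cor:Supp}. For part (a), recall that $S=St_{r-1}$ is the restriction of $St_1^{(0)}\o\dots\o St_1^{(r-2)}$, and its support is the single point $\blambda\in\mP^{r-1}$ with $\sum_i\lambda_i^{p^l}\blambda_i=0$ for $0\le l\le r-2$ but $\sum_i\lambda_i^{p^{r-1}}\blambda_i\neq0$. The crucial observation is that $\blambda$ does not lie in the hyperplane cut out by the first $r-1$ coordinates, equivalently, that the point $\blambda$ is not a linear combination of $e_1,\dots,e_{r-1}$ only — more precisely, $\blambda$ does not lie in $\mP^{r-2}=\{[\alpha_1:\dots:\alpha_{r-1}:0]\}$, because otherwise the equation $\sum_{i=1}^r\lambda_i^{p^{r-1}}\blambda_i=\sum_{i=1}^{r-1}\lambda_i^{p^{r-1}}\blambda_i$ would have to vanish by the Moore-matrix linear dependence together with the other $r-1$ relations (the first $r$ rows of $\mathbb M(\ulambda)$ being a basis whenever we include row $r-1$, but in fact the relevant assertion is simpler: the restriction of $S$ to $\langle g_1,\dots,g_{r-1}\rangle$ is $St_1^{(0)}\o\dots\o St_1^{(r-2)}$ restricted to a rank $r-1$ elementary abelian group, which is the top Steinberg module $St_{r-1}$ for that subgroup and hence is \emph{projective} over $C_p^{r-1}$, since the relevant Moore minor is again invertible). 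So $S|_{C_p^{r-1}}$ is a free $\kk C_p^{r-1}$-module; since $X$ is a summand of $X'\o S$ for some $X'$, and $-\o(\text{free})$ is free over $C_p^{r-1}$, the restriction of $X$ to $C_p^{r-1}$ is projective, proving (a).

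For part (b), I would first note that (a) already forces $p^{r-1}\mid \dim X$ (any projective $\kk C_p^{r-1}$-module has dimension divisible by $|C_p^{r-1}|=p^{r-1}$). For the finer bounds, pass to the one-parameter subalgebra $\kk[t]/t^p$ corresponding to $\blambda$: since $X$ is non-projective with support $\{\blambda\}$, the element $t=\sum\blambda_i X_i$ acts on $X$ with Jordan type a partition of $\dim X$ into parts of size $\le p$, with at least one part $<p$ impossible... actually the point is the reverse: write the Jordan type of $t$ on $X$ as having $a_j$ blocks of size $j$, for $1\le j\le p$; then $\dim\top X=\sum_j a_j$ while $\dim X=\sum_j j\cdot a_j$. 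Periodicity of $X$ forces all blocks to have size exactly... no — here is where I need to use the full strength of $S$-projectivity rather than mere periodicity. The cleanest route: $X$ is a summand of $S\o X''$ for some $X''$, and over the cyclic shifted subgroup $\kk[t]/t^p$, $S$ restricts to a module whose stable class is $\Omega$-periodic of dimension $p^{r-1}$; concretely $S|_{\kk[t]/t^p}$ has Jordan type consisting of $p^{r-2}$ blocks of size $p$ \emph{except} — no, its dimension is $p^{r-1}$ and it is non-projective, so its Jordan type is $[p^{\,p^{r-2}-1}, 1^{?}]$... This is the step I expect to be the main obstacle: pinning down exactly which Jordan types can occur for $t$ on an $S$-projective module. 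I would argue that tensoring with $S$ can only produce blocks of size $\le p$ and, by a counting/parity argument using $\dim S=p^{r-1}$ and $\Hom_E(\one,S)=\ell^\nabla(S)$ from \Cref{PropT}, that the number of Jordan blocks of the minimal possible size is controlled; combined with $\dim\top X=d$ this yields $d\le\dim X/p^{r-1}\le (p-1)d$.

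For part (c), when $p=2$ the only block sizes are $1$ and $2$, a block of size $1$ is a projective summand over $\kk[t]/t^2$, and an $S$-projective module has no such summand in its restriction (since $S|_{\kk[t]/t^2}$ itself is a sum of size-$2$ blocks — it is the restriction of a module all of whose Jordan blocks at $\blambda$ have size $2$, as $p=2$ forces period $1$ and hence $\Omega X\cong X$, which in turn forces $t$-Jordan type $[2^{n}]$). Thus $t$ acts on $X$ with all Jordan blocks of size $2$, giving $\dim X=2\dim\top X$; combined with the divisibility $2^{r-1}\mid\dim X$ from (a) one could worry this is not yet enough, but in fact over $C_2^{r-1}$ the module $X$ is \emph{free}, and a free $\kk C_2^{r-1}$-module of rank $n$ has $\dim\top = n = \dim X/2^{r-1}$, hence $\dim X = 2^{r-1}\dim\top X$ directly. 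So part (c) follows cleanly from (a) once we know $X|_{C_2^{r-1}}$ is free and use that the top of a free module over a local ring has dimension equal to the rank. I would write part (c) essentially as this one-line consequence of (a), and spend the bulk of the write-up on the Jordan-type analysis needed for (b).
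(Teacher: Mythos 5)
Part (a) of your proposal is correct, and in fact takes a slightly different (equally valid) route from the paper: you use that $S$ restricted to $G:=\langle g_1,\dots,g_{r-1}\rangle$ is the restricted Steinberg module for a faithful rank-$(r-1)$ embedding, hence free by \Cref{lem:restricted-tilting}, so any summand of $X'\o S$ is free over $G$; the paper instead argues via the support of $X$ itself (an $S$-projective module inherits the one-point support $\{\blambda\}$ of $S$ from \Cref{Cor:Supp}, and $\blambda$ avoids the shifted cyclic subgroups of $G$). Either way (a) and the divisibility $p^{r-1}\mid\dim X$, together with the lower bound $\dim X\ge d\,p^{r-1}$ (since $\top_E X$ is a quotient of $\top_G(X|_G)$, the free rank of $X|_G$ is at least $d$), are fine.

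The genuine gap is the upper bound $\dim X\le (p-1)d\,p^{r-1}$, and hence all of (c). Your Jordan-type analysis at the point $\blambda$ cannot deliver it: the number of Jordan blocks of $t$ on $X$ equals $\dim\top_{\kk[t]/t^p}(X)$, which only bounds $\dim\top_E X$ from \emph{above} (as $tX\subseteq\rad(\kk E)X$), so any statement of the form ``$\dim X = (\text{block sizes})\cdot(\text{number of blocks})$'' again only yields lower bounds on $\dim X$ in terms of $d$; you flag this step yourself as ``the main obstacle'' and the proposed counting via $\dim S$ and $\ell^\nabla(S)$ from \Cref{PropT} is not an argument. The same conflation of $\top_E X$ with $\top_G(X|_G)$ (or with the $t$-block count) invalidates your one-line proof of (c): freeness over $C_2^{r-1}$ of rank $n$ gives $\dim X = 2^{r-1}n\ge 2^{r-1}\dim\top_E X$, not equality. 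The missing idea in the paper is to play $X$ off against $\Omega X$: with $P\cong\kk E^{\oplus d}$ the projective cover of $X$ one has $\dim X+\dim\Omega X=p^r d$; the module $\Omega X$ is again non-zero, non-projective and $S$-projective, it is free over $G$ by the same argument as in (a), and $\dim\soc_E(\Omega X)=d$ (else $X$ would have a projective summand), so the socle bound gives $\dim\Omega X\ge d\,p^{r-1}$ and therefore $\dim X\le p^r d-p^{r-1}d=(p-1)d\,p^{r-1}$. For $p=2$ this pins $\dim X$ to exactly $2^{r-1}d$, which is how (c) actually follows from (b).
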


\begin{proof} (a) Such an $X$ has the same one-point support as $S$, as computed in \Cref{Cor:Supp}. This specific point does not lie in the space of points corresponding to the cyclic shifted subgroups of $G:=\langle g_1,\dots,g_{r-1}\rangle$. Hence, the restrictions to those cyclic shifted subgroups are projective, and the restriction $X'$ of $X$ to $G$ must be projective.

(b) It follows from (a) that $\dim X=\dim X'$ is a multiple of $p^{r-1}$. As $\dim\top X'\ge\dim\top X=d$, we even have $\dim X=\dim X'\ge d p^{r-1}$. We also have
$$
\dim X + \dim \Omega X = \dim P^d = p^r d .
$$
Both $X$ and $\Omega X$ are $S$-projective and non-zero. In particular, $\Omega X$ is projective when restricted to $G$, too, and $\dim\soc (\Omega X)=d$, as otherwise $X$ would have a non-zero projective direct summand. Thus, the restriction of $\Omega X$ to $G$ has a socle of dimension at least $d$, so $\dim\Omega X\ge dp^{r-1}$, as well. 
This establishes the asserted range for $\dim X$. 

(c) follows immediately from (b).
\end{proof}

\begin{lemma} Assume $S\o Y$ is projective. Then $Y$ cannot appear as a quotient or subobject in any indecomposable non-projective $S$-projective $E$-module $X$.
\end{lemma}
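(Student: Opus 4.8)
The plan is to show that the two hypotheses — that $X$ is $S$-projective and that $S\o Y$ is projective — already force the vanishing of both stable hom-spaces $\uHom_E(X,Y)$ and $\uHom_E(Y,X)$, and then to observe that a quotient or subobject relation between a nonzero $Y$ and $X$ together with such a vanishing would make $Y$ projective, hence a direct summand of $X$, contradicting that $X$ is indecomposable and non-projective. (One assumes $Y\neq0$, as otherwise there is nothing to prove.)

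First I would use that, since $X$ is $S$-projective, \Cref{DefWProj}(1) exhibits $X$ as a direct summand of $M\o S$ for some $M\in\Rep E$. Recalling that $S$ is self-dual, and using the tensor--hom adjunction (which descends to $\Stab E$: if a map $A\o B\to C$ factors through a projective $P$, then its adjoint $A\to B^\ast\o C$ factors through $B^\ast\o P$, which is again projective since a tensor product of a projective $\Rep E$-module with any module is projective), I would apply the additive functors $\uHom_E(-,Y)$ and $\uHom_E(Y,-)$ to this summand relation. This shows that $\uHom_E(X,Y)$ is a direct summand of $\uHom_E(M\o S,Y)\cong\uHom_E(M,S\o Y)$ and that $\uHom_E(Y,X)$ is a direct summand of $\uHom_E(Y,M\o S)\cong\uHom_E(S\o Y,M)$, all computed in $\Stab E$. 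Since $S\o Y$ is projective it is zero in $\Stab E$, so both right-hand groups vanish, and therefore $\uHom_E(X,Y)=0=\uHom_E(Y,X)$.

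Then I would argue by contradiction. If $Y$ is a quotient of $X$ via an epimorphism $f\colon X\tto Y$, then $\uHom_E(X,Y)=0$ means $f$ factors through a projective-injective module $P$, say $f=\beta\alpha$ with $\alpha\colon X\to P$ and $\beta\colon P\to Y$; surjectivity of $f$ forces $\beta$ surjective, and since $P$ is injective this splits, so $Y$ is a direct summand of $P$ and hence projective. But then $f$ itself splits (as $Y$ is projective), so $Y$ is a direct summand of $X$; since $X$ is indecomposable and $Y$ is projective while $X$ is not, this forces $Y=0$, a contradiction. The case where $Y$ is a subobject of $X$ is handled symmetrically using $\uHom_E(Y,X)=0$: a monomorphism $Y\hookrightarrow X$ factors through a projective-injective $Q$, its first factor $Y\to Q$ is then a split monomorphism, so $Y$ is a summand of $Q$ hence projective (hence injective), so the monomorphism splits and again $Y$ is a summand of the indecomposable non-projective $X$, forcing $Y=0$.

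I do not expect a genuine obstacle in this argument; the only points requiring care are the self-duality of $S$ (so $S^\ast\cong S$ in the adjunction), the verification that the tensor--hom adjunction respects morphisms factoring through projectives so that it really descends to $\Stab E$, and the remark that $Y=0$ is implicitly excluded from the statement. The conceptual content is simply the observation that $S$-projectivity of $X$ combined with projectivity of $S\o Y$ annihilates both stable hom-spaces between $X$ and $Y$, after which a standard Frobenius-category manipulation finishes the proof.
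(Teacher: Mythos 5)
Your reduction to the vanishing of stable Hom-spaces is fine: since $X$ is a summand of $M\o S$ and $S$ is self-dual, adjunction in $\Stab E$ gives $\uHom_E(X,Y)\subset^{\oplus}\uHom_E(M,S\o Y)=0$ and likewise $\uHom_E(Y,X)=0$, because $S\o Y$ is projective. The genuine gap is in how you exploit this. From a factorisation $f=\beta\alpha$ of the epimorphism $f\colon X\tto Y$ through a projective-injective $P$ you claim that ``since $P$ is injective'' the surjection $\beta\colon P\tto Y$ splits, so $Y$ is projective. That implication is false: injectivity of the source never forces an epimorphism to split (otherwise every module would be projective, being a quotient of the projective-injective $\kk E$; concretely, the counit $\kk E\tto\unit$ does not split). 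The dual step in the subobject case fails for the same reason: a monomorphism $Y\hookrightarrow Q$ into a projective-injective $Q$ splits only if $Y$ itself is injective (the socle inclusion $\unit\hookrightarrow\kk E$ is not split). Note also that the intermediate conclusion you aim for ($Y$ projective) is not what one should expect; the correct contradiction is that a nonzero \emph{projective} module splits off the indecomposable non-projective $X$.

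Your strategy can be repaired, and the repair essentially reproduces the paper's argument. A stably trivial map into $Y$ factors through the projective cover $\pi\colon Q\tto Y$ (lift $\beta$ along $\pi$ using projectivity of $P$), so $f=\pi g$ for some $g\colon X\to Q$; since $\ker\pi\subset\rad Q$ and $f$ is surjective, $g$ is surjective onto $\top Q$, hence surjective, and then the projective $Q\neq0$ splits off $X$ — contradicting indecomposability of the non-projective $X$ (dually with the injective hull for the subobject case). The paper gets the same factorisation more directly, without stable Hom-spaces: projectivity of $S\o Y$ makes $\pi\colon Q\tto Y$ an $S$-split epimorphism, and condition (3) of \Cref{DefWProj} for the $S$-projective module $X$ lifts the epimorphism $X\tto Y$ through $\pi$; the top-dimension argument then shows the lift is onto $Q$.
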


\begin{proof} Let $\pi:Q\tto Y$ be a projective cover. As $S\o Y$ is projective, $\pi$ is $S$-split. As $\pi$ is a projective cover, the tops of $Q$ and $Y$ have the same dimension. Due to $S$-projectivity of $X$, any epimorphism $X\tto Y$ factors via some $f:X\to Q$. As any morphism is surjective if and only if it is surjective onto the top, $f$ must be surjective.

This shows $Q$ appears as a quotient, hence as a direct summand, in any $S$-projective object that has $Y$ as a quotient. The argument for the statement on subobjects is similar, using that $S$-projectivity is the same as $S$-injectivity.
\end{proof}

\begin{corollary} \label{cor:quotients} $T_{p-1}^{(r-1)}$ cannot be a quotient or subobject in any indecomposable non-projective $S$-projective module.
\end{corollary}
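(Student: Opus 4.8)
The plan is to obtain \Cref{cor:quotients} as an immediate consequence of the preceding (unlabelled) lemma, applied with $Y:=T_{p-1}^{(r-1)}=St_1^{(r-1)}$. So the only thing I would need to verify is that $S\o T_{p-1}^{(r-1)}$ is projective over $E$. For this I would iterate Donkin's tensor product formula \eqref{DTPT}, which (exactly as used in \Cref{SecSuppHell} and throughout \Cref{sec:analyse}) gives $St_j\simeq St_1\o St_1^{(1)}\o\cdots\o St_1^{(j-1)}$ for all $j$; hence
\[
S\o T_{p-1}^{(r-1)}\;=\;St_{r-1}\o St_1^{(r-1)}\;\simeq\;St_1\o St_1^{(1)}\o\cdots\o St_1^{(r-1)}\;\simeq\;St_r\;=\;T_{p^r-1}.
\]
By \Cref{lem:restricted-tilting} this is $\bk E$, which is projective. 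Feeding this into the previous lemma closes the argument, since that lemma then forbids $T_{p-1}^{(r-1)}$ from occurring as a quotient or subobject of any indecomposable non-projective $S$-projective module.

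An equivalent route — should one prefer to avoid Donkin's formula — is to check projectivity of $S\o T_{p-1}^{(r-1)}$ via support varieties: the support of $T_{p-1}^{(r-1)}$ is the hyperplane $\{\underline{\alpha}\mid\sum_s\alpha_s\lambda_s^{p^{r-1}}=0\}$ by the computation in the proof of \Cref{PropSupp}, and this does not contain the point $\blambda=\supp(S)$ precisely because $\sum_i\lambda_i^{p^{r-1}}\blambda_i\neq0$, the last assertion of \Cref{Cor:Supp}; hence $\supp(S\o T_{p-1}^{(r-1)})=\varnothing$ by \eqref{sup:tensor}, so the module is projective.

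There is no genuine obstacle here: the corollary is a direct specialisation of the previous lemma, and the single input needed — projectivity of $S\o T_{p-1}^{(r-1)}$ — is exactly the identity $S\o Q\simeq\bk E$ for the ``complementary'' uniserial module $Q=St_1^{(r-1)}$ already used in \Cref{defxi} and \Cref{Qxi}. The only care worth taking is to state the corollary for the $E$-module $T_{p-1}^{(r-1)}$ (equivalently $St_1^{(r-1)}$), since it is this specific twist, and not $T_{p-1}$ itself, whose tensor product with $S$ is projective.
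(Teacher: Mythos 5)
Your proposal is correct and matches the paper's intended argument: the corollary is stated as an immediate consequence of the preceding lemma, the only input being projectivity of $S\o T_{p-1}^{(r-1)}\simeq St_r\simeq\bk E$, which is exactly what you verify (and your support-variety alternative is a valid second check).
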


\subsection{Classification of cyclic \texorpdfstring{$S$}{S}-projectives for \texorpdfstring{$r=2$}{r=2}}

Assume $p$ arbitrary and $r=2$.
The following is the main result of this section, and proves a special case of the formulation of our main conjecture in \Cref{conj-c}.
\begin{theorem} \label{thm:cyclic-S-proj}
    If $r=2$, then the cyclic $S$-projective $C_p^r$-modules are in $\cT_r$.
\end{theorem}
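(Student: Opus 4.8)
The plan is to classify cyclic $S$-projective $C_p^2$-modules directly and identify each one as a restricted tilting module, more precisely as a summand of a tensor power of $S$ so that it lies in $\cT_r$ by \Cref{Lem:TOT}. First I would reduce to the indecomposable non-projective case. Let $X$ be an indecomposable non-projective cyclic $S$-projective $C_p^2$-module, with generator $m$, and set $d:=\dim\top X$. Since $X$ is cyclic, $d=1$. By \Cref{lem:dim-periodic}(a)–(b), the restriction of $X$ to $\langle g_1\rangle\cong C_p$ is projective and $\dim X\in p\{1,\dots,p-1\}=\{p,2p,\dots,p(p-1)\}$; in particular $X$ is free of rank $k$ over $\bk\langle g_1\rangle=\bk[X_1]/X_1^p$ with $1\le k\le p-1$. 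Because $X$ is cyclic over $\bk E$, and $X_1$ acts freely, $X$ is generated over $\bk[X_1]/X_1^p$ by $m, X_2 m, \dots, X_2^{k-1}m$, and the module structure is completely determined by expressing $X_2^k m$ as a $\bk[X_1]/X_1^p$-linear combination of the lower powers. So cyclic $S$-projective modules of dimension $kp$ form a bounded family parametrised by finitely many coefficients in $\bk$.

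Next I would pin down which members of this family are actually $S$-projective. Here I would use \Cref{rem:SX-projective} and \Cref{Cor:Supp}: $S\o X$ is projective exactly when $\blambda\notin\supp X$, so $S$-projectivity of $X$ with one-point support forces $\supp X=\{\blambda\}$, and this is a single linear condition on the leading coefficient (the coefficient of $X_1$ in the expansion of $X_2 m$ relative to the $\langle g_1\rangle$-free structure after a suitable normalisation), exactly as in the proof of \Cref{lem:uniqueselfdual}. After normalising, the remaining parameters are constrained further by the requirement that $X$ be $S$-projective rather than merely periodic with the right support. The key input is \Cref{cor:quotients}: $T_{p-1}^{(1)}=St_1^{(1)}$ can be neither a quotient nor a subobject of $X$. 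Since $St_1^{(1)}$ is the $p$-dimensional module on which $X_1$ acts as zero and $X_2$ acts with a single Jordan block, excluding it as a sub/quotient rules out the "pure Frobenius-twisted" directions and should cut the family down to exactly the modules $S\o Y$ with $Y$ cyclic of dimension $1$ through $p-1$ — equivalently, the restricted tilting modules $T_{p-1},T_p,\dots,T_{2p-2}$ together with (via $\Omega$, using \Cref{prop:Omega-Ti}) the appropriate Heller shifts. Combining \Cref{cor:self-ext1}, \Cref{prop::self-ext-S-any-r}, and \Cref{Lem:TOT} then identifies each surviving $X$ as an element of $\cT_r$.

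The main obstacle I anticipate is the combinatorial bookkeeping in the middle step: translating "cyclic, $\langle g_1\rangle$-free of rank $k$, support $\{\blambda\}$, and no $St_1^{(1)}$ sub/quotient" into a clean finite list, and in particular showing that once the leading term is normalised the higher-order coefficients are forced (up to isomorphism) to the values realised by $S\o U_k$. I would handle this by working over $SL_2$ as much as possible: write the candidate $X$ as a quotient/submodule of $S\o St_1^{(1)}\simeq\bk E$ via the element $\xi$ of \ref{defxi}, use \Cref{Powxi} to control radical filtrations, and then match dimensions and socle/top structure against $S\o U_k\simeq T_{(k+1)p-1}$ from \Cref{lem::S-Ud} (or its $\Omega$-shift). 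An alternative, possibly cleaner route for the obstacle: show directly that any cyclic $S$-projective $X$ is, up to $\Omega$-shift, a quotient of $S\o St_1^{(1)}\cong \bk E$ by a submodule of the form $S\o(z^k U_p)$, which is forced by cyclicity of $X$ together with the fact that $\xi$ acts on $S\o St_1^{(1)}$ as $S\o z$ (from \Cref{Qxi} with $Q=St_1^{(1)}$); then \Cref{lem::S-Ud} finishes the identification immediately, and $\cT_r$-membership follows from \Cref{rem:T-tilde}(4).
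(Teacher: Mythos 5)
There is a genuine gap at exactly the step you flag as the ``main obstacle'', and it is not just bookkeeping: the sieve you propose (cyclic, free over $\langle g_1\rangle$, support $\{\blambda\}$, no $St_1^{(1)}$ sub- or quotient module as in \Cref{cor:quotients}) is strictly weaker than $S$-projectivity and does not cut the family down to the tilting modules. Concretely, for $p=3$, $r=2$ the modules $N^{(0)}(\mu)$ of \Cref{LemClas3} with $\mu\neq0$ are cyclic, free over $\langle g_1\rangle$, have support $\{\blambda\}$, and do not admit $St_1^{(1)}$ as a sub- or quotient module (on $N^{(0)}(\mu)$ the element $X_2-\lambda^{3}X_1-\binom{\lambda^3}{2}X_1^2$ acts with image of dimension $>3$, while it would have to land in a $3$-dimensional kernel/annihilate a $3$-dimensional sub), yet by \Cref{prop-cyc} they are not $S$-projective. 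So nothing in your list of constraints detects the difference between $T_5=N^{(0)}(0)$ and its lookalikes $N^{(0)}(\mu)$, and ``matching dimensions and socle/top structure against $S\o U_k$'' cannot either, since these modules all share those invariants. Your alternative route has the same problem in sharper form: the claim that cyclicity plus $\xi|_{S\o St_1^{(1)}}=S\o z$ ``forces'' the annihilator ideal of a cyclic $S$-projective module to be of the form $S\o(z^kU_p)$, i.e.\ generated by a power of $\xi$, is precisely the content of the theorem and is asserted rather than proved. (A smaller inaccuracy: $St_1^{(1)}$ restricted to $E$ is the uniserial module attached to $\ulambda^{(1)}$, on which $X_1$ acts freely; it is not the module with $X_1=0$.)

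What is missing is a mechanism that actually uses the lifting property of $S$-projective modules against $S$-split maps, which is where the paper does its real work: it shows inductively that a cyclic $S$-projective module having $\Delta_{k+\ell p}$ as a quotient must also have $\Delta_{k+1+\ell p}$ as a quotient, by lifting along the explicit $S$-split epimorphisms $T_{2p-2-k+\ell p}\tto\Delta_{k+\ell p}$ of \Cref{lem:cyc-S-proj-r2}(c) and analysing the cyclic submodules of $T_{2p-2-k+\ell p}$ not contained in the radical (\Cref{lem:realizations}(c)); climbing up to $\Delta_{p-1+\ell p}\cong T_{(\ell+1)p-1}$ and invoking \Cref{lem:dim-periodic} then yields the list $\{T_{\ell p-1}\}$, with no parametrised classification of cyclic modules at all. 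If you want to salvage your classification-by-coefficients approach, you would need to replace \Cref{cor:quotients} by an argument of this lifting type (or an equivalent computation showing directly that $S\o N^{(0)}(\mu)^\ast$-style tensor products do not contain $N^{(0)}(\mu)$ as a summand for $\mu\neq0$), since support and sub/quotient exclusions alone provably do not suffice.
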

The theorem will be proved as the more specific \Cref{prop-cyc} below.

We set $\Delta_{-1}:=T_{-1}:=0$.
Recall from \Cref{Lem:UseSL2} that for all $0\le i\le j$, there is an $E$-module epimorphism
$$\Delta_j\tto\Delta_i$$
whose kernel is the subspace of $SL_2$-weight vectors of weight strictly below $j-2i$. 

\begin{lemma} \label{lem:cyc-S-proj-r2}
(a) $\dim \Delta_i=i+1$ and $\Delta_i$ is cyclic for all $-1\le i\le p^2-1$. $\Delta_{\ell p-1}\cong T_{\ell p-1}$ for all $0\le \ell\le p$.

(b) For all $0< \ell\le p-1$, $\Delta_{\ell p}$ is the unique non-split extension of $\one$ by $T_{\ell p-1}$.

(c) For all $0\le k,\ell\le p-2$, there is an $S$-split epimorphism 
$$ T_{2p-2-k+\ell p} \tto \Delta_{k+\ell p} .
$$
\end{lemma}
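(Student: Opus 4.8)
Handle the three parts in order; part~(c) is where the real work lies.

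\textbf{Part (a).} Realise $\Delta_i$ as the $SL_2$-dual of the costandard module $\nabla_i=\bk[x,y]_i$, so $\dim\Delta_i=i+1$ and $\Delta_{-1}=0$ is trivially cyclic. For $\Delta_{\ell p-1}\simeq T_{\ell p-1}$: since $(\ell p-1)+1=\ell p=0+\ell p$ has no non-zero non-leading $p$-adic digit, \cite{STWZ}*{Proposition~3.3} gives $\ell^\nabla(T_{\ell p-1})=1$, i.e.\ $T_{\ell p-1}=\nabla_{\ell p-1}=\Delta_{\ell p-1}$; equivalently, $\Delta_{\ell p-1}\hookrightarrow T_{\ell p-1}$ over $SL_2$ while $\dim T_{\ell p-1}=\dim(T_{p-1}\otimes T_{\ell-1}^{(1)})=\ell p$ by Donkin's formula~\eqref{DTPT}. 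In particular $\Delta_{p^2-1}\simeq T_{p^2-1}\simeq\bk E$ by \Cref{lem:restricted-tilting}, which is cyclic, and then for every $0\le i\le p^2-1$ the epimorphism $\Delta_{p^2-1}\tto\Delta_i$ supplied by \Cref{Lem:UseSL2} exhibits $\Delta_i$ as a quotient of a cyclic module, hence cyclic.

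\textbf{Part (b).} Apply \Cref{Lem:UseSL2} with $j=\ell p$, $i=\ell p-1$ to get an $E$-epimorphism $\Delta_{\ell p}\tto\Delta_{\ell p-1}=T_{\ell p-1}$; by the explicit description there its kernel is the line spanned by the lowest-weight vector of $\Delta_{\ell p}$ (dually, the cokernel of $\nabla_{\ell p-1}\hookrightarrow\nabla_{\ell p}$ is $\bk x^{\ell p}$), on which $E$ acts trivially, so we obtain a short exact sequence $0\to\one\to\Delta_{\ell p}\to T_{\ell p-1}\to0$. It is non-split because $\Delta_{\ell p}$ is cyclic by~(a), hence has one-dimensional top, while $\one\oplus T_{\ell p-1}$ has top of dimension $\ge 2$. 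For the uniqueness it suffices to prove $\dim_\bk\Ext^1_E(T_{\ell p-1},\one)=1$: using self-duality of $T_{\ell p-1}$ and \Cref{prop:Omega-Ti} (applicable since $\ell p-1=(p-1)+p(\ell-1)$ with $0\le\ell-1\le p-2$), one has $\Ext^1_E(T_{\ell p-1},\one)\simeq\uHom_E(\Omega T_{\ell p-1},\one)\simeq\uHom_E(T_{p(p-\ell)-1},\one)=\Hom_E(\one,T_{p(p-\ell)-1})$, where the last step uses that $T_{p(p-\ell)-1}$ has no projective summand as $p(p-\ell)-1<p^2-1$; and this space has dimension $\ell^\nabla(T_{p(p-\ell)-1})=1$ by \Cref{PropT}(1), since $p(p-\ell)=0+(p-\ell)p$ again has no non-zero non-leading digit.

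\textbf{Part (c).} First construct the epimorphism over $SL_2$: for $0\le k,\ell\le p-2$ the number $(2p-2-k+\ell p)+1=(p-1-k)+(\ell+1)p$ is a genuine $p$-adic expansion with exactly one non-zero non-leading digit $p-1-k$, so by \cite{STWZ}*{Proposition~3.3} the tilting module $T_{2p-2-k+\ell p}$ has a $\nabla$-flag with the two factors $\nabla_{2p-2-k+\ell p}$ and $\nabla_{k+\ell p}$; dually it has a $\Delta$-flag with factors $\Delta_{2p-2-k+\ell p}$ and $\Delta_{k+\ell p}$, and as $\Delta_{2p-2-k+\ell p}\hookrightarrow T_{2p-2-k+\ell p}$ is the highest-weight submodule, the quotient is $\Delta_{k+\ell p}$, giving a short exact sequence $0\to\Delta_{2p-2-k+\ell p}\to T_{2p-2-k+\ell p}\xrightarrow{f}\Delta_{k+\ell p}\to0$. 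The remaining, and main, point is that $f$ is $S$-split, i.e.\ that $0\to S\otimes\Delta_{2p-2-k+\ell p}\to S\otimes T_{2p-2-k+\ell p}\to S\otimes\Delta_{k+\ell p}\to0$ splits in $\Rep E$; this is the key obstacle. I would attack it by showing that the class of this sequence dies in $\Ext^1_E(S\otimes\Delta_{k+\ell p},\,S\otimes\Delta_{2p-2-k+\ell p})$: note that $S\otimes T_{2p-2-k+\ell p}$ is a restriction of an $SL_2$-tilting module, that all three modules have support contained in $\{\blambda\}$ after tensoring with $S$ (by \Cref{Cor:Supp}), and that $\Delta_{k+\ell p}$ and $\Delta_{2p-2-k+\ell p}$ admit filtrations by quotients of uniserial $p$-dimensional representations, so one can hope to compute the relevant $\Ext^1$ using the description of $S\otimes-$ on such extensions from \Cref{sec::subcategories} (cf.\ \Cref{prop::im-omega}, \Cref{cor:S-tensor-ext-Ud-Ue}); alternatively one may build a splitting of $S\otimes f$ directly from the explicit bases of \ref{basis} and the element $\xi$ of \ref{defxi}. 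Controlling this extension class is where the argument is delicate.
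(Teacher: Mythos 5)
Parts (a) and (b) of your proposal are correct and essentially coincide with the paper's proof (your bound $\dim\Ext^1_E(T_{\ell p-1},\one)\le 1$ via cyclicity of $\Omega T_{\ell p-1}\simeq T_{(p-\ell)p-1}$ is exactly the paper's argument), and in (c) your construction of the epimorphism $T_{2p-2-k+\ell p}\tto\Delta_{k+\ell p}$ from the $\Delta$-flag (dualizing the $\nabla$-factors given by \cite{STWZ}*{Proposition~3.3}) is a legitimate alternative to the paper's computation with composition series.

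However, there is a genuine gap: the $S$-splitness of this epimorphism, which you yourself call the key obstacle, is never proved — you only sketch two possible attacks without carrying either out. The route you favour, computing the class of the tensored sequence in $\Ext^1_E(S\o\Delta_{k+\ell p},\,S\o\Delta_{2p-2-k+\ell p})$ with the machinery of \Cref{sec::subcategories}, is not obviously workable: $\Delta_{2p-2-k+\ell p}$ is not an extension of two quotients $U_d,U_e$ of a module in a $\Rep C_p$-type subcategory, so \Cref{prop::im-omega} and \Cref{cor:S-tensor-ext-Ud-Ue} do not compute the relevant extension group. The missing step in fact has a short proof, and it is the heart of the paper's argument: every composition factor of $T_{2p-2-k+\ell p}$ (hence of $\Delta_{k+\ell p}$ and of the kernel of $f$) is some $L_i$ with $i\le p^2-2\le p^r-1$ for $r=2$; by \cite{BEO}*{Lemma~3.3} (already used in \Cref{lem:restricted-sl-in} and \Cref{lem:self-ext1}) each $S\o L_i$ in this range is an $SL_2$-tilting module, so tensoring the short exact sequence with $S$ produces $SL_2$-modules filtered by tilting modules; as $\Ext^1_{SL_2}$ between tilting modules vanishes, these filtered modules are themselves tilting and the tensored sequence splits already over $SL_2$ (\cite{BEO}*{Proposition~3.2}), hence over $E$ after restriction. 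Without this (or an equivalent) argument your part (c) is not established, and with it the input to \Cref{prop-cyc} is missing.
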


\begin{proof}
(a) Cyclicity follows as $\Delta_{p^2-1}\cong T_{p^2-1}$ is the indecomposable projective $E$-module and $\Delta_i$ is a quotient of the latter for all $0\le i\le p^2-1$. The rest follows from well-known facts about standard modules for $SL_2$.

(b) By \Cref{prop:Omega-Ti}, $\Omega T_{\ell p-1}\cong T_{(p-\ell)p-1}$ is cyclic. Hence, $\dim\Hom(\Omega T_{\ell p-1},\one)=1$ and there is a unique such extension. $\Delta_{\ell p}$ is such an extension, as $\Delta_{\ell p-1}\cong T_{\ell p-1}$ is a quotient, and the difference of the dimensions is $1$.

(c) Recall that for any simple $SL_2$-module $L_i$ of highest weight $i\le p^r-1$, $L_i\o St_{r-1}$ is tilting as an $SL_2$-module (\cite{BEO}*{Lemma~3.3}, \cite{AbEnv}*{Lemma~4.3.4}). As extensions between tiltings vanish, this implies that any $SL_2$-epimorphism between modules with composition factors in this range splits upon tensoring with $S=St_{r-1}$ (\cite{BEO}*{Proposition~3.2}).

It is known that $T_{2p-2-k}$ has a composition series of the form $[L_k,L_{2p-2-k},L_k]$, see for instance \cite{DT}*{Lemma~1.1(b)}. We compute
$$L_{2p-2-k}\otimes L_{\ell}^{(1)}=L_{p-2-k}\otimes (L_1\otimes L_{\ell})^{(1)}=L_{2p-2-k+\ell p}\oplus L_{-2-k+\ell p}.$$
Hence, $T_{2p-2-k+\ell p}\cong T_{2p-2-k}\o T_\ell^{(1)}$ has a composition series $[L_{k+\ell p},L_{2p-2-k+\ell p}, L_{-2-k+\ell p},L_{k+\ell p}]$. This shows $T_{2p-2-k+\ell p}$ has the desired composition factors and the desired $SL_2$-epimorphism to $\Delta_{k+\ell p}$.
\end{proof}

Recall that we have fixed generators denoted by the symbol $1$ and endomorphisms denoted by the symbol $z$ with one-dimensional kernels for all uniserial $E$-modules, and that we denote the elements $z^k(1)$ also by $z^k$, for all $k\ge0$. In particular, this applies to $T_i$ for $0\le i\le p-1$ and its Frobenius twists.

\begin{lemma} \label{lem:realizations}
(a) For $0\le k,\ell\le p-1$, we have an isomorphism (of $E$-modules)
$$
\Delta_{k+\ell p} 
\cong \frac{T_{p-1}\o T_\ell^{(1)}}{\langle z^{k+1}\o z^\ell\rangle}
. $$

 (b) \label{lem:T-prime} For $0\le k,\ell\le p-2$, we have an isomorphism (of $E$-modules)
$$
T_{2p-2-k+\ell p} 
\cong \frac{(T_{p-1}\o T_{\ell+1}^{(1)}) \oplus (T_{p-1}\o T_\ell^{(1)})}{\langle(1\o z^{\ell+1}, -z^{k+1}\o z^\ell)\rangle}
.
$$

(c) For all $0\le k,\ell\le p-2$, any cyclic submodule of $T_{2p-2-k+\ell p}$ not contained in the radical has $\Delta_{k+1+\ell p}$ as a quotient.

\end{lemma}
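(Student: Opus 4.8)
I would prove the three parts in turn; parts (a) and (b) are bookkeeping with standard $SL_2$-facts together with \Cref{prop:Omega-Ti} and \Cref{Lem:UseSL2}, and part (c) is the substantive point. For part (a): that $\dim\Delta_i=i+1$, that each $\Delta_i$ ($-1\le i\le p^2-1$) is cyclic (being a quotient of $\Delta_{p^2-1}\simeq\bk E$), and that $\Delta_{\ell p-1}\simeq T_{\ell p-1}$ (by Donkin's formula \eqref{DTPT} over $SL_2$ and a dimension count) are standard; the same formula gives $T_{p-1}\o T_\ell^{(1)}\simeq T_{(\ell+1)p-1}\simeq\Delta_{(\ell+1)p-1}$. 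For the displayed presentation I would locate $z^{k+1}\o z^\ell$: since $z^\ell$ spans $\soc(T_\ell^{(1)})\simeq\one$, this vector lies in the submodule $T_{p-1}\o\soc(T_\ell^{(1)})\simeq St_1$, inside which $z^{k+1}$ generates $\rad^{k+1}(St_1)$; hence $\langle z^{k+1}\o z^\ell\rangle$ is uniserial of dimension $p-1-k$. Comparing its dimension and its top $U$-weight with those of $\ker(\Delta_{(\ell+1)p-1}\tto\Delta_{k+\ell p})$ from \Cref{Lem:UseSL2} shows the two submodules coincide, giving the asserted quotient. For part (b): by \Cref{prop:Omega-Ti} (with $r=2$), $\Omega T_{\ell p-1}\simeq T_{(p-\ell)p-1}$, which is cyclic by (a), so $\dim\Hom_E(\Omega T_{\ell p-1},\one)=1$ and $\Ext^1_E(T_{\ell p-1},\one)\simeq\uHom_E(\Omega T_{\ell p-1},\one)$ has dimension at most one; since $\Delta_{\ell p}$ is an extension of $\one$ by $\Delta_{\ell p-1}\simeq T_{\ell p-1}$ — the latter a codimension-one quotient of $\Delta_{\ell p}$ by \Cref{Lem:UseSL2} with one-dimensional kernel $\simeq\one$ — this $\Ext$-space is exactly one-dimensional and $\Delta_{\ell p}$ represents its unique non-split class.

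For part (c), I would first set up the structure. Write $T':=T_{2p-2-k+\ell p}$ and, via (b), $T'=(M_1\oplus M_2)/R$ with $M_1:=T_{p-1}\o T_{\ell+1}^{(1)}\simeq\Delta_{(\ell+2)p-1}$, $M_2:=T_{p-1}\o T_\ell^{(1)}\simeq\Delta_{(\ell+1)p-1}$, and $R$ generated by $(1\o z^{\ell+1},-z^{k+1}\o z^\ell)$. Inspecting these generators (using, as in (a), that $1\o z^{\ell+1}$ generates $T_{p-1}\o\soc(T_{\ell+1}^{(1)})\simeq St_1$ and $z^{k+1}\o z^\ell$ generates $\rad^{k+1}(T_{p-1})\o\soc(T_\ell^{(1)})$) one gets: $R\simeq St_1$; the composite $0\oplus M_2\hookrightarrow M_1\oplus M_2\tto T'$ is injective, so $D:=M_2\simeq\Delta_{(\ell+1)p-1}$ is a submodule of $T'$ not contained in $\rad T'$; and $T'/D\simeq T_{p-1}\o(T_{\ell+1}^{(1)}/\soc)\simeq T_{p-1}\o T_\ell^{(1)}\simeq\Delta_{(\ell+1)p-1}$ (the middle isomorphism holding already over $U$). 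Thus $T'$ is a self-extension $0\to D\xrightarrow{\iota}T'\xrightarrow{\pi}D\to 0$ with $\iota D\not\subseteq\rad T'$.

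Now let $N=\bk E\cdot x\le T'$ be cyclic with $N\not\subseteq\rad T'$, say $x=[(m_1,m_2)]$. Since $R\subseteq\rad(M_1\oplus M_2)$, one of $m_1,m_2$ generates its summand, and after multiplying $x$ by a unit of $\bk E$ I may assume either $m_1=1\o1$, or $m_1\in\rad M_1$ and $m_2=1\o1$. If $m_1=1\o1$, then $\pi(x)$ generates $T'/D\simeq\Delta_{(\ell+1)p-1}$, so $N$ surjects onto $\Delta_{(\ell+1)p-1}$ and hence, by \Cref{Lem:UseSL2} (using $k+1\le p-1$), onto $\Delta_{k+1+\ell p}$. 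The remaining case $m_1\in\rad M_1$, $m_2=1\o1$ is the crux: here $\pi(x)\in\rad(T'/D)$, so the quotient copy of $D$ is useless. I would instead show $N\simeq\Delta_{(\ell+1)p-1}$: after subtracting a suitable element of $R$ one may take $m_1=\theta\cdot(1\o1)$ for some $\theta\in\rad\bk E$; the projection $\bk E\cdot(m_1,m_2)\to M_2$ is onto with kernel $\Ann(M_2)\cdot m_1$, and a direct analysis of $\bk E\cdot m_1\subseteq M_1$ shows it is annihilated by $\Ann(M_2)$, so $\bk E\cdot(m_1,m_2)\simeq M_2$; finally, since $R\cap\bk E\cdot(m_1,m_2)$ must be a submodule while the only scalar multiple of the generator of $R$ spanning a submodule is $0$, one concludes $R\cap\bk E\cdot(m_1,m_2)=0$, whence $N\simeq M_2\simeq\Delta_{(\ell+1)p-1}\tto\Delta_{k+1+\ell p}$.

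The main obstacle is precisely this last case of (c): when the generator of the cyclic submodule lies along the ``$D$-direction'' of the two-dimensional head of $T'$, one cannot extract the desired quotient by projecting onto a copy of $D$, and must instead use the explicit presentation from (b) to pin down $\bk E\cdot m_1$ and the intersection $R\cap\bk E\cdot(m_1,m_2)$ carefully enough to see that $N$ is again isomorphic to (or at least surjects onto) $\Delta_{(\ell+1)p-1}$. Everything else is routine.
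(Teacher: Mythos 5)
Your part (a) (and the case of (c) where the first slot of the generator generates $M_1$) is fine and matches the paper's argument, but there are two genuine gaps. First, part (b) of \Cref{lem:realizations} is never proved: what you establish under ``(b)'' --- that $\Delta_{\ell p}$ is the unique non-split extension of $\one$ by $T_{\ell p-1}$ --- is \Cref{lem:cyc-S-proj-r2}(b), a different and much easier statement, and your argument for (c) then invokes the presentation of $T_{2p-2-k+\ell p}$ ``via (b)'' without any justification. This identification is not bookkeeping: a dimension count only shows that both sides are self-extensions of $T_{p-1+\ell p}$, of which there are several non-isomorphic ones (cf.\ \Cref{rem::extensions}(1)). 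The paper proves it by fixing $\xi\in\kk E$ with $\xi|_{T_{p-1}\o T_{p-1}^{(1)}}=1\o z$, showing $\xi^2T_{2p-2-k}=0$ and $\xi T_{2p-2-k}\cong L_{p-2-k}$, and establishing that a self-extension $Z$ of $T_{p-1+\ell p}$ with $\xi^{\ell+1}Z\cong L_{p-2-k}$ is unique; some substitute for this step is required.

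Second, the crux case of (c) fails as you argue it. By (the proof of) \Cref{lem::S-Ud}, $M_1\cong\kk E/(\xi^{\ell+2})$ and $M_2\cong\kk E/(\xi^{\ell+1})$, so $\Ann(M_2)=(\xi^{\ell+1})$; writing $\kk E=\kk[\xi,Y]/(\xi^p,Y^p)$ (possible since $\xi\notin\rad^2\kk E$), the condition that $\kk E\cdot m_1$ be killed by $\Ann(M_2)$ says exactly $m_1\in\xi M_1$, a proper submodule of $\rad M_1$; moreover adding elements of $R$ only moves $m_1$ inside $\xi^{\ell+1}M_1$ and unit rescaling preserves $\xi M_1$, so the case $m_1\in\rad M_1\setminus\xi M_1$ (e.g.\ $m_1=Y\cdot(1\o1)$) cannot be normalised away. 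For such $m_1$ all three of your claims fail: $\Ann(M_2)\cdot m_1\neq0$, $\kk E\cdot(m_1,m_2)\not\cong M_2$, and $R\cap\kk E\cdot(m_1,m_2)\neq0$ (this intersection is a submodule of $R\cong T_{p-1}$; nothing forces it to be a scalar multiple of the chosen generator of $R$). Worse, the intended conclusion is itself false: for $p=3$, $k=\ell=0$ and $m_1=Y\cdot(1\o1)$ one checks $R\cap(M_1\oplus0)=\kk\,(\xi Y^2(1\o1),0)$, so $\xi$ acts non-trivially on $N=\kk E\cdot\ol{(m_1,1\o1)}$, whereas $\xi\Delta_2=\xi M_2=0$; hence $N\not\cong\Delta_{(\ell+1)p-1}$. (The lemma still holds for this $N$, which surjects onto $\Delta_1$, but not via your route.) This is precisely the case the paper handles by passing to the quotient $T'/\langle\ol{(z\o1,0)}\rangle\cong T^{(1)}_{\ell+1}\sqcup_\one\Delta_{k+1+\ell p}$ and using a nilpotent endomorphism to adjust the generator; an argument of that nature (or some other replacement) is needed here.
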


\begin{proof}
(a) It is well-known that $\Delta_{p-1+\ell p}\cong T_{p-1+\ell p}\simeq T_{p-1}\otimes T_\ell^{(1)}$. By \Cref{Lem:UseSL2}, $\Delta_{k+\ell p}$ is the quotient of the latter with respect to the subspace of $SL_2$-weight vectors of weight strictly below $p-1+\ell p-2(k+\ell p)$. Those weight vectors are exactly $\{z^{k'}\o z^\ell\}_{k+1\le k'\le p-1}$, and the submodule they form is generated by $z^{k+1}\o z^\ell$.

(b) Fix $\xi\in\kk E$ such that $\xi|_{T_{p-1}\o T_{p-1}^{(1)}}=1\o z$, using that $T_{p-1}\o T_{p-1}^{(1)}\cong\kk E$, so in particular, $\xi$ acts trivally on $T_{p-1}$. Furthermore, as $\xi\in\rad(\kk E)$, the coproduct $\Delta$ of $\kk E$ satisfies 
\begin{equation} \label{eq:Delta-xi}    
\Delta(\xi) \in \xi\o1 + 1\o\xi+ \rad(\kk E)\o\rad(\kk E),
\end{equation}
which we will use to compute the action of $\xi$ on tensor products.

We claim that $\xi^2 T_{2p-2-k}=0$ and $\xi T_{2p-2-k}\cong L_{p-2-k}$. The first part of the claim follows using that, as an $E$-module, $T_{2p-2-k}$ is a self-extension of $T_{p-1}$ by \Cref{cor:self-ext1}, and $\xi|_{T_{p-1}}=0$. 

As can be derived from the proof of \Cref{lem:self-ext1}, we have the following description of $T_{2p-p-k}$: Set $M:=T_{p-1}\o T_{p-1-k}=L_{p-1}\o L_{p-1-k}$, set $v_1:=1\o 1$ and $v_2:=1\o z^{p-1-k}$ in $M$. Then $\langle v_1,v_2\rangle\cong T_{2p-2-k}$. Using \Cref{eq:Delta-xi}, we get $\xi(v_2)=0$, so $\xi T_{2p-2-k}=\xi\langle v_1\rangle$. The action of $\xi$ on $v_1=1\o 1$ is described in \Cref{Lemxi0} and \Cref{lem::xi-S-S}, which imply that $\xi(\Delta_{2p-2-k})\cong L_{p-2-k}$ as an $E$-module.

Now, since $T_{p-1}\o T_{p-1}^{(1)}$ is cyclic, $1\o z^{\ell+1}\in T_{p-1}\o T_{p-1}^{(1)}$ generates a submodule with corresponding quotient $T_{p-1}\o T_\ell^{(1)}\cong T_{p-1+\ell p}$. Hence, the submodule can be identified with $\Omega T_{p-1+\ell p}$, and the module on the RHS in the asserted equation is isomorphic to a module of the form 
$$T_{p-1+\ell p}\sqcup_{\Omega T_{p-1+\ell p}}\bk E,$$
corresponding to a morphism $\Omega T_{p-1+\ell p}\to T_{p-1+\ell p}$ with image $\im(z^{k+1}\o z^\ell)\subset T_{p-1}\o T_\ell^{(1)}$, or equivalently $L_{p-2-k}\subset T_{p-1+\ell p}$. As such a morphism is determined by the image of the generator, we find the module is the unique self-extension $Z$ of $T_{p-1+\ell p}$ satisfying $\xi^{\ell+1} Z\cong L_{p-2-k}$, where uniqueness follows form the observation that there is a unique embedding of $L_{p-2-k}$ into $T_{p-1+\ell p}$.

We claim that $T_{2p-2-k+\ell p}$ is a self-extension of $T_{p-1+\ell p}$. Indeed, by \Cref{cor:self-ext1}, $T_{2p-2-k}$ is a self-extension of $T_{p-1}$, and then we can take a tensor product with $T_{\ell}^{(1)}$. 
Using both parts of the above claim and \Cref{eq:Delta-xi}, we compute 
$$
\xi^{\ell+1} T_{2p-2-k+\ell p} 
\cong \Delta(\xi)^{\ell+1} (T_{2p-2-k} \o T_{\ell}^{(1)}) 
\cong \xi T_{2p-2-k} \cong L_{p-2-k} .
$$

This shows $T_{2p-2-k+\ell p}$ can also be identified as $Z$.
    
(c) 
By (b), $T':=T_{2p-2-k+\ell p}$ can be realized as
$$
T' \cong \frac{T_{p-1}\o T_{\ell+1}^{(1)}\oplus T_{p-1}\o T_\ell^{(1)}}{\langle (1\o z^{\ell+1},-z^{k+1}\o z^\ell)\rangle} .
$$
This means that the vectors 
$$
v_{ij}:=\ol{(z^i\o z^j,0)},
\qquad w_{ij}:=\ol{(0,z^i\o z^j)}
\qquad\text{for all } 0\le i\le p-1, 0\le j\le \ell, 
$$
form a basis of $T'$, that $\ol{(z^i\o z^{\ell+1},0)}=0$ for all $i\ge p-1-k$, and that
$$
K_1:=\langle v_{1,0}\rangle=\kk\{v_{ij}\}_{1\le i\le p-1, 0\le j\le \ell}+\kk\{w_{i\ell}\}_{k+2\le i\le p-1}
$$
$$
K_2:=\langle w_{0,0}\rangle=\kk\{w_{ij}\}_{0\le i\le p-1,0\le j\le \ell},
$$
are submodules. We compute using (a):
$$
T'/K_1\cong \frac{T_{\ell+1}^{(1)} \oplus T_{p-1}\o T_\ell^{(1)}}{\langle (z^{\ell+1},-z^{k+1}\o z^\ell),(0,z^{k+2}\o z^\ell) \rangle}
= T^{(1)}_{\ell+1} \sqcup_{\one} \Delta_{k+1+\ell p}
$$
and
$$
T'/K_2\cong T_{p-1}\o T_{\ell+1}^{(1)} \cong T_{p-1+\ell p} .
$$

Any cyclic submodule not in the radical is generated by a vector 
$$
v=\sum_{i,j} \alpha_{ij} v_{ij} + \sum_{i,j} \beta_{ij} w_{ij}
$$
with $\alpha_{0,0}\neq0$ or $\beta_{0,0}\neq0$.
If $\alpha_{0,0}\neq0$, then $\ol{v}\in T'/K_2$ generates $T_{p-1+\ell p}\cong\Delta_{p-1+\ell p}$, which has $\Delta_{k+1+\ell p}$ as a quotient.
If $\alpha_{0,0}=0$ and $\beta_{0,0}\neq 0$, then
$$
\ol{v}\in T'/K_1\cong T_{\ell+1}^{(1)}\sqcup_\one \Delta_{k+1+\ell p}
$$
is of the form $w+w'$, where $w$ is the image of a generator of $\Delta_{k+1+\ell p}$ and $w'$ is the image of some element in $\rad(T_{\ell+1}^{(1)})$. This means there is a nilpotent endomorphism of $T'/K_1$ sending $w$ to $w'$. Hence, using an automorphism of $T'/K_1$, we can conclude that the submodule generated by $\ol v$ is isomorphic to that generated by $w$, which is $\Delta_{k+1+\ell p}$.
\end{proof}

\begin{proposition}\label{prop-cyc} If $r=2$, then the non-zero cyclic $S$-projective $C_p^r$-modules are $\{T_{\ell p-1}: 1\le \ell\le p\}$.
\end{proposition}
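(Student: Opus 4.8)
The claim has two inclusions. For ``$\supseteq$'': by Donkin's formula $T_{\ell p-1}\cong S\o T_{\ell-1}^{(1)}$, so $T_{\ell p-1}$ is a direct summand of $T_{\ell-1}^{(1)}\o S$ and hence $S$-projective by \Cref{DefWProj}(1); it is cyclic because $T_{\ell p-1}\cong\Delta_{\ell p-1}$ (\Cref{lem:cyc-S-proj-r2}(a)) and every $\Delta_i$ is a quotient of $\kk E=\Delta_{p^2-1}$. For ``$\subseteq$'', let $X$ be a nonzero cyclic $S$-projective module. Cyclicity forces $\dim\top X=1$, so $X$ is indecomposable; if $X$ is projective then $X\cong\kk E=T_{p^2-1}$, so assume $X$ is non-projective. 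Then $\supp X=\supp S$ by \eqref{sup:tensor} (as $X$ is a summand of some $M\o S$ and $X$ is not projective), \Cref{lem:dim-periodic}(a) shows $X$ is free over $\langle g_1\rangle$, and \Cref{lem:dim-periodic}(b) with $d=\dim\top X=1$ gives $\dim X=mp$ for some $1\le m\le p-1$. The plan is then to show that $X$ surjects onto $\Delta_{mp-1}$; since $\dim\Delta_{mp-1}=mp=\dim X$, any such epimorphism is an isomorphism, whence $X\cong\Delta_{mp-1}=T_{mp-1}$.

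To produce the surjection $X\tto\Delta_{mp-1}$ I would prove, by induction on $j$, that $X\tto\Delta_j$ for all $0\le j\le mp-1$. The base case $j=0$ is $X\tto\top X=\one=\Delta_0$. For the step, assume $X\tto\Delta_j$ with $j<mp-1$ and write $j=k+\ell p$ with $0\le k\le p-1$; note $\ell=\lfloor j/p\rfloor\le m-1\le p-2$. If $k\le p-2$ (``climbing within a block''), apply \Cref{lem:cyc-S-proj-r2}(c): there is an $S$-split epimorphism $T_{2p-2-k+\ell p}\tto\Delta_j$, so by \Cref{DefWProj}(3) the surjection $X\tto\Delta_j$ lifts to some $\tilde g\colon X\to T_{2p-2-k+\ell p}$ lying over it. Its image is cyclic (a quotient of $X$) and not contained in $\rad T_{2p-2-k+\ell p}$ (it still surjects onto $\Delta_j\neq 0$), so \Cref{lem:realizations}(c) yields $\Delta_{k+1+\ell p}=\Delta_{j+1}$ as a quotient of $\im\tilde g$, hence of $X$.

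If instead $k=p-1$, i.e.\ $j=(\ell+1)p-1$ (``jumping to the next block''), then $\Delta_j=T_{(\ell+1)p-1}$ by \Cref{lem:cyc-S-proj-r2}(a); moreover $\ell+1\le m-1$ since $j+1<mp$, so the kernel $K$ of $X\tto\Delta_j$ has dimension $(m-\ell-1)p>0$ and therefore surjects onto $\one$. Forming the pushout of $K\hookrightarrow X$ along $K\tto\one$, i.e.\ passing to $\bar X:=X/\ker(K\tto\one)$, yields a cyclic module fitting in $0\to\one\to\bar X\to\Delta_j\to 0$. Cyclic modules being indecomposable, this sequence cannot split (its split version $\one\oplus\Delta_j$ has two-dimensional top), so \Cref{lem:cyc-S-proj-r2}(b) identifies $\bar X\cong\Delta_{(\ell+1)p}=\Delta_{j+1}$; hence $X\tto\Delta_{j+1}$. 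This completes the induction, and with it the proof.

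I expect the main obstacle to be discovering this two-move induction and checking that the indices stay in the ranges where \Cref{lem:cyc-S-proj-r2} and \Cref{lem:realizations} apply — this goes through because $\ell\le m-1\le p-2$ at every stage. Conceptually, $S$-projectivity is used only in the ``climbing'' step, via \Cref{DefWProj}(3), whereas cyclicity of $X$ is exactly what forces the ``jump'' to land on the standard extension $\Delta_{(\ell+1)p}$ rather than a split or more exotic one. This is also what rules out the cyclic but non-standard lookalikes of the restricted Steinberg module (for instance the modules $S(\mu)$ and $N^{(0)}(\mu)$ with $\mu\neq0$ from \Cref{Sec9}): were any of them $S$-projective, the argument above would force it to be the corresponding $T_{mp-1}$, a contradiction.
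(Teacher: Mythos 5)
Your proposal is correct and follows essentially the same route as the paper's proof: the same two-move induction on quotients $\Delta_{j}$ of $X$ (climbing within a block via the $S$-split epimorphism of \Cref{lem:cyc-S-proj-r2}(c) together with \Cref{lem:realizations}(c), and jumping to the next block via cyclicity and the uniqueness statement \Cref{lem:cyc-S-proj-r2}(b)), with \Cref{lem:dim-periodic} supplying the dimension constraint. The only differences are cosmetic: you invoke the dimension bound $\dim X=mp$ up front and terminate the induction at $\Delta_{mp-1}$ by a dimension count, whereas the paper iterates until the quotient is no longer proper or the bound of \Cref{lem:dim-periodic} forces projectivity, and you also spell out the easy inclusion that each $T_{\ell p-1}$ is cyclic and $S$-projective.
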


\begin{proof} Let $X$ be a cyclic $S$-projective module.

If, for some $0\le \ell\le p-1$, the module $T_{\ell p-1}$ is a proper quotient of $X$, then also $\Delta_{\ell p}$ is a quotient of $X$. Indeed, the case $\ell=0$ is trivial, the case $\ell>0$ follows from
 \Cref{lem:cyc-S-proj-r2}(b) and the fact that $X$ is cyclic.

If for some $0\le \ell\le p-1$ and $0\le k\le p-2$, the module $\Delta_{k+\ell p}$ is a quotient of $X$, then also $\Delta_{k+1+\ell p}$ is a quotient of $X$,
by the lifting property of $X$ with respect to a morphism in \Cref{lem:cyc-S-proj-r2}(c) and by \Cref{lem:realizations}(c).

In conclusion, for $0\le\ell\le p-2$, if $T_{\ell p-1}$ is a proper quotient of $X$, then $\Delta_{p-1+\ell p}\cong T_{(\ell+1)p-1}$ is a quotient of $X$. Since $T_{-1}=0$ is a quotient of $X$, it follows that either $X$ is isomorphic to $T_{\ell p-1}$ for some $\ell\le p-1$ or that it has $T_{(p-1)p-1}$ as a proper quotient. In the latter case $X$ has to be projective by \Cref{lem:dim-periodic}, so $X\simeq T_{p^2-1}$ .
\end{proof}

\subsection*{Acknowledgements}
The authors thank Pavel Etingof and Raphaël Rouquier for useful discussions. KC was supported by ARC grants FT220100125 and DP210100251.

\bibliography{bib}
\bibliographystyle{amsrefs}%

\end{document}